\newcommand{\href}[1]{#1} 
\let\origdoublepage\cleardoublepage
\newcommand{\clearemptydoublepage}{%
  \clearpage{\pagestyle{empty}\origdoublepage}}
\let\cleardoublepage\clearemptydoublepage
\newtheorem{theorem}{Theorem}[section]
\newtheorem{proposition}[theorem]{Proposition}
\newtheorem{lemma}[theorem]{Lemma}
\newtheorem{corollary}[theorem]{Corollary}
\newtheorem{fact}[theorem]{Fact}
\theoremstyle{definition}
\newtheorem{definition}[theorem]{Definition}
\theoremstyle{remark}
\newtheorem{remark}[theorem]{Remark}
\newtheorem{example}[theorem]{Example}
\def\Ind{\setbox0=\hbox{$x$}\kern\wd0\hbox to 0pt{\hss$\mid$\hss} \lower.9\ht0\hbox to 0pt{\hss$\smile$\hss}\kern\wd0} 
\def\Notind{\setbox0=\hbox{$x$}\kern\wd0\hbox to 0pt{\mathchardef \nn=12854\hss$\nn$\kern1.4\wd0\hss}\hbox to 0pt{\hss$\mid$\hss}\lower.9\ht0 \hbox to 0pt{\hss$\smile$\hss}\kern\wd0} 
\def\ind{\mathop{\mathpalette\Ind{}}}
\newcommand{\compcent}[1]{\vcenter{\hbox{$#1\circ$}}} 
\newcommand{\comp}{\mathbin{\mathchoice 
{\compcent\scriptstyle}{\compcent\scriptstyle} 
{\compcent\scriptscriptstyle}{\compcent\scriptscriptstyle}}} 
\numberwithin{equation}{section}
\def \x {\bar x}
\def \d {\delta}
\def \dd {\partial}
\def \D {\Delta}
\def \t {\theta}
\def \T {\Theta}
\def \l {\langle}
\def \r {\rangle}
\def \L {\Lambda}
\def \I {\mathcal I}
\def \P {\mathcal P}
\def \V {\mathcal V}
\def \NN {\mathbb N}
\def \QQ {\mathbb Q}
\def \U {\mathbb U}
\def \C {\mathcal C}
\def \la {\langle}
\def \ra {\rangle}
\def \al {\alpha}
\def \DD {\mathcal D}
\def \ta {\tau_{\DD/\D}}
\def \ld {\ell_s}
\def \G {\mathcal G}
\def \H {\mathcal H}
\def \Ga {\Gamma}
\def \s {\sigma}
\def \ZZ {\mathbb Z}
\def \et {d_{D/\D}}
\def \w {\omega}
\def \M {\mathcal M}
\def \LL {\mathcal L}
\begin{document}

\pagestyle{empty}
\pagenumbering{roman}

\begin{titlepage}
        \begin{center}
        \vspace*{1.0cm}

        \Huge
        {\bf Contributions to the model theory of partial differential fields }

        \vspace*{1.0cm}

        \normalsize
        by \\

        \vspace*{1.0cm}

        \Large
        Omar Le\'on S\'anchez \\

        \vspace*{3.0cm}

        \normalsize
        A thesis \\
        presented to the University of Waterloo \\ 
        in fulfillment of the \\
        thesis requirement for the degree of \\
        Doctor of Philosophy \\
        in \\
        Pure Mathematics \\

        \vspace*{2.0cm}

        Waterloo, Ontario, Canada, 2013 \\

        \vspace*{1.0cm}

        \copyright\ Omar Le\'on S\'anchez 2013 \\
        \end{center}
\end{titlepage}

\pagestyle{plain}
\setcounter{page}{2}

\cleardoublepage 

  \noindent
I hereby declare that I am the sole author of this thesis. 

\bigskip
  
  \noindent
This is a true copy of the thesis, including any required final revisions, as accepted by my examiners.

  \bigskip
  
  \noindent
I understand that my thesis may be made electronically available to the public.

\cleardoublepage


\begin{center}\textbf{Abstract}\end{center}

In this thesis three topics on the model theory of partial differential fields are considered: the generalized Galois theory for partial differential fields, geometric axioms for the theory of partial differentially closed fields, and the existence and properties of the model companion of the theory of partial differential fields with an automorphism. The approach taken here to these subjects is to relativize the algebro geometric notions of prolongation and D-variety to differential notions with respect to a fixed differential structure.

It is shown that every differential algebraic group which is not of maximal differential type is definably isomorphic to the sharp points of a relative D-group. Pillay's generalized finite dimensional differential Galois theory is extended to the possibly infinite dimensional partial setting.   Logarithmic differential equations on relative D-groups are discussed and the associated differential Galois theory is developed. The notion of generalized strongly normal extension is naturally extended to the partial setting, and a connection betwen these extensions and the Galois extensions associated to logarithmic differential equations is established.

A geometric characterization, in the spirit of Pierce-Pillay, for the theory $DCF_{0,\ell+1}$ (differentially closed fields of characteristic zero with $\ell+1$ commuting derivations) is given in terms of the differential algebraic geometry of $DCF_{0,\ell}$ using relative prolongations. It is shown that this characterization can be rephrased in terms of characteristic sets of prime differential ideals, yielding a first-order geometric axiomatization of $DCF_{0,\ell+1}$. 

Using the machinery of characteristic sets of prime differential ideals it is shown that the theory of partial differential fields with an automorphism has a model companion. Some basic model theoretic properties of this theory are presented: description of its completions, supersimplicity and elimination of imaginaries. Differential-difference modules are introduced and they are used, together with jet spaces, to establish the canonical base property for finite dimensional types, and consequently the Zilber dichotomy for minimal finite dimensional types.

\cleardoublepage


\begin{center}\textbf{Acknowledgements}\end{center}

Throughout my years in Waterloo, I have been fortunate enough to be immersed in a welcoming, friendly and selfless environment. I am profoundly thankful to the faculty, staff, and my fellow graduate students at the Pure Mathematics Department of the University of Waterloo. Foremost, I am deeply indebted to Rahim Moosa, whose knowledge and guidance have been essential to the development of this thesis. I thank him for his patience, wise advise, generosity and frankness, which have been invaluable to me. He has not only been an outstanding supervisor, but also a great mentor and exceptional example of dedication and enthusiasm. 

I am also compelled to thank the Model Theory community, which has been incredibly welcoming and supportive. In particular, I am thankful to Moshe Kamensky, Lynn Scow, Philipp Hieronymi, Lou van den Dries, Deirdre Haskell, Patrick Speissegger and Chris Miller for giving the opportunity to present my research, and for their hospitality. Special thanks go also to Anand Pillay and David Marker, whose own work, and clear exposition in both books and papers, have taught me so much, and inspired a great part of the work presented here. 

This thesis has also benefited from the many discussions and e-mail exchanges with James Freitag and Ronnie Nagloo. I thank James for being a familiar face in many of my talks, for many clear explanations about his ideas and for his infectious enthusiasm. I thank Ronnie for so many exciting conversations about math and future projects, and for his encouraging words.

I can certainly not overlook the generosity and support of the Differential Algebra community. I would like to thank the organizers and participants of the Kolchin seminar at CUNY and the DART V meeting. In particular, I thank Alexey Ovchinnikov, Phyllis Cassidy and William Sit for carefully listening while I talked about my research, and for giving me helpful advise and constructive feedback. 

Finally, I have not enough words of gratitude to express my recognition to Ale, but I especially thank her for embarking us in this academic adventure. I thank her for her patience, for her love and for her friendship. I am also indebted to her for her always smart and honest advise, and for teaching me, with her own example, that one must always put both brain and heart in even the tiniest of tasks. I lastly thank her for sharing and enjoying with me these five years, in spite of them being filled with \emph{derivations}, \emph{model companions}, \emph{characteristic sets} and, in some lucky days, \emph{stability}.

\cleardoublepage


\begin{center}\textbf{Dedication}\end{center}

\vspace{2in}

\begin{center}
To Ale, her family and my family, for their continuous support.
\end{center}

\cleardoublepage

\renewcommand\contentsname{Table of Contents}
\tableofcontents
\cleardoublepage
\phantomsection




\pagenumbering{arabic}


\chapter*{Introduction}
\addcontentsline{toc}{chapter}{Introduction}

Just as algebraic geometry can be viewed as the study of the set of solutions to systems of polynomial equations, \emph{differential algebraic geometry} studies the set of solutions to systems of differential algebraic equations. Our approach to the subject is in the spirit of Weil, with model theory providing the analogue of the universal domain: our \emph{differential algebraic varieties} are taken to live inside a large \emph{differentially closed field} of characteristic zero. By now, it is well known that the interaction between model theory and differential algebraic geometry goes well beyond this. The theory of differentially closed fields is arguably the most interesting example of an $\w$-stable theory so far. For example, the study of \emph{finite dimensional} differential algebraic varieties has led to a variety of interesting examples, and detailed descriptions, of notions from general stability (see for example \cite{Ma3} and \cite{PiT}). On the other hand, the machinery of $\omega$-stable theories has been succesfully implemented in diverse aspects of differential algebra, such as differential Galois theory and the theory of differential algebraic groups (see \cite{Po2}, \cite{Pi} and \cite{Pi4}).  The model theory of finite dimensional differential algebraic varieties is behind Hrushovki's celebrated proof of the Mordell-Lang conjecture for function fields in characteristic zero~\cite{Hru2}.

While much has been done in the finite dimensional setting, it was not until the last decade that great interest in describing \emph{infinite dimensional} differential algebraic varieties, from a model theoretic perspective, began \cite{Moo}. These descriptions are mostly of interest in the partial case, i.e., in differential fields with several commuting derivations, since in this setting one has a rich supply of infinite dimensional definable sets. One example is the Heat equation $\d_1x=\d_2^2 x$, which was studied in depth by Suer in \cite{So2}. This thesis can be viewed as a next step towards a better understanding of infinite dimensional differential algebraic geometry. 

The basic approach we have in mind is the following: just as one thinks of ordinary (i.e., one derivation) differential algebraic geometry as adding structure to algebraic geometry (namely that induced by the derivation), we view differential algebraic geometry in two commuting derivations as adding structure to ordinary differential algebraic geometry, and so on. Therefore, instead of treating pure fields as our base theory to which we then add some new structure, we treat differential fields as our base theory. This way of thinking of differential algebraic geometry is somewhat implicit in the work of Kolchin \cite{Ko2}; however, by formalizing this approach and building a theory systematically upon it, we are able to tackle several problems on the subject.

In the remainder of this introduction we will outline the contents of this thesis. More thorough introductions to these topics appear as the first sections of each chapter.

In the preliminary Chapter 1, we present the differential algebraic background needed for the rest of this thesis. We make special emphasis on the \emph{differential type} of a definable set, as this notion yields a way to classify differential algebraic varieties according to their \emph{transcendental dimension} (finite dimensional ones are precisely those of differential type zero).

\noindent{\bf Relative D-varieties and D-groups.} The first step towards understanding finite dimensional differential algebraic varieties is simply to observe that, since their (differential) function field has finite transcendence degree, they can be generically described by algebraic polynomial equations together with first order differential equations of a very particular form. Loosely speaking, one obtains that all the derivations can be completely described in terms of algebraic relations. This was formalized through the concept of \emph{algebraic D-variety} by Buium in \cite{Buium}, and later explored in model theoretic terms by Pillay in \cite{PiT} and~\cite{Pi7}. 

The situation is quite different for infinite dimensional differential algebraic varieties, their function fields have infinite transcendence degree and thus we need a way to measure how ``transcendental'' it really is. This is where the differential type comes into play, it gives us a way to differentiate between infinite dimensional differential algebraic varieties according to their (differential) transcendental properties. In loose terms, the differential type gives us the largest subset of the derivations that can be completely described in terms of the others. In particular, the differential type is a natural number between zero and the number of derivations.

We formalize the above ideas in Chapter~\ref{chapro} by introducing the category of \emph{relative D-varieties}. Briefly, a \emph{relative D-variety} is given by a partition $\DD\cup\D$ of the total set of derivations, say $\Pi$, and a pair $(V,s)$ where $V$ is a $\D$-algebraic variety and $s$ is a $\D$-algebraic section of a certain torsor of the $\D$-tangent bundle of $V$ called the \emph{relative prolongation of $V$} and denoted by $\ta V$ (see Definition~\ref{fipro}). The section $s$ needs to satisfy a certain \emph{integrability condition} (see Definition \ref{dvar}). Once in the right setting, we are able to prove a number of properties, including the following important characterization (Proposition \ref{finteo}):

\noindent {\bf Theorem A.} {\it Let $(K,\Pi)$ be a differential field in $m$ commuting derivations $\Pi$, and suppose $W$ is an irreducible $\Pi$-algebraic variety defined over $K$ of $\Pi$-type $\ell<m$. Then there exist}
\begin{itemize}
\item {\it $m$ linearly independent linear combinations $\{D_1,\dots,D_m\}$ of the derivations $\Pi$ over the constant field of $(K,\Pi)$,}
\item {\it a $\{D_1,\dots,D_\ell\}$-algebraic variety $V$ defined over $K$ of $\{D_1,\dots,D_\ell\}$-type $\ell$, and}
\item {\it $\{D_1,\dots,D_\ell\}$-polynomials $s_1,\dots,s_{m-\ell}$,}
\end{itemize}
{\it such that $W$ is $\Pi$-birationally equivalent to} $$\{a\in V: s_i(a)=D_{\ell+i}(a), \, i=1,\dots,m-\ell\}.$$

As we have stated the theorem above, relative D-varieties did not appear explicitly. However, if we let $\D=\{D_1,\dots,D_\ell\}$, $\DD=\{D_{\ell+1},\dots,D_{m}\}$ and $s=(\operatorname{Id},s_1,\dots,s_{m-\ell})$, then $(V,s)$ is a relative D-variety w.r.t. $\DD/\D$, and the displayed $\Pi$-algebraic variety in the conclusion is usually denoted by $(V,s)^\#$ and called the sharp points of $(V,s)$. The theorem says that every irreducible $\Pi$-algebraic variety is $\Pi$-birationally equivalent to the sharp points of a relative D-variety of the appropriate form. This is a precise generalization of the characterization of finite dimensional differential algebraic varieties in terms of algebraic D-varieties.

In Theorem A the birational equivalence between $W$ and $(V,s)^\#$ might indeed be cutting some proper subvarieties, and hence it might not be an isomorphism. However, in the presence of a group structure this can be refined. We consider the group objects in the category of relative D-varieties to obtain the notion of \emph{relative D-group}. Then, by applying a general version of Weil's group chunk theorem, we are able to show that in the case when $W$ is a connected differential algebraic group, then the birational equivalence becomes an isomorphism of groups between $W$ and the set of sharp points of a relative D-group (see Theorem \ref{co}). This result is used in Chapter \ref{chapgal} when we establish a connection between our two differential Galois theories.

\noindent {\bf Galois theory for partial differential fields.} As in the algebraic case, differential Galois theory is the study of differential field extensions with well behaved group of automorphisms. The subject has its roots in the work of Picard and Vessiot at the end of the 19th century when they studied the solutions of linear differential equations. After that, several formalizations and generalizations of their work were considered, but it was not until Kolchin's \emph{strongly normal extensions} entered the picture that differential Galois theory was systematically built as a whole new subject \cite{Ko}.

In Kolchin's work on strongly normal extensions the field of constants played an essential role. However, in \cite{Pi}, Pillay succesfully replaced the constants for an arbitrary definable set and introduced the notion of a \emph{generalized strongly normal extension}. There, he showed that these extensions have a \emph{good} Galois theory. Even though Pillay's setting was ordinary differential fields, his results can be extended more or less immediately to the partial case, as long as the definable sets considered are finite dimensional. One of the purposes of Chapter 3 is to extend his results to the possibly infinite dimensional setting. 

In algebraic Galois theory the Galois extensions are associated to polynomial equations. In the Picard-Vessiot linear differential Galois theory this remains true: the Picard-Vessiot extensions are the Galois extensions associated to homogeneous linear differential equations. In \cite{Pi2}, Pillay shows that, under certain assumptions on the base field, his generalized strongly normal extensions are the Galois extensions associated to certain differential equations on (possibly non-linear) algebraic groups. We develop a Galois theory associated to \emph{relative logarithmic differential equations} on relative D-groups (Section~\ref{galex}), associating to each such equation a generalized strongly normal extension. We show (Proposition \ref{gal} and Corollary \ref{corresp}):

\noindent {\bf Theorem B.} {\it The Galois group of the generalized strongly normal extension $L/K$ associated to a relative logarithmic differential equation, on a relative D-group $(G,s)$, is of the form $(H,t)^\#$ where $(H,t)$ is a relative D-subgroup of $(G,s)$ defined over $K$. Moreover, there is a Galois correspondence between the intermediate differential fields of $L/K$ and the relative D-subgroups of $(H,t)$ defined over $K$}.

The above theory simultaneously generalizes the parametrized Picard-Vessiot theory of Cassidy and Singer \cite{PM}, and Pillay's finite dimensional Galois theory on algebraic D-groups~\cite{Pi2}.

While Galois extensions corresponding to relative logarithmic differential equations are instances of generalized strongly normal extensions, it is not known in general (i.e., in the possibly infinite dimensional case) if the converse is true. However, we can show the following generalization of Remark 3.8 of \cite{Pi2} (see Theorem \ref{main} below):

\noindent {\bf Theorem C.} {\it Suppose $(K,\Pi)$ is a differential field with $m$ commuting derivations and $\D$ is a set of fewer than $m$ linearly independent linear combinations of the derivations in $\Pi$ over the constant field of $(K,\Pi)$. Suppose, moreover, that $L$ is a generalized strongly normal extension of $K$ whose $\Pi$-type is bounded by $\D$ (see Definition~\ref{boundty}) and such that $K$ is $\D$-closed. Then $L/K$ is the Galois extension associated to a logarithmic differential equation on a relative D-group.}

\noindent {\bf Geometric axioms for $DCF_{0,m}$.} Axiomatizations of the existentially closed models (i.e., models in which each consistent system of equations has a solution) of several theories of fields with operators have been formulated in terms of algebro geometric objects. For difference fields this was done by Chatzidakis and Hrushovski in \cite{CH}, for ordinary differential fields we have the characterization of Pierce and Pillay in \cite{PiPi}, and, generalizing both, for the theory of fields with free operators considered by Moosa and Scanlon we have \cite{MS}. These \emph{geometric axiomatizations} usually involve a scheme of axioms referring to certain dominant projections of irreducible algebraic varieties.

In \cite{Pierce2}, Pierce attempted to extend the geometric axioms for ordinary differentially closed fields (known as the Pierce-Pillay axioms) to the partial setting in a very natural way; however, he found a mistake in his argument. Essentially, the problem is that the commuting of the derivations is too strong of a condition to impose using simple algebraic relations. Nonetheless, in \cite{Pierce}, he does find a geometric axiomatization, but his new axioms do not formally specialize to the Pierce-Pillay axioms, and ultimately have a very different flavour.

In Chapter 4, using the approach discussed at the begining of the introduction, we treat differential fields with $\ell +1$ commuting derivations as adding one derivation to differential fields with $\ell$ derivations. Then we are able to characterize differentially closed fields in $\ell +1$ derivations in terms of differential algebro geometric objects in $\ell$ derivations in precisely the same spirit as the Pierce-Pillay axioms (see Theorem~\ref{charactheo}). 

\noindent {\bf Theorem D.} {\it $(K,\D\cup\{D\})$ is differentially closed if and only if}
\begin{enumerate}
\item [(i)] {\it $(K,\D)$ is differentially closed}
\item [(ii)] {\it Suppose $V$ and  $W$ are irreducible affine $\D$-algebraic varieties defined over $K$ such that $W\subseteq \tau_{D/\D} V$ and $W$ projects $\D$-dominantly onto $V$. If $O_V$ and $O_W$ are nonempty $\D$-open subsets of $V$ and $W$ respectively, defined over $K$, then there exists a $K$-point $a\in O_V$ such that $(a,Da) \in O_W$. }
\end{enumerate}

However, some complications arise in this setting. It is a well known open problem as to whether irreducibility in differential algebraic geometry is first order expressible (it is in algebraic geometry). A similar issue arises with the property of a projection being $\D$-dominant. Thus, it is not clear if our characterization above yields, as it is, a first order axiomatization. We bypass these issues, of differential irreducibility and dominance, by reformulating our characterization using the differential algebra machinery of \emph{characteristic sets of prime differential ideals}. Given a finite set $\L$ of $\D$-polynomials, we let $\V(\L)$ be the zeros of $\L$ and $\V^*(\L)$ be the zeros of $\L$ that are not zeros of the \emph{initials} or \emph{separants} of $\L$ (see Section~\ref{ondiff} for definitions). We are able to prove the following (Theorem \ref{characprime}):

\noindent {\bf Theorem E.} {\it $(K,\D\cup\{D\})$ is differentially closed if and only if}
\begin{enumerate}
\item [(i)] $(K,\D)$ {\it is differentially closed.}
\item [(ii)] {\it Suppose $\L$ and $\Ga$ are characteristic sets of prime $\D$-ideals of the $\D$-polynomial rings $K\{x\}_{\D}$ and $K\{x,y\}_{\D}$ respectively, such that $$\V^*(\Ga)\subseteq \V(f,\, \et f:\, f\in \L)$$ (see Section~\ref{dider} for the definition of the operator $\et$). Suppose $O$ is a nonempty $\D$-open subset of $\V^*(\L)$ defined over $K$ such that the projection of $\V^*(\Ga)$ to $\V(\L)$ contains $O$. Then there is a $K$-point $a \in \V^*(\L)$ such that $(a,D a) \in \V^*(\Ga)$}.
\end{enumerate}

By the results of Tressl from \cite{Tr}, one can express in a first order fashion when a finite set of differential polynomials is a characteristic set of a prime differential ideal. Thus, Theorem E yields a geometric axiomatization of partial differentially closed fields.

\noindent {\bf Partial differential fields with an automorphism.} For over fifteen years, it has been known  that the class of existentially closed ordinary differential fields with an automorphism has a first order axiomatization (see for example \cite{Bu}). However, the proof does not extend to the theory of partial differential fields with an automorphism. In Chapter 5 we settle this open problem.

In \cite{GR}, Guzy and Rivi\`ere observed that the existentially closed partial differential fields with an automorphism are characterized by a certain differential algebro geometric condition (see Fact 2.1), analogous to Theorem D above. However, due to the same definable questions there, it remains open as to whether their characterization is first order. Motivated by the ideas used in Chapter 4, we apply the machinery of characteristic sets of prime differential ideals in Chapter 5 to prove (c.f. Corollary \ref{mc}):

\noindent {\bf Theorem F.} {\it The class of existentially closed partial differential fields with an automorphism is axiomatizable.}

In the usual manner, various basic model theoretic properties of this model companion are derived. In addition, we extend the Pillay-Ziegler \cite{PZ} theory of differential and difference modules to this setting and prove an appropriate form of the \emph{canonical base property} for finite dimensional types. As a consequence we obtain the following Zilber dichotomy for finite dimensional types (Corollary~\ref{zildi}):

\noindent{\bf Theorem G.} {\it If $p$ is a finite dimensional type with $SU(p)=1$, then $p$ is either one-based or almost internal to the intersection of the fixed and constant fields (see Section~\ref{zidi} for the definitions).}

We expect that the finite dimensional assumption is not necessary, and that by  extending the methods of Moosa, Pillay, Scanlon \cite{Moo}, to this setting, we can remove this assumption. However, we have not checked this in detail and thus omit it from this thesis.

\chapter{Preliminaries}\label{chapre}

This chapter is intended to provide the reader with the necessary background in differential algebra and differential algebraic geometry. We include also a very cursory introduction to model theory, while the more advanced model theoretic notions are reviewed by specializing them to the theory of differentially closed fields. For the abstract definitions we suggest \cite{Ma2} and \cite{Pi8}.  We do take this opportunity to sometimes introduce new and convenient terminology, for example that of \emph{bounding} and \emph{witnessing} the differential type in Section~\ref{polype} below.

\section{Model theory}

This brief section is intended for the reader that is not at all familiar with model theory. While basic model theory is formally a prerequisite for this thesis, we hope that an ignorance on the subject will not be an insurmountable obstacle to reading this thesis, especially because we will be focusing on a few very particular theories and will not work in the full abstraction that model theory allows. Nevertheless, a basic understanding of the model theoretic approach to mathematics is necessary, and this is what we present now. For a detailed and contemporary introduction to model theory we suggest David Marker's textbook \cite{Ma2}. Our discussion here is informed by \cite{Moosa}.

A \emph{structure} $\M$ consists of a nonempty underlying set $M$, called the universe of $\M$, together with:
\begin{enumerate}
\item [(i)] a set $\{c_i: i\in I_C\}$ of distinguised elements of $M$, called \emph{constants}
\item [(ii)] a set $\{f_i:M^{n_i}\to M: i\in I_F\}$ of distinguished maps, called \emph{basic functions}
\item [(iii)] a set $\{R_i\subseteq M^{k_i}: i\in I_R\}$ of distinguished subsets of cartesian powers of $M$, called \emph{basic relations}
\end{enumerate}

For example any unitary ring $R$ is made into a structure by considering $(R,0,1,+,-,\times)$ where $0$ and $1$ are the constants, $+$ and $\times$ are the basic binary functions, $-$ is the basic unary function, and there are no basic relations.

Given a structure $\M$, the \emph{definable sets} in $\M$ are the subsets of various cartesian powers of $M$ that are obtained in a finitary manner from the constants, basic relations, and graphs of basic functions, by closing off under 
\begin{itemize}
\item intersections,
\item complements,
\item images under coordinate projections, and
\item fibers of coordinate projections.
\end{itemize}

The definable sets have a syntactic characterization that we do not explain in detail here, but it suffices to say that definable sets are \emph{defined by} first order formulas in a \emph{language} that has symbols for the basic constants, functions, and relations, as well as the standard logical symbols. These latter correspond to the above operations on definable sets in the natural way: intersections of definable sets corrrespond to \emph{conjunctions} of formulae, complements to \emph{negation}, images under projections to \emph{existential quantifications}, and taking fibers of projections correspond to \emph{specializations} or \emph{naming parameters}. Note that by combining these we get also \emph{disjunctions} and \emph{universal quantifications}.

In general, the class of definable sets can be very wild; for example, there is a rigorous sense in which all of mathematics can be definably encoded  (via G\"odel's numbering) in the ring $(\ZZ,0,1,+,-,\times)$. At the opposite extreme we have tame mathematics; for example, a theorem of Chevalley (and also Tarski) implies that in $(K,0,1,+,-,\times)$, where $K$ is an algebraically closed field, the definable sets are precisely the Zariski-constructible sets. In particular, the third operation (existential quantification) becomes superfluous: this goes by the term \emph{quantifier elimination} in model theory.

One central aspect of model theory is the study of definable sets in a given structure. Another aspect is the study of classes of structures satisfying a given set of \emph{first order axioms} in a fixed language. For example, in the language of unitary rings as above, we can express the axioms of algebraically closed fields. Each axiom is a first order finitary formula without free variables. One such collection of axioms consists of the axioms of fields together with a formula of the form
$$\forall a_{n-1}\, \cdots \, \forall a_1\, \forall a_0\, \exists x\;  x^n+a_{n-1}x^{n-1}+\cdots+a_1x+a_0=0,$$
for each $n>0$. This collection of axioms is usually denoted by $ACF$ and referred to as the \emph{theory of algebraically closed fields}. A structure in which these axioms hold is a \emph{model} of the theory. Of course, there are deep connections between the study of theories and their models, and the study of definable sets in a given structure.

We end this section by listing, more or less as a glossary, some basic model theoretic notation. Let $\M$ be a structure, $\LL$ be a language having the appropriate symbols, and  $T$ be an $\LL$-theory (i.e., a set of axioms in the language $\LL$).
\begin{itemize}
\item $\M\models T.$ This means that $\M$ is a model  of the theory $T$; that is, all the axioms in $T$ are true in $\M$.
\item $\M\models \phi(a).$ Here $a$ is a (finite) tuple from $M$, $\phi(x)$ is a first order $\LL$-formula in free variables $x$, and the notation means that $a$ realises $\phi$ in $\M$. In other words, if $X\subseteq M^{|x|}$ is the set defined by $\phi$ then $a\in X$.
\item $X(A)=\{a\in A^n: a\in X\}$. Here $A\subseteq M$ and $X$ is a definable subset of $M^n$. 
\item $S_n(A).$ Here $A\subseteq M$. This means the space of \emph{complete $n$-types over $A$}. A complete $n$-type $p$ over $A$ is a set of $\LL$-formulas over $A$ in $n$ variables such that each finite subset of $p$ is realised in $\M$ and $p$ is maximal with this property.
\item $a\models p.$ Here $a$ is a tuple from $M$ and $p$ is a complete type over a subset of $M$. This means that $a$ realises in $\M$ all the formulas from $p$. 
\item $p^\M=\{a\in M: a\models p\}$. Here $p$ is a complete type over a subset of $M$.
\item $tp(a/A)=\{\phi(x): \phi \text{ is an } \LL\text{-formula over } A \text{ and } \M\models \phi(a)\}.$ Here $a$ is a tuple from $M$ and $A\subseteq M$. This is the \emph{complete type of $a$ over $A$}. Clearly, $tp(a/A)\in S_n(A)$ where $n=|a|$.
\item $\operatorname{dcl}(A)=\{a\in M:\{a\}\text{ is an } $A$\text{-definable set} \}.$ This is the \emph{definable closure of $A$}. Note that $a\in \operatorname{dcl}(A)$ iff there is an $\LL$-formula over $A$ such that $a$ is the unique realisation in $\M$.
\item $\operatorname{acl}(A)=\{a\in \M: a \text{ is contained in a finite } A\text{-definable set}\}.$ This is the model theoretic \emph{algebraic closure of $A$}.
\item $A$ \emph{is interdefinable (interalgebraic) with B over $C$.} Here $A,B,C$ are subsets of $M$. This means that $A\cup C$ has the same definable (algebraic) closure as $B\cup C$.
\end{itemize}

\section{Differential algebra}\label{ondiff}

In this section we review the differential algebraic notions required for this thesis. For a complete treatment of differential algebra we refer the reader to \cite{Ko}.

Let $R$ be a ring (unital and commutative) and $S$ a ring extension. An additive map $\d:R\to S$ is called a \emph{derivation} if it satifies the Leibniz rule; i.e., $\d(ab)=\d(a)b+a\d(b)$. A ring $R$ equipped with a set of derivations $\D=\{\d_1,\dots,\d_m\}$, $\d_i:R\to R$ for $i=1,\dots,m$, such that the derivations commute with each other is called a \emph{$\D$-ring}. In the case when $R$ is a field we say \emph{$\D$-field}. The notions of \emph{$\D$-homomorphism}, \emph{$\D$-isomorphism} and \emph{$\D$-automorphism} are defined naturally.

Let $R$ be a $\D$-ring. The ring of \emph{$\D$-constants of} $R$ will be denoted by $$R^{\D}=\{a\in R:\, \d a=0\text{ for all } \d\in \D\}.$$ If $S$ is a $\D$-ring extension of $R$ and $A\subseteq S$, we denote by $R\{A\}_\D$ the $\D$-subring of $S$ generated by $A$ over $R$. In the case when $S$ is a field, we denote by $R\l A\r_\D$ the $\D$-subfield of $S$ generated by $A$ over $R$.

The set of \emph{derivative operators} is defined as the commutative monoid $$\T_\D= \{\d_m^{e_m}\cdots \d_1^{e_1}: \, e_i<\omega\}.$$ If $x=(x_1,\dots,x_n)$ is a tuple of (differential) indeterminates, we set $\T_\D x=\{\t x_i: \t\in \T_\D, i=1,\dots,n\}$ and call these the corresponding \emph{algebraic indeterminates}. The ring of \emph{$\D$-polynomials over} $R$ is defined as $R\{x\}_\D=R[\T_\D x]$. One can equip $R\{x\}_\D$ with the structure of a $\D$-ring by defining $$\d_j(\d_m^{e_m}\cdots \d_1^{e_1}x_i)=\d_m^{e_m}\cdots \d_j^{e_j+1}\cdots \d_1^{e_1}x_i$$ and then extending to all of $R\{x\}_\D$ using additivity and the Leibniz rule. In the case when $R$ is an integral domain, we define the field of \emph{$\D$-rational functions over} $R$, denoted by $R\l x\r_\D$, as the field of fractions of $R\{x\}_\D$, or equivalently $R\l x\r_\D=R(\T_\D x)$. By the quotient rule, $R\l x\r_\D$ admits a canonical $\D$-field structure.

When the set of derivations $\D$ is understood, we will often omit the subscripts in the differential notation. For example, $\T$ and $R\{x\}$ stand for $\T_\D$ and $R\{x\}_\D$, respectively.

Given a $\D$-ring $R$, an ideal $I$ of $R\{x\}$ is said to be a \emph{$\D$-ideal} (or differential ideal) if $\d f\in I$ for all $f\in I$ and $\d\in \D$. Given a set $B \subseteq R\{x\}$, the ideal and the $\D$-ideal generated by $B$ will be denoted by $(B)$ and $[B]$, respectively. 

\begin{fact}[Differential basis theorem, \cite{Ko}, Chap. III, \S 4]\label{difba}
Suppose $R$ is a $\QQ$-algebra that satisfies the ACC on radical $\D$-ideals. If $I$ is a radical $\D$-ideal of $R\{x\}$, then there are $f_1,\cdots, f_s\in R\{x\}_\D$ such that $I=\sqrt{[f_1,\dots,f_s]}$.
\end{fact}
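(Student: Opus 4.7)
The plan is to prove the classical Ritt--Raudenbush basis theorem in two main stages: first reducing the finite-generation conclusion to an ACC statement on $R\{x\}$, and then lifting ACC from $R$ to $R\{x\}$.

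For the reduction, I would show that it suffices to prove that $R\{x\}$ itself satisfies the ACC on radical $\D$-ideals. Assume this; given a radical $\D$-ideal $I\subseteq R\{x\}$, consider the family $\mathcal{F}=\{\sqrt{[F]}: F\subseteq I\text{ finite}\}$, a collection of radical $\D$-ideals all contained in $I$. By ACC, $\mathcal{F}$ has a maximal element $\sqrt{[f_1,\dots,f_s]}$. If there were $g\in I\setminus\sqrt{[f_1,\dots,f_s]}$, then $\sqrt{[f_1,\dots,f_s,g]}\in\mathcal{F}$ would strictly exceed it, contradicting maximality. Hence $I=\sqrt{[f_1,\dots,f_s]}$.

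The heart of the argument is the ACC lifting, which I would carry out by induction on the number of indeterminates $n=|x|$, so the essential case is $n=1$; write $y$ for the single indeterminate. Fix an orderly ranking on $\T_\D y$. Suppose for contradiction there were a strictly ascending chain $I_1\subsetneq I_2 \subsetneq \cdots$ of radical $\D$-ideals in $R\{y\}$, and for each $k\geq 2$ pick $g_k\in I_k\setminus I_{k-1}$, chosen of minimal rank with respect to the ranking. Using Dickson's lemma on the monoid $\T_\D\cong\NN^{|\D|}$ (no infinite antichain under divisibility), one can extract a subsequence along which the leaders $\t_k y$ of the $g_k$ form a divisibility chain $\t_1\mid\t_2\mid\cdots$. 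Now apply the partial-reduction procedure: for $j>k$, pseudo-reduce $g_j$ modulo $\t_j/\t_k$ applied to $g_k$, using that the separant and initial of $g_k$ have strictly lower rank than $g_k$ itself. Since $R$ is a $\QQ$-algebra, no problematic torsion arises when handling the separants, and the standard argument (identical to the Rosenfeld lemma) shows that the resulting pseudo-remainders lie in $I_j$ (otherwise one would contradict primality arguments after reducing modulo an associated prime). Iterating, one produces rank-descending reductions that eventually land in $R$, yielding a strictly ascending chain of radical $\D$-ideals of $R$ --- contradicting the hypothesis on $R$.

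The main obstacle is making the reduction step rigorous: one must verify that multiplying by powers of separants and initials (which is required to perform a pseudo-division) does not take one outside the radical $\D$-ideals in the chain, and that after sufficiently many reductions one genuinely arrives at elements of $R$ (rather than stalling at a fixed positive rank). The first point is where the characteristic-zero hypothesis enters critically, via the fact that the separant of $A$ has strictly lower rank than $A$ and hence is not automatically in a prime $\D$-ideal of minimal rank containing $A$; the second requires the well-orderedness of the ranking combined with Dickson's lemma to ensure termination. With these two ingredients in place, the inductive step is routine, and the theorem follows.
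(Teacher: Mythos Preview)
The paper does not prove this statement; it is stated as a Fact with a citation to Kolchin (Chap.~III, \S4) and no argument is given. So there is no paper's proof to compare against, and I will assess your proposal on its own.

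Your reduction from finite generation to ACC is correct and standard. The ACC-lifting sketch, however, has real gaps. Your invocation of ``the Rosenfeld lemma'' is misplaced: Rosenfeld's criterion concerns coherent autoreduced sets and the relationship between $[\L]:H_\L^\infty$ and $(\L):H_\L^\infty$; it has nothing to do with where pseudo-remainders land. In fact the pseudo-remainder $r_j$ lies in $I_j$ trivially, since pseudo-division writes $H^N g_j - r_j$ as an element of $[g_k]\subseteq I_j$. The substantive difficulty is elsewhere: to keep the chain strict after reduction you need $r_j\notin I_{j-1}$, but from $H^N g_j \equiv r_j \pmod{[g_k]}$ and $r_j\in I_{j-1}$ radicality only yields $Hg_j\in I_{j-1}$, and you cannot cancel $H$ without knowing $I_{j-1}$ is prime and $H\notin I_{j-1}$. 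Your parenthetical ``reducing modulo an associated prime'' gestures at this, but you have not set up any prime decomposition, nor explained why passing to a prime component preserves strictness of the chain. The claim that iterated reductions ``eventually land in $R$'' is likewise unjustified: ranks decrease, but you have infinitely many $g_j$, and you have not explained how the process terminates uniformly.

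The standard proof (Kolchin, Ritt--Raudenbush) organizes this differently and avoids your difficulty. One first proves the product lemma: if $ab\in\sqrt{[S]}$ then $\sqrt{[S,a]}\cdot\sqrt{[S,b]}\subseteq\sqrt{[S]}$ (this is where the $\QQ$-algebra hypothesis enters, via $\theta(ab)^k\in[ab]$). From this, a radical $\D$-ideal maximal among those \emph{not} of the form $\sqrt{[f_1,\dots,f_s]}$ must be prime. One then takes a single element $f$ of least rank in that prime outside $R$; its separant $S_f$ has strictly lower rank, hence $S_f\notin$ the prime, and now the reduction argument goes through cleanly to exhibit a finite generating set---contradiction. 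Your chain-based approach might be salvageable, but as written it is missing exactly the primality step that makes separant cancellation legitimate.
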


A set $X\subseteq R^n$ is said to be \emph{$\D$-closed} if there is a set $B\subseteq R\{x\}$ such that $X=\{a\in R^n: f(a)=0 \text{ for all } f\in B\}$. The differential basis theorem shows that, if $R$ is a field, the $\D$-closed subsets of $R^n$ are the closed sets of a Noetherian topology, called the \emph{$\D$-topology} of $R^n$.

\noindent {\bf Characteristic sets.} In the remainder of this section we discuss the somewhat technical, but very useful, notion of characteristic sets. The motivation here is that despite the Noetherianity of the $\D$-topology, it is not necessarily the case that prime $\D$-ideals are finitely generated as $\D$-ideals. However, they are determined by a finite subset in a canonical way.

The \emph{canonical ranking} on the algebraic indeterminates $\T x$ is defined by 
\begin{displaymath}
\d_m^{e_m}\cdots \d_1^{e_1}x_i< \d_m^{r_m}\cdots \d_1^{r_1}x_j \iff \left(\sum e_k,i,e_m,\dots,e_1\right)<\left(\sum r_k,j,r_m,\dots,r_1\right)
\end{displaymath}
in the lexicographical order. Let $f\in R\{x\}\setminus R$. The \emph{leader} of $f$, $v_f$, is the highest ranking algebraic indeterminate that appears in $f$. The \emph{order} of $f$, $ord(f)$, is defined as $\sum e_k$ where $v_f=\d_m^{e_m}\cdots\d_1^{e_1}x_i$. The \emph{degree} of $f$, $d_f$, is the degree of $v_f$ in $f$. The \emph{rank} of $f$ is the pair $(v_f,d_f)$. If $g\in R\{x\}\setminus R$ we say that $g$ has \emph{lower rank} than $f$ if $rank(g)<rank(f)$ in the lexicograpical order. By convention, an element of $R$ has lower rank than all the elements of $R\{x\}\setminus R$.

Let $f\in R\{x\}\setminus R$. The \emph{separant} of $f$, $S_f$, is the formal partial derivative of $f$ with respect to $v_f$, that is $$S_f:=\frac{\partial f}{\partial v_f}\in R\{x\}.$$ The \emph{initial} of $f$, $I_f$, is the leading coefficient of $f$ when viewed as a polynomial in $v_f$, that is, if we write $$f=\sum_{i=0}^{d_f}g_i(v_f)^i$$ where $g_i\in R\{x\}$ and $v_{g_i}<v_f$ then $I_f=g_{d_f}$. Note that both $S_f$ and $I_f$ have lower rank than $f$. 

\begin{definition}
Let $f,g\in R\{x\}\setminus R$. We say $g$ is \emph{partially reduced} with respect to $f$ if no (proper) derivative of $v_f$ appears in $g$. If in addition the degree of $v_f$ in $g$ is less than $d_f$ we say that $g$ is \emph{reduced} with respect to $f$.
\end{definition}

A finite set $\L=\{f_1,\dots,f_s\}\subset R\{x\}\setminus R$ is said to be \emph{autoreduced} if for all $i\neq j$ we have that $f_i$ is reduced with respect to $f_j$. We will always write autoreduced sets in order of increasing rank, i.e., $rank(f_1)<\dots<rank(f_s)$. The canonical ranking on autoreduced sets is defined as follows: $\{g_1,\dots,g_r\}<\{f_1,\dots,f_s\}$ if and only if, either there is $i\leq r,s$ such that $rank(g_j)=rank(f_j)$ for $j<i$ and $rank(g_i)<rank(f_i)$, or $r>s$ and $rank(g_j)=rank(f_j)$ for $j\leq s$.

\begin{fact}[\cite{Ko}, Chap. I, \S 10]\label{re1} 
Every $\D$-ideal $I$ of $R\{x\}$ contains a lowest ranking autoreduced set. Any such set is called a characteristic set of $I$.
\end{fact}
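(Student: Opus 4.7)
The plan is to establish this in three stages: well-ordering of individual ranks, finiteness of autoreduced sets, and well-ordering of autoreduced sets under the canonical ranking, after which a straightforward minimality argument extracts a characteristic set from $I$.

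First I would check that the canonical ranking on $\T x$ is a well-ordering. This reduces to the fact that the lexicographic order on $\NN^{m+2}$ is a well-ordering when restricted to tuples of the form $(\sum e_k, i, e_m, \dots, e_1)$ with $1 \leq i \leq n$; this is classical. Extending by the lex product with $\NN_{>0}$, the rank $(v_f, d_f)$ of a polynomial is also well-ordered.

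Second I would prove that every autoreduced set is finite. Here the key observation is that the leaders of an autoreduced set form an antichain in the divisibility preorder, since if $v_{f_j}$ were a (proper) derivative of $v_{f_i}$ then $f_j$ would fail to be partially reduced with respect to $f_i$. For each indeterminate $x_k$, the leaders of the form $\theta x_k$ correspond to the monoid elements $\theta \in \T$ whose exponent tuples lie in $\NN^m$, and Dickson's lemma implies that every antichain in $\NN^m$ is finite. Running this over $k = 1, \dots, n$ gives the bound.

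Third, and this is the main obstacle, I would show that the canonical ranking on autoreduced sets is itself a well-ordering. The subtlety is the convention that a strictly longer autoreduced set that agrees in its initial segment ranks with a shorter one is \emph{smaller}. Assume for contradiction an infinite strictly descending chain $\mathcal{A}_1 > \mathcal{A}_2 > \cdots$, with $\mathcal{A}_k = \{f^k_1 < \cdots < f^k_{s_k}\}$. By well-ordering of the ranks, the sequence $\operatorname{rank}(f^k_1)$ is eventually non-increasing, hence eventually constant; pass to an infinite tail on which it is constant. Inductively, for each $i$, after passing to further tails we may assume $\operatorname{rank}(f^k_i)$ is constant in $k$ whenever $s_k \geq i$. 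But strict descent then forces $s_k$ to eventually strictly increase along the tail, producing autoreduced sets with arbitrarily many elements whose leaders lie in a fixed antichain; this contradicts step two. Hence no such descending chain exists.

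Finally I would apply this well-ordering to the collection of autoreduced sets contained in $I$ (this collection is nonempty as soon as $I$ contains a nonconstant element; otherwise the empty autoreduced set serves trivially) to extract a minimum, which is a characteristic set of $I$ by definition. The hardest conceptual step is the third, since the mixing of lexicographic descent with the length convention requires a careful two-level induction and is where the finiteness result from step two is essential.
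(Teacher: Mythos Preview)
The paper does not supply its own proof of this statement; it is recorded as a \emph{Fact} with a citation to Kolchin, Chapter~I, \S10. So there is no in-paper argument to compare against, and your task was really to reconstruct the standard proof.

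Your reconstruction is correct and is essentially the classical argument one finds in Kolchin. A couple of small remarks. First, in step three you call the canonical ranking on autoreduced sets a \emph{well-ordering}; strictly speaking it is only a well-founded total preorder, since two distinct autoreduced sets can have identical rank sequences and hence be incomparable under $<$ while not equal. This does not affect the conclusion (a minimal element still exists), and the paper's phrasing ``a lowest ranking autoreduced set'' already anticipates non-uniqueness. Second, your handling of the limiting step is right but could be stated more sharply: once the first $N$ ranks $(\rho_1,\dots,\rho_N)$ have stabilized along the tail, the corresponding leaders $v_1,\dots,v_N$ are determined and form an antichain (being leaders of a single autoreduced set in the tail); letting $N\to\infty$ gives an infinite antichain in $\bigcup_k \NN^m$, contradicting Dickson. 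This is what you have, just with the quantifiers made explicit.
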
 

\begin{remark}\label{contin}
If $\L$ is a characteristic set of $I$ and $g\in R\{x\}\setminus R$ is reduced with respect to all $f\in\L$, then $g\notin I$. Indeed, if $g$ were in $I$ then $$\{f\in I: rank(f)<rank(g)\}\cup\{g\}$$ would be an autoreduced subset of $I$ of lower rank than $\L$, contradicting the minimality of $\L$.
\end{remark}

Suppose $\L=\{f_1,\dots,f_s\}$ is a subset of $R\{x\}\setminus R$. We let $\displaystyle H_{\L}:= \prod_{i=1}^s I_{f_i} \, S_{f_i}$. Also, for each $\t\in\T$, we let $(\L)_\t$ denote the ideal of $R\{x\}$ generated by the $f_i$'s and their derivatives whose leaders are strictly below $\t$. If $I$ is an ideal of $R\{x\}$ and $g\in R\{x\}$, we denote the \emph{saturated ideal of $I$ over} $g$ by
\begin{displaymath}
I:g^{\infty}=\{f\in R\{x\} \,: \, g^\ell f\in I \, \text{ for some } \ell\geq 0\}.
\end{displaymath}

\begin{definition}\label{coh}
Let $\L=\{f_1,\dots,f_s\}$ be a subset of $R\{x\}$. The set $\L$ is said to be {\it coherent} if the following two conditions are satisfied:
\begin{enumerate}
\item $\L$ is autoreduced.
\item For $i\neq j $, suppose there are $\t_i$ and $\t_j\in \T$ such that $\t_iv_{f_i}=\t_jv_{f_j}=v$ where $v$ is the least such in the ranking. Then 
\begin{displaymath}
S_{f_j}\t_if_i-S_{f_i}\t_jf_j\in (\L)_{\t}:H_{\L}^{\infty}.
\end{displaymath}
\end{enumerate}
\end{definition}

Even though differential ideals of $R\{x\}$ are not in general generated by their characteristic sets, when $R$ is a field of characteristic zero, prime differential ideals are determined by these. Moreover, we have a useful criterion to determine when a finite set of differential polynomials is the characteristic set of a prime differential ideal.

\begin{fact}[Rosenfeld's criterion, \cite{Ko}, Chap. IV, \S 9]\label{rosen}
Let $K$ be a $\D$-field of characteristic zero. If $\L$ is a characteristic set of a prime $\D$-ideal $\P$ of $K\{x\}$, then $$\P=[\L]:H_{\L}^{\infty},$$ $\L$ is coherent, and $(\L):H_{\L}^{\infty}$ is a prime ideal not containing a nonzero element reduced with respect to all $f\in\L$. Conversely, if $\L$ is a coherent subset of $K\{x\}$ such that $(\L):H_{\L}^{\infty}$ is prime and does not contain a nonzero element reduced with respect to all $f\in\L$, then $\L$ is a characteristic set of a prime $\D$-ideal of $K\{x\}$.
\end{fact}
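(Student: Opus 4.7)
The plan is to isolate the central technical tool, \emph{Rosenfeld's lemma}: if $\L$ is coherent, then any element of $[\L]:H_\L^{\infty}$ which is partially reduced with respect to $\L$ already lies in $(\L):H_\L^{\infty}$. Both implications will flow from this lemma together with the standard partial and algebraic reduction algorithms modulo $\L$. I would prove Rosenfeld's lemma by induction on the leader of the partially reduced element $g$: writing $H_\L^{k}g = \sum_{j} c_j\,\t_j f_{i_j}$ and letting $v$ be the highest leader appearing on the right, one groups the contributions whose derivative of some $f_i$ has leader exactly $v$. If only one index contributes then partial reducedness of $g$ forces that contribution to lie in $(\L)_{\t}:H_\L^{\infty}$; if two distinct $i\neq i'$ both produce leader $v$, the coherence identity
\[ S_{f_{i'}}\t_i f_i - S_{f_i}\t_{i'}f_{i'} \in (\L)_{\t}:H_\L^{\infty} \]
allows the pair to be replaced by a strictly lower-leader combination, so the inductive hypothesis closes.

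For the forward direction, the equality $\P = [\L]:H_\L^{\infty}$ comes from reduction: the inclusion $\supseteq$ uses that each $S_{f_i}$ and $I_{f_i}$ has rank strictly below $\L$, so $H_\L\notin \P$ by Remark~\ref{contin}, and primality of $\P$ gives $[\L]:H_\L^{\infty}\subseteq \P$; the reverse inclusion applies full reduction so that a suitable $H_\L^{\alpha}g$ differs from an element reduced with respect to $\L$ by something in $[\L]$, and Remark~\ref{contin} forces that reduced remainder to be zero. Coherence of $\L$ is then automatic: $S_{f_j}\t_if_i - S_{f_i}\t_jf_j$ lies in $\P$, is partially reduced and has leader strictly below $v$, so Rosenfeld's lemma places it in $(\L)_{\t}:H_\L^{\infty}$. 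Primality of $(\L):H_\L^{\infty}$ is obtained by identifying it with the extension to $K\{x\}$ of $\P\cap K[A]$, where $A$ is the set of $\T x$-variables that are not proper derivatives of any leader of $\L$: autoreducedness gives $\L\subseteq K[A]$, Rosenfeld's lemma yields $\P\cap K[A] = (\L)\cdot K[A]:H_\L^{\infty}$, and extension of the prime ideal $\P\cap K[A]$ along the polynomial extension $K[A]\hookrightarrow K\{x\}$ remains prime. The absence of a nonzero reduced element in $(\L):H_\L^{\infty}$ then follows from $(\L):H_\L^{\infty}\subseteq\P$ and Remark~\ref{contin}.

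For the converse, set $\P := [\L]:H_\L^{\infty}$, which is visibly a $\D$-ideal. Given $gh\in\P$, partially reduce via the separants to partially reduced $g^{*},h^{*}$ with $S_\L^{a}g - g^{*}\in [\L]$ and $S_\L^{b}h - h^{*}\in [\L]$. Then $g^{*}h^{*}$ is partially reduced and $S_\L^{a+b}gh - g^{*}h^{*}\in [\L]$, so $g^{*}h^{*}\in\P$; Rosenfeld's lemma places it in $(\L):H_\L^{\infty}$, whose assumed primality yields, say, $g^{*}\in(\L):H_\L^{\infty}\subseteq\P$. Since $S_\L^{a}$ divides a power of $H_\L$, pushing back through $S_\L^{a}g\equiv g^{*}\pmod{[\L]}$ gives $g\in [\L]:H_\L^{\infty}=\P$, establishing primality. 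That $\L$ is a characteristic set of $\P$ is then clear: any nonzero element of $\P$ that were reduced with respect to $\L$ would be partially reduced and in $\P$, hence in $(\L):H_\L^{\infty}$ by Rosenfeld's lemma, contradicting the hypothesis, so no element of $\P$ has rank strictly below $\L$. The principal obstacle throughout is Rosenfeld's lemma: the induction is delicate precisely because coherence is stated modulo the saturated ideal $(\L)_{\t}:H_\L^{\infty}$ rather than $(\L)_{\t}$, so one must carry saturated-ideal memberships along as leaders decrease, and only coherence permits the equal-leader cross-terms to be absorbed at each step.
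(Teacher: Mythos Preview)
The paper does not supply its own proof of this statement: it is recorded as a \emph{Fact} with a citation to Kolchin, Chapter~IV, \S9, and is used as a black box throughout. So there is no paper-proof to compare against; your sketch is essentially a reconstruction of Kolchin's argument, and the broad architecture (reduction algorithm plus Rosenfeld's lemma) is the standard one.

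That said, your coherence step in the forward direction is circular as written. Your formulation of Rosenfeld's lemma takes coherence of $\L$ as a hypothesis, so you cannot invoke it to conclude that $S_{f_j}\t_if_i - S_{f_i}\t_jf_j$ lies in $(\L)_v:H_\L^{\infty}$ while you are still trying to establish coherence. (There is also a secondary issue: that cross-term need not be partially reduced with respect to $\L$, since the lower-order derivatives appearing in $\t_if_i$ may well be proper derivatives of some other $v_{f_k}$; and even if it were, your stated Rosenfeld's lemma only lands you in $(\L):H_\L^{\infty}$, not in the sharper $(\L)_v:H_\L^{\infty}$ that coherence demands.) The correct route here bypasses Rosenfeld's lemma entirely: since the $v$-terms cancel, every algebraic indeterminate in the cross-term ranks strictly below $v$, so the full reduction algorithm only ever uses derivatives $\t f_k$ whose leaders are below $v$. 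One obtains $H_\L^{\alpha}(S_{f_j}\t_if_i - S_{f_i}\t_jf_j)\equiv r \pmod{(\L)_v}$ with $r$ reduced; since the left side lies in $\P$ and $r$ is reduced, Remark~\ref{contin} forces $r=0$, giving membership in $(\L)_v:H_\L^{\infty}$ directly. With that repaired, the rest of your outline (the identification $\P=[\L]:H_\L^{\infty}$, the primality of $(\L):H_\L^{\infty}$ via restriction to the non-derivative variables, and the converse via partial reduction plus Rosenfeld's lemma) is sound and matches the classical development.
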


\begin{example}
Suppose $K$ is a $\D$-field of characteristic zero and $a_1,\dots,a_m\in K$ satisfy $\d_ja_i=\d_ia_j$ for $i,j=1\dots,m$. We use Rosenfeld's criterion to show that the $\D$-ideal
$$[\d_ix-a_i x: i=1,\dots,m]\subseteq K\{x\}$$
is prime. Let $\L=\{\d_i x-a_ix:i=1,\dots,m\}$. Note that $H_\L=1$, and that the assumption on the $a_i$'s imply that $\L$ is coherent. Clearly $(\L)$ is a prime ideal of $K\{x\}$. If $g\in K\{x\}$ is reduced with respect to all the elements of $\L$, then $g$ is an (algebraic) polynomial. Hence, $(\L)$ does not contain a nonzero element reduced with respect to the elements of $\L$. By Rosenfeld's criterion, $\L$ is a characteristic set of the prime $\D$-ideal $$[\L]:H_\L^\infty=[\d_ix-a_i x:i=1,\dots,m].$$
\end{example}

\section{Differentially closed fields}

In this section we review the basic model theory of differential fields of characteristic zero with several commuting derivations. We will work in the first order language of partial differential rings with $m$ derivations $$\mathcal L_{m}=\{0,1,+,-,\times\}\cup\{\d_1,\dots,\d_m\},$$ where $\D=\{\d_1,\dots,\d_m\}$ are unary function symbols. The first order $\mathcal{L}_m$-theory $DF_{0,m}$ consists of the axioms of fields of characteristic zero together with axioms asserting that $\D$ is a set of $m$ commuting derivations. 

Note that a $\D$-field $(K,\D)$ is existentially closed (in the sense of model theory) if every system of $\D$-polynomial equations with a solution in a $\D$-field extension already has a solution in $K$. In \cite{Mc}, McGrail showed that the class of existentially closed $\Delta$-fields is axiomatizable; this is the theory $DCF_{0,m}$ of \emph{differentially closed fields with $m$ commuting derivations of characteristic zero}, also called simply \emph{$\D$-closed fields}. We refer the reader to \cite{Mc} for McGrail's original axioms, but let us give an alternative axiomatization given by Tressl in \cite{Tr}. He observed that $(K,\D)\models DCF_{0,m}$ if and only if given a characteristic set $\L=\{f_1,\dots,f_s\}$ of a prime differential ideal of $K\{x\}_{\D}$ then there exists a tuple $a$ from $K$ such that $$f_1(a)=0\land\cdots\land f_s(a)=0\land H_\L(a)\neq 0.$$ In order to conclude that this is indeed a first order axiomatization, Tressl proved the following property (which will play an essential role for us in Chapters~\ref{chapaxioms} and \ref{chapaut}):

\begin{fact}\label{defchar}
Suppose $(K,\D)$ is a $\D$-field of characteristic zero. The condition that ``$\L=\{f_1,\dots,f_s\}$ is a characteristic set of a prime differential ideal of $K\{x\}_\D$'' is a definable property (in the language $\mathcal{L}_m$) of the coefficients of $f_1,\dots,f_s$.
\end{fact}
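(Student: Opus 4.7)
The strategy is to invoke Rosenfeld's criterion (Fact \ref{rosen}), which replaces the differential-algebraic condition ``$\L$ is a characteristic set of a prime $\D$-ideal'' with three conditions of an essentially algebraic nature: coherence of $\L$, primality of the saturated ideal $(\L):H_\L^\infty$ in the (algebraic) polynomial ring, and the absence of a nonzero element reduced with respect to all $f\in\L$ in this saturated ideal. It then suffices to show that each of these conditions is first-order expressible in the coefficients $y$ of $f_1,\dots,f_s$.

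First I would perform a finite case split on the ``shape'' of $\L$, i.e., on the specification of which of the coefficients $y$ of each $f_i$ vanish. Since the rank, leader $v_{f_i}$, separant $S_{f_i}$, and initial $I_{f_i}$ are determined syntactically once the shape is known, fixing the shape turns autoreducedness into a purely syntactic (and therefore trivially first-order) condition, and isolates a finite set $u_1,\dots,u_N\subseteq \T x$ of algebraic indeterminates actually occurring in $\L$, along with the finite list of pairs $(i,j,\t_i,\t_j)$ relevant to coherence.

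Within a fixed shape, the coherence condition asks that finitely many explicit polynomials of the form $S_{f_j}\t_i f_i - S_{f_i}\t_j f_j$ lie in the saturation $(\L)_\t:H_\L^\infty$ inside $K[u_1,\dots,u_N]$. The coefficients of all these polynomials are themselves $\D$-polynomial (and hence $\mathcal{L}_m$-definable) expressions in $y$, because applying a derivative operator $\t\in\T$ to $f_i$ produces a polynomial in $u_1,\dots,u_N$ whose coefficients are obtained by applying elements of $\D$ to the $y$'s. Likewise, primality of $(\L):H_\L^\infty$ and the non-containment of a nonzero reduced element reduce to algebraic conditions on the finitely generated ideal $(\L)\subseteq K[u_1,\dots,u_N]$.

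The main technical obstacle is therefore to verify that such algebraic conditions --- membership in a saturation, primality, and absence of a reduced element in the saturation --- are uniformly first-order expressible in the coefficients of a finite generating set. This is supplied by effective commutative algebra: effective Hilbert basis theorem (Hermann-type bounds) yields a uniform degree bound $B$ in cofactors so that $g\in(\L)_\t:H_\L^\infty$ is equivalent to the existence of an identity $H_\L^\ell g=\sum h_{\t',k}\,\t'f_k$ with $\ell$ and $\deg h_{\t',k}$ bounded by $B$, and an analogous uniform bound reduces primality to a bounded-degree statement of the form ``for all $a,b$ of degree $\le B'$, if $ab\in(\L):H_\L^\infty$ then $a\in(\L):H_\L^\infty$ or $b\in(\L):H_\L^\infty$.'' Each such bounded statement becomes a quantifier-free condition on the coefficients by comparing monomials, and combining across the finitely many shapes produces the desired $\mathcal{L}_m$-formula $\phi(y)$. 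The real work of the proof, beyond the conceptual reduction via Rosenfeld's criterion, lies in justifying these uniform bounds.
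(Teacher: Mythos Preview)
Your proposal is correct and follows essentially the same approach as the paper. The paper does not give a detailed proof of this fact (it is attributed to Tressl), but the sketch it provides is exactly your strategy: invoke Rosenfeld's criterion to reduce the differential condition to coherence together with primality and the reduced-element condition for the algebraic saturated ideal $(\L):H_\L^\infty$ in a polynomial ring in finitely many variables, and then appeal to classical uniform bounds (the paper cites van den Dries) to make these algebraic conditions first-order in the coefficients. Your case split on the shape of $\L$ and the explicit identification of the Hermann-type bounds needed are a fuller elaboration of what the paper leaves implicit.
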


This means that if $f_1(u,x),\dots, f_s(u,x)$ is a set of $\D$-polynomials over $\QQ$ in the variables $u$ and $x$, then the set $$\{a\in K^{|u|}: \{f_1(a,x),\dots,f_s(a.x)\} \text{ is a characteristic set of a prime $\D$-ideal of } K\{x\}_\D\}$$ is definable in $(K,\D)$.

Fact~\ref{defchar} is essentially an application of Rosenfeld's criterion (Fact~\ref{rosen} above) which reduces the problem to the classical problem of checking primality in polynomial rings in finitely many variables where uniform bounds are well known \cite{Van}.

In Chapter~\ref{chapaxioms} below we will give a new geometric axiomatization for $DCF_{0,m}$.

We now outline the model theoretic properties of $DCF_{0,m}$ that will be used throughout the thesis. For proofs and more details the reader is referred to \cite{Mc} and \cite{So2}. 

\begin{fact} \label{onty} \
\begin{enumerate}
\item [(i)] Every $\D$-field embeds into a $\D$-closed field.
\item [(ii)] If $(K,\D)$ is differentially closed, then the field of constants $K^\D$ is a pure algebraically closed field.
\item [(iii)] $DCF_{0,m}$ eliminates quantifiers; that is, every definable set is a finite union of sets of solutions to systems of $\D$-polynomial equations and inequations. In other words, the definable sets are precisely the $\D$-constructible sets.
\item [(iv)] $DCF_{0,m}$ eliminates imaginaries; that is, for every definable set $X$ with a definable equivalence relation $E$ there is a definable set $Y$ and a definable function $f:X\to Y$ such that $xEy$ if and only if $f(x)=f(y)$.
\item [(v)] Suppose $(K,\D)\models DCF_{0,m}$ and $A\subseteq K$. Then $\operatorname{dcl}(A)=\QQ\l A\r_\D$, the $\D$-field generated by $A$, and $\operatorname{acl}(A)=\QQ\l A\r_\D^{alg}$, the field theoretic algebraic closure of the $\D$-field generated by $A$.
\end{enumerate}
\end{fact}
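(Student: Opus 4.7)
The plan is to derive all five parts from two foundational ingredients: an amalgamation property for $\D$-fields over a common $\D$-subfield, and quantifier elimination for $DCF_{0,m}$. For amalgamation, given $\D$-fields $L_1, L_2$ extending $K$ in characteristic zero, one extends the derivations of $L_1$ and $L_2$ to the tensor product $L_1 \otimes_K L_2$ via the Leibniz rule; pairwise commutativity is inherited from each factor, and since $L_1 \otimes_K L_2$ is a reduced $\QQ$-algebra it has a minimal prime $\D$-ideal, whose quotient has a fraction field that is a joint $\D$-field extension of $L_1$ and $L_2$ over $K$.

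For (i), iteratively build a chain $K = K_0 \subseteq K_1 \subseteq \cdots$ in which each $K_{n+1}$ realises every finite system of $\D$-polynomial equations and inequations over $K_n$ that is consistent with some $\D$-extension (using amalgamation to merge the various witness extensions into one $K_{n+1}$). Taking the union of sufficiently many such steps produces an existentially closed $\D$-field, hence a model of $DCF_{0,m}$. For (ii), given a non-constant monic $f\in K^\D[x]$, algebraically adjoin a root $r$ and extend the derivations of $K$ uniquely; from $0 = \d f(r) = f'(r)\d r$ together with separability $f'(r)\neq 0$ one gets $\d r = 0$, so $r$ is a constant in some $\D$-extension of $K$, and existential closedness places such an $r$ inside $K^\D$ itself.

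For (iii), one first proves model-completeness via a back-and-forth argument: amalgamation over any $\D$-subfield allows realisations of quantifier-free $\D$-types to be transferred step by step into the ambient $\D$-closed field, so $K \preceq L$ whenever $K \subseteq L$ are $\D$-closed. Quantifier elimination then follows from model-completeness together with the existence of a $\D$-closure over any $\D$-subfield (part (i)). Part (v) is then a direct consequence: by QE, two tuples with the same quantifier-free $\D$-type over $A$ have the same complete type over $A$, so $\operatorname{dcl}(A)$ consists exactly of the elements quantifier-free-definable over $A$, namely $\QQ\l A\r_\D$, and a standard homogeneity argument in a sufficiently saturated model yields $\operatorname{acl}(A) = \QQ\l A\r_\D^{alg}$.

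For (iv), elimination of imaginaries is obtained by producing canonical codes for $\D$-constructible sets: using the $\D$-topology one decomposes such a set into Kolchin-irreducible components, each represented by a prime $\D$-ideal which, by the machinery of characteristic sets reviewed above (together with Tressl's definability of the ``is a characteristic set of a prime $\D$-ideal'' condition), admits a canonical finite code. Combined with QE, this lets one replace any $\D$-definable equivalence class with a canonical tuple of parameters. The main obstacle in the whole programme is (iii): constructing the amalgam of partial $\D$-field extensions so that the $m$ derivations continue to commute requires care beyond the ordinary case, since commutativity must be preserved through the tensor-product construction and through the quotient by the prime $\D$-ideal; once that is verified, the back-and-forth proof of model-completeness and the subsequent deductions proceed as in the ordinary case.
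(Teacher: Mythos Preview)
The paper does not prove this fact at all: it is stated as background with a reference to McGrail and Suer, so there is no ``paper's own proof'' to compare against. Your sketch is therefore being judged on its own merits, and most of it is along standard lines (amalgamation via tensor products, model-completeness plus substructure completeness giving QE, dcl/acl read off from QE). But there is a genuine gap in your treatment of~(ii).

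The assertion in~(ii) is that $K^\D$ is a \emph{pure} algebraically closed field, not merely that it is algebraically closed. ``Pure'' here means that the structure induced on $K^\D$ by the ambient differential field $(K,\D)$ is no richer than the field structure: every subset of $(K^\D)^n$ that is definable in $(K,\D)$, possibly with parameters from $K$, is already definable in the field language with parameters from $K^\D$. Your argument only establishes that $K^\D$ has a root for each non-constant polynomial over $K^\D$, which is the algebraic-closedness half. Purity is a separate and less trivial statement; one route is to show that any $\D$-polynomial relation among constants reduces (after applying the derivations and using $\d_i c = 0$) to an ordinary polynomial relation, and then invoke QE for $ACF_0$ together with stable embeddedness of $K^\D$ in $K$. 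You have not addressed this at all.

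A secondary concern is~(iv). Your idea of coding Kolchin-closed sets by characteristic sets is heuristically appealing but not obviously canonical: a characteristic set depends on the choice of ranking, and different rankings can produce different characteristic sets for the same prime $\D$-ideal, so the ``code'' you extract is not automatically invariant under automorphisms. The usual proofs of elimination of imaginaries for $DCF_{0,m}$ go through $\omega$-stability and the existence of canonical bases in the home sort (or, as McGrail does, via an explicit analysis of definable equivalence relations), rather than through characteristic sets directly. Your outline would need substantial extra work to be made rigorous on this point.
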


In somewhat the same way as we can consider the algebraic closure of a field, there exist \emph{differential closures} of differential fields. A $\D$-field extension $\bar K$ of $K$ is a \emph{$\D$-closure of} $K$ if $(\bar K,\D)\models DCF_{0,m}$ and for every $(L,\D)\models DCF_{0,m}$ extending $K$ there is an $\D$-isomorphism from $\bar K$ into $L$ fixing $K$ pointwise. The following result follows from general model theoretic facts (see \cite{Pi6}).

\begin{fact} 
$\D$-closures always exist and are unique up to $\D$-isomorphism over $K$. Moreover, if $\bar K$ is a $\D$-closure of $K$ then for every $a\in \bar K$ the type $tp(a/K)$ is \emph{isolated}; that is, there is an $\mathcal{L}_m$-formula over $K$ such that for all tuples $b$ from $\U$ we have $$\U\models \phi(b) \iff tp(b/K)=tp(a/K).$$
\end{fact}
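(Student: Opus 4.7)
The plan is to recognize that a $\D$-closure of $K$, as defined in the text, is precisely a \emph{prime model} of $DCF_{0,m}$ over $K$, and then invoke the standard theory of prime models in $\omega$-stable theories.

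First, I would match up the definitions. By quantifier elimination (Fact~\ref{onty}(iii)), any $\D$-embedding between two models of $DCF_{0,m}$ is automatically elementary. Consequently, the condition that $\bar K$ is $\D$-closed and embeds over $K$ into every $\D$-closed extension of $K$ is equivalent to saying that $\bar K$ is a prime model of $DCF_{0,m}$ over $K$ in the sense of model theory. So the statement becomes: prime models of $DCF_{0,m}$ over $K$ exist, are unique up to isomorphism fixing $K$, and are atomic.

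Second, I would appeal to the general theorem (see \cite{Pi6}, or any standard stability-theoretic reference) that every $\omega$-stable theory admits a prime model over any set of parameters, and that such a prime model is unique up to isomorphism over the parameters. The essential input here is that $DCF_{0,m}$ is totally transcendental—this is McGrail's theorem in \cite{Mc}. Existence is then obtained by a transfinite construction that at each stage realizes an isolated type over the parameters accumulated so far, using the key $\omega$-stable fact that isolated types are dense in each type space $S_n(A)$. Uniqueness is proved by a back-and-forth argument which works because both prime models are atomic over $K$.

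Third, for the isolation assertion, I would use exactly the same general result: prime models of $\omega$-stable theories are atomic, meaning every finite tuple from $\bar K$ realizes an isolated type over $K$. In particular this applies to each single element $a\in \bar K$, giving the required formula $\phi(x)$ over $K$ that isolates $\operatorname{tp}(a/K)$.

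There is no genuine obstacle here: the entire content of the statement lies in the abstract stability-theoretic machinery, once total transcendence of $DCF_{0,m}$ is in hand. The only thing to check carefully is the translation between the paper's notion of a $\D$-closure and the model-theoretic notion of a prime model, and this is immediate from quantifier elimination.
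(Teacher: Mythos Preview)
Your proposal is correct and matches the paper's own treatment: the paper does not give a proof at all but simply remarks that the result ``follows from general model theoretic facts (see \cite{Pi6}),'' which is exactly the prime-model-over-a-set machinery for $\omega$-stable theories that you have spelled out. Your identification of $\D$-closure with prime model via quantifier elimination, and your appeal to McGrail's total transcendence of $DCF_{0,m}$, are precisely the ingredients behind that citation.
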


A model $(\U,\D)$ of $DCF_{0,m}$ is said to be \emph{saturated} if whenever $\Phi(x)$ is a (possibly infinite) system of $\D$-polynomial equations and inequations over a $\D$-subfield of cardinality less than $\U$, any finite subsystem of which has a solution in $\U$, then $\Phi(x)$ has a solution in $\U$. It follows from general model theoretic facts (see \cite{Pi8}) that for arbitrarily large cardinal $\kappa$ there is a saturated differentially closed field of cardinality $\kappa$.

\begin{fact}\label{poi}
Let $(\U,\D)$ be a saturated model of $DCF_{0,m}$.
\begin{enumerate}
\item [(i)] If $(K,\D)\models DCF_{0,m}$ and $|K|\leq |\U|$, then there is a $\D$-isomorphism from $K$ into $\U$.
\item [(ii)] Suppose $K$ is a $\D$-subfield of $\U$ with $|K|<|\U|$ and $f$ is a $\D$-isomorphism from $K$ into $\U$, then there is a $\D$-automorphism of $\U$ extending $f$.
\item [(iii)] Suppose $K$ is a $\D$-subfield of $\U$ with $|K|<|\U|$. If $a$ and $a'$ are tuples from $\U$, then $tp(a/K)=tp(a'/K)$ if and only if there is $f\in Aut_{\D}(\U/K)$ such that $f(a)=a'$. Here $Aut_\D(\U/K)$ is the set of $\D$-automorphisms of $\U$ fixing $K$ pointwise.
\item [(iv)] Let $A\subset \U$ with $|A|<|\U|$. Then, $$\operatorname{dcl}(A)=\{a\in \U: f(a)=a \text{ for all } f\in Aut_{\D}(\U/A)\}$$ and $$\operatorname{acl}(A)=\{a\in \U: \text{ the orbit of } a \text{ under } Aut_\D(\U/A) \text{ is finite}\}.$$ 
\item [(v)] Let $A\subset \U$ with $|A|<|\U|$. Then a definable set is definable over $A$ if and only if every automorphism of $\U$ fixing $A$ pointwise fixes the definable set setwise.
\end{enumerate}
\end{fact}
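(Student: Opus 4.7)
The plan is to prove all five parts by standard back-and-forth/saturation arguments, leveraging quantifier elimination of $DCF_{0,m}$ (Fact 1.3.3(iii)) together with elimination of imaginaries and the description of definable and algebraic closures (Fact 1.3.3(iv)-(v)). Everything ultimately rests on the basic observation that if $K$ is a $\D$-subfield of $\U$ with $|K|<|\U|$ and $p(x)\in S_n(K)$ is a consistent complete type, then $p$ is realised in $\U$: by quantifier elimination $p$ is equivalent to a family of $\D$-polynomial equations and inequations, and any finite subset of these is jointly realisable in some $\D$-closed extension (hence, by the existence of $\D$-closures and model-completeness implicit in quantifier elimination, is consistent with $\operatorname{Diag}(K)$); saturation of $\U$ then produces a realisation.

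For (i), I would enumerate $K=\{a_\alpha:\alpha<|K|\}$ and inductively build a $\D$-embedding $f:K\to\U$ by, at stage $\alpha$, realising $tp(a_\alpha/K_\alpha)$ inside $\U$ along the already constructed partial map, where $K_\alpha$ is the subfield generated by $\{a_\beta:\beta<\alpha\}$; this uses saturation at each stage since $|K_\alpha|<|\U|$. Quantifier elimination guarantees that realising the type is exactly what is needed to keep the map a $\D$-embedding. For (ii), I would do a back-and-forth: alternately extend the domain (to ensure every element of $\U$ eventually enters the domain) and the range (to ensure surjectivity) by, at each stage, realising the type of a newly chosen element over the image of the current partial isomorphism, again using saturation. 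The enumeration has length $|\U|$, and at each stage the parameter set has cardinality $<|\U|$, so saturation applies.

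For (iii), the direction ($\Leftarrow$) is immediate because automorphisms preserve formulas. For ($\Rightarrow$), if $tp(a/K)=tp(a'/K)$, then by quantifier elimination the map sending $a\mapsto a'$ extends to a $\D$-isomorphism $\QQ\l K,a\r_\D\to \QQ\l K,a'\r_\D$ fixing $K$; apply (ii) to lift this to a $\D$-automorphism of $\U$. Part (iv) then follows: an element $a$ lies in $\operatorname{dcl}(A)$ iff $tp(a/A)$ has a unique realisation in $\U$, iff by (iii) no $\D$-automorphism of $\U$ over $A$ moves it; similarly $a\in \operatorname{acl}(A)$ iff $tp(a/A)$ has finitely many realisations in $\U$, iff its orbit under $Aut_\D(\U/A)$ is finite. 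Finally, for (v), use elimination of imaginaries (Fact 1.3.3(iv)): a definable set $X$ has a canonical parameter $\ulcorner X\urcorner\in\U^{eq}$ (which by elimination of imaginaries may be taken in $\U$), and $X$ is $A$-definable iff $\ulcorner X\urcorner\in\operatorname{dcl}(A)$, which by (iv) is equivalent to every element of $Aut_\D(\U/A)$ fixing $\ulcorner X\urcorner$, which is in turn equivalent to every such automorphism fixing $X$ setwise.

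The main technical obstacle is really bookkeeping in the back-and-forth of (ii) to guarantee that the enumeration of elements to be added to the domain and range is carried out in a way that, after $|\U|$ many stages, both are exhausted; this is a routine transfinite induction of length $|\U|$ using a suitable pairing. Everything else is a direct application of saturation plus quantifier elimination, with elimination of imaginaries being invoked only in part (v).
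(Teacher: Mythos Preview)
The paper does not prove this statement: it is labeled as a \emph{Fact} and presented without proof, as part of the preliminary review of standard model-theoretic properties of saturated models (the surrounding text refers the reader to general references such as \cite{Pi8} and \cite{Ma2}). So there is no proof in the paper to compare against.

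Your sketch is correct and is precisely the standard argument one finds in the references the paper cites. A couple of minor comments. In part (i) the hypothesis is $|K|\leq |\U|$, not strict inequality; your inductive construction still works because at each stage $\alpha<|K|\leq|\U|$ the partial domain has size $|\alpha|+\aleph_0<|\U|$ (saturated models have regular cardinality), so saturation applies. In part (v), your use of elimination of imaginaries is the cleanest route, and it is exactly how one would justify this in $DCF_{0,m}$; note that without EI one would need a slightly different argument (e.g., compactness over the set of $A$-conjugates of a defining formula), so it is worth flagging that you are genuinely using Fact~1.3.3(iv) here and not just quantifier elimination.
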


The previous facts explain why it is convenient to work in a saturated model. It serves as a universal domain in which to study differential fields. Thus, we will usually work in a fixed suficiently large saturated $(\U,\D)\models DCF_{0,m}$. Unless stated otherwise, all the parameter subsets of $\U$ considered will be assumed to be small, i.e., of cardinality less than $\U$.

\section{Independence and $U$-rank}

We work in fixed sufficiently large saturated differentially closed field $(\U,\D)$.

We have a robust notion of independence that is analogous to algebraic independence in algebraically closed fields. Let $A$, $B$, $C$ be (small) subsets of $\U$. We say that $A$ is \emph{independent of $B$ over $C$}, denoted by $A \ind_C B$, if and only if $\QQ\l A \cup C\r_{\D}$ is algebraically disjoint from $\QQ\l B \cup C\r_{\D}$ over $\QQ\l C\r_{\D}$. Abstractly this notion of independence comes from Shelah's nonforking in stable theories \cite{She}; that is, if $C\subseteq B$ and $a$ is a tuple, then $tp(a/B)$ \emph{does not fork over $C$}, or \emph{is a nonforking extension of} $tp(a/C)$, if and only if $a\ind_C B$. For our purposes, we may as well take the previous sentence as a definition of nonforking extension. We collect in the following fact the essential properties of independence.

\begin{fact}\label{prur}
Let $a$ be a tuple from $\U$ and $A,B,C$ subsets of $\U$.
\begin{enumerate}
\item (Invariance) If $\phi$ is an automorphism of $\U$ and $A\ind_C B$, then $\phi(A)\ind_{\phi(C)} \phi(B)$.
\item (Local character) There is a finite subset $B_0\subseteq B$ such that $A\ind_{B_0} B$.
\item (Extension) There is a tuple $b$ such that $tp(a/C)=tp(b/C)$ and $b\ind_C B$.
\item (Symmetry) $A\ind_C B$ if and only if $B\ind_ C A$
\item (Transitivity) Assume $C\subseteq B\subseteq D$. Then $A\ind_C D$ if and only if $A\ind_C B$ and $A\ind_B D$.
\item (Stationarity) If $C$ is an algebraically closed $\D$-field and $C\subseteq B$, then there is a unique nonforking extension $p\in S_n(B)$ of $tp(a/C)$.
\end{enumerate}
\end{fact}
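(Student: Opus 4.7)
The plan is to reduce each clause to a known property of algebraic disjointness of differential field extensions, using the concrete definition given just above the statement. Throughout, write $K_A=\QQ\l C\cup A\r_\D$ and similarly $K_B$, $K_C$, so that $A\ind_C B$ means $K_A$ and $K_B$ are algebraically disjoint over $K_C$ (as subfields of $\U$). Since $\D$-closures of differential fields exist and the model theoretic algebraic closure equals field-theoretic algebraic closure of the $\D$-subfield generated (Fact~\ref{onty}(v)), many claims reduce to classical facts from field theory applied to the $\D$-field generated by the parameters.

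For \textbf{invariance} and \textbf{symmetry} I would simply observe that a $\D$-automorphism of $\U$ takes $K_A,K_B,K_C$ to the $\D$-subfields generated by the images, and that algebraic disjointness is a symmetric condition on field extensions. \textbf{Transitivity} is the standard tower law for algebraic disjointness: given $C\subseteq B\subseteq D$, the extension $K_A/K_C$ is algebraically disjoint from $K_D/K_C$ iff it is disjoint from $K_B/K_C$ and the compositum $K_AK_B/K_B$ is disjoint from $K_D/K_B$; applied inside $\U$ this is exactly the stated equivalence. For \textbf{local character}, by the differential basis theorem (Fact~\ref{difba}) the ideal of $\D$-polynomial relations between $A$ and $B$ over $C$ is finitely generated as a radical $\D$-ideal; the finitely many defining relations involve only finitely many elements $B_0\subseteq B$, and then $K_A$ is algebraically disjoint from $K_B$ over $K_{B_0\cup C}$.

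\textbf{Extension} uses saturation of $\U$. Given $a$ and $A=\{a\}$, one constructs a copy of $K_A$ inside some large $\D$-closed extension of $K_B$ that is algebraically disjoint from $K_B$ over $K_C$ (amalgamating $K_A$ and $K_B$ freely over $K_C$ inside a $\D$-closure of their free compositum), and then embeds it back into $\U$ over $K_C$ using Fact~\ref{poi}(i),(ii) together with quantifier elimination (Fact~\ref{onty}(iii)) to transfer the type. The image of $a$ is the required $b$.

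The main obstacle is \textbf{stationarity}, which is where the hypothesis that $C$ is an algebraically closed $\D$-field is essential. I would argue as follows: by quantifier elimination, $tp(a/B)$ is determined by the isomorphism type of the $\D$-field extension $\QQ\l C\cup B\cup\{a\}\r_\D$ over $\QQ\l C\cup B\r_\D$. Given two nonforking extensions $p,q\in S_n(B)$ of $tp(a/C)$, realized by $a_1,a_2$, independence gives that $K_C\l a_i\r_\D$ is algebraically disjoint from $K_B$ over $K_C$ for $i=1,2$. Because $K_C$ is algebraically closed (both in the $\D$-sense and, by Fact~\ref{onty}(v), in the field sense), the $\D$-isomorphism $K_C\l a_1\r_\D\to K_C\l a_2\r_\D$ over $K_C$ induced by $tp(a_1/C)=tp(a_2/C)$ extends uniquely to a $\D$-isomorphism $K_C\l a_1\r_\D\cdot K_B \to K_C\l a_2\r_\D\cdot K_B$ over $K_B$; applying Fact~\ref{poi}(ii) to this isomorphism produces an automorphism of $\U$ over $B$ taking $a_1$ to $a_2$, so by Fact~\ref{poi}(iii) we conclude $p=q$. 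The algebraic closedness of $K_C$ is precisely what rules out the obstruction of a normal field extension splitting differently over the two realizations, and it is the one place where a careful use of the differential algebra combined with elimination of quantifiers is needed.
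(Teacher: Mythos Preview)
The paper does not prove this statement at all: it is recorded as a \emph{Fact}, with the justification being that the concrete independence relation agrees with Shelah's nonforking in the stable theory $DCF_{0,m}$, and the listed properties are then the standard package one gets from general stability theory (with stationarity coming from elimination of imaginaries and $\omega$-stability). So your proposal is not a comparison against a proof in the paper but rather an alternative, self-contained verification directly from the algebraic-disjointness definition.

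Your route is essentially correct and is the more elementary one: invariance, symmetry and transitivity are immediate properties of algebraic disjointness; extension is the free amalgam plus saturation; and your stationarity argument is exactly the classical one (algebraic closedness of $C$ upgrades algebraic disjointness to linear disjointness, so the compositum is the fraction field of the tensor product and the $\D$-isomorphism $C\l a_1\r_\D\to C\l a_2\r_\D$ extends over $K_B$). What your approach buys is that one never needs to invoke abstract forking calculus or even know that $DCF_{0,m}$ is stable; what the paper's approach buys is brevity and the observation that nothing here is special to differential fields.

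Two small points to tighten. First, in local character you mention ``relations between $A$ and $B$ over $C$'', but the clause as stated has no $C$; also, your differential-basis-theorem argument only works as written when $A$ is a finite tuple (which is what superstability actually asserts---the statement in the paper is slightly loose on this). Second, in stationarity you should say explicitly that algebraic closedness of $C$ is used to pass from algebraic to \emph{linear} disjointness, since that is what makes the tensor product a domain and hence makes the extension of the isomorphism to the compositum well-defined; you allude to this in your last sentence but it deserves to be the stated mechanism rather than a parenthetical.
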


The notion of nonforking extension induces a partial ordering in the set of types, and the foundation rank of this partial ordering is a very important model theoretic invariant.

\begin{definition}\label{Las}
Let $p\in S_n(K)$. Then $U(p)\geq 0$, and for an ordinal $\al$ we define $U(p)\geq \al$ inductively as follows:
\begin{enumerate}
\item [1)] For a succesor ordinal $\al=\beta +1$, $U(p)\geq \al$ if there is a forking extension $q$ of $p$ such that $U(q)\geq \beta$.
\item [2)] For a limit ordinal $\al$, $U(p)\geq \al$ if $U(p)\geq \beta$ for all $\beta<\al$.
\end{enumerate}
We define $U(p)=\al$ if $U(p)\geq\al$ and $U(p)\ngeq \al+1$. In case $p=tp(a/K)$ we sometimes write $U(a/K)$ for $U(p)$. 
\end{definition}

We now list some of the basic properties of $U$-rank.

\begin{fact}\label{yut}
Let $a$ and $b$ be tuples from $\U$.
\begin{enumerate}
\item [(i)] $U(a/K)$ is an ordinal.
\item [(ii)] $U(a/K)=0$ if and only if $a\in \operatorname{acl}(K)$. Consequently, $U(a/K)=1$ if and only if  $a\notin \operatorname{acl}(K)$ but every forking extension of $tp(a/K)$ is \emph{algebraic} (i.e., has only finitely many realisations).
\item [(iii)] If $L$ is a $\D$-field extension of $K$, then $a\ind_K L$ if and only if $U(a/K)=U(a/L)$.
\item [(iv)] Lascar inequalities: $$U(a/K\l b\r)+U(b/K)\leq U(a,b/K)\leq U(a/K\l b\r)\oplus U(b/K).$$ Here $\oplus$ stands for the Cantor sum of ordinals.
\end{enumerate}
\end{fact}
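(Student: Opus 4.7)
The plan is to prove the four items in order, leveraging the forking calculus of Fact~\ref{prur} and the fact that $DCF_{0,m}$ is $\omega$-stable (and so superstable). For (i), I would use superstability via local character: any strictly increasing chain of forking extensions of $tp(a/K)$ yields, after passing to a subchain indexed by finite parameter sets, a contradiction with the local-character axiom, so the inductive definition of $U$-rank terminates at some ordinal. For (ii), if $a\in\operatorname{acl}(K)$ then every extension of $tp(a/K)$ has the same (finite) set of realisations and hence cannot fork, giving $U(a/K)=0$. Conversely, if $a\notin\operatorname{acl}(K)$, then $tp(a/K)$ has infinitely many realisations and, using the extension axiom to realise $tp(a/K)$ by some $b$ independent from $a$ over $K$, the formula $x=b$ forks $tp(a/K\l b\r)$ over $K$, yielding $U(a/K)\geq 1$. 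The statement for $U=1$ is the immediate consequence.

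For (iii), one shows by transfinite induction on $\alpha$ that if $a\ind_K L$ then $U(a/L)\geq\alpha$ iff $U(a/K)\geq\alpha$. At successor stages, given a forking extension of $tp(a/L)$ witnessing $U\geq\beta+1$, one uses transitivity of nonforking (Fact~\ref{prur}(5)) together with extension to produce a corresponding forking extension of $tp(a/K)$ of the same rank, and conversely uses stationarity (working over an algebraically closed base, which costs nothing since $\operatorname{acl}(K)$ does not affect $U$-rank by (ii)) to descend forking extensions through nonforking ones. For the reverse direction in (iii), if $a\nind_K L$, then by definition $tp(a/L)$ itself is a forking extension of $tp(a/K)$, hence $U(a/L)<U(a/K)$.

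The main obstacle is (iv), the Lascar inequalities. I would prove the lower bound $U(a/K\l b\r)+U(b/K)\leq U(a,b/K)$ by transfinite induction on $U(b/K)$. The base case $U(b/K)=0$ reduces to showing $U(a,b/K)=U(a/K\l b\r)$, which follows from (iii) since $b\in\operatorname{acl}(K)$ makes $K\l b\r$ algebraic over $K$. For the successor step, a forking extension witnessing the drop in $U(b/K)$ is lifted, via extension and symmetry, to a forking extension of $tp(a,b/K)$ whose $U$-rank dominates $U(a/K\l b\r)+\beta$ by induction. The upper bound $U(a,b/K)\leq U(a/K\l b\r)\oplus U(b/K)$ is proved by transfinite induction on $U(a,b/K)$: given any forking extension of the pair's type, one analyzes whether the forking occurs already in $tp(b/\cdot)$ (in which case $U(b/\cdot)$ drops and we apply induction) or only in $tp(a/b,\cdot)$ (in which case $U(a/\cdot\l b\r)$ drops), and then uses the monotonicity property $\alpha\oplus\beta < \alpha'\oplus\beta'$ whenever one coordinate drops to bound the Cantor sum. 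The delicate point is the bookkeeping between ordinal sum and Cantor sum, and in particular distinguishing the two cases of where forking ``occurs'' — this is the step that requires the most care.
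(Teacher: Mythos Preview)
The paper does not prove this statement: it is recorded as a \emph{Fact} in the preliminary chapter, i.e.\ as background material from general stability theory quoted without proof. So there is no ``paper's own proof'' to compare against; the intended reference is to standard sources such as \cite{Pi8}.

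Your sketch is essentially the textbook argument and is broadly correct. A couple of small points worth tightening. In (ii), your forward direction is slightly roundabout: rather than introducing an auxiliary realisation $b$, it is cleaner to observe directly that if $a\notin\operatorname{acl}(K)$ then $tp(a/K\l a\r)$ forks over $K$ (since here forking is failure of algebraic disjointness, and $K\l a\r$ is certainly not algebraically disjoint from itself over $K$ unless $a$ is algebraic), which already gives $U(a/K)\geq 1$. In (iii), be careful with the phrase ``working over an algebraically closed base, which costs nothing'': passing from $K$ to $\operatorname{acl}(K)$ preserves $U$-rank by the Lascar inequalities (or by a direct argument using (ii) and transitivity), not by (ii) alone; since you are proving (iii) before (iv), you should justify this step without invoking (iv). Your outline for (iv) is the standard one and is fine at this level of detail.
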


The $U$-rank of an arbitrary definable set $X\subseteq \U^n$ is defined by $$U(X)=\sup\{U(a/F): a\in X\}$$ where $F$ is any (small) $\D$-field over which $X$ is defined. Using Fact~\ref{prur}, it is not hard to show that this is well defined (i.e., it is independent of the choice of $F$). By the same token, if $f:X\to Y$ is a definable bijection between definable sets then $U(X)=U(Y)$.

\begin{example}
Since the field of constants $\U^\D$ is a pure algebraically closed field, every definable subset of $\U^\D$ is either finite or cofinite. That is, by definition, the field of constants forms a \emph{strongly minimal set}. It follows from Fact~\ref{yut}~(ii) that the constant field is of $U$-rank~$1$.
\end{example}

The $U$-rank of $\U$ is $\w^m$. See \cite{Ma4} for a detailed proof of this in the case when $m=1$. As we could not find a correct proof of the general case in the literature, we have included one in the appendix of this thesis. We also include in the appendix some $U$-rank computations of certain definable subgroups of the additive group.

\section{Affine differential algebraic geometry}\label{affi}

We fix a sufficiently large saturated model $(\U,\D)\models DCF_{0,m}$ and a base (small) $\D$-subfield $K$.

\begin{definition}
An \emph{affine $\D$-algebraic variety $V$} (or \emph{affine $\D$-variety} for short) \emph{defined over} $K$ is a $\D$-closed subset of $\U^n$ (for some $n$) defined over $K$. That is, for some set of $\D$-polynomials $B\subseteq K\{x\}_\D$, $$V=\{a\in \U^n: f(a)=0 \text{ for all } f\in B\}.$$
\end{definition}

\begin{remark} \
\begin{enumerate}
\item [(i)] Our approach here is in the spirit of classical rather than contemporary algebraic geometry: for us affine $\D$-varieties are a set of points rather than a scheme. The fact that we are working in a saturated model (so a universal domain in the spirit of Weil) means that this is not an unreasonable way to proceed. Scheme theoretic approaches to differential algebraic geometry have been certainly developed (see for example \cite{Kova}), but there are subtleties there that we wish to avoid.
\item [(ii)]  Every affine $\D$-variety is of course a definable set in $(\U,\D)$. On the other hand, quantifier elimination (Fact~\ref{onty}~(iii)) tells us that every definable set is a finite boolean combination of affine $\D$-varieties.
\end{enumerate}
\end{remark}

\begin{definition}\label{regmaps}
A map $f:V\to W$ between affine $\D$-varieties defined over $K$ is \emph{$\D$-regular over $K$} if there is a finite cover $\{O_i\}$ of $V$ by $\D$-open subsets defined over $K$ and a family $\{g_i\}$ of tuples of $\D$-rational functions over $K$ such that $f|_{O_i}=g_i|_{O_i}$.
\end{definition}

Given $A\subseteq K\{x\}$ and $V\subseteq \U^n$, we let $$\V(A):=\{a\in \U^n: f(a)=0 \text{ for all } f\in A\}$$ and $$\I(V/K):=\{f\in K\{x\}: f(a)=0 \text{ for all } a\in V\}.$$ 

The set $\V(A)$ is called the \emph{vanishing set of} $A$, and $\I(V/K)$ is called the \emph{defining $\D$-ideal of} $V$ over $K$. It is easy to check that $\I(V/K)$ is a radical differential ideal, and that $\V(\I(V/K))$ is the $\D$-closure of $V$ over $K$; that is, the smallest $\D$-closed set defined over $K$ containing $V$. If $a$ is a tuple from $\U$ we define the \emph{$\D$-locus of $a$ over $K$} as $\V(\I(a/K))$.

\begin{fact}[Differential Nullstellensatz, \cite{Ko}, Chap. IV, \S 2]\label{diffnull}
Let $A\subseteq K\{x\}$. Then $\I(\V(A)/K)=\sqrt{[A]}$.
\end{fact}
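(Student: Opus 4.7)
The plan is to prove the two inclusions separately, with the nontrivial direction following the classical Rabinowitsch-style argument adapted to the differential setting, leveraging the saturation of $\U$.

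First I would dispose of the easy inclusion $\sqrt{[A]}\subseteq \I(\V(A)/K)$. Any element of $[A]$ is a finite $K$-linear combination of derivatives of elements of $A$, all of which vanish identically on $\V(A)$; so $[A]\subseteq \I(\V(A)/K)$. Since $\I(\V(A)/K)$ is clearly radical, taking radicals on the left gives the inclusion.

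For the converse direction, suppose toward contradiction that $f\in \I(\V(A)/K)$ but $f\notin \sqrt{[A]}$. The first step is to produce a prime $\D$-ideal of $K\{x\}_\D$ containing $A$ but not $f$. For this I would invoke the differential basis theorem (Fact~\ref{difba}), applied to the $\QQ$-algebra $K\{x\}_\D$, which is Ritt–Noetherian (satisfies ACC on radical $\D$-ideals). A standard Zorn-plus-Noetherianity argument then shows that any radical $\D$-ideal equals the intersection of the prime $\D$-ideals containing it; in particular, since $f\notin\sqrt{[A]}$ there is a prime $\D$-ideal $\P\supseteq [A]$ with $f\notin \P$.

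Next I would pass to the quotient $R:=K\{x\}_\D/\P$, which is a $\D$-integral domain. Its field of fractions $L$ inherits a canonical $\D$-field structure by the quotient rule, and it contains $K$ as a $\D$-subfield. Writing $a\in L^n$ for the image of $x$, we have $g(a)=0$ for all $g\in A$ and $f(a)\neq 0$. The final step is to transport this witness into $\U$. Here I use Fact~\ref{onty}~(i) to embed $L$ into some $\D$-closed field, and then Fact~\ref{poi}~(i) (saturation of $\U$, possibly after enlarging $\U$) to embed that $\D$-closed field into $\U$ over $K$. The image of $a$ lies in $\V(A)\subseteq \U^n$ but fails to annihilate $f$, contradicting $f\in\I(\V(A)/K)$.

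The main obstacle, if any, is the passage from $f\notin \sqrt{[A]}$ to the existence of a prime $\D$-ideal avoiding $f$; but this is a clean consequence of the Ritt–Raudenbush theorem together with the usual Zorn's-lemma decomposition of radical ideals as intersections of primes, so the argument is structural rather than computational. Everything else is essentially formal, with the saturation of $\U$ doing the geometric work of guaranteeing enough points.
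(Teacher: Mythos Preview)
The paper does not prove this statement at all: it is recorded as a Fact with a citation to Kolchin's book (Chap.~IV, \S 2) and no argument is given. So there is nothing to compare against on the paper's side.

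Your argument is the standard one and is correct. One small quibble: the step from $f\notin\sqrt{[A]}$ to the existence of a prime $\D$-ideal $\P\supseteq A$ with $f\notin\P$ is not really a consequence of the differential basis theorem (Fact~\ref{difba}) as you cite it, but rather of the companion fact that in a Ritt algebra every radical $\D$-ideal is an intersection of prime $\D$-ideals; this is the Zorn's-lemma argument you allude to, and it uses characteristic zero in an essential way (via the identity $\sqrt{[I,ab]}=\sqrt{[I,a]}\cap\sqrt{[I,b]}$). Also, the embedding of $L$ into $\U$ over $K$ is cleanest via amalgamation of $\D$-fields plus existential closedness of $\U$, rather than the combination of Facts~\ref{onty}(i) and~\ref{poi}(i) you cite, since the latter do not immediately give an embedding \emph{over $K$}; but saturation of $\U$ handles this, as you note.
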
 

An affine $\D$-variety $V$ defined over $K$ is said to be \emph{irreducible over $K$}, or \emph{$K$-irreducible}, if it is not the union of two proper $\D$-subvarieties defined over $K$; equivalently, the defining $\D$-ideal of $V$ over $K$ is prime. By the differential basis theorem (Fact~\ref{difba}), every affine $\D$-variety $V$ defined over $K$ has a unique irredundant decomposition into $K$-irreducible $\D$-varieties, which are called the \emph{$K$-irreducible components of} $V$.

\begin{fact}[\cite{Ma4}, Chap. II, \S 5]
If $V$ is $K^{alg}$-irreducible then $V$ is (absolutely) irreducible; that is, $V$ is $F$-irreducible for all $\D$-field extensions $F$ of $K$.
\end{fact}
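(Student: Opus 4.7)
The plan is to use a generic point of $V$ together with the Extension property of nonforking. First, since any decomposition of $V$ into proper $\D$-subvarieties over $F$ is also a decomposition over the composite $\D$-field $F\cdot K^{alg}$, I may replace $F$ by $F\cdot K^{alg}$ and assume $K^{alg}\subseteq F\subseteq \U$. The hypothesis that $V$ is $K^{alg}$-irreducible says that $\I(V/K^{alg})$ is a prime $\D$-ideal of $K^{alg}\{x\}_\D$, so the corresponding complete type over $K^{alg}$ is realised in the saturated model $\U$, giving a tuple $a$ with $\I(a/K^{alg})=\I(V/K^{alg})$. Applying Extension, I obtain a tuple $a'$ with $tp(a'/K^{alg})=tp(a/K^{alg})$ and $a'\ind_{K^{alg}}F$; in particular $\I(a'/K^{alg})=\I(V/K^{alg})$ and $a'\in V$.

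The main claim is that $\I(V/F)=\I(a'/F)$. This suffices, since $\I(a'/F)$ is automatically prime, being the kernel of the evaluation map $F\{x\}_\D\to \U$ at $a'$, whose image is a $\D$-subring of the $\D$-field $\U$ and hence a $\D$-integral domain. The inclusion $\I(V/F)\subseteq \I(a'/F)$ is immediate because $a'\in V$. For the reverse inclusion, the key input is that, because $K^{alg}$ is algebraically closed as a pure field and we are in characteristic zero, the algebraic disjointness of $K^{alg}\l a'\r_\D$ and $F$ over $K^{alg}$ provided by $a'\ind_{K^{alg}}F$ upgrades to their linear disjointness over $K^{alg}$.

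Concretely, given $f\in F\{x\}_\D$ with $f(a')=0$, I would choose a finite $K^{alg}$-linearly independent subset $\{e_1,\dots,e_r\}\subseteq F$ spanning the $K^{alg}$-subspace of $F$ generated by the coefficients of $f$, and write $f=\sum_{j=1}^{r} e_j h_j$ with each $h_j\in K^{alg}\{x\}_\D$. By linear disjointness, $\{e_1,\dots,e_r\}$ remains linearly independent over $K^{alg}\l a'\r_\D$, so the identity $\sum_j e_j h_j(a')=0$ forces $h_j(a')=0$ for every $j$. Each $h_j$ then lies in $\I(a'/K^{alg})=\I(V/K^{alg})$ and therefore vanishes on all of $V$; hence $f$ does too. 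The main step to get right is the upgrade from algebraic to linear disjointness, which rests on the classical fact that every extension of an algebraically closed field in characteristic zero is regular; the remainder is bookkeeping together with the identification (from the facts listed for $DCF_{0,m}$) of the model-theoretic algebraic closure of $K$ in $\U$ with the field-theoretic $K^{alg}$, which is what legitimises applying the nonforking machinery over $K^{alg}$.
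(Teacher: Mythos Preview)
Your argument is correct. The reduction to $K^{alg}\subseteq F$, the use of a $\D$-generic point together with the Extension property to produce $a'\ind_{K^{alg}}F$, and the passage from algebraic to linear disjointness over the algebraically closed field $K^{alg}$ (so that the coefficient-decomposition $f=\sum_j e_j h_j$ forces each $h_j(a')=0$) all go through as you describe. One small remark: the Extension property of nonforking holds over arbitrary parameter sets, so you do not actually need the identification $\operatorname{acl}(K)=K^{alg}$ to invoke it; what you do need, and implicitly use, is that $K^{alg}$ is a $\D$-subfield of $\U$ (derivations extend uniquely to algebraic closures in characteristic zero) so that $\I(V/K^{alg})$ and the generic point over $K^{alg}$ make sense.

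As for comparison with the paper: there is nothing to compare. The paper records this statement as a \emph{Fact} with a citation to \cite{Ma4}, Chapter~II, \S5, and gives no proof of its own. Your model-theoretic argument via nonforking and linear disjointness is a standard and clean way to establish the result, and is essentially the argument one finds in the cited reference.
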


Working in a saturated model gives us \emph{$\D$-generic points}:

\begin{definition}
Let $V$ be a $K$-irreducible affine $\D$-variety. A point $a\in V$ is called a \emph{$\D$-generic point of $V$ over $K$} (or simply generic when $\D$ and $K$ are understood) if $a$ is not contained in any proper $\D$-subvariety of $V$ defined over $K$.
\end{definition}

Note, for example, that any $a$ is a $\D$-generic point over $K$ of its own $\D$-locus over $K$. 

Let us show that $\D$-generic points always exist. Let $V$ be $K$-irreducible. Consider the set of $\mathcal{L}_m$-formulas over $K$ $$\Phi=\{x\in V\}\cup\{x\notin W: W \text{ is a proper }\D\text{-subvariety of } V \text{ defined over }K\}.$$ If this set were finitely inconsistent, then $V$ would not be $K$-irreducible. Hence, by saturation, $\Phi$ is realised in $\U$, and such a realisation is the desired $\D$-generic point. 

If $a$ and $a'$ are $\D$-generic points over $K$ of a $K$-irreducible affine $\D$-variety $V$, then, by quantifier elimination for $DCF_{0,m}$ (Fact~\ref{onty}), $tp(a/K)=tp(a'/K)$. Hence, we can define the \emph{$\D$-generic type of $V$ over $K$} to be the complete type over $K$ of any $\D$-generic point of $V$ over $K$. In fact, this yields a bijective correspondence between $K$-irreducible affine $\D$-varieties and complete types over $K$, given by $$V\mapsto \text{the $\D$-generic type of $V$ over $K$}.$$ The inverse of this correspondence is given by taking the $\D$-locus over $K$ of any realisation of the type. 

The differential analogue of the tangent bundle of an algebraic variety is given by the following construction of Kolchin (c.f. \cite{Ko}, Chap. VIII, \S 2).

\begin{definition}\label{koltan}
Let $V$ be an affine $\D$-variety defined over $K$. The \emph{$\D$-tangent bundle of} $V$, denoted by $T_\D V$, is the affine $\D$-variety defined by $$f(x)=0 \quad \text{and} \quad \sum_{\t\in \T_\D, i\leq n}\frac{\partial f}{\partial (\t x_i)}(x)\t u_i=0,$$ for all $f\in \I(V/K)$, together with the projection onto the first $n$ coordinates $\rho:T_\D V\to V$.
\end{definition}

An \emph{affine $\D$-algebraic group defined over $K$} is a group object in the category of affine $\D$-varieties defined over $K$. In other words, an affine $\D$-algebraic group over $K$ is a group whose underlying universe is an affine $\D$-algebraic variety and the group operation $p:G\times G\to G$ is a $\D$-regular map, both over $K$. 

The following property of affine $\D$-algebraic groups, which is not necessarily true for $\D$-varieties, will be important for us in the appendix.

\begin{fact}[\cite{Fre2}, \S 2]\label{gengroup}
If $G$ is a connected affine $\D$-algebraic group defined over $K$, then $U(G)=U(p)$ where $p$ is the $\D$-generic type of $G$ over $K$.
\end{fact}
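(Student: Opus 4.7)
The plan is to reduce to a standard fact about $\omega$-stable groups. The easy inequality $U(p) \leq U(G)$ is immediate from the definition $U(G) = \sup\{U(a/K) : a \in G\}$, since any realization of $p$ is a point of $G$.

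For $U(G) \leq U(p)$, I would invoke stable group theory. Note that $DCF_{0,m}$ is $\omega$-stable, and that connectedness of $G$ (absolute irreducibility as a $\D$-variety) forces $G^0 = G$ in the stable-group sense: any proper $\D$-definable subgroup of finite index would decompose $G$ into finitely many cosets, contradicting irreducibility. The standard theory of stable groups then guarantees that $G$ admits a unique generic type $p_0 \in S(K)$, characterized by translation invariance, and satisfying $U(p_0) = U(G)$.

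The key step is identifying $p_0$ with the $\D$-generic $p$, by showing that $p$ is translation invariant. Fix $g \in G$ and take $a \models p$ with $a \ind_K g$. By absolute irreducibility of $G$ together with the independence $a \ind_K g$, the $\D$-locus of $a$ over $K\l g \r$ is still $G$. Since left translation by $g$ is a $K\l g \r$-definable automorphism of $G$ as a $\D$-variety, the $\D$-locus of $ga$ over $K\l g \r$ equals $gG = G$. Hence $tp(ga / K\l g \r)$ is the unique type over $K\l g \r$ with $\D$-locus $G$, namely the nonforking extension $p|_{K\l g \r}$. This yields translation invariance of $p$, so by uniqueness $p = p_0$, giving $U(p) = U(p_0) = U(G)$.

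The main obstacle is the identification step, since $p$ and $p_0$ are a priori characterized by superficially different conditions (full $\D$-locus versus translation invariance). The bridge is absolute irreducibility, which ensures that the $\D$-locus condition propagates cleanly under the base extension $K \subseteq K\l g \r$ and interacts correctly with left translation. Alternatively, one can attempt a direct Lascar-inequality argument with translates $ga$ for $g$ generic over $K\l a \r$ and deduce $U(a/K) \leq U(p)$; but this quickly runs into the noncommutativity of ordinal addition and does not obviously close, so the stable-group route above is substantially cleaner.
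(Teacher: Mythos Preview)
The paper does not supply its own proof; the statement is cited from \cite{Fre2}. Your argument is correct and is the standard route: identify the $\D$-generic type with the unique model-theoretic generic of the connected $\omega$-stable group $G$ by verifying translation invariance. The only step worth making more explicit is why the $\D$-locus of $a$ over $K\l g\r$ remains $G$ when $a\ind_K g$: this is exactly stationarity of $p$, which holds because $G$ is absolutely irreducible (and $K$-irreducibility together with the $K$-rational identity point forces absolute irreducibility, since the Galois orbit of the component through $e$ must be trivial). With that in hand, left translation by $g$ is a $K\l g\r$-definable automorphism of $G$ preserving the $\D$-generic type over $K\l g\r$, giving translation invariance as you say.
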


We conclude this section with the following proposition describing the geometric meaning of characteristic sets.

\begin{proposition}\label{basic}
Suppose $V$ is a $K$-irreducible affine $\D$-variety. Let $\L$ be a characteristic set of the prime $\D$-ideal $\I(V/K)$. 
\begin{enumerate}
\item [(i)] Let $\V^*(\L)=\V(\L)\setminus\V(H_\L)$, where $H_\L$ is the product of the initials and separants of $\L$ (see Section~\ref{ondiff}). Then $\V^*(\L)$ in nonempty.
\item [(ii)] $\V^*(\L)=V\setminus \V(H_\L)$.
\item [(iii)] $V$ is a $K$-irreducible component of $\V(\L)$.
\end{enumerate}
\end{proposition}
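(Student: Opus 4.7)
The strategy is to use Rosenfeld's criterion (Fact~\ref{rosen}), which identifies the prime $\D$-ideal $\mathcal{P} := \I(V/K)$ as $[\L] : H_\L^\infty$, together with the observation that $\V(\L) = \V([\L])$ as subsets of $\U^n$. This latter equality holds because the evaluation map $K\{x\} \to \U$, $f \mapsto f(a)$, is a $\D$-ring homomorphism; hence if $\L$ vanishes at $a$, so does the entire $\D$-ideal $[\L]$.

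For (i) the key step is to show $H_\L \notin \mathcal{P}$. Each initial $I_f$ and separant $S_f$ of $f \in \L$ has rank strictly below that of $f$ (the leader of $I_f$ is strictly below $v_f$, while $S_f$ contains $v_f$ only to degree $d_f - 1$). Using that $\L$ is autoreduced, one checks that the reducedness of $f$ with respect to any other $g \in \L$ passes to the coefficients of $f$ as a polynomial in $v_f$; hence $I_f$ and $S_f$ are reduced with respect to every element of $\L$. Remark~\ref{contin} then gives $I_f, S_f \notin \mathcal{P}$ whenever they are nonconstant, and nonzero constants are units in $K$ and so automatically avoid the proper ideal $\mathcal{P}$. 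Primality of $\mathcal{P}$ yields $H_\L \notin \mathcal{P}$. Consequently $V \cap \V(H_\L)$ is a proper $\D$-subvariety of $V$ defined over $K$, so a $\D$-generic point $a$ of $V$ over $K$ satisfies $H_\L(a) \neq 0$. Since $\L \subseteq \mathcal{P}$ gives $V \subseteq \V(\L)$, we conclude $a \in \V^*(\L)$.

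For (ii), the inclusion $V \setminus \V(H_\L) \subseteq \V^*(\L)$ is immediate from $V \subseteq \V(\L)$. Conversely, given $a \in \V^*(\L)$ and $g \in \mathcal{P}$, Rosenfeld's criterion produces $k$ with $H_\L^k g \in [\L]$; evaluating at $a$ and using $\V(\L) = \V([\L])$ yields $H_\L(a)^k g(a) = 0$, and $H_\L(a) \neq 0$ forces $g(a) = 0$, placing $a$ in $V$. For (iii), suppose $W$ is a $K$-irreducible $\D$-subvariety of $\V(\L)$ with $V \subseteq W$. By (ii), $W \setminus \V(H_\L) \subseteq V$, so $W = (W \cap V) \cup (W \cap \V(H_\L))$ is a decomposition into $\D$-closed $K$-subsets of $W$; $K$-irreducibility forces either $W \subseteq V$ or $W \subseteq \V(H_\L)$, and the latter combined with $V \subseteq W$ would contradict the fact from (i) that $V$ meets the complement of $\V(H_\L)$. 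Hence $W = V$, so $V$ is a $K$-irreducible component of $\V(\L)$. The main technical hurdle is the verification in (i) that every $I_f$ and $S_f$ is reduced with respect to every element of $\L$, which requires a careful unpacking of the structure of each $f$ as a polynomial in its leader together with the autoreducedness of $\L$.
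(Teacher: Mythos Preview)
Your proof is correct and follows essentially the same approach as the paper's. The only notable difference is in part (i): the paper argues by contradiction via the differential Nullstellensatz (if $\V(\L)\subseteq\V(H_\L)$ then $H_\L\in\sqrt{[\L]}\subseteq\I(V/K)$, contradicting Remark~\ref{contin}), whereas you first establish $H_\L\notin\mathcal{P}$ directly and then exhibit a $\D$-generic point of $V$ in $\V^*(\L)$; both routes rest on the same key fact that the initials and separants are reduced with respect to $\L$, which you spell out in more detail than the paper does.
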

\begin{proof} \

\noindent (i) If $\V(\L)\subseteq \V(H_\L)$ then, by the differential Nullstellensatz (Fact~\ref{diffnull}), $H_\L$ would be in $\sqrt{[\L]}\subseteq \I(V/K)$ contradicting Remark~\ref{contin}. Thus $\V(\L)\setminus\V(H_\L)\neq \emptyset$.

\noindent (ii) Let $a\in\V^*(\L)$. We need to show that $f(a)=0$ for all $f\in \I(V/K)$. Clearly if $f\in [\L]$ then $f(a)=0$. Let $f\in \I(V/K)$, since $\I(V/K)=[\L]:H_\L^\infty$, we can find $\ell$ such that $H_\L^\ell\, f\in [\L]$. Hence, $H_\L(a)f(a)=0$, but $H_\L(a)\neq 0$, and so $f(a)=0$. The other containment is clear. 

\noindent (iii) Let $W$ be an irreducible component of $\V(\L)$ containing $V$.  Since $\V^*(\L)=V\setminus \V(H_\L)$, we have that $W\setminus\V(H_\L)=V\setminus \V(H_\L)\subseteq V$. Hence $V$ contains a nonempty $\D$-open set of $W$, and so, by irreducibility of $W$, $V=W$.
\end{proof}

\section{Differential type and typical differential dimension}\label{polype}

We continue to work in a fixed sufficiently large saturated $(\U,\D)\models DCF_{0,m}$ and over a (small) base $\D$-subfield $K$ of $\U$. In this section we discuss the Kolchin polynomial and the associated notions of differential type and typical differential dimension. As these are central to our work in this thesis we will give more details than in previous sections.

Let $L$ be a $\D$-subfield of $\U$ containing $K$. A set $A\subseteq L$ is said to be \emph{$\D$-algebraically independent over $K$} if for every (finite) tuple $a$ from $A$ we have that $\I(a/K)=\{0\}$. A maximal subset of $L$ that is $\D$-algebraically independet over $K$ is called a \emph{$\D$-transcendence basis of $L$ over $K$}. An element $a\in L$ is said to be \emph{$\D$-transcendental over $K$} if $\{a\}$ is $\D$-algebraically independent over $K$, and \emph{$\D$-algebraic over $K$} otherwise.

\begin{fact}[\cite{Ko}, Chap. II, \S 9]
There exists a $\D$-transcendence basis of $L$ over $K$, and any two such basis have the same cardinality. 
\end{fact}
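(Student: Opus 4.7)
The plan is to follow the Steinitz-style proof of existence and equicardinality of transcendence bases in ordinary field theory, with $\D$-algebraic dependence replacing algebraic dependence. Existence is a routine application of Zorn's lemma to the poset $\mathcal{F}$ of $\D$-algebraically independent subsets of $L$ over $K$, ordered by inclusion: the union of any chain in $\mathcal{F}$ is again $\D$-algebraically independent because $\D$-algebraic independence is a property of finite tuples, so a maximal element exists, and any such maximal element is, by definition, a $\D$-transcendence basis of $L$ over $K$.

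For equicardinality, the plan is to verify that $\D$-algebraic dependence defines a pregeometry on $L$, whence equicardinality of all maximal independent sets follows from the general theory. Define, for $B \subseteq L$,
\[
\operatorname{cl}_\D(B) = \{a \in L : a \text{ is } \D\text{-algebraic over } K\langle B\rangle_\D\}.
\]
Reflexivity, monotonicity, and finite character are immediate from the definitions. Transitivity, $\operatorname{cl}_\D(\operatorname{cl}_\D(B)) = \operatorname{cl}_\D(B)$, is the transitivity of $\D$-algebraic dependence, which can be obtained from the classical transitivity of algebraic dependence by viewing $\D$-polynomial relations as ordinary polynomial relations in finitely many of the algebraic indeterminates $\T$ (developed in Kolchin, Chap.~II).

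The heart of the argument is the exchange axiom: if $a \in \operatorname{cl}_\D(B \cup \{b\}) \setminus \operatorname{cl}_\D(B)$, then $b \in \operatorname{cl}_\D(B \cup \{a\})$. To prove this, take a nonzero $\D$-polynomial witness $h(x) \in K\langle B \cup \{b\}\rangle_\D\{x\}_\D$ with $h(a) = 0$, clear the denominators of its coefficients (which lie in $\operatorname{Frac}(K\langle B\rangle_\D\{b\}_\D)$), and substitute $\T y$ for $\T b$ in the resulting expression to produce a nonzero $f(x,y) \in K\langle B\rangle_\D\{x,y\}_\D$ with $f(a,b) = 0$. Viewing $f$ in the algebraic polynomial ring $K\langle B\rangle_\D[\T x, \T y]$, expand
\[
f = \sum_\nu d_\nu(\T x) \cdot \nu(\T y),
\]
where $\nu$ ranges over monomials in $\T y$ and $d_\nu \in K\langle B\rangle_\D[\T x]$. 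Substituting $\T a$ for $\T x$ yields $f(a,y) = \sum_\nu d_\nu(\T a) \cdot \nu(\T y) \in K\langle B\cup\{a\}\rangle_\D[\T y]$. If this polynomial were identically zero, each nonzero $d_\nu$ would furnish a nonzero $\D$-polynomial in $x$ over $K\langle B\rangle_\D$ annihilating $a$, contradicting $a \notin \operatorname{cl}_\D(B)$. Hence $f(a,y)$ is a nonzero $\D$-polynomial in $y$ over $K\langle B \cup \{a\}\rangle_\D$ with $f(a,b)=0$, witnessing $b \in \operatorname{cl}_\D(B \cup \{a\})$. With the pregeometry in place, equicardinality of $\D$-transcendence bases follows from the standard pregeometry argument: in the finite case by an exchange back-and-forth, and in the infinite case by the observation that each element of one basis is $\D$-algebraic over a finite subset of the other, together with a cardinality comparison. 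The only obstacle beyond the classical proof is notational, arising from the need to track the infinitely many algebraic indeterminates $\T x$ and $\T y$ inside a $\D$-polynomial; once this bookkeeping is set up, the Steinitz arguments transfer verbatim.
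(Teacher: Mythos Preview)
The paper does not prove this statement; it is recorded as a Fact with a citation to Kolchin (\cite{Ko}, Chap.~II, \S 9), so there is no in-paper proof to compare against. Your argument is the standard Steinitz/pregeometry approach and is essentially what one finds in Kolchin: existence by Zorn's lemma, exchange for $\D$-algebraic closure, and then the usual matroid equicardinality argument.

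One small point worth tightening in your exchange proof: the step ``substitute $\T y$ for $\T b$'' to pass from an element of $K\langle B\rangle_\D\{b\}_\D$ to one of $K\langle B\rangle_\D\{y\}_\D$ is a choice of lift, not a well-defined substitution, unless $b$ is $\D$-transcendental over $K\langle B\rangle_\D$. The cleanest fix is to note at the outset that if $b\in\operatorname{cl}_\D(B)$ the conclusion $b\in\operatorname{cl}_\D(B\cup\{a\})$ is immediate, so one may assume $b\notin\operatorname{cl}_\D(B)$; then the evaluation $y\mapsto b$ is injective and the substitution is genuine. With that in place your argument goes through as written.
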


By virtue of the previous fact one defines the \emph{$\D$-transcendence degree of $L$ over $K$} as the cardinality of any $\D$-transcendence basis of $L$ over $K$.

By a \emph{numerical polynomial} we mean a polynomial (in one variable) $f\in \mathbb{R}[x]$ such that if $e=deg(f)$ then there are unique $d_0,\dots,d_e\in \ZZ$ with $f(x)=\sum_{i=0}^e d_i$$x+i\choose{i}$. Note that if $f$ is a numerical polynomial then $f(h)\in \ZZ$ for sufficiently large $h\in \NN$.

We denote by $\T_\D(h)$ the set of derivative operators of order less than or equal to $h\in \NN$. 

\begin{fact}[\cite{Ko}, Chap.2, \S 12]\label{poli}
Let $a=(a_1,\dots,a_n)$ be a tuple from $\U$. There exists a numerical polynomial $\omega_{a/K}$ with the following properties:
\begin{enumerate}
\item [(i)] For sufficiently large $h\in \NN$, $\omega_{a/K}(h)$ equals the transcendence degree of $K(\t a_i:\, i=1,\dots,n,\, \t\in \T_\D(h))$ over $K$.
\item [(ii)] deg $\omega_{a/K}\leq m$.
\item [(iii)] If we write $\omega_{a/K}=\sum_{i=0}^m d_i$$x+i\choose{i}$ where $d_i\in \ZZ$, then $d_m$ equals the $\D$-transcendence degree of $K\l a\r_\D$ over $K$.
\item [(iv)] If $b$ is another tuple with each coordinate in $K\l a\r$, then there is $h_0\in \NN$ such that for sufficiently large $h\in \NN$ we have $\omega_{b/K}(h)\leq \omega_{a/K}(h+h_0)$.
\end{enumerate}
\end{fact}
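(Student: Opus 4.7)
The plan is to reduce the transcendence-degree count defining $\omega_{a/K}$ to a counting problem for lattice points in $\NN^m$, and then to invoke a Hilbert-type polynomial result. I would begin by letting $\mathfrak{p} = \I(a/K) \subseteq K\{x_1,\dots,x_n\}_\D$; this is a prime $\D$-ideal and, by Fact \ref{re1}, admits a characteristic set $\L = \{f_1,\dots,f_s\}$ with leaders $v_{f_1},\dots,v_{f_s}$. Let $\Theta \subseteq \{\t x_i : \t \in \T_\D,\, i\leq n\}$ consist of those algebraic indeterminates which are not of the form $\t' v_{f_j}$ for any $\t' \in \T_\D$, and put $\Theta(h) = \{\t x_i \in \Theta : \operatorname{ord}(\t) \leq h\}$.

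The first technical step is to show that the image of $\Theta(h)$ in $R := K\{x_1,\dots,x_n\}_\D/\mathfrak{p}$ is a transcendence basis over $K$ of the subfield $K_h$ of $\operatorname{Frac}(R)$ generated by the images of $\{\t x_i : \t \in \T_\D(h),\, i \leq n\}$. Algebraic independence follows from Remark \ref{contin}: any polynomial relation among these elements would itself be reduced with respect to $\L$ and hence nonzero modulo $\mathfrak{p}$. For the spanning property, one uses Rosenfeld's criterion $\mathfrak{p} = [\L]:H_\L^\infty$: for each $\t' \neq 1$, the polynomial $\t' f_j$ is linear in its leader $\t' v_{f_j}$ with coefficient the separant $S_{f_j}$, so after inverting $S_{f_j}$ one may solve for $\t' v_{f_j}$ modulo $\mathfrak{p}$ in terms of algebraic indeterminates of strictly lower rank; an induction on the ranking then shows that every $\t a_i$ with $\t x_i \notin \Theta$ is algebraic over the image of $\Theta$. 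Therefore $\omega_{a/K}(h) = |\Theta(h)|$.

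Next I would translate the count into a combinatorial question. Identify $\T_\D$ with $\NN^m$ via $\d_1^{e_1}\cdots\d_m^{e_m} \leftrightarrow (e_1,\dots,e_m)$, so that order corresponds to the $\ell^1$-norm $|\cdot|_1$. For each $i \leq n$ set $E_i = \{\t \in \NN^m : \t x_i = \t'v_{f_j} \text{ for some } j \leq s,\, \t' \in \T_\D\}$; this is an upward-closed subset of $\NN^m$ (a finite union of translated orthants). Then
$$\omega_{a/K}(h) = \sum_{i=1}^n |\{\t \in \NN^m \setminus E_i : |\t|_1 \leq h\}|.$$
The main obstacle is now the following lattice-point lemma, which is the real content of the proof: for any upward-closed $E \subseteq \NN^m$, the function $h \mapsto |\{\t \in \NN^m \setminus E : |\t|_1 \leq h\}|$ agrees, for sufficiently large $h$, with a numerical polynomial of degree at most $m$. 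This is essentially Hilbert's theorem applied to the monomial ideal generated by $\{y^\t : \t \in E\}$ in $K[y_1,\dots,y_m]$; alternatively it admits a direct proof by induction on $m$, using Dickson's lemma to write $E$ as a finite union of principal up-sets and then applying inclusion-exclusion. Summing over $i$ establishes (i) and (ii).

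Finally, (iii) and (iv) follow cleanly. For (iii), note that $d_m$ equals the number of indices $i$ with $E_i = \emptyset$, because for such $i$ the contribution is the full count $|\T_\D(h)| = \binom{h+m}{m}$, while for each $i$ with $E_i \neq \emptyset$ a minimal element of $E_i$ confines $\NN^m \setminus E_i$ to a region missing an orthant of full dimension and so contributes a polynomial of degree strictly less than $m$. The $i$ with $E_i = \emptyset$ are exactly those $x_i$ such that no derivative of $x_i$ is the leader of any element of $\L$, and these are precisely the $a_i$ that form a $\D$-transcendence basis of $K\l a\r_\D$ over $K$ (modulo the others). For (iv), choose $h_0$ large enough that each coordinate $b_k$ of $b$ lies in $K(\t a_i : \t \in \T_\D(h_0),\, i \leq n)$; applying $\t' \in \T_\D(h)$ to these rational expressions and using the Leibniz and quotient rules produces elements of $K(\t'' a_i : \t'' \in \T_\D(h + h_0),\, i \leq n)$, yielding the required inequality of transcendence degrees.
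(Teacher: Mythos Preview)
The paper does not prove this statement: it is recorded as a \emph{Fact} cited from Kolchin's book (\cite{Ko}, Chap.~II, \S 12), so there is no proof in the paper to compare against. Your argument is essentially the classical one found in Kolchin, and it is correct in outline.

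Two small points of exposition deserve tightening. In the spanning step you treat only proper derivatives $\t' \neq 1$, where $\t' f_j$ is indeed linear in its leader with coefficient $S_{f_j}$; you should also note that for $\t' = 1$ the polynomial $f_j$ itself, though possibly of degree $d_{f_j} > 1$ in $v_{f_j}$, still exhibits $v_{f_j}$ as \emph{algebraic} over the lower-rank indeterminates once $I_{f_j}$ is inverted. Second, to conclude that the transcendence basis for $K_h$ is exactly $\Theta(h)$ (rather than $\Theta(h')$ for some $h' > h$), you are implicitly using that in the canonical orderly ranking every algebraic indeterminate appearing in $\t' f_j$ has order at most $\operatorname{ord}(\t' v_{f_j})$; this is true and worth making explicit, and it is also the reason one needs $h$ large enough (at least $\max_j \operatorname{ord}(f_j)$) for the count to stabilise to a polynomial. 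Your arguments for (iii) and (iv) are fine, though in (iii) the parenthetical ``(modulo the others)'' is unnecessary and slightly confusing: the $a_i$ with $E_i = \emptyset$ genuinely form a $\D$-transcendence basis of $K\l a\r_\D$ over $K$.
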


The polynomial $\omega_{a/K}$ is called the \emph{Kolchin polynomial of} $a$ \emph{over} $K$. Even though the Kolchin polynomial is not in general a $\D$-birational invariant, (iv) of Fact \ref{poli} shows that its degree is. We call this degree the $\D$\emph{-type of} $a$ \emph{over} $K$ and denote it by $\D$-type$(a/K)$. Similarly, if we write $\omega_{a/K}=\sum_{i=0}^m d_i$$x+i\choose{i}$ where $d_i\in \ZZ$, the coefficient $d_{\tau}$, where $\tau=\D$-type$(a/K)$, is a $\D$-birational invariant. We call $d_{\tau}$ the \emph{typical} $\D$\emph{-dimension} of $a$ \emph{over} $K$ and denote it by $\D$-dim$(a/K)$. We will adopt the convention that if $\omega_{a/K}=0$ then $\D$-type$(a/K)=0$ and $\D$-dim$(a/K)=0$.

Given a complete type $p=tp(a/K)$ its Kolchin polynomial $\omega_p$ is defined to be $\omega_{a/K}$. Thus it makes sense to talk about the $\D$-type and typical $\D$-dimension of complete types. 

Now we extend these concepts to definable sets. First recall that we can put a total ordering on numerical polynomials by eventual domination, i.e., $f\leq g$ if and only if $f(h)\leq g(h)$ for all sufficiently large $h\in \NN$.

\begin{definition}
Let $X$ be a definable set. The \emph{Kolchin polynomial of $X$} is defined by
\begin{displaymath}
\omega_X=\sup\left\{ \omega_{a/F}:\, a\in X\right\},
\end{displaymath}
where $F$ is any $\D$-field over which $X$ is defined (the fact that this does not depend on the choice of $F$ is a consequence, for example, of Theorem 4.3.10 of \cite{Mc}). In Lemma \ref{max} below we will see that $\omega_X=\omega_{a/F}$ for some $a\in X$. We define the $\D$-type of $X$ and its typical $\D$-dimension in the obvious way. Also, $X$ is said to be \emph{finite dimensional} if $\D$-type$(X)=0$.
\end{definition}

\begin{lemma}\label{max}
Let $X$ be a $K$-definable set and let $V_1, \dots, V_s$ be the $K$-irreducible components of the $\D$-closure of $X$ over $K$ (in the $\D$-topology). If $p_i$ is the $\D$-generic type of $V_i$ over $K$, then
\begin{displaymath}
\omega_X=\max\{\omega_{p_i}:\, i=1,\dots,s\}.
\end{displaymath}
\end{lemma}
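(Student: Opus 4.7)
The plan is to establish the two inequalities $\omega_X \leq \max_i \omega_{p_i}$ and $\omega_X \geq \max_i \omega_{p_i}$ separately: the first via the specialization behaviour of Kolchin polynomials, and the second via a saturation argument producing a $\D$-generic of $V_i$ inside $X$. Throughout, let $\bar X = V_1 \cup \cdots \cup V_s$ denote the $\D$-closure of $X$ over $K$.

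For the upper bound, I would take any $a \in X$, note that $a \in V_j$ for some $j$ since $X \subseteq \bar X$, and let $b$ realise $p_j$. The containment $\I(b/K) \subseteq \I(a/K)$ induces, for each $h \in \NN$, a surjective $K$-algebra homomorphism
\begin{displaymath}
K[\T_\D(h) x]\bigm/\bigl(\I(b/K)\cap K[\T_\D(h) x]\bigr) \twoheadrightarrow K[\T_\D(h) x]\bigm/\bigl(\I(a/K)\cap K[\T_\D(h) x]\bigr).
\end{displaymath}
Both rings are integral domains, and a surjection can only decrease the transcendence degree over $K$, so Fact~\ref{poli}(i) yields $\omega_{a/K}(h) \leq \omega_{b/K}(h) = \omega_{p_j}(h)$ for all sufficiently large $h$. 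Hence $\omega_{a/K} \leq \omega_{p_j} \leq \max_i \omega_{p_i}$, and taking the supremum over $a \in X$ gives $\omega_X \leq \max_i \omega_{p_i}$.

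For the lower bound, I first observe that $\overline{X \cap V_i} = V_i$ for each $i$: from $\bar X = \bigcup_j \overline{X \cap V_j}$ and the irreducibility of $V_i$ together with $V_i \not\subseteq V_j$ for $j \neq i$, we must have $V_i \subseteq \overline{X \cap V_i}$. Using this density, I would apply saturation of $\U$ to the partial type over $K$ consisting of $\{x\in X\} \cup \{x\in V_i\}$ together with $\{x \notin W\}$ for every proper $K$-definable $\D$-subvariety $W$ of $V_i$; this is finitely realisable because any finite union of proper $K$-subvarieties of the irreducible $V_i$ is itself a proper $\D$-closed subset of $V_i$ and hence cannot contain the dense set $X \cap V_i$. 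A realisation $a_i$ of this type is a $\D$-generic point of $V_i$ over $K$ that lies in $X$, giving $\omega_{p_i} = \omega_{a_i/K} \leq \omega_X$ for each $i$, and simultaneously confirming the parenthetical claim in the definition that the supremum defining $\omega_X$ is attained in $X$. The main step requiring care is precisely this density identity together with the saturation argument; the specialization reasoning underlying the upper bound is routine.
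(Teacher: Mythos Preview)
Your proof is correct and, for the upper bound, proceeds exactly as the paper does: given $a\in X$ lying in some $V_j$, the containment $\I(b/K)\subseteq\I(a/K)$ for a realisation $b$ of $p_j$ yields a surjective $K$-algebra map at each truncation level, forcing $\omega_{a/K}\leq\omega_{p_j}$. The paper phrases this as the existence of a $\D$-ring homomorphism $K\{b\}_\D\to K\{a\}_\D$ over $K$ sending $b\mapsto a$, but the content is identical.

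Where you differ is in treating the lower bound explicitly. The paper's written proof stops after the upper bound and does not argue that a $\D$-generic point of $V_i$ over $K$ actually lies in $X$; you supply this via the density identity $\overline{X\cap V_i}=V_i$ together with a saturation argument. This is a genuine addition: without it the equality (and the parenthetical claim that the supremum is attained) is not established. Your density argument is clean, and the compactness step is exactly the standard one used elsewhere in the paper to produce $\D$-generic points. One could alternatively argue directly from quantifier elimination that any $\D$-generic of $V_i$ over $K$ must satisfy every $K$-definable set whose $\D$-closure contains $V_i$, but your route is just as efficient.
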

\begin{proof}
Let $b\in X$, then $b$ is in some $V_i$. Let $a_i$ be a realisation of $p_i$. By $\D$-genericity, there is a $\D$-ring homomorphism $f_i:K\{a_i\}_{\D}\to K\{b\}_{\D}$ over $K$ such that $a_i\mapsto b$. Thus, for any $h\in \NN$ the transcendence degree of $K(\t a_{i,j}: j=1,\dots,n,\, \t\in \T_\D(h))$ over $K$, where $a_i=(a_{i,1},\dots,a_{i,n})$, is greater than or equal to the transcendence degree of $K(\t b_j: j=1,\dots,n',\, \t\in \T_\D(h))$ over $K$, where $b=(b_1,\dots,b_{n'})$. This implies that, for sufficiently large $h\in \NN$, $\omega_{a_i/K}(h)\geq\omega_{b/K}(h)$ and hence $\omega_{a_i/K}\geq\omega_{b/K}$.
\end{proof}

In the ordinary case of $\D=\{\d\}$, if an element $a$ is $\{\d\}$-algebraic over $K$ then $\d^{k+1}a\in K(a,\d a, \dots,\d^k a)$ for some $k$ (see for example Chapter 1.1 of \cite{Ma4}). In general, for arbitrary $\D$, it may occur that $a$ is $\D$-algebraic over $K$ but $\d_{m}^{k+1}a\notin K\l a,\d_m a,\dots,\d_m^{k}\r_{\D'}$ for every $k$, where $\D'=\D\setminus\{\d_m\}$. However, if we allow linearly independent transformations of $\D$ over $K^{\D}$, an analogous result holds, which we explain now.

Observe that any linear combination of the derivations in $\D$ over the constant field $\U^\D$ is again a derivation on $\U$. As we are working over $K$ we will be considering the $K^\D$-vector space $\operatorname{span}_{K^\D}\D$ of derivations of $\U$ obtained by taking linear combinations of the derivations in $\D$ over the constant field of $(K,\D)$. Note that if $\D'\subseteq \operatorname{span}_{K^\D}\D$, then any $\D$-field extension of $K$ is also a $\D'$-field. One consequence of $\U$ being differentially closed is that $\D$ is a basis for $\operatorname{span}_{K^\D}\D$. Moreover, by verifying the definition of existentially closed, it is not hard to see that if $\D'\subseteq \operatorname{span}_{K^\D}\D$ is a $K^\D$-linearly independent set, then $(\U,\D')$ is itself differentially closed and in fact saturated. Shifting from $\D$ to some other basis for $\operatorname{span}_{K^\D}\D$ is something that turns out to be quite useful. For example:

\begin{fact}[\cite{Ko}, Chap. 2, \S 11]\label{diffal}
Let $a=(a_1,\dots,a_n)$ be a tuple from $\U$ such that each $a_i$ is $\D$-algebraic over $K$. Then there is a natural number $k>0$ and a basis $\D'\cup\{D\}$ of the $K^{\D}$-vector space $\operatorname{span}_{K^\D}\D$ such that $K\l a\r_\D=K\l a, D,\dots, D^{k}a\r_{\D'}$.
\end{fact}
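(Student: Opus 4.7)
The plan is to pass to a new basis $\D'\cup\{D\}$ of $\operatorname{span}_{K^\D}\D$ in which $D$ is a generic $K^\D$-linear combination of $\d_1,\ldots,\d_m$, and then exploit a characteristic set of the prime $\D$-ideal $\I(a/K)$ with respect to a ranking on $\T_{\D'\cup\{D\}}$ that gives highest priority to $D$-derivatives. A key preliminary observation is that any two elements of $\operatorname{span}_{K^\D}\D$ commute: a direct computation using $\d_i\d_j=\d_j\d_i$ together with $\d(c)=0$ for $c\in K^\D$ and $\d\in\D$ shows that $K^\D$-linear combinations of commuting derivations commute.

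First I would reduce the problem to showing that $D^{k+1}a\in F_k:=K\l a,Da,\ldots,D^ka\r_{\D'}$ for some $k$. Granted this, commutativity implies $D\theta'(D^j a)=\theta' D^{j+1}a\in F_k$ for all $j\leq k$ and $\theta'\in\T_{\D'}$, so $D$ stabilizes the $\D'$-generators of $F_k$ and hence all of $F_k$. Thus $F_k$ is closed under $\D'\cup\{D\}$, hence under all of $\D$, and since $F_k$ contains $a$ we get $K\l a\r_\D\subseteq F_k$; the reverse containment is immediate.

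Now I would produce such a $k$ by working one coordinate at a time. For each $s=1,\ldots,n$, pick a nonzero $g_s\in\I(a/K)\cap K\{x_s\}_\D$, which exists since $a_s$ is $\D$-algebraic over $K$. Expanding $g_s$ in the new basis gives a polynomial in $\T_{\D'\cup\{D\}}$-derivatives of the single variable $x_s$, and for generic $D$ its leader in the new ranking is a pure $D$-derivative $D^{E_s}x_s$. Applying the pseudo-reduction procedure together with Rosenfeld's criterion (Fact~\ref{rosen}), some element of the characteristic set $\L$ of $\I(a/K)$ has leader $D^{r_s}x_s$ with $r_s\leq E_s$, and this element expresses $D^{r_s}a_s$ as a $\D'$-rational function of $a_s,Da_s,\ldots,D^{r_s-1}a_s$. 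A straightforward induction using $D$-$\D'$ commutativity then shows $D^t a_s\in K\l a_s,Da_s,\ldots,D^{r_s-1}a_s\r_{\D'}\subseteq F_{r_s-1}$ for every $t\geq r_s$. Taking $k=\max_s r_s-1$, we conclude that $D^{k+1}a\in F_k$, completing the reduction.

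The hard part is the genericity claim: for generic $D$ (and arbitrary fixed $\D'$ complementing $D$ to a basis), each $g_s$ has leader $D^{E_s}x_s$ in the new ranking. The coefficient of the pure term $D^{E_s}x_s$ in the expansion of $g_s$ is a polynomial in the coordinates $c_1,\ldots,c_m$ defining $D$, and one needs to show that this polynomial is not identically zero---which amounts to a delicate combinatorial analysis of how the highest-$D$-exponent terms transform under the change of derivation basis. Given this non-vanishing, avoiding its zero locus is a Zariski-open condition on $(c_1,\ldots,c_m)$ that is satisfiable since $K^\D\supseteq\QQ$ is infinite.
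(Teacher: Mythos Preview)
The paper does not prove this statement; it is stated as a Fact with a citation to Kolchin's book (Chapter~II, \S11), so there is no proof in the paper to compare against. I will therefore assess your argument on its own merits.

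Your reduction step is correct: once $D^{k+1}a\in F_k:=K\langle a,Da,\ldots,D^ka\rangle_{\D'}$, commutativity gives $F_k=K\langle a\rangle_\D$. The genericity idea for $D$ is also the right one. However, the core step has a genuine error. You assert that a characteristic-set element with leader $D^{r_s}x_s$ ``expresses $D^{r_s}a_s$ as a $\D'$-rational function of $a_s,Da_s,\ldots,D^{r_s-1}a_s$.'' In fact such an element only shows that $D^{r_s}a_s$ is \emph{algebraic} over $F_{r_s-1}$, not that it lies in $F_{r_s-1}$. A concrete counterexample: take $m=2$, $D=\d_2$, $\D'=\{\d_1\}$, and $a$ generic for $(\d_2 x)^2=\d_1 x$. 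In the $D$-priority ranking the characteristic set is $\{(Dx)^2-\d_1' x\}$ with leader $Dx$, so $r_1=1$; but $Da=\d_2 a$ satisfies $(Da)^2=\d_1 a$ and is genuinely quadratic over $K\langle a\rangle_{\d_1}$, hence not in $F_0$. Your induction therefore breaks at the first step.

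The repair is to shift up by one and use the separant. If $g_s$ has leader $D^{E_s}x_s$ (pure $D$-power) with $S_{g_s}(a)\neq 0$, then applying $D$ to the identity $g_s(a)=0$ produces a relation in which $D^{E_s+1}a_s$ appears \emph{linearly}, with coefficient $S_{g_s}(a)$, while every other term is a $\D'$-derivative of some $D^j a_s$ with $j\leq E_s$ and hence lies in $F_{E_s}$. Solving yields $D^{E_s+1}a_s\in F_{E_s}$, so $k=\max_s E_s$ works. In the example above this gives $D^2a=\d_1(Da)/(2Da)\in F_1$. Ensuring both that the minimal-rank element of $\I(a/K)\cap K\{x_s\}$ has a pure $D$-power leader \emph{and} that its separant does not vanish at $a$ is exactly the delicate genericity argument you flag at the end --- and it does require real work, as Kolchin's proof shows.
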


Fact~\ref{diffal} implies the following fact which will be of central importance to us.

\begin{fact}[\cite{Ko}, Chap. 2, \S 13]\label{Kolteo}
Let $a$ be a tuple from $\U$. Then there is a set $\D'$ of linearly independent elements of the $K^\D$-vector space $\operatorname{span}_{K^\D}\D$ with $|\D'|=\D$-type$(a/K)$, and a tuple $\al$ from $\U$, such that
\begin{displaymath}
K\la a\ra_{\D}=K\la\al\ra_{\D'}
\end{displaymath}
with $\D'$-type$(\al/K)=\D$-type$(a/K)$ and $\D'$-dim$(\al/K)=\D$-dim$(a/K)$.
\end{fact}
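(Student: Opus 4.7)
The plan is to proceed by induction on $m - \tau$, where $\tau := \D$-type$(a/K)$. In the base case $\tau = m$ there is nothing to prove: take $\D' := \D$ and $\al := a$. Assume henceforth that $\tau < m$. Then $\w_{a/K}$ has degree strictly less than $m$, so its top coefficient $d_m$ (in the binomial basis of Fact~\ref{poli}) vanishes; by Fact~\ref{poli}(iii) this forces $a$ to be $\D$-algebraic over $K$. Fact~\ref{diffal} then supplies a basis $\D^\dag \cup \{D\}$ of the $K^\D$-vector space $\operatorname{span}_{K^\D}\D$, with $|\D^\dag| = m - 1$, together with an integer $k \ge 0$ satisfying $K\la a\ra_\D = K\la a, Da, \ldots, D^k a\ra_{\D^\dag}$. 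Setting $\al_1 := (a, Da, \ldots, D^k a)$, the problem reduces to analysing $\al_1$ in a field viewed with one fewer derivation.

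The crux is to verify that this reduction preserves both invariants; i.e., that $\D^\dag$-type$(\al_1/K) = \tau$ and $\D^\dag$-dim$(\al_1/K) = d_\tau$. For this, I would compare, for large $h$, the transcendence degrees over $K$ of the fields generated by $\T_{\D^\dag}(h)\cdot \al_1$ and by $\T_\D(h)\cdot a$. All elements of $\operatorname{span}_{K^\D}\D$ commute, because $K^\D$-scalars are killed by every $\d \in \D$; consequently each $\t\cdot D^j a$ with $\t \in \T_{\D^\dag}(h)$ and $0 \le j \le k$ is a $K^\D$-linear combination of elements of $\T_\D(h+k)\cdot a$, while the identification $K\la a\ra_\D = K\la \al_1\ra_{\D^\dag}$ expresses each element of $\T_\D(h)\cdot a$ as a rational function over $K$ of derivatives $\t \cdot D^j \al_{1,i}$ with $\t$ of $\D^\dag$-order bounded by $h$ plus a constant. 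Propagating these bounds through the definition of the Kolchin polynomial pins down its degree as $\tau$ and its leading coefficient as $d_\tau$. This transcendence-degree bookkeeping is the main obstacle.

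Granting preservation of invariants, apply the induction hypothesis to $\al_1$ inside the differentially closed (and saturated) $\D^\dag$-field $\U$, which now has $m - 1$ derivations. Since $(m - 1) - \tau < m - \tau$, this yields a set $\D'$ of $\tau$ linearly independent derivations and a tuple $\al$ with $K\la \al_1\ra_{\D^\dag} = K\la \al\ra_{\D'}$ realising the desired type and typical dimension. A small additional check is needed to ensure $\D'$ lies in $\operatorname{span}_{K^\D}\D$ and is $K^\D$-linearly independent there, rather than merely $K^{\D^\dag}$-linearly independent in the a priori larger $\operatorname{span}_{K^{\D^\dag}}\D^\dag$; this can be arranged by formulating the induction uniformly over the fixed $K^\D$-space $\operatorname{span}_{K^\D}\D$, using $\D^\dag \subseteq \operatorname{span}_{K^\D}\D$ and $K^\D \subseteq K^{\D^\dag}$. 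Chaining the identifications gives $K\la a\ra_\D = K\la \al\ra_{\D'}$ with the required invariants, closing the induction.
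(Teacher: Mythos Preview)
The paper does not actually prove this fact; it is stated with a citation to Kolchin's book and only the remark that ``Fact~\ref{diffal} implies the following fact.'' Your inductive reduction via Fact~\ref{diffal} is therefore exactly in the spirit of the paper's hint, and your identification of the transcendence-degree bookkeeping as ``the main obstacle'' is accurate. The preservation of the $\D$-type under the reduction goes through as you sketch.

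However, your claimed bound in one direction --- that each element of $\T_\D(h)\cdot a$ lies in the field generated by $\T_{\D^\dag}(h+c)\cdot\al_1$ for a fixed constant $c$ --- is not correct, and this breaks the preservation of typical dimension. If $D^{k+1}a$ is a $\D^\dag$-rational function of $\al_1$ of $\D^\dag$-order $h_0$, then each application of $D$ raises the $\D^\dag$-order by $h_0$, so the honest bound is of the form $h_0\,h + c$. Feeding this into the Kolchin polynomial comparison yields only $d_\tau \le h_0^{\tau}\cdot \D^\dag\text{-dim}(\al_1/K)$, which does not force equality. More decisively, an arbitrary basis produced by Fact~\ref{diffal} need not preserve typical dimension at all: for the heat variety $\d_2 a = \d_1^2 a$ (with $a$ $\d_1$-transcendental) one has $\D\text{-type}=1$ and $\D\text{-dim}=2$, and the choice $\D^\dag=\{\d_1\}$, $D=\d_2$, $k=0$, $\al_1=a$ satisfies the conclusion of Fact~\ref{diffal} yet gives $\d_1\text{-dim}(a/K)=1$. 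Your induction, as written, could select this bad basis and terminate with the wrong typical dimension.

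Kolchin's argument in \cite[Chap.~II, \S13]{Ko} does not proceed by blind iteration of Fact~\ref{diffal}. He works with a characteristic set of $\I(a/K)_\D$ and chooses the new basis so that the parametric derivatives (those not appearing as leaders) involve only $\tau$ of the derivations; this is what guarantees that the typical dimension is carried along. Your approach can be repaired, but only by importing this extra information about \emph{which} basis to pick at each step --- Fact~\ref{diffal} alone is not enough.
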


Fact~\ref{Kolteo} says that a tuple $a$ has $\D$-type less than or equal to $\tau\in {\mathbb N}$ if and only if there is a set $\D'$ of linearly independent elements of $\operatorname{span}_{K^\D}\D$ with $|\D'|=\tau$ such that $K\l a\r_\D$ is finitely $\D'$-generated over $K$. If in addition the $\D'$-transcendence degree of $K\l a\r_\D$ over $K$ is positive then $\D$-type$(a/K)=\tau$. This motivates the following terminology (which we take the liberty to introduce).

\begin{definition}[\it{Bounding and witnessing the $\D$-type}]\label{boundty}
Let $a$ be a (finite) tuple. A set $\D'$ of linearly independent elements of $\operatorname{span}_{K^\D}\D$ is said to \emph{bound the $\D$-type of $a$ over $K$} if the $\D$-field $K\l a\r_{\D}$ is finitely $\D'$-generated over $K$. Moreover, if $\D'$ bounds the $\D$-type of $a$ over $K$ and $|\D'|=\D$-type$(a/K)$ then we will say that $\D'$ \emph{witnesses the $\D$-type of $a$ over $K$}. 
\end{definition}

So Fact \ref{Kolteo} says that given any finite tuple $a$ we can always find a $\D'$ witnessing the $\D$-type of $a$ over $K$.

\begin{lemma}\label{onb}
Let $a$ be a tuple, $L$ a $\D$-field extension of $K$, and $\D'\subseteq \operatorname{span}_{K^\D}\D$ linearly independent. Suppose $a\ind_K L$. Then $\D'$ bounds $\D$-type$(a/L)$ if and only if it bounds $\D$-type$(a/K)$.
\end{lemma}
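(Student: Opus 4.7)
Both directions rest on the elementary identity $L\cdot K\l d\r_{\D'} = L\l d\r_{\D'}$ for any finite tuple $d$; only the forward direction uses the independence hypothesis.

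\emph{Easy direction.} If $K\l a\r_\D = K\l b\r_{\D'}$ for some finite tuple $b$, then taking the compositum with $L$ gives $L\l a\r_\D = L\cdot K\l a\r_\D = L\cdot K\l b\r_{\D'} = L\l b\r_{\D'}$, so $\D'$ bounds $\D$-type$(a/L)$. This does not use $a \ind_K L$.

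\emph{Forward direction.} Assume $L\l a\r_\D = L\l c\r_{\D'}$ for some finite tuple $c$. Choose $N\in \NN$ large enough so that each coordinate of $c$ is a rational function over $L$ of the finite tuple $b := (\t a:\,\t\in \T_\D(N))\subseteq K\l a\r_\D$. Then for every $\t\in \T_\D$, the element $\t a$, which lies in $L\l a\r_\D = L\l c\r_{\D'}$, is a $\D'$-rational function over $L$ of $c$, and hence of $b$. This yields $K\l a\r_\D\subseteq L\l b\r_{\D'} = L\cdot K\l b\r_{\D'}$. Since $K\l b\r_{\D'}\subseteq K\l a\r_\D$ trivially, the desired equality $K\l a\r_\D = K\l b\r_{\D'}$ (which gives finite $\D'$-generation over $K$) reduces to the intersection equality
\begin{equation*}
K\l a\r_\D\cap L\cdot K\l b\r_{\D'} = K\l b\r_{\D'}.
\end{equation*}

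\emph{Intersection equality and main obstacle.} This is where $a\ind_K L$ is used. By definition, $a\ind_K L$ means $K\l a\r_\D$ is algebraically disjoint from $L$ over $K$; passing to algebraic closures, $K\l a\r_\D^{alg}/K^{alg}$ is regular, and so algebraic disjointness upgrades to linear disjointness of $K\l a\r_\D^{alg}$ and $L^{alg}$ over $K^{alg}$. The standard tower property of linear disjointness, applied to the intermediate subfield $K^{alg}\l b\r_{\D'}\subseteq K\l a\r_\D^{alg}$, gives linear disjointness of $K\l a\r_\D^{alg}$ and $L^{alg}\cdot K^{alg}\l b\r_{\D'}$ over $K^{alg}\l b\r_{\D'}$, and restricting yields the required intersection identity. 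The \textbf{main obstacle} is precisely this field-theoretic bookkeeping — upgrading algebraic to linear disjointness through $K^{alg}$ — but the moral content is transparent: $a\ind_K L$ is exactly what allows one to descend the $L$-coefficients witnessing $\D'$-generation over $L$ to $K$-coefficients witnessing $\D'$-generation over $K$, via $b=(\t a:\t\in \T_\D(N))$.
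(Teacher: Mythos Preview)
Your easy direction is correct. The forward direction, however, has a genuine gap at the very step you flag as the ``main obstacle'': the claim that linear disjointness over $K^{alg}$, once established, ``restricts'' to the intersection identity $K\l a\r_\D\cap L\cdot K\l b\r_{\D'}=K\l b\r_{\D'}$ is false. What your argument actually yields is
\[
K\l a\r_\D \;\subseteq\; K\l a\r_\D^{alg}\cap L^{alg}\cdot K^{alg}\l b\r_{\D'} \;=\; K^{alg}\l b\r_{\D'},
\]
i.e.\ only that $K\l a\r_\D$ is \emph{algebraic} over $K\l b\r_{\D'}$, not that it equals $K\l b\r_{\D'}$. And in fact the stronger equality can fail for your choice of $b$. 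Take $\D=\{\d_1,\d_2\}$, $\D'=\{\d_1\}$, $K=\QQ$, $L=\QQ(\sqrt 2)$, and let $a$ be $\d_1$-generic satisfying $\d_2 a=\sqrt 2\,\d_1 a$. Then $L\l a\r_\D=L\l a\r_{\d_1}$, so one may take $c=a$ and $N=0$, whence $b=a$. But $K\l a\r_\D=\QQ(\sqrt 2)\l a\r_{\d_1}$ strictly contains $K\l b\r_{\D'}=\QQ\l a\r_{\d_1}$ (the latter is purely transcendental over $\QQ$ and omits $\sqrt 2=\d_2 a/\d_1 a$). So the intersection identity you want is simply not true with this $b$; independence $a\ind_K L$ holds trivially here since $L/K$ is algebraic, and the desired conclusion does hold --- just via the larger generating set $(a,\d_2 a)$.

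The paper's proof avoids this problem by never asserting $K\l a\r_\D=K\l \alpha\r_{\D'}$. Instead it enlarges the generating tuple to $(\alpha,\d_1\alpha,\dots,\d_m\alpha)$ and shows directly that $K\l \alpha,\d_1\alpha,\dots,\d_m\alpha\r_{\D'}$ is closed under each $\d\in\D$. The independence hypothesis is used not via linear disjointness but via algebraic disjointness: for $w$ in this field one writes $w$ as algebraic over $L(\rho)$ for some finite tuple $\rho$ of $\D'$-derivatives of $\alpha$, descends this algebraic dependence from $L$ to $K$, and then computes $\d w\in K(w,\rho,\d\rho)$, the point being that $\d\rho$ lands among $\D'$-derivatives of the $\d_i\alpha$'s. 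Your route is salvageable --- once you have $K\l a\r_\D$ algebraic over $K\l b\r_{\D'}$, the same closure-under-$\D$ argument with the enlarged tuple $(b,\d_1 b,\dots,\d_m b)$ finishes the job --- but as written, the descent from $K^{alg}$ back to $K$ is missing exactly the step that does the real work.
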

\begin{proof}
Assume $\D'$ bounds $\D$-type$(a/L)$. Then we can find $\alpha$ a tuple of the form $$(a,\t_1 a,\dots,\t_s a:\, \t_i\in \T_\D, s<\omega)$$ such that $L\l a\r_{\D}=L\l \alpha \r_{\D'}$; here if $a=(a_1,\dots,a_n)$ and $\t \in \T_\D$ then $\t a=(\t a_1,\dots,\t a_n)$. It suffices to prove the following: 

\vspace{.07in}

\noindent {\bf Claim.} $K\l a\r_{\D}=K\l \alpha,\d_{1}\alpha,\dots,\d_m\alpha\r_{\D'}$.

\vspace{.05in}

\noindent Let $w\in K\l \alpha,\d_{1}\alpha,\dots,\d_m\alpha\r_{\D'}$, we need to show that $\d w\in K\l \alpha,\d_{1}\alpha,\dots,\d_m\alpha\r_{\D'}$ for all $\d\in \D$. Since $w\in L\l \alpha \r_{\D}=L\l \alpha\r_{\D'}$, then $w=f(\alpha)$ for some $\D'$-rational function $f$ over $L$. Thus, there is a tuple $\beta$ of the form $(\al,\t_1'\al,\dots,\t_r'\al:\, \t_i'\in \T_{\D'}, r< \omega)$ such that $w=g(\beta)$ for some rational function $g$ over $L$. Let $\rho$ be a tuple whose entries form a maximal algebraically independent (over $K$) subset of the set consisting of the entries of $\beta$. Then $(w,\rho)$ is algebraically dependent over $L$. But since $a\ind_K L$, $K\l a\r_\D$ is algebraically disjoint from $L$ over $K$. Thus, since $(w,\rho)$ is from $K\l a\r_{\D}$, we get that $(w,\rho)$ is algebraically dependent over $K$. Thus, there is $h\in K[x,y]$ such that $h(w,\rho)=0$. Moreover, since $\rho$ is algebraically independent over $K$, $w$ appears non trivially in $h(w,\rho)$. Hence, $w$ is algebraic over $K(\rho)$ and so $\d w\in K(w,\rho,\d\rho)\subseteq K\l \alpha,\d_{1}\alpha,\dots,\d_m\alpha\r_{\D'}$.
\end{proof}

\begin{definition}\label{defboset}
Let $X$ be a $K$-definable set. A linearly independent subset $\D'\subseteq \operatorname{span}_{K^\D}\D$ is said to \emph{bound the $\D$-type of $X$} if $\D'$~bounds $\D$-type$(a/K)$ for all $a\in X$. We say $\D'$ \emph{witnesses the $\D$-type of $X$} if $\D'$ bounds the $\D$-type of $X$ and $|\D'|=\D$-type$(X)$.
\end{definition}

By Lemma \ref{onb}, this definition does not depend on the choice of $K$. In other words, $\D'$ bounds the $\D$-type$(X)$ if and only if there is a $\D$-field $F$, over which $X$ and $\D'$ are defined, such that $\D'$ bounds the $\D$-type$(a/F)$ for all $a\in X$. Indeed, suppose that $\D'$-bounds the $\D$-type of $X$, $F$ is a $\D$-field over which $X$ and $\D'$ are defined, and $a\in X$. One can assume, without loss of generality, that $F<K$. Let $b$ be a tuple from $\U$ such that $b\ind_F K$ and $tp(b/F)=tp(a/F)$. By assumption, $\D'$ bounds the $\D$-type$(b/K)$, and so, by Lemma~\ref{onb}, $\D'$ bounds the $\D$-type$(b/F)$. Hence, $\D'$ bounds $\D$-type$(a/F)$.

\begin{remark}\label{witge}
It is not clear (at least to the author) that every definable set has a witness to its $\D$-type. However, for definable groups one can always find such a witness. To see this let $G$ be a $K$-definable group. Note that, since the property ``$\D'$ witnesses the $\D$-type'' for definable sets is preserved under definable bijection, a witness to the $\D$-type of the connected component of $G$ will be a witness to the $\D$-type of $G$. Hence, we may assume that $G$ is connected. Let $p$ be the generic type of $G$ over $K$. By Fact \ref{Kolteo}, there is $\D'$, a linearly independent subset of $\operatorname{span}_{K^\D}\D$, such that for any $a\models p$ the $\D$-field $K\l a\r_{\D}$ is finitely $\D'$-generated over $K$. Let us check that this $\D'$ witnesses the $\D$-type of $G$. Let $g\in G$, then there are $a,b\models p$ such that $g=a\cdot b$. Indeed, choose $a\models p$ such that $a\ind_K g$ and let $b=a^{-1}\cdot g$ (see \S 7.2 of \cite{Ma2} for details). Thus $K\l g\r_{\D}\leq K\l a,b\r_{\D}$, but since the latter is finitely $\D'$-generated over $K$ the former is as well.
\end{remark}

We conclude this section with a remark on how the differential type interacts with the model theoretic notion of internality. Let $X$ be a $K$-definable set. A complete type $tp(a/K)$ is said to be \emph{internal to $X$} is there is $\D$-field extension $L$ of $K$ with $a\ind_K L$ and a tuple $c$ from $X$ such that $a\in \operatorname{dcl}(L,c)=L\l c\r_\D$.

\begin{lemma}\label{onint}
Let $X$ be a $K$-definable set and suppose that $tp(a/K)$ is $X$-internal. Then
\begin{enumerate}
\item $\D\text{-type}(a/K)\leq \D\text{-type}(X)$.
\item If $\D'$ bounds $\D$-type$(X)$ then $\D'$ also bounds $\D$-type$(a/K)$.
\end{enumerate}
\end{lemma}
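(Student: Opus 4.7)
The argument begins from the definition of internality: there exist a $\Delta$-field extension $L$ of $K$ with $a \ind_K L$, and a tuple $c$ from $X$ such that $a \in L\langle c \rangle_\Delta$.

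For part~(1), the plan is to chain four inequalities on the $\Delta$-type. By Fact~\ref{poli}(iv) applied to $a \in L\langle c \rangle_\Delta$, we get $\omega_{a/L}(h) \leq \omega_{c/L}(h+h_0)$ for some $h_0$ and all sufficiently large $h$, hence $\Delta$-type$(a/L) \leq \Delta$-type$(c/L)$. The independence $a \ind_K L$ gives algebraic disjointness of $K\langle a \rangle_\Delta$ from $L$ over $K$; this makes each $K(\t a : \t \in \T_\Delta(h))$ algebraically disjoint from $L$ over $K$, so $\omega_{a/K} = \omega_{a/L}$ and consequently $\Delta$-type$(a/K) = \Delta$-type$(a/L)$. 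Enlarging the base can only decrease transcendence degree, so $\omega_{c/L}(h) \leq \omega_{c/K}(h)$ and $\Delta$-type$(c/L) \leq \Delta$-type$(c/K)$. Finally, $c \in X$ and $X$ being $K$-definable force $\Delta$-type$(c/K) \leq \Delta$-type$(X)$ by definition. Concatenating yields $\Delta$-type$(a/K) \leq \Delta$-type$(X)$.

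For part~(2), the remark following Definition~\ref{defboset} lets us apply the hypothesis with base field $L$ (since $L$ contains $K$, a $\Delta$-field of definition for both $X$ and $\Delta'$), giving that $L\langle c \rangle_\Delta$ is finitely $\Delta'$-generated over $L$. After possibly enlarging, we choose a $\Delta'$-generator $\gamma$ consisting of $\T_\Delta$-derivatives of $c$, so that $L\langle c \rangle_\Delta = L\langle \gamma \rangle_{\Delta'}$. Since $L\langle \gamma \rangle_{\Delta'}$ is $\Delta$-closed, each $\delta \gamma$ for $\delta \in \Delta$ is a $\Delta'$-rational function of $\gamma$ over $L$; this yields a differential system constraining all $\T_\Delta$-derivatives of $\gamma$ within $L\langle \gamma \rangle_{\Delta'}$. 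Since $a \in L\langle \gamma \rangle_{\Delta'}$, a $\Delta$-chain rule computation then expresses every $\T_\Delta$-derivative of $a$ as a $\Delta'$-rational function of a fixed finite collection of $\T_\Delta$-derivatives of $a$, exhibiting $L\langle a \rangle_\Delta$ as finitely $\Delta'$-generated over $L$. Invoking Lemma~\ref{onb} with $a \ind_K L$ then upgrades this to $\Delta'$ bounding $\Delta$-type$(a/K)$.

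The main obstacle is the generator-chasing in part~(2): in general, a $\Delta'$-subfield of a finitely $\Delta'$-generated $\Delta'$-field need not be finitely $\Delta'$-generated, so the argument must essentially exploit that $L\langle a \rangle_\Delta$ is itself $\Delta$-closed inside the $\Delta$-closed ambient $L\langle c \rangle_\Delta$. The differential system described above encodes precisely this compatibility between the $\Delta$- and $\Delta'$-structures, allowing the orders of the $\T_{\Delta'}$-derivatives needed to express arbitrary $\T_\Delta$-derivatives of $a$ to be uniformly controlled.
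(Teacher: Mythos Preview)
Your argument for part~(1) is correct and essentially identical to the paper's: the chain
\[
\D\text{-type}(a/K)=\D\text{-type}(a/L)\leq \D\text{-type}(c/L)\leq \D\text{-type}(X)
\]
is exactly what the paper writes, with the equality coming from $a\ind_K L$ and the first inequality from Fact~\ref{poli}(iv). Your extra intermediate step through $\D$-type$(c/K)$ is harmless but unnecessary, since $X$ is also $L$-definable and the Kolchin polynomial of $X$ is independent of the defining field.

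For part~(2) there is a genuine gap in your chain-rule step. Writing $a$ as a $\D'$-rational function of $\gamma$ and using that each $\d\gamma$ is a $\D'$-rational function of $\gamma$, the chain rule indeed expresses every $\T_\D$-derivative of $a$ as a $\D'$-rational function of $\gamma$ over $L$. But $\gamma$ is a tuple of $\T_\D$-derivatives of $c$, not of $a$; nothing in this computation produces an expression of $\t a$ in terms of a \emph{fixed finite collection of $\T_\D$-derivatives of $a$}. All you have recovered is the containment $L\l a\r_\D\subseteq L\l\gamma\r_{\D'}$, which was already known. Your concluding paragraph correctly diagnoses the obstacle --- a $\D'$-subfield of a finitely $\D'$-generated $\D'$-field need not be finitely $\D'$-generated --- but the ``uniform control of orders'' you invoke is asserted rather than established.

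The paper's proof of~(2) bypasses this entirely by appealing (implicitly) to a standard fact of differential algebra: in characteristic zero, any intermediate $\D'$-field of a finitely $\D'$-generated $\D'$-field extension is itself finitely $\D'$-generated (this follows from the Ritt--Raudenbush basis theorem; see Kolchin, \emph{Differential Algebra and Algebraic Groups}, Chap.~II--III). Since $L\l a\r_\D$ sits between $L$ and $L\l c\r_\D$, and the latter is finitely $\D'$-generated over $L$, so is $L\l a\r_\D$. Then Lemma~\ref{onb} transfers this from $L$ down to $K$, exactly as you wrote in your final sentence.
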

\begin{proof} \

\noindent (1)  For any $d=(d_1,\dots,d_r)$, $\D$-type$\displaystyle(d/K)=\max_{i\leq r}\{\D$-type$(d_i/K)\}$ (see for example Lemma 3.1 of \cite{Moo}). Now let $L$ be a $\D$-field extension of $K$ such that $a\ind_K L$, and $c$ from $X$ such that $a\in L\l c\r_\D$. Hence,
\begin{displaymath}
\D\text{-type}(a/K)=\D\text{-type}(a/L)\leq \D\text{-type}(c/L)\leq \D\text{-type}(X),
\end{displaymath}
where the equality follows from Lemma \ref{onb} and the first inequality holds by Fact~\ref{poli}~(iv).

\noindent (2) Assume $\D'$ bounds $\D$-type$(X)$, then $L\l c\r_{\D}$ is finitely $\D'$-generated over $L$. Since $L\l a\r_{\D}\leq L \l c\r_{\D}$, then $L\l a\r_{\D}$ is also finitely $\D'$-generated over $L$. But $a\ind_K L$, so, by Lemma \ref{onb}, $\D'$ bounds the $\D$-type of $a$ over $K$.
\end{proof}

\section{Abstract differential algebraic varieties}

In Section~\ref{affi} we defined the notion of an affine differential algebraic variety. While these affine objects are quite concrete and provide the proper setting for the presentation of most of this thesis, in Sections \ref{relDgroup} and \ref{galex} we will need a more general notion of differential algebraic variety. In this section we discuss abstract differential algebraic varieties as presented in \cite{Pi4} (see Chap. 7 of \cite{Ma2} for the algebraic case). We then naturally extend the concepts of Sections \ref{affi} and \ref{polype} to this more general setting.

We continue to work in a sufficiently large saturated $(\U,\D)\models DCF_{0,m}$ and over a base $\D$-subfield $K$.

By a \emph{quasi-affine $\D$-variety} defined over $K$ we mean a $\D$-open subset of an affine $\D$-variety, where both are defined over $K$. We define $\D$-regular maps between quasi-affine $\D$-varieties in the natural way (i.e., as in Definition~\ref{regmaps}). The idea behind the construction of abstract $\D$-varieties is to build them up from affine ones (or even quasi-affine) in the same way that manifolds are built from open sets of $\mathbb{R}^n$ or $\mathbb{C}^n$. This is the analogue of Weil's definition of abstract algebraic varieties.

\begin{definition}\label{absvar}
An \emph{abstract $\D$-algebraic variety} (or \emph{$\D$-variety} for short) is a topological space $V$ equipped with a finite open cover $\{V_i\}_{i=1}^s$ and homeomorphisms $f_i:V_i\to U_i$, where $U_i$ is an affine $\D$-variety, such that
\begin{enumerate}
\item [(i)] $U_{ij}=f_i(V_i\cap V_j)$ is a $\D$-open subset of $U_i$, and
\item [(ii)] $f_{ij}=f_i\comp f_j^{-1}:U_{ji}\to U_{ij}$ is a $\D$-regular map of quasi-affine varieties.
\end{enumerate}
The pairs $(V_i,f_i)$ are called the charts of the $\D$-variety. Hence, a $\D$-variety $V$ is given by the information $(V,\{V_i\},\{f_i\})$. We say $V$ is defined over $K$ if the $U_i$'s and the $f_{ij}$'s are defined over $K$.
\end{definition}

It is easy to see that affine, and more generally quasi-affine, $\D$-varieties are abstract $\D$-varieties. 

\begin{definition}
Suppose $(V,\{V_i\},\{f_i\})$ and $(W,\{W_j\},\{g_j\})$ are $\D$-varieties defined over $K$. We say that a map $h:V\to W$ is $\D$-regular if $g_j\comp h\comp f_i^{-1}:f_i(V_i)\to g_j(W_j)$ is a $\D$-regular map of affine $\D$-varieties for each $i,j$. We say that $h$ is defined over $K$ if all the $g_j\comp h\comp f_i^{-1}$ are defined over $K$.
\end{definition}

Let $(V,\{V_i\},\{f_i\})$ be a $\D$-variety. By an \emph{affine open set} of $V$ we mean an open subset of $V$ of the form $f_i^{-1}(W)$ for some $i$ and $W$ a $\D$-open subset of $f_i(V_i)$. Any open subset of $V$ is a finite union of affine open subsets. This implies that the topology of $V$ is Noetherian. We can define irreducibility and $K$-irreducibility of $V$ in the natural way. It follows, from Noetherianity, that if $V$ is defined over $K$ then it has a unique irredundant decomposition into $K$-irreducible $\D$-varieties.

As the reader might expect, since abstract $\D$-varieties are modelled by affine ones, in most cases one can reduce constructions and proofs of statements about abstract $\D$-varieties to the affine case. More precisely, one chooses an affine cover and works with coordinates in affine space using a local chart. We will however, in some cases, carry out certain constructions and proofs to make the concepts clear.

We now extend some of the concepts that were defined for affine $\D$-varieties.

Let $(V,\{V_i\},\{f_i\})$ be a $K$-irreducible $\D$-variety. A point $a\in V$ is said to be a \emph{$\D$-generic point of $V$ over $K$} if it is not contained in any proper $\D$-subvariety defined over $K$. As in the affine case, $\D$-generic points always exist. Any $a$ of the form $f_i^{-1}(b)$, where $b$ is a $\D$-generic point over $K$ of the affine $K$-irreducible $\D$-variety $f_i(V_i)$, is a $\D$-generic point of $V$ over $K$. Moreover, for every $i$, any $\D$-generic point of $V$ will be of this form. Hence, if we identify any two $\D$-generic points over $K$ with their images under the same $f_i$, they will have the same complete type over $K$. We define the \emph{$\D$-generic type of $V$ over $K$} to be this complete type. Notice that the $\D$-generic type depends on the embedding $f_i$. However, if $p_i$ and $p_j$ are the $\D$-generic types of $V$ over $K$ with respect to $f_i$ and $f_j$, then there are $a_i\models p_i$ and $a_j\models p_j$ such that $a_i$ and $a_j$ are interdefinable over $K$ (to see this simply use the transition functions $f_{ij}$). 

Let $(V,\{V_i\},\{f_i\})$ be a $\D$-variety defined over $K$ and $a\in V$. We define the \emph{Kolchin polynomial} of $a$ over $K$ to be 
$\w_{a/K}=\max\{\w_{f_i(a)/K}: i=1,\dots,s\}$. The \emph{$\D$-type and typical $\D$-dimension} of $a$ over $K$ are defined in the obvious way. It follows, using the transition functions $f_{ij}$, that $\D$-type$(a/K)=\D$-type$(f_i(a)/K)$ and $\D$-dim$(a/K)=\D$-dim$(f_i(a)/K)$ for any $i$. That is, the notions of $\D$-type and typical $\D$-dimension of $a$ over $K$ do not depend on the embedding.

The \emph{Kolchin polynomial} of $V$ is defined as
$$\w_V=\max\{\w_{f_i(V_i)}:\, i=1,\dots,s\}.$$
We define the \emph{$\D$-type and typical $\D$-dimension} of $V$ in the obvious way. It follows that if $V$ is $K$-irreducible then $\D$-type$(V)=\D$-type$(p)$ and $\D$-dim$(V)=\D$-dim$(p)$ where $p$ is the $\D$-generic type of $V$ over $K$ with respect to any embedding.

Let $a\in V$. A set $\D'$ of linearly independent elements of $\operatorname{span}_{K^\D}\D$ is said to \emph{bound the $\D$-type of $a$ over $K$} if $\D'$ bounds the $\D$-type of $f_i(a)$ over $K$ for some $i$. Since the property ``$\D'$ bounds the $\D$-type'' is preserved under interdefinability over $K$, we get that $\D'$ bounds $\D$-type$(a/K)$ if and only if it bounds $\D$-type$(f_j(a)/K)$ for every $j$. We say that $\D'$ \emph{bounds the $\D$-type of $V$} if $\D'$ bounds the $\D$-type of each $f_i(V_i)$. The notion of \emph{witness} of the $\D$-type is defined in the obvious way (see Definitions~\ref{boundty} and \ref{defboset}).

Given an abstract $\D$-variety $(V,\{V_i\},\{f_i\})$, we can construct the $\D$-tangent bundle of $V$ as an abstract $\D$-variety. One patches together all the affine pieces $T_\D f_i(V_i)$ to form a $\D$-variety where the transitions are given by the differentials of the $f_{ij}$'s. For a more detailed description of this type of constructions, we refer the reader to the end of Section~\ref{defrel} below, where we perform the construction of the \emph{relative prolongation} of $V$.

\begin{definition}
An (abstract) $\D$-algebraic group is a group object in the category of $\D$-varieties. In other words, a $\D$-algebraic group is a group whose underlying universe $G$ is a $\D$-variety and the group operation $p:G\times G\to G$ is a $\D$-regular map. We say the $\D$-algebraic group is defined over $K$, if $G$ and $p$ are over $K$.
\end{definition} 

Let us finish this section with a couple of remarks about $\D$-algebraic groups.

\begin{remark}\label{witg} \
\begin{enumerate} 
\item [(i)] By Remark~\ref{witge}, every $\D$-algebraic group has a witness to its $\D$-type.
\item [(ii)] Every $\D$-algebraic group defined over $K$ is isomorphic to a $K$-definable group in $(\U,\D)$. Indeed, suppose $(G,\{V_i\},\{f_i\})$ is a $\D$-algebraic group over $K$. Let $H$ be the disjoint union of the $f_i(V_i)$ quotiented by the $K$-definable equivalence relation $E$ induced by the transitions $f_{ij}$. By elimination of imaginaries (Fact~\ref{onty}~(iv)) for $DCF_{0,m}$, $H$ is $K$-definable group which is isomorphic to $G$. Due to this fact, we think of $\D$-algebraic groups as being definable sets in $(\U,\D)$. Conversely, every $K$-definable group is definably isomorphic to a $\D$-algebraic group over $K$. The proof of this requires much more effort and we refer the reader to \cite{Pi4} for details.
\end{enumerate}
\end{remark}

\chapter{Relative prolongations, D-varieties and D-groups}\label{chapro}

In this chapter we introduce the notions of \emph{relative} prolongation, D-variety, and D-group. We present their basic properties and show that every differential algebraic variety which is not of maximal differential type is differentially birational to the sharp points of a relative D-variety. This material will be used in Chapters \ref{chapgal} and \ref{chapaxioms}. We begin, however, with a review of this theory in the ordinary case.

\section{A review of prolongations and D-varieties in $DCF_0$}\label{repro}

In this section we briefly review the notions/properties of prolongations and D-varieties over an ordinary differential field. For more details (and proofs) on the material reviewed in this section we suggest \cite{PiPi}, \cite{Ma3}, \cite{PiT}, \cite{Pi7} and \cite{KoP}.

Fix a sufficiently large saturated $(\U,\d)\models DCF_0$ and a small $\d$-subfield $K$. Recall that we identify all affine algebraic varieties with their set of $\U$-points and that we denote by $\U^\d$ and $K^\d$ the field of constants of $\U$ and $K$, respectively.

\begin{definition}\
Let $V\subseteq \U^n$ be an algebraic variety defined over $K$. The \emph{prolongation of} $V$, denoted by $\tau V$, is the subvariety of $\U^{2n}$ defined by
\begin{equation}\label{eqofprol}
f(x)=0 \quad \text{ and } \quad \sum_{i=1}^n\frac{\partial f}{\partial x_i}(x)u_i +f^\d(x)=0
\end{equation}
for all $f$ in $\I(V/K)$, where $x=(x_1,\dots,x_n)$, $u=(u_1,\dots,u_n)$. Here $f^\d$ is the polynomial obtained by applying $\d$ to the coefficients of $f$. The map $\pi:\tau V\to V$ denotes the projection onto the first $n$ coordinates and, for each $a\in V$, $\tau V_a$ denotes the fibre (with respect to $\pi$) of $\tau V$ above $a$.
\end{definition}

\begin{remark}\label{nothard}
It is not hard to verify that in the definition of $\tau V$ it suffices to consider the equations~\ref{eqofprol} as $f$ ranges over any set of polynomials generating $\I(V/K)$, rather than all of $\I(V/K)$.
\end{remark}

Note that if $a\in V$ then $(a,\d a)\in \tau V$. This allows us to define a map $\nabla=\nabla_\d^V:V\to \tau V$ given on points by $a\mapsto (a,\d a)$. It is clear that $\nabla$ is a $\d$-regular section of $\pi:\tau V\to V$; that is, $\pi\comp\nabla=\operatorname{Id}_V$. 

In the case that $V$ is defined over the constants $K^\d$, inspecting the equations~\ref{eqofprol} and using Remark~\ref{nothard} above, we see that $\tau V$ is simply the tangent bundle $TV$. In general, $\pi:\tau V\to V$ is a \emph{torsor} under $\rho:TV\to V$; that is, there is a regular map $TV \times_V\tau V\to \tau V$ inducing a regular action (by translation) of $TV_a$ on $\tau V_a$ for all $a\in V$. For this reason the prolongation is often thought of as a \emph{twisted} version of the tangent bundle.

The points of the prolongation also have the following differential algebraic interpretation.

\begin{fact}\label{propro}
Suppose $a\in V$ is a generic point of $V$ over $K$ (in the sense of algebraic geometry). Then the points $b\in \tau V_a$ correspond to derivations $\d':K(a)\to \U$ that extend $\d$ and $\d' a=b$.
\end{fact}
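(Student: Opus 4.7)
The plan is to establish the bijection directly from the defining equations of $\tau V$. Fix $a$ a generic point of $V$ over $K$ and $b \in \tau V_a$, so by definition $b \in \U^n$ satisfies
$$\sum_{i=1}^n \frac{\partial f}{\partial x_i}(a) b_i + f^\delta(a) = 0 \quad \text{for all } f \in \I(V/K).$$
The goal is to produce a derivation $\d': K(a) \to \U$ extending $\d$ with $\d'(a) = b$, and to show this map $b \mapsto \d'$ is a bijection with the set of such derivations.

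First, I would define $\d'$ on $K[a]$ as follows: given $c = P(a)$ for some $P \in K[x]$, set $\d'(c) := \sum_i \frac{\partial P}{\partial x_i}(a) b_i + P^\delta(a)$. The critical issue is well-definedness: if $P(a) = Q(a)$, then $f := P - Q$ vanishes at $a$, and by genericity of $a$ this means $f \in \I(V/K)$. The fact that $(a,b) \in \tau V$ then gives precisely $\sum \partial f/\partial x_i(a) b_i + f^\delta(a) = 0$, showing that $\d'(P(a)) = \d'(Q(a))$. Additivity and the Leibniz rule on $K[a]$ follow from a routine computation using the corresponding identities for partial derivatives and for $\d$ on $K$. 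Since $K(a)$ is the fraction field of $K[a]$ (or more precisely, since $K[a]$ is an integral domain sitting inside $\U$), the derivation extends uniquely to $K(a)$ by the quotient rule, and clearly $\d'|_K = \d$ and $\d'(a) = b$.

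For the converse direction, suppose $\d': K(a) \to \U$ is any derivation extending $\d$ with $\d'(a) = b$. For any $f \in \I(V/K)$, applying $\d'$ to the identity $f(a) = 0$ and using the chain rule (together with $\d'|_K = \d$, which produces the $f^\delta$ term) yields exactly
$$\sum_{i=1}^n \frac{\partial f}{\partial x_i}(a) b_i + f^\delta(a) = 0,$$
so $(a,b) \in \tau V$, i.e., $b \in \tau V_a$. This shows the assignments $b \mapsto \d'$ and $\d' \mapsto \d'(a)$ are mutually inverse.

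I do not anticipate a serious obstacle here; the only subtle point is the use of genericity of $a$, which is what identifies the polynomial relations satisfied by $a$ over $K$ with $\I(V/K)$. Without this, one would only get an implication in one direction. Everything else reduces to manipulations of the Leibniz rule and the chain rule, and the construction of $\d'$ is a standard argument that a derivation on a polynomial ring is determined by its values on generators subject to the vanishing of the differential of each relation.
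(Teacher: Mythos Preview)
Your argument is correct and is the standard one. The paper itself does not prove this fact; it is stated in a review section with references to the literature, so there is no proof in the paper to compare against directly. That said, your approach is exactly the specialisation to $\D=\emptyset$ of the machinery the paper develops in the next section: Lemma~\ref{exten1} gives your converse direction (applying the derivation to $f(a)=0$ via the chain rule), and Fact~\ref{exx} is the general statement that a solution $b$ to the system $\et f_a(u)=0$ for $f\in\I(a/R)_\D$ determines a unique extending $\D$-derivation with $D'a=b$. Your well-definedness check via genericity (identifying $\{f:f(a)=0\}$ with $\I(V/K)$) is precisely the point, and is what the paper packages into the hypothesis $f\in\I(a/R)$ in the general version.
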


One consequence of Fact~\ref{propro}, using also that $(\U,\d)\models DCF_0$, is that if $V$ and $W\subseteq \tau V$ are irreducible affine algebraic varieties, such that $W$ projects dominantly onto $V$, then $\nabla(V)$ is Zariski-dense in $W$. In fact, this latter property characterizes the models of $DCF_0$; these are the Pierce-Pillay axioms which will be discussed in Section~\ref{piax} below.

Let $f:V\to W$ be a regular map between affine algebraic varieties, where $f=(f_1,\dots,f_s)$. Then one can define a regular map $\tau f:\tau V\to \tau W$ given by
\begin{displaymath}
\tau f(x,u)=\left(f_1(x),\dots,f_s(x), \sum_{i=1}^n\frac{\partial f_1}{\partial x_i}(x)u_i +f_1^\d(x),\dots,\sum_{i=1}^n\frac{\partial f_s}{\partial x_i}(x)u_i +f_s^\d(x)\right).
\end{displaymath} 
One can in fact show that $\tau$ is a (covariant) functor from the category of affine varieties to itself. 

Another way to think of $\tau V$ is to identify it with the $\U[\epsilon]/(\epsilon)^2$-points of $V\times_K R$, where $R$ denotes the ring of dual numbers $K[\epsilon]/(\epsilon)^2$ equipped with the $K$-algebra structure $e:K\to R$ given by $e(a)= a+\d (a) \epsilon$. One can formalize this approach by defining $\tau V$ as the Weil restriction of $V\times_K R$ from $K[\epsilon]/(\epsilon)^2$ to $K$. Note that the base change is with respect to the $K$-algebra structure $e:K\to R$ while the Weil restriction is with respect to the standard $K$-algebra structure of $K[\epsilon]/(\epsilon)^2$. This approach is taken and generalized by Moosa and Scanlon in the series of papers \cite{Moo2}, \cite{Moo3} and \cite{MS}.

More generally, one can define the prolongation of an abstract algebraic variety $V$ defined over $K$ by patching together the prolongations of an affine covering of $V$. This construction yields the prolongation of $V$ as an abstract algebraic variety $\tau V$ defined over $K$ with a canonical projection $\pi:\tau V\to V$. Then, all the results of this section also hold for abstract algebraic varieties defined over $K$.

When studying finite dimensional $\d$-algebraic varieties it is convenient to work in the following geometric category introduced by Buium \cite{Buium} (see also \cite{Pi7}).

\begin{definition}
An \emph{algebraic D-variety} is a pair $(V,s)$ where $V$ is an (abstract) algebraic variety and $s:V\to\tau V$ is a regular section of $\pi:\tau V\to V$. The D-variety $(V,s)$ is said to be defined over $K$ if $V$ and $s$ are defined over $K$. The \emph{set of sharp points of} $(V,s)$ is defined as $(V,s)^\#=\{a\in V: \, s(a)=\nabla a\}$. A \emph{morphism of D-varieties} $(V,s)$ and $(W,t)$ is a regular map $f:V\to W$ such that $\tau f\comp s=t\comp f$.
\end{definition}

\begin{fact} \label{onit} \
\begin{enumerate}
\item [(a)] The set of sharp points of a D-variety $(V,s)$ is a finite dimensional Zariski-dense subset of $V$. 
\item [(b)] If $W$ is a finite dimensional irreducible $\d$-variety defined over $K$, then there exists an algebraic D-variety $(V,s)$ defined over $K$ such that $W$ is $\d$-birationally equivalent (over $K$) to $(V,s)^\#$.
\end{enumerate}
\end{fact}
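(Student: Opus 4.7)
\emph{Part (a).} Working in an affine chart and writing $s=(\operatorname{Id},s_2)$, the sharp points of $(V,s)$ are precisely the solutions in $V$ of the system $\d x_i=s_{2,i}(x)$ for $i=1,\dots,n$. Hence every $a\in(V,s)^\#$ satisfies $K\la a\ra_\d=K(a)$, a finitely generated pure field extension of $K$, so $(V,s)^\#$ is of $\d$-type zero and thus finite dimensional. For Zariski density, I would let $U\subseteq V$ be any nonempty $K$-open subset and take $a$ an algebraic generic point of $U$ over $K$. By Fact~\ref{propro}, the point $s(a)\in\tau V_a$ corresponds to a derivation $\d'$ on $K(a)$ extending $\d$ with $\d'(a)=s_2(a)$; in the $\d$-field $(K(a),\d')$ the point $a$ lies in $U$ and satisfies $\d'x=s_2(x)$. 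Since $(\U,\d)$ is differentially closed, hence existentially closed, the consistent system ``$x\in U \wedge \d x=s_2(x)$'' is realised in $\U$, giving a sharp point of $(V,s)$ inside $U$.

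\emph{Part (b).} My plan is constructive. Pick a $\d$-generic point $a$ of $W$ over $K$; because $W$ is finite dimensional, $K\la a\ra_\d$ has finite transcendence degree over $K$, so I can choose a finite tuple $b=(b_1,\dots,b_n)$ generating $K\la a\ra_\d$ as a pure field extension of $K$. Then each $\d b_i$ lies in $K(b)$, yielding rational functions $g_i\in K(x)$ with $\d b_i=g_i(b)$. Let $V$ be the Zariski locus of $b$ over $K$ and consider the rational map $s:V\to\tau V$ defined by $x\mapsto (x,g_1(x),\dots,g_n(x))$.

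To see that $s$ is indeed a rational section of $\tau V\to V$, I would differentiate the polynomial relations defining $V$: for any $f\in\I(V/K)$ the identity $f(b)=0$ gives, upon applying $\d$, the relation $\sum_{i=1}^n(\partial f/\partial x_i)(b)\,g_i(b)+f^\d(b)=0$, which is exactly the defining equation of $\tau V$ evaluated at $(b,g(b))$. Restricting to a nonempty $K$-open $V_0\subseteq V$ on which every $g_i$ is regular and on which $s$ lands inside $\tau V_0$, one obtains a genuine algebraic D-variety $(V_0,s)$ defined over $K$. By construction $b\in(V_0,s)^\#$, and the equality $K\la b\ra_\d=K(b)=K\la a\ra_\d$ makes $a$ and $b$ interdefinable over $K$ in the $\d$-field sense; since $a$ is $\d$-generic in $W$ and $b$ is $\d$-generic in $(V_0,s)^\#$, this interdefinability is realised by mutually inverse $\d$-rational maps, providing the desired $\d$-birational equivalence over $K$. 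The two points that require genuine care are, for (a), the appeal to existential closedness (via Fact~\ref{propro}) to produce a sharp point inside an arbitrary $K$-open subset, and, for (b), the restriction to $V_0$ that promotes the a priori merely rational section to a regular one.
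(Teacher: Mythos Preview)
Your proof is correct. Note that in the paper this statement appears in a review section and is stated as a \emph{Fact} without proof, with references to the literature. However, the paper does prove strict generalizations later: Proposition~\ref{lema1} (parts (1) and (3)) generalizes part~(a), and Proposition~\ref{finteo} generalizes part~(b). Comparing to those:

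For part~(a), your argument is essentially the specialization to $\D=\emptyset$ of the paper's proof of Proposition~\ref{lema1}(1): take a Zariski-generic point of (a component of) $V$, use the section $s$ to extend the derivation to the function field, and invoke existential closedness to realise a sharp point in any given open set. The only cosmetic difference is that you cite Fact~\ref{propro} where the paper uses the more general Fact~\ref{exx}.

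For part~(b), there is a minor but genuine difference in strategy. You pick a field-generating tuple $b$ for $K\la a\ra_\d$, obtain \emph{rational} functions $g_i$ with $\d b_i=g_i(b)$, and then restrict to a Zariski-open $V_0$ on which the $g_i$ are regular, producing a D-variety structure on a quasi-affine variety. The paper's proof of Proposition~\ref{finteo} instead uses the ``standard trick'' of enlarging the tuple to $b=(\alpha,1/g_1(\alpha),\dots,1/g_r(\alpha))$, which forces the section to be given by \emph{polynomials} and hence regular on the full affine Zariski locus. Both approaches are valid and yield $\d$-birationally equivalent D-varieties; the paper's version has the mild advantage of staying in the affine category with a globally regular section, while yours is perhaps more transparent but requires passing to an open subvariety.
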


Note that Fact~\ref{onit} essentially reduces the study of finite dimensional $\d$-varieties to the study of first order differential equations.

\begin{example}
In the case when a finite dimensional $\d$-variety $W\subseteq \U$ is given by a single $\d$-polynomial equation of the form $\d^n x=f(x,\d x,\dots,\d^{n-1} x)$ where $f\in K[t_0,\dots,t_{n-1}]$, it is easy to find a D-variety $(V,s)$ such that $W$ is $\d$-birationally equivalent to $(V,s)^\#$. Let $V=\U^n$ and $s(x_0,\dots,x_{n-1})=(x_0,\dots,x_{n-1}, x_1,\dots,x_{n-1},f(x_0,\dots,x_{n-1}))$. Then $(V,s)$ is a D-variety, and the map from $W$ to 
$$(V,s)^\#=\{(a_0,\dots,a_{n-1})\in V:  \d^n a_0=f(a_0,\dots,a_{n-1}) \text{ and } \d^i a_0=a_i, i=1,\dots,n-1\}$$
given by $x\mapsto (x,\d x,\dots,\d^{n-1}x)$ is a $\d$-birational equivalence (in fact a $\d$-isomorphism in this case).
\end{example}

If $G$ is an algebraic group then $\tau G$ has naturally the structure of an algebraic group: if $p:G\times G\to G$ is the group operation, then the group operation on $\tau G$ is given by $\tau p: \tau G\times \tau G\to \tau G $ (this uses the fact that $\tau$ commutes with products). An \emph{algebraic D-group} is a D-variety $(G,s)$ where $G$ is an algebraic group and $s:G\to \tau G$ is a group homorphism. In the presence of a group structure, part~(b) of Fact~\ref{onit} can be refined to show that every finite dimensional definable group in $DCF_0$ is definably isomorphic (rather than just $\d$-birationally equivalent) to $(G,s)^\#$ for some algebraic D-group $(G,s)$.

The rest of this chapter is devoted to extend these notions/properties to the setting of several commuting derivations. Our approach will be to relativize the prolongation with respect to a given partition of the set of derivations. Hence, the role of algebraic varieties in the above discussion will be played by differential algebraic varieties involving a subset of the set of derivations, and the role of $\d$ will be played by the remaining derivations. In particular, an analogue of Fact~\ref{onit} will thus give us a geometric category in which to study possibly infinite dimensional differential algebraic varieties. The differential algebraic machinery (i.e., differential derivations) that we will require to accomplish this will be presented in the next section.

\section{Differential derivations}\label{dider}

If $K<F$ is an extension of fields and $\d$ is a derivation on $K$, then one can always extend $\d$ to a derivation on $F$. The situation is quite different if we start with a field $K$ equipped with several commuting derivations $\D$, a $\D$-field extension $F$, and a distinguished additional derivation $D$ on $K$ commuting with $\D$.  If we want to extend $D$ to a derivation on $F$ such that the extension still commutes with $\D$, then we need to find solutions to a certain system of $\D$-equations which might not have a solution in $F$. However, as we will see, this system will always have a solution in some $\D$-field extension of $F$. This latter fact was proven by Kolchin in Chapter 0.4 of \cite{Ko2}. In this section we give a brief review of this result.

Let $(R,\D)$ be differential ring and $(S,\D)$ a $\D$-ring extension of $R$. 

\begin{definition}
Suppose $D:R\to S$ is a derivation. Then $D$ is called a \emph{$\D$-derivation} if $D$ commutes with $\D$ on $R$. 
\end{definition}

Fix a $\D$-derivation $D:R\to S$. Let $f\in R\{x\}_\D$, where $x=(x_1,\dots,x_n)$. By $f^D$ we mean the $\D$-polynomial in $S\{x\}_\D$ obtained by applying $D$ to the coefficients of $f$. 

\begin{remark}\label{super}
The map $f\mapsto f^D$ is a $\D$-derivation from $R\{x\}_\D$ to $S\{x\}_\D$. Indeed, clearly the map is additive and commutes with $\D$, so we only need to check that it satisfies the Leibniz rule on monomials. Recalling that $\T_\D$ is the set of derivative operators (see page 10), we need to consider $\displaystyle f(x)=c\prod_{\t\in \T_\D, i\leq n}(\t x_i)^{\alpha_{\t,i}}$ and $\displaystyle g(x)=d\prod_{\t\in \T_\D, i\leq n}(\t x_i)^{\beta_{\t,i}}$ where $c$, $d\in R$ and $\alpha_{\t,i}=\beta_{\t,i}=0$ except finitely many times. Then
\begin{eqnarray*}
(fg)^D(x)
&=& D(cd)\prod_{\t\in \T_\D, i\leq n}(\t x)^{\alpha_{\t,i}+\beta_{\t,i}} \\
&=& (D(c)d+cD(d))\prod_{\t\in \T_\D, i\leq n}(\t x)^{\alpha_{\t,i}+\beta_{\t,i}}\\
&=& f^D(x)g(x) +f(x)g^D(x).
\end{eqnarray*}
\end{remark}

\begin{lemma}\label{exten1}
Suppose $a=(a_1,\dots,a_n)$ is a tuple from $S$ and $D':R\{a\}_\D\to S$ is a $\D$-derivation extending $D$. Then, for all $f\in R\{x\}_\D$,  we have
\begin{displaymath}
D' f(a)=\sum_{\t\in \T_\D, i\leq n}\frac{\partial f}{\partial(\t x_i)}(a)\t D' a_i+f^D(a).
\end{displaymath}
\end{lemma}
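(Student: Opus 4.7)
The plan is to verify the identity by reducing to monomials via additivity, and then compute directly using the Leibniz rule together with the fact that $D'$, being a $\D$-derivation, commutes with every $\t\in \T_\D$ on $R\{a\}_\D$.

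First I would observe that both sides of the claimed equality are additive and $\mathbb{Z}$-linear in $f$. Indeed, for the left-hand side this is immediate from additivity of $D'$ and of evaluation at $a$. For the right-hand side, additivity of $f\mapsto f^D$ was noted in Remark~\ref{super}, while the partial derivatives $\partial/\partial(\t x_i)$ are obviously additive. Therefore it suffices to verify the identity when $f$ is a single monomial of the form
$$f(x) \;=\; c\prod_{\t\in \T_\D,\, i\leq n}(\t x_i)^{\alpha_{\t,i}}$$
with $c\in R$ and $\alpha_{\t,i}=0$ for all but finitely many pairs $(\t,i)$.

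For such an $f$, I would compute the left-hand side by applying $D'$ to $f(a)=c\prod(\t a_i)^{\alpha_{\t,i}}$ using the Leibniz rule. The ``outer'' derivative of the coefficient contributes $D(c)\prod(\t a_i)^{\alpha_{\t,i}}$, which is exactly $f^D(a)$. The remaining terms, arising from differentiating each factor $(\t a_i)^{\alpha_{\t,i}}$, produce expressions of the form
$$c\cdot \alpha_{\t,i}(\t a_i)^{\alpha_{\t,i}-1}\,D'(\t a_i)\prod_{(\t',i')\neq(\t,i)}(\t' a_{i'})^{\alpha_{\t',i'}}.$$
Here I would use that $D'$ is a $\D$-derivation, so $D'(\t a_i)=\t D' a_i$. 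On the other hand, direct differentiation of the monomial $f$ gives
$$\frac{\partial f}{\partial(\t x_i)}(a) \;=\; c\cdot \alpha_{\t,i}(\t a_i)^{\alpha_{\t,i}-1}\prod_{(\t',i')\neq(\t,i)}(\t' a_{i'})^{\alpha_{\t',i'}},$$
so that summing over $(\t,i)$ recovers precisely the first term in the right-hand side of the claimed identity.

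Combining these two computations shows equality on monomials, and by additivity the identity extends to all $f\in R\{x\}_\D$. The only mild subtlety is purely bookkeeping: handling the possibly empty monomials (i.e.\ constants $c\in R$, where the first sum vanishes and only $f^D(a)=D(c)$ remains) and confirming that the Leibniz expansion is finite since only finitely many $\alpha_{\t,i}$ are nonzero. No essential obstacle arises.
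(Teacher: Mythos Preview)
Your proof is correct and follows essentially the same approach as the paper: reduce to monomials by additivity, apply the Leibniz rule to $D'f(a)$, and use that $D'$ commutes with each $\t\in\T_\D$ to rewrite $D'(\t a_i)=\t D'a_i$. The paper's proof is slightly terser but the logic is identical.
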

\begin{proof}
By additivity it suffices to prove this for monomials $\displaystyle f(x)=c\prod_{\t\in \T_\D, i\leq n}(\t x_i)^{\alpha_{\t,i}}$ where $c\in R$ and $\alpha_{\t,i}=0$ except finitely many times,
\begin{eqnarray*}
D' f(a) 
&=& \sum_{\eta\in \T_\D,k\leq n}\; \prod_{\t\neq \eta, i\neq k} c (\t a_i)^{\alpha_{\t,i}} \; \alpha_{\eta,k} (\eta a_k)^{\alpha_{\eta,k}-1} D' \eta a_k+ D(c)\prod_{\t\in \T_\D, i\leq n}(\t a_i)^{\alpha_{\t,i}} \\
&=& \sum_{\t\in T_\D, i\leq n}\frac{\partial f}{\partial(\t x_i)}(a)\t D' a_i+f^D(a) \\
\end{eqnarray*}
where in the last equality we used that $D'\t a_i=\t D'a_i$ for all $\t\in \T_\D$ and $i\leq n$.
\end{proof}

Due to the last property, it is natural to study the following operator.

\begin{definition}\label{et}
Let $f\in R\{x\}_\D$. We define the $\D$-polynomial $\et f\in S\{x,u\}_\D$ by
\begin{displaymath}
\et f(x, u):= \sum_{\t\in \T_\D, i\leq n}\frac{\partial f}{\partial(\t x_i)}(x)\t u_i+f^D(x).
\end{displaymath}
where $u=(u_1,\dots,u_n)$. For a tuple $a$ from $S$, we will write $\et f_a(u)$ instead of $\et f(a,u)$.
\end{definition}

Thus, Lemma~\ref{exten1} says that $D' f(a)=\et f(a,D'a)$. Note that, as only finitely many algebraic indeterminates $\t x_i$ appear in $f$, the above displayed sum is a finite sum and so indeed $\et f\in S\{x,u\}_\D$. 

The map $f\mapsto \et f$ from $R\{x\}_\D\to S\{x,u\}_\D$ is clearly additive. Using Remark~\ref{super}, it is not hard to verify that 
\begin{equation}\label{etder}
\et (fg)(x,u)=\et f(x,u) g(x)+f(x)\et g(x,u).
\end{equation}
Hence, if we identify $R\{x\}_\D$ as a $\D$-subring of $S\{x,u\}_\D$, we get that $\et :R\{x\}_\D\to S\{x,u\}_\D$ is a derivation extending $D$. Moreover, as $\et \t x_i=\t u_i=\t \et x_i$ for all $\t\in \T_\D$ and $i\leq n$, $\et$ is in fact a $\D$-derivation. 

Note that, under the assumptions of Lemma~\ref{exten1}, for all $f\in R\{x\}_\D$ we have $Df(a)=\et f_a(D'a)$, and so, if $f\in\I(a/R)_\D$, we get $\et f_a(D'a)=0$. Thus, any $\D$-derivation $D'$ from $R\{a\}_\D$ to $S$ extending $D$ yields a tuple $D'a=(D'a_1,\dots,D' a_n)$ of $S$ at which $\et f_a$ vanishes for all $f\in\I(a/R)_\D$. The following fact is the converse of this implication and gives the criterion for when a $\D$-derivation can be extended to a finitely generated $\D$-ring extension.

\begin{fact}\label{exx}
Let $a$ be a tuple from $S$. Suppose there is a tuple $b$ from $S$ such that for every $f\in \I(a/R)_\D$,
\begin{displaymath}
\et f_a(b)=0.
\end{displaymath}
Then there is a unique $\D$-derivation $D':R\{a\}_\D\to S$ extending $D$ such that $D'a=b$.
\end{fact}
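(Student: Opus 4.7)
The plan is to construct $D'$ by first building an auxiliary $\D$-derivation on the free $\D$-polynomial ring $R\{x\}_\D$ and then descending it to the quotient $R\{a\}_\D \cong R\{x\}_\D/\I(a/R)_\D$. The key observation is that the operator $\et:R\{x\}_\D\to S\{x,u\}_\D$ has already been shown (via Remark~\ref{super} and the discussion following Definition~\ref{et}) to be a $\D$-derivation extending $D$, so it does most of the work for us.

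First I would define $\tilde D : R\{x\}_\D \to S$ by $\tilde D(f) := \et f(a,b)$, i.e., as the composition of $\et$ with the evaluation map $\operatorname{ev}_{(a,b)}:S\{x,u\}_\D\to S$. Viewing $S$ as an $R\{x\}_\D$-module via the evaluation $f\mapsto f(a)$, I need to check that $\tilde D$ is a $\D$-derivation. Additivity and the Leibniz rule $\tilde D(fg) = \tilde D(f)g(a) + f(a)\tilde D(g)$ follow immediately from identity~\eqref{etder} by evaluating at $(a,b)$. For commuting with $\D$, I would first observe that evaluation at any tuple $c$ in $S$ defines a $\D$-ring homomorphism $S\{y\}_\D\to S$ (a quick check on monomials using that $\d_j$ commutes with $\t\in\T_\D$); then since $\et(\d_j f) = \d_j(\et f)$ in $S\{x,u\}_\D$, evaluation at $(a,b)$ gives $\tilde D(\d_j f) = \d_j(\tilde D f)$.

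Next, the hypothesis $\et f_a(b) = 0$ for all $f\in\I(a/R)_\D$ says precisely that $\tilde D$ vanishes on $\ker(\operatorname{ev}_a)$. Hence $\tilde D$ factors through the $\D$-quotient $R\{x\}_\D/\I(a/R)_\D = R\{a\}_\D$, yielding a well-defined $\D$-derivation $D':R\{a\}_\D\to S$ with $D'(f(a)) = \et f(a,b)$. That $D'$ extends $D$ is immediate since for $r\in R$ we have $\et r = r^D$ and thus $D'(r) = r^D(a) = D(r)$; that $D'(a_i) = b_i$ follows from $\et x_i = u_i$, so $\et x_i(a,b) = b_i$.

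For uniqueness, suppose $D'':R\{a\}_\D\to S$ is any $\D$-derivation extending $D$ with $D''(a_i) = b_i$. By Lemma~\ref{exten1} applied to $D''$, for every $f\in R\{x\}_\D$ we have $D''(f(a)) = \et f(a, D''a) = \et f(a,b) = D'(f(a))$, so $D''=D'$. There is no real obstacle here beyond carefully tracking which module structure is in play when verifying the Leibniz rule for $\tilde D$; the whole proof is essentially a formal consequence of the fact that $\et$ and evaluation are compatible with the differential structures.
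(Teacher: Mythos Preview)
Your proof is correct. Note, however, that the paper does not actually supply a proof of this statement: it is stated as a \emph{Fact}, attributed to Kolchin (Chapter~0.4 of \cite{Ko2}), as part of the review section on differential derivations. So there is no ``paper's own proof'' to compare against.

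That said, your argument is the natural one and matches the spirit of what the paper sets up: the operator $\et$ is constructed precisely so that it is already a $\D$-derivation on the free object $R\{x\}_\D$, and your proof simply observes that the hypothesis is exactly the condition needed to descend it through the evaluation quotient $R\{x\}_\D \twoheadrightarrow R\{a\}_\D$. The verification that evaluation $S\{x,u\}_\D\to S$ at $(a,b)$ is a $\D$-ring homomorphism is routine (and implicit in the paper's conventions), and your use of Lemma~\ref{exten1} for uniqueness is exactly right. There are no gaps.
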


Thus if we want to extend $D$ to a $\D$-derivation from $R\{a\}_\D$ to $S$, we need to find a solution to the system of $\D$-equations $\{\et f_a(u)=0: f\in\I(a/R)_\D\}$. In the case when $S$ is a field (of characteristic zero), this system does have a solution in some $\D$-field extension of $S$. Indeed, the $\D$-ideal $$\left[\et f_{a}(u): f\in\I(a/R)_\D\right]$$ of $S\{u\}_\D$ is prime. From this and Fact~\ref{exx} one obtains:

\begin{fact}\label{exx2}
Suppose $(K,\D)$ is a differentially closed field extending $(R,\D)$ and $D:R\to K$ a $\D$-derivation. Then there is a $\D$-derivation $D':K\to K$ extending $D$.
\end{fact}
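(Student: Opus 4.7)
The plan is to promote Fact~\ref{exx} along a Zorn's lemma argument, using the differential closedness of $K$ at each step to ensure that the system of $\D$-equations we need to solve actually admits a solution inside $K$ rather than merely in a $\D$-field extension.

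First, I consider the poset of pairs $(R',D')$ where $R\subseteq R'\subseteq K$ is a $\D$-subring and $D':R'\to K$ is a $\D$-derivation extending $D$, ordered by extension. The poset is nonempty since it contains $(R,D)$, and it is closed under unions of chains: given a chain $\{(R'_i,D'_i)\}$ the union $\bigcup_i R'_i$ is a $\D$-subring of $K$ and the $D'_i$ patch together into a $\D$-derivation on it. By Zorn's lemma there is a maximal element $(R^*,D^*)$, and it suffices to show $R^*=K$.

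Suppose for contradiction that $R^*\neq K$, and pick $a\in K\setminus R^*$. Form the system of $\D$-equations
\[
\bigl\{\et f_a(u)=0 \; : \; f\in \I(a/R^*)_\D\bigr\}
\]
in $K\{u\}_\D$. The $\D$-ideal that these polynomials generate in $K\{u\}_\D$ is prime (this is precisely the assertion cited in the paragraph preceding the fact), so it is a proper $\D$-ideal. Consequently the system has a solution in some $\D$-field extension of $K$ (one can, for example, take the fraction field of $K\{u\}_\D$ modulo this prime $\D$-ideal). Now by the differential basis theorem, the radical of this $\D$-ideal is finitely generated as a radical $\D$-ideal, so the condition on $u$ can be rephrased as a finite conjunction of $\D$-polynomial equations over $K$. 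Since $K$ is differentially closed, hence existentially closed, this finite system must already have a solution $b\in K^{|a|}$. Applying Fact~\ref{exx} with $S=K$ to the $\D$-derivation $D^*:R^*\to K$ and the tuple $a$, we obtain a $\D$-derivation $D^{**}:R^*\{a\}_\D\to K$ extending $D^*$ with $D^{**}a=b$. Since $a\notin R^*$, this strictly extends $(R^*,D^*)$, contradicting maximality. Hence $R^*=K$, and $D^*$ is the desired extension.

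The main obstacle — and the only reason differential closedness is used — is the passage from ``solution in some $\D$-field extension of $K$'' to ``solution already in $K$.'' Without the existentially closed hypothesis, the Zorn's lemma step could stall at a maximal proper $\D$-subring, because the tuple $b$ needed to invoke Fact~\ref{exx} might live only in an extension. Everything else (Zorn, primality of the relevant $\D$-ideal, application of Fact~\ref{exx}) is routine once that obstacle is disposed of by existential closedness.
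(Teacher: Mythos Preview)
Your argument is correct and is exactly the fleshing-out the paper has in mind: the paper does not give a detailed proof but simply indicates that the primality of the $\D$-ideal $[\et f_a(u):f\in\I(a/R)_\D]$ in $S\{u\}_\D$, combined with Fact~\ref{exx}, yields the result. Your Zorn's lemma step together with the use of existential closedness (via the differential basis theorem to reduce to a finite system) is the standard way to make that indication precise, so there is nothing to add.
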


\section{The relative prolongation of a differential algebraic variety}\label{defrel}

For the rest of this chapter we fix a sufficiently large saturated $(\U,\Pi)\models DCF_{0,m}$ and a small $\Pi$-subfield $K$. We assume a (disjoint) partition $\Pi=\DD\cup\D$ with $\DD=\{D_1,\dots,D_r\}\neq\emptyset$. Note that $\DD$ is a set of commuting $\D$-derivations.

\begin{remark}\label{extlin}
Suppose $\Pi'$ is a basis of the space of derivations on $\U$ obtained by taking linear combinations of the elements of $\Pi$ over the constant field $K^\Pi$. We denote this $K^\Pi$-vector space by $\operatorname{span}_{K^\Pi}\Pi$. Then $(\U,\Pi')$ is again a saturated model of $DCF_{0,m}$. Indeed, $(\U,\Pi')$ is interdefinable with $(\U,\Pi)$ over $K^\Pi$. Thus, if $\DD'\cup \D'$ is a partition of $\Pi'$, then one can apply all the definitions and results of this chapter to that partition as well.
\end{remark}

Let $D\in \DD$. Recall, from the previous section, that $D:K\to K$ induces a $\D$-derivation $\et :K\{x\}_\D\to K\{x,u\}_\D$ (see Definition~\ref{et}).  We can extend $\et$ to all $\D$-rational functions over $K$ using the quotient rule; that is, if $f=\frac{g}{h}\in K\l x\r_\D$ then $$\et f(x,u)=\frac{\et g(x,u) \, h(x)-g(x)\,\et h(x,u)}{(h(x))^2}.$$ Lemma~\ref{exten1} extends to all $\D$-rational functions over $K$, and so, for any tuple $a$ from $\U$ and $f\in K\l x\r_\D$, we have $$Df(a)=\et f(a,Da).$$ If $f=(f_1,\dots,f_s)$ is a sequence of $\D$-rational functions over $K$ by $\et f$ we mean $(\et f_1,\dots,\et f_s)$. If $a$ is an $n$-tuple, we write $\et f_{a}(u)$ instead of $\et f(a,u)$.

\begin{remark}
For the first part of this section we restrict ourselves to the category of affine $\D$-varieties, since the presentation in the affine setting is more intuitive and concrete. Then, in the second part, we extend the constructions and concepts to the larger category of abstract $\D$-varieties.
\end{remark}

\begin{definition}\label{fipro}
Let $V$ be an affine $\D$-variety defined over $K$. The \emph{relative prolongation of $V$ (w.r.t $D/\D$)}, denoted by $\tau_{D/\D} V$, is the affine $\D$-algebraic variety defined by
\begin{displaymath}
f(x)=0 \; \text{ and } \; \et f(x,u)=0,
\end{displaymath}
for all $f$ in $\I(V/K)_{\D}$. The map $\pi:\tau_{D/\D} V\to V$ denotes the projection onto the first $n$ coordinates and, if $a$ is a point in $V$, we denote the fibre of $\tau_{D/\D}V$ above $a$ by $\tau_{D/\D}V_a$.
\end{definition}

\begin{remark}
Note that we allow the possibility of $\D=\emptyset$. In this case $V$ is an affine algebraic variety and the relative prolongation $\tau_{D/\emptyset}V$ is equal to the prolongation $\tau V$ of Section~\ref{repro} with respect to the ordinary differential field structure $(K,D)$.
\end{remark}

It turns out that in the definition of $\tau_{D/\D} V$ we can restrict the polynomials $f$ to a set that differentially generates $\I(V/K)_\D$ (it is worth pointing out that we might not be able to find a generating set whose elements have bounded order \cite{Ko3}).

\begin{lemma}\label{togen}
Let $A\subset K\{x\}_\D$ be such that $[A]=\I(V/K)$. Then $\tau_{D/\D} V$ is defined by $f(x)=0$ and $\et f(x,u)=0$ for all $f\in A$.
\end{lemma}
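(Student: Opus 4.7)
The plan is to prove set-theoretic equality between $\tau_{D/\D}V$ and the $\D$-variety $W$ cut out by the smaller system $\{f(x)=0,\ \et f(x,u)=0 : f \in A\}$. One inclusion, $\tau_{D/\D}V \subseteq W$, is immediate from $A \subseteq \I(V/K)$. The content lies in showing $W \subseteq \tau_{D/\D}V$: given $(a,b) \in W$, we must verify $g(a) = 0$ and $\et g(a,b) = 0$ for every $g \in \I(V/K) = [A]$.

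First I would dispose of $g(a)=0$. Every element of $[A]$ is a finite sum $\sum h_i\,\t_i f_i$ with $h_i \in K\{x\}_\D$, $\t_i \in \T_\D$, $f_i \in A$. Since the $f_i$ vanish at $a$ and evaluation of $\D$-polynomials at the tuple $a$ commutes with applying elements of $\T_\D$ (because the $\D$-structure on $\U$ is what is being used to evaluate $\t_i x_j$), we have $(\t_i f_i)(a) = \t_i(f_i(a)) = 0$, hence $g(a) = 0$. This in fact shows $\V(A) = V$ and could also be read off from the differential Nullstellensatz together with the fact that $[A] = \I(V/K)$ is radical.

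The main step is $\et g(a,b) = 0$. Here I would exploit that $\et : K\{x\}_\D \to K\{x,u\}_\D$ is a $\D$-derivation, as recorded in the excerpt just after equation~(\ref{etder}). Combining the Leibniz rule with the fact that $\et$ commutes with each $\d \in \D$ (and hence with each $\t_i \in \T_\D$), one obtains
$$\et g \;=\; \sum_i \et h_i \cdot (\t_i f_i) \;+\; \sum_i h_i \cdot \t_i(\et f_i).$$
Evaluating at $(a,b)$: the first sum vanishes because $(\t_i f_i)(a) = 0$ by the previous paragraph, and the second sum vanishes because $(\et f_i)(a,b) = 0$ for $f_i \in A$ (by definition of $W$), so the same commutation-with-evaluation gives $\t_i(\et f_i)(a,b) = \t_i\bigl((\et f_i)(a,b)\bigr) = 0$. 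Thus $\et g(a,b) = 0$ and $(a,b) \in \tau_{D/\D}V$.

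The only place where one could slip is in verifying the identity $\et(h\,\t f) = \et h \cdot \t f + h \cdot \t(\et f)$; this is a routine combination of the Leibniz rule~(\ref{etder}) and the commutation $\et \comp \t = \t \comp \et$, both of which are already established in the excerpt. I would therefore expect no real obstacle beyond keeping the indices and evaluations straight.
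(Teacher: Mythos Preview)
Your proof is correct and follows essentially the same approach as the paper's. Both arguments hinge on the identity $\et(h\,\t f) = (\et h)\cdot \t f + h\cdot \t(\et f)$, obtained from the Leibniz rule~(\ref{etder}) together with the commutation of $\et$ with $\T_\D$, and then evaluate at $(a,b)$ to kill both summands; the paper simply starts from ``Let $a\in V$'' (using $\V(A)=V$ implicitly) whereas you spell out the $g(a)=0$ step and the trivial inclusion explicitly.
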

\begin{proof}
Let $a\in V$ and $b$ such that $\et f_a(b)=0$ for all $f\in A$, we need to show that $\et g_a(b)=0$ for all $g\in \I(V/K)_\D$. Since $\et$ is a $\D$-derivation, for each $\t\in \T_\D$, $h\in K\{x\}_\D$ and $f\in A$, we have that $$\et(h\t f)_a(b)=\et h_a(b)\t f(a)+h(a)\t \et f_a(b) .$$ By assumption $\et f_a(b)=0$ and since $f\in A\subseteq\I(V/K)_\D$ we get $\t f(a)=0$. Hence, $\et (h\t f)_a(b)=0$, and so it follows that $\et g_a(b)=0$ for all $g\in [A]=\I(V/K)_\D$.
\end{proof}

The previous lemma implies that the relative prolongation is independent of the $\D$-field over which $V$ is defined.

\begin{corollary}\label{change}
Let $F$ be a $\D$-subfield of $K$. If $V$ is also defined over $F$, then $\tau_{D/\D} V$ is defined by $f(x)=0$ and $\et f(x,u)=0$ for all $f\in\I(V/F)_\D$ (i.e., the defining $\D$-ideal of $V$ over $F$).
\end{corollary}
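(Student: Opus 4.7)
The plan is to apply Lemma~\ref{togen} to the set $A=\I(V/F)_\D$, regarded as a subset of $K\{x\}_\D$. With this choice of $A$, the conclusion of the corollary coincides exactly with the conclusion of the lemma, so the task reduces to verifying the hypothesis of Lemma~\ref{togen}, namely that
\[
[\I(V/F)_\D] \;=\; \I(V/K)_\D
\]
as $\D$-ideals of $K\{x\}_\D$.

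The inclusion $[\I(V/F)_\D]\subseteq \I(V/K)_\D$ is immediate from $\I(V/F)_\D\subseteq \I(V/K)_\D$ together with the fact that the latter is itself a $\D$-ideal of $K\{x\}_\D$. For the reverse inclusion I would use that, since $V$ is defined over $F$, we have $V=\V(\I(V/F)_\D)$, and then apply the differential Nullstellensatz (Fact~\ref{diffnull}) in $K\{x\}_\D$ to get
\[
\I(V/K)_\D \;=\; \sqrt{[\I(V/F)_\D]}.
\]
Hence the statement will follow once one checks that $[\I(V/F)_\D]$ is already a radical $\D$-ideal of $K\{x\}_\D$.

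The main obstacle is this radicality, which is really a descent/base-change statement for radical $\D$-ideals under the extension $F\subseteq K$. To handle it I would first reduce to the case where $\I(V/F)_\D$ is \emph{prime} in $F\{x\}_\D$, by decomposing the radical $\D$-ideal $\I(V/F)_\D$ into its finitely many $F$-irreducible (i.e.\ prime) components, which is possible by Fact~\ref{difba}. In the prime case I would then argue that, since we are in characteristic zero and the field extension $F\subseteq K$ is therefore separable, the extension of a prime $\D$-ideal from $F\{x\}_\D$ to $K\{x\}_\D$ breaks up as a finite intersection of prime $\D$-ideals, and is in particular radical. Putting the components back together yields the desired equality $[\I(V/F)_\D]=\I(V/K)_\D$, and Lemma~\ref{togen} then delivers the corollary.
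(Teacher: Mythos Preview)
Your approach is correct and coincides with the paper's: both apply Lemma~\ref{togen} with $A=\I(V/F)_\D$, reducing the corollary to the equality $[\I(V/F)_\D]=\I(V/K)_\D$ in $K\{x\}_\D$. The only difference is that the paper simply asserts this equality as the standard base-change fact $\I(V/K)_\D=\I(V/F)_\D\cdot K\{x\}_\D$, whereas you sketch a proof of it via the Nullstellensatz and radicality of the extended ideal; your sketch is fine, though the ``putting the components back together'' step implicitly uses that extension along the flat map $F\{x\}_\D\hookrightarrow K\{x\}_\D$ commutes with finite intersections.
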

\begin{proof}
As $V$ is defined over $F$ we have that $\I(V/K)_\D=\I(V/F)_\D\cdot K\{x\}_\D$. In particular, $\I(V/K)_\D=[\I(V/F)_\D]$. Now apply Lemma~\ref{togen} with $A=\I(V/F)_\D$.
\end{proof}

\begin{lemma}\label{overc}
If $V$ is defined over the $D$-constants $K^{D}$, then $\tau_{D/\D} V=T_{\D} V$. Here $T_\D V$ is Kolchin's $\D$-tangent bundle (see Definition~\ref{koltan}).
\end{lemma}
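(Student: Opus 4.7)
The plan is to reduce the defining equations of $\tau_{D/\D} V$ to a differential generating set contained in $K^D\{x\}_\D$, use that $f^D = 0$ for such polynomials, and then verify that the resulting equations are exactly those defining $T_\D V$.

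First, I would apply Corollary~\ref{change} with $F = K^D$. Since $V$ is assumed to be defined over $K^D$, this shows that $\tau_{D/\D} V$ is cut out by
$$f(x) = 0 \quad \text{and} \quad \et f(x, u) = 0, \qquad f \in \I(V/K^D)_\D.$$
For any such $f$, every coefficient of $f$ lies in $K^D$, so by definition $f^D = 0$. Consulting Definition~\ref{et}, this means
$$\et f(x, u) \;=\; \sum_{\t \in \T_\D,\, i \le n} \frac{\partial f}{\partial (\t x_i)}(x)\, \t u_i,$$
which is precisely the Kolchin linearization appearing in Definition~\ref{koltan}.

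Next, I would argue that $T_\D V$ is likewise cut out by $f(x)=0$ together with this same linearization, ranging only over $f \in \I(V/K^D)_\D$. This is the exact analogue of Lemma~\ref{togen} for the $\D$-tangent bundle and is proved in the same way: the map $L : K\{x\}_\D \to K\{x,u\}_\D$ sending $f$ to $\sum_{\t, i} \frac{\partial f}{\partial (\t x_i)}(x)\, \t u_i$ is $K$-linear, commutes with each element of $\D$ (because mixed partials commute), and satisfies the Leibniz rule in $f$ (because $\partial / \partial (\t x_i)$ is a derivation), so $L$ is a $\D$-derivation. Consequently, if $L_f(a,b) = 0$ for every $f$ in a $\D$-generating set of $\I(V/K)$, the same vanishing propagates to all of $\I(V/K)_\D$, exactly as in the proof of Lemma~\ref{togen}.

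Combining these two observations, $\tau_{D/\D} V$ and $T_\D V$ are defined by identical systems of equations, hence coincide. The only nontrivial point is the tangent-bundle analogue of Lemma~\ref{togen}, but this requires no new ideas beyond verifying that the linearization is a $\D$-derivation, paralleling Remark~\ref{super} and equation~\ref{etder}; once that is in hand, the identification is immediate from $f^D = 0$.
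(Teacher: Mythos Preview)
Your proof is correct and follows essentially the same approach as the paper: apply Corollary~\ref{change} with $F=K^D$, observe that $f^D=0$ kills the inhomogeneous term in $\et f$, and identify the remaining equations with those of $T_\D V$. The only difference is that the paper simply asserts (appealing to Corollary~\ref{change}) that $T_\D V$ is also cut out by the equations ranging over $\I(V/K^D)_\D$, whereas you take the extra care to justify this tangent-bundle analogue of Lemma~\ref{togen}; this is a reasonable point to make explicit but involves no new idea.
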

\begin{proof}
First note that if $f\in K^D\{x\}_{\D}$ then $$\et f(x,u)=\sum_{\t\in \T_{\D},\, i\leq n} \frac{\partial f}{\partial(\t x_i)}(x)\t u_i.$$ Hence, by Corollary~\ref{change}, both $\tau_{D/\D} V$ and $T_{\D} V$ are defined by $$\left\{f(x)=0 \,\text{ and }\, \sum_{\t\in \T_{\D},\, i\leq n} \frac{\partial f}{\partial(\t x_i)}(x)\t u_i=0: \; f\in \I(V/K^D)_{\D}\right\}.$$
\end{proof}

In general, when $V$ is not necessarily defined over $K^D$, we have that each fibre $T_\D V_a$ acts regularly on $\tau_{D/\D}V_a$ by translation. In fact,

\begin{lemma}\label{torito}
$\pi:\tau_{D/\D}V\to V$ is a torsor under $\rho:T_\D V\to V$ where the fibrewise action is translation.
\end{lemma}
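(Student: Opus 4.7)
The plan is threefold: construct the $\D$-regular action, verify it respects the defining equations of the target, and establish simple transitivity on (necessarily nonempty) fibres.

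First, I would define $\mu:T_\D V\times_V \tau_{D/\D}V \to \tau_{D/\D}V$ on points by $\mu((a,v),(a,w))=(a,v+w)$. This is manifestly $\D$-regular since it is merely coordinate-wise addition. The key observation, from which both well-definedness of the target and simple transitivity follow, is that the defining equation $\et f(x,u) = \sum_{\theta,i}\frac{\partial f}{\partial(\theta x_i)}(x)\,\theta u_i + f^D(x)$ of $\tau_{D/\D}V$ is affine in $u$, with linear part equal to the defining equation of $T_\D V$. Hence if $v$ satisfies the homogeneous system at $a$ and $w$ satisfies the inhomogeneous system at $a$, so does $v+w$; and the difference of two inhomogeneous solutions is a homogeneous solution, giving transitivity. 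Freeness of the action is automatic from additive cancellation, and the group-action axioms (identity, associativity) are inherited from $(\U^n,+)$.

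The substantive (though mild) obstacle is showing that each fibre $\tau_{D/\D}V_a$ is nonempty, i.e.\ that $\pi$ is surjective. For this I would invoke Fact~\ref{exx2}: since $(\U,\D)$ is differentially closed, the $\D$-derivation $D:K\to K\hookrightarrow \U$ extends to a $\D$-derivation $D':\U\to \U$. Given $a\in V$, Lemma~\ref{exten1} applied to the restriction $D':K\{a\}_\D\to \U$ yields, for every $f\in \I(V/K)_\D$, the identity $\et f(a,D'a) = D'f(a) = D'(0) = 0$, so $(a,D'a)\in \tau_{D/\D}V_a$. Combined with the first paragraph, this verifies that $\pi:\tau_{D/\D}V\to V$ is a torsor under $\rho:T_\D V\to V$ with fibrewise action given by translation.
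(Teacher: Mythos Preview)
Your argument is correct and follows essentially the same route as the paper: both proofs exploit that $\et f(x,u)$ is affine in $u$ with linear part equal to the defining equation of $T_\D V$, yielding well-definedness of the translation action and simple transitivity on fibres via the same two computations (sum of homogeneous and inhomogeneous solutions, difference of two inhomogeneous solutions).

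One minor remark on your surjectivity step: invoking Fact~\ref{exx2} is more than you need. In the ambient setup of this section $D\in\DD\subseteq\Pi$ is already a derivation on all of $\U$, so you may simply take $D'=D$; the point $(a,Da)$ lies in $\tau_{D/\D}V_a$ directly. This is precisely the section $\nabla$ introduced immediately after the lemma, and is presumably why the paper omits the surjectivity verification altogether.
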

\begin{proof}
Let $a\in V$. For each $(a,b)\in T_\D V_a$ and $(a,c), (a,d)\in \tau_{D/\D}V_a$ we show that $(a,b+c)\in \tau_{D/\D}V_a$ and $(a,c-d)\in T_\D V_a$. Let $b=(b_1,\dots,b_n)$, $c=(c_1,\dots,c_n)$, $d=(d_1,\dots,d_n)$, and $f\in \I(V/K)_\D$, then
\begin{eqnarray*}
\et f(a,b+c)
&=& \sum_{\t\in \T_{\D},\, i\leq n} \frac{\partial f}{\partial(\t x_i)}(a)\t (b_i+c_i)+f^{D}(a) \\
&=& \sum_{\t\in \T_{\D},\, i\leq n} \frac{\partial f}{\partial(\t x_i)}(a)\t b_i+ \sum_{\t\in \T_{\D},\, i\leq n} \frac{\partial f}{\partial(\t x_i)}(a)\t c_i+f^{D}(a) \\
&=& 0 + \et f(a,c)=0.
\end{eqnarray*}
Thus, $(a,b+c)\in \tau_{D/\D}V_a$. On the other hand,
\begin{eqnarray*}
\sum_{\t\in \T_{\D},\, i\leq n} \frac{\partial f}{\partial(\t x_i)}(a)\t (c_i-d_i)
&=& \sum_{\t\in \T_{\D},\, i\leq n} \frac{\partial f}{\partial(\t x_i)}(a)\t c_i- \sum_{\t\in \T_{\D},\, i\leq n} \frac{\partial f}{\partial(\t x_i)}(a)\t d_i+f^{D}(a) \\
&=& \left(\et f(a,c) -f^D(a)\right) - \left(\et f(a,d)-f^D(a)\right) \\
&=& -f^D(a)+f^D(a)=0.
\end{eqnarray*}
Thus, $(a,c-d)\in T_\D V_a$. These facts imply that the map $T_\D V _a\times\tau_{D/\D}V_a\to \tau_{D/\D}V_a$ given by $((a,b),(a,c))\mapsto (a,b+c)$ is a well defined regular action. Hence, the regular map $T_\D V\times_V \tau_{D/\D} V\to \tau_{D/\D} V$ given on points by $((x,u),(x,v))\mapsto (x,u+v)$ induces a regular action of $T_\D V$ on $\tau_{D/\D} V$ over ~$V$.
\end{proof}

The following lemma shows that the fibres of the relative prolongation of any $K$-irreducible component of $V$ are generically the same as the fibres of the relative prolongation of $V$.

\begin{lemma}\label{irred}
Let $\{V_1,\dots,V_s\}$ be the $K$-irreducible components of $V$. If $a \in V_i\backslash\bigcup_{j\neq i}V_j$, then $\tau_{D/\D} V_{a}=\tau_{D/\D} (V_i)_{a}$.
\end{lemma}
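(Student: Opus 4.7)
The plan is to prove the lemma by showing both inclusions of fibres separately, with the interesting content lying in the inclusion $\tau_{D/\D}V_a \subseteq \tau_{D/\D}(V_i)_a$. Recall that the fibre $\tau_{D/\D}V_a$ above $a \in V$ is precisely the set of tuples $b$ such that $\et f_a(b) = 0$ for every $f \in \I(V/K)_\D$, and similarly for $\tau_{D/\D}(V_i)_a$. Since $V_i \subseteq V$ implies $\I(V/K)_\D \subseteq \I(V_i/K)_\D$, the containment $\tau_{D/\D}(V_i)_a \subseteq \tau_{D/\D}V_a$ is immediate for any $a \in V_i$.

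For the reverse inclusion, fix $a \in V_i \setminus \bigcup_{j\neq i}V_j$ and $b \in \tau_{D/\D}V_a$. I must show that $\et g_a(b) = 0$ for every $g \in \I(V_i/K)_\D$. The key observation is that because $a \notin V_j$ for each $j \neq i$, the differential Nullstellensatz (or just the definition of $V_j$ as the $\D$-closure of its zero set over $K$) provides a $\D$-polynomial $h_j \in \I(V_j/K)_\D$ with $h_j(a) \neq 0$. Set $H := \prod_{j \neq i} h_j$, so that $H(a) \neq 0$.

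Given any $g \in \I(V_i/K)_\D$, the product $gH$ vanishes on every $K$-irreducible component $V_k$: on $V_i$ because $g$ does, and on $V_j$ for $j \neq i$ because $h_j$ does. Hence $gH \in \I(V/K)_\D$, and therefore $\et(gH)_a(b) = 0$ by assumption on $b$. Applying the Leibniz rule for $\et$ (equation~\ref{etder}) gives
\begin{equation*}
\et(gH)(a,b) = \et g(a,b)\,H(a) + g(a)\,\et H(a,b).
\end{equation*}
Since $a \in V_i$ and $g \in \I(V_i/K)_\D$, we have $g(a) = 0$, so the equation reduces to $\et g(a,b)\,H(a) = 0$. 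Dividing by $H(a) \neq 0$ yields $\et g_a(b) = 0$, as required.

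The main conceptual point, and the only place where genericity of $a$ enters, is the availability of the polynomial $H$ separating $a$ from the other components; everything else is a routine application of the derivation property of the operator $\et$. There is no real technical obstacle here — the argument is the differential analogue of the standard component-separation trick in commutative algebra.
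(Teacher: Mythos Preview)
Your proof is correct and follows essentially the same argument as the paper: both prove the nontrivial inclusion by multiplying an arbitrary $g\in\I(V_i/K)_\D$ by a product of separating polynomials $h_j\in\I(V_j/K)_\D$ with $h_j(a)\neq 0$, then apply the Leibniz rule for $\et$ and use $g(a)=0$. One small quibble: your closing remark about ``genericity of $a$'' is a slight misnomer, since the hypothesis is only that $a$ avoids the other components, not that it is generic.
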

\begin{proof}
Clearly $\tau_{D/\D} (V_i)_{a}\subseteq\tau_{D/\D} V_{a}$. Let $b\in\tau_{D/\D} V_{a}$ and $f\in \I(V_i/K)_{\D}$. Since $a$ is not in $V_j$, for $j\neq i$, we can pick a $g_j\in \I(V_j/K)_\D$ such that $g_j(a)\neq 0$. Then, if $g=\prod_j g_j$, we get $fg\in \I(V/K)_\D$ and so
\begin{displaymath}
0= \et (fg)_{a}(b) = \et f_{a}(b)g(a)+f(a)\et g_{a}(b) = \et f_{a}(b)g(a)
\end{displaymath}
where the third equality holds because $a\in V_i$. Since $g(a)\neq 0$, we have that $\et f_{a}(b)=0$, and so $b\in\tau_{D/\D} (V_i)_{a}$.
\end{proof}

\begin{remark}
The relative prolongation can be described in terms of local $\D$-derivations along the lines of Kolchin's definition of the $\D$-tangent bundle (see \cite{Ko2}, Chap. 8, \S 2). We give a brief explanation. Let $V$ be an affine $\D$-variety defined over $K$ with coordinate $\D$-ring $$K\{ V \}_{\D}:=K\{x\}_{\D}/\I\,,$$
where $x=(x_1,\dots,x_n)$. Let $a\in V(F)$, where $F$ is a $\D$-extension of $K$. The \emph{local} $\D$\emph{-ring of} $V$ \emph{at} $a$ \emph{over} $K$, $\mathcal{O}_a(V/K)_{\D}$, is just the localization of $K\{V\}_{\D}$ at the prime $\D$-ideal $\P:=\{f\in K\{V\}_{\D}:f(a)=0\}$. Note that $\mathcal{O}_a(V/K)_{\D}$ is a $\D$-ring extension of $K$. A \emph{local} $\D$\emph{-derivation at} $a$ is an additive map $\xi:\mathcal{O}_a(V/K)_\D\to F$ commuting with $\D$ such that $\xi(f g)=\xi(f)g(a)+f(a)\xi(f)$ for all $f,g\in \mathcal{O}_a(V/K)_\D$. If $\xi$ is a local $\D$-derivation at $a$ extending $D$, then $(\xi(\bar x_1),\dots,\xi(\bar x_n))$ is a solution in $F^n$ to the system $\{\et f_a(u)=0: f\in \I(V/K)_{\D}\}$, where $\bar x_i$ is the image in $\mathcal{O}_a(V/K)_\D$ of $x_i+\I$. Conversely, every solution $b$ in $F^n$ to the system $\{\et f_a(u)=0: f\in \I(V/K)_{\D}\}$ gives rise to a local $\D$-derivation at $a$ extending $D$ defined by
\begin{displaymath}
\xi_b (f)=\et f_a(b),
\end{displaymath}
and it satisfies $(\xi_b(\bar x_1),\dots,\xi_b(\bar x_n))=b$. Thus, the set of local $\D$-derivations at $a$ extending $D$ can be identified with the set of solutions in $F$ of the system $\{\et f_a(u)=0: f\in \I(V/K)_{\D}\}$. Hence, the $F$-points of $\tau_{D/\D}V_a$ can be identified with the set of all local $\D$-derivations at $a$ extending $D$. 
\end{remark}

Now, let us consider the whole set $\DD=\{D_1,\dots,D_r\}$ at once, and for each affine $\D$-variety $V$ over $K$ define

\begin{definition}
The \emph{relative prolongation of $V$ (w.r.t. $\DD/\D$)} is the fibred product:
\begin{displaymath}
\tau_{\DD/\D} V:=\tau_{D_1/\D}V\times_V \cdots\times_V \tau_{D_r/\D}V.
\end{displaymath}
If $V$ and $W$ are affine $\D$-varieties and $f:V\to W$ is a $\D$-regular map (all defined over $K$), then we have an induced $\D$-regular map, $\tau_{\DD/\D} f:\tau_{\DD/\D} V\to \tau_{\DD/\D}W$ defined over $K$, given on points by
\begin{equation}\label{expta}
(x,u_1,\dots,u_s)\mapsto (f(x),d_{D_1/\D}f(x,u_1),\dots,d_{D_r/\D}f(x,u_r)).
\end{equation}
\end{definition}

\begin{proposition}\label{fun}
$\tau_{\DD/\D}$ is a functor from the category of affine $\D$-algebraic varieties (defined over $K$) to itself. Moreover, $\tau_{\DD/\D}$ commutes with products; that is, $\tau_{\DD/\D}(V\times W)$ is naturally isomorphic to $\tau_{\DD/\D}V\times \tau_{\DD/\D}W$ via the map $$(x,y,u_1,\dots,u_r,v_1\dots,v_r)\mapsto(x,u_1,\dots,u_r,y,v_1,\dots,v_r).$$
\end{proposition}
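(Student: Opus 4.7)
My plan is to reduce to the single-derivation case $\DD=\{D\}$. For general $\DD=\{D_1,\dots,D_r\}$, the variety $\ta V$ is by definition the $r$-fold fibred product $\tau_{D_1/\D}V\times_V\cdots\times_V\tau_{D_r/\D}V$, so the universal property of fibred products, together with the fact that each $\tau_{D_i/\D}f$ commutes with the canonical projection to $V$, automatically produces the morphism $\ta f$ described in (\ref{expta}); functoriality and the product-preservation isomorphism then follow coordinate-by-coordinate from the corresponding properties of each $\tau_{D_i/\D}$ (after rearranging a fibred product of products). So the proposition reduces to showing that $\tau_{D/\D}$ is a product-preserving endofunctor.

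The main obstacle is a chain rule for the operator $\et$. On a $\D$-open set where a $\D$-regular map $f\colon V\to W$ is given by a tuple of $\D$-rational functions $(f_1,\dots,f_s)$, I consider the $\D$-algebra homomorphism $\phi\colon K\{y\}_\D\to K\{x\}_\D$ determined by $y_j\mapsto f_j$, together with its companion $\tilde\phi\colon K\{y,v\}_\D\to K\{x,u\}_\D$ sending $y_j\mapsto f_j$ and $v_j\mapsto \et f_j$. Both $\et\comp\phi$ and $\tilde\phi\comp\et$ (the second $\et$ taken on the $y$-variables) are $\D$-derivations from $K\{y\}_\D$ to $K\{x,u\}_\D$ that extend $D$ on $K$ and send each $\D$-generator $y_j$ to $\et f_j$. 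Since a $\D$-derivation is uniquely determined by its restriction to $K$ and its values on a $\D$-generating set, the two maps coincide. Evaluating at $(a,b)\in\U^{2n}$, and using that $\tilde\phi$ substitutes $y_j=f_j(a)$ and $v_j=\et f_j(a,b)$, this yields the pointwise chain rule
\[
\et (h\comp f)(a,b)\;=\;\et h\bigl(f(a),\,\et f(a,b)\bigr)
\]
for every $h\in K\{y\}_\D$.

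Given the chain rule, functoriality of $\tau_{D/\D}$ is immediate. If $h\in\I(W/K)_\D$ and $(a,b)\in\tau_{D/\D}V$, then $h\comp f\in\I(V/K)_\D$ yields $\et(h\comp f)(a,b)=0$, and the chain rule translates this to $\et h(f(a),\et f(a,b))=0$, placing $(f(a),\et f(a,b))$ in $\tau_{D/\D}W$. Compatibility with composition is a second application of the chain rule, and $\tau_{D/\D}(\operatorname{Id}_V)=\operatorname{Id}_{\tau_{D/\D}V}$ is immediate from $\et x_i=u_i$. For product-preservation, the standard fact that in characteristic zero $\I(V\times W/K)_\D$ is $\D$-generated by $\I(V/K)_\D\cup\I(W/K)_\D$ (viewed in disjoint variable sets $x$ and $y$) combines with Lemma \ref{togen}: each $\et f$ for $f\in\I(V/K)_\D$ involves only the variables $(x,u)$, and symmetrically for $W$, so the defining equations of $\tau_{D/\D}(V\times W)$ separate cleanly, producing the stated $\D$-isomorphism with $\tau_{D/\D}V\times\tau_{D/\D}W$ after coordinate permutation.
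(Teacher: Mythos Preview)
Your proof is correct and follows the same overall architecture as the paper: both reduce to the single-derivation case and both hinge on the chain-rule identity $\et(g\comp f)(a,b)=\et g(f(a),\et f(a,b))$, from which functoriality is immediate, and both dispatch the product statement by appealing to the fact that $\I(V\times W/K)_\D$ is $\D$-generated by $\I(V/K)_\D\cup\I(W/K)_\D$ in separated variables.

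The genuine difference is in how the chain rule is established. The paper proves it by a direct calculus-style computation: it expands $\frac{\partial(g\comp f)}{\partial \theta x_i}$ via the ordinary multivariate chain rule (tracking the $\D$-derivatives of the $f_j$), separately computes $(g\comp f)^D$ in terms of $g^D$ and $f^D$, and then combines the pieces. Your argument is more conceptual: you observe that $\et\comp\phi$ and $\tilde\phi\comp\et$ are both $\D$-derivations (twisted through $\phi$) from $K\{y\}_\D$ into $K\{x,u\}_\D$ that agree on $K$ and on the generators $y_j$, and invoke the uniqueness of such derivations (the same principle underlying Fact~\ref{exx}). This buys you a cleaner proof that avoids the index-chasing in the paper, and as a bonus you explicitly verify that $\tau_{D/\D}f$ actually lands in $\tau_{D/\D}W$, which the paper leaves implicit. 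One small cosmetic point: since $\D$-regular maps are locally $\D$-rational rather than $\D$-polynomial, the target of your $\phi$ should strictly be a localization of $K\{x\}_\D$; this changes nothing in the argument, as the uniqueness of $\D$-derivations extends to such localizations by the quotient rule.
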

\begin{proof}
For ease of notation we assume $\DD=\{D\}$. We need to show that for each pair of $\D$-regular maps $f:V\to W$ and $g:W\to U$ we have that $\tau_{D/\D}(g\comp f)=\tau_{D/\D}g\,\comp\,\tau_{D/\D}f$. By the definition of $\tau_{D/\D}$ it suffices to show that $\et(g\comp f)(a,b)=\et g(f(a),\et f(a,b))$ for all $(a,b)\in\tau_{D/\D} V$. Write $f=(f_1,\dots,f_s)$. For each $\sigma\in \T_\D$ and $j\leq s$ we have that
\begin{displaymath}
\sum_{\t\in \T_\D,\, i\leq n}\frac{\partial \sigma f_j}{\partial \t x_i}(a)\t b_i=\sum_{\t\in \T_\D,\, i\leq n} \sigma\left(\frac{\partial f_j}{\partial \t x_i}(a)\,\t b_i\right),
\end{displaymath}
and hence 
\begin{displaymath}
\sum_{\t\in \T_\D,i\leq n}\frac{\partial (g\comp f)}{\partial \t x_i}(a)\t b_i=\sum_{\s\in \T_\D, j\leq s}\frac{\partial g}{\partial \s y_j}(f(a))\s \left(\sum_{\t\in\T_\D,i\leq n} \frac{\partial f_j}{\partial \t x_i}(a)\t b_i\right).
\end{displaymath}
From this we get
\begin{eqnarray*}
(g\comp f)^D(a)
&=& D(g(f(a)))-\sum_{\t\in \T_\D,i\leq n}\frac{\partial (g\comp f)}{\partial \t x_i}(a)\t D a_i \\
&=& \sum_{\s \in\T_\D,j\leq s}\frac{\partial g}{\partial \s y_j}(f(a))\s (Df_j(a))+g^D(f(a))-\sum_{\t\in \T_\D,i\leq n}\frac{\partial (g\comp f)}{\partial \t x_i}(a)\t a_i \\
&=& \sum_{\sigma\in \T_\D,\, j\leq s}\frac{\partial g}{\partial \sigma y_j}(f(a))\sigma(f^D_j(a))+g^D(f(a))=\et g(f(a),f^D(a)).
\end{eqnarray*}

Finally, using these equalities we have that
\begin{eqnarray*}
\et(g\comp f)(a,b)
&=& \sum_{\t\in \T_\D,i\leq n}\frac{\partial (g\comp f)}{\partial \t x_i}(a)\t b_i+(g\comp f)^D(a) \\
&=& \sum_{\sigma\in \T_\D,\, j\leq s}\frac{\partial g}{\partial \sigma y_j}(f(a))\sigma\left(\sum_{\t\in\T_\D,\,i\leq n}\frac{\partial f_j}{\partial \t x_i}(a)\t b_i +f^D_j(a)\right)+g^D(f(a)) \\
\vspace{2in}
&=& \et g(f,\et f)(a,b).
\end{eqnarray*}

The moreover clause follows from the fact that $\I(V\times W/K)_\D$ equals the ideal generated by $\I(V/K)_\D\subseteq K\{x\}_\D$ and $\I(W/K)_\D\subseteq K\{y\}_\D$ in $K\{x,y\}_\D$.
\end{proof}

Let $V$ be an affine $\D$-variety defined over $K$. Recall that for each tuple $a$ from $\U$, $D\in \DD$ and $f\in K\{x\}_\D$ we have that $Df(a)=\et f(a,Da)$, and so for each $a\in V$ and $g\in \I(V/K)_\D$ we have $$\et g(a,Da)=Dg(a)=0.$$ Hence, $(a,D_1 a,\dots,D_r a)\in \tau_{\DD/\D}V$ for all $a\in V$. This motivates the following definition.

\begin{definition}
For each affine $\D$-variety $V$ defined over $K$, we define the map $\nabla=\nabla_\DD^V$ from $V$ to $\tau_{\DD/\D}V$ by $$\nabla a=(a,D_1 a,\dots,D_r a).$$
Clearly $\nabla$ is a $\Pi$-regular section of $\pi:\ta V\to V$, i.e., a $\Pi$-regular map such that $\pi\comp\nabla=\operatorname{Id}_V$. 
\end{definition}

\begin{remark}\label{commu}
If $f:V\to W$ is a $\D$-regular map between affine $\D$-algebraic varieties (all defined over $K$), then $\nabla_\DD^W\comp f=\tau_{\DD/\D}f\comp\nabla_\DD^V$. In other words, the following diagram commutes
$$\xymatrix{
\ta V \ar[rr]^{\ta f}&&\ta W\\
V \ar[u]^{\nabla_\DD^V}\ar[rr]^{f}&&W\ar[u]_{\nabla_\DD^W}
}$$
Indeed, since for every tuple $b$ and $D\in \DD$ we have that $Df(b)=\et f(b,Db)$, for every $a\in V$  $$\nabla_\DD^W(f(a))=(a,\et f(\nabla_\DD^V a))=\tau_{\DD/\D}f( \nabla_\DD^V a).$$
\end{remark}

\noindent {\bf Abstract relative prolongations.} While the definitions and results of this section were stated for affine $\D$-varieties (for the sake of concreteness), all of them make sense and hold for abstract $\D$-varieties. In the remainder of this section we discuss how to make this precise by presenting in detail the construction of the \emph{abstract relative prolongation} and the induced map $\nabla$. 

First, for a quasi-affine $\D$-variety $U$ defined over $K$, set $\ta U$ to be the preimage of $U$ under $\pi:\ta V\to V$ where $V$ is the $\D$-closure (in the $\D$-topology) of $U$ over $K$. If $f:U\to U'$ is $\D$-regular map between quasi-affine $\D$-varieties (defined over $K$), we define $\ta f:\ta U\to \ta U'$ by specifying it on points exactly as we did for the affine case (see (\ref{expta}) above).

Let $(V,\{V_i\},\{f_i\})$ be an (abstract) $\D$-variety defined over $K$ (see Definition~\ref{absvar}), where $\{V_i\}_{i=1}^s$ is an open cover and $(V_i,f_i)$ are the local charts. Let $U_i=f_i(V_i)$ be the affine $\D$-varieties defined over $K$ that form the affine cover of $V$, $U_{ij}=f_i(V_i\cap V_j)$, and $f_{ij}=f_i\comp f_j^{-1}:U_{ji}\to U_{ij}$. 

\begin{definition}
The \emph{relative prolongation of $V$ (w.r.t. $\DD/\D$)}, denoted by $\ta V$, is constructed as follows. Let $X$ be the disjoint union of the $\ta(U_i)$'s and define on $X$ an equivalence relation $E$ where $aEb$ if and only if $\ta f_{ij}(a)=b$ for some $i$ and $j$. Then $\ta V$ is defined as $X/E$ with the induced topology. We define the projection $\pi:\ta V \to V$ by $\pi(\al)=f_i^{-1}(\pi_i(a))$ where $a\in \ta U_i $ is a representative of the $E$-class $\al$ and $\pi_i:\ta U_i \to U_i$. 
\end{definition}

Let us check that the map $\pi:\ta V \to V$ is well defined. Suppose $aEb$, then $\ta f_{ij}(a)=b$, and applying the projection $\pi_i$ we get $f_i\comp f_j^{-1}(\pi_j(a))=\pi_i(b)$. Thus, $f_j^{-1}(\pi_j(a))=f_i^{-1}(\pi_i(b))$, as desired.

A similar construction allows us to define the $\D$-tangent bundle $\rho:T_\D V\to V$ as an abstract $\D$-variety. From the affine case (Lemma~\ref{torito}), we can deduce that in the abstract category we still have that $\pi:\ta V\to V$ is a torsor under $\rho:T_\D V\to V$.

We can also define the $\Pi$-regular section $\nabla=\nabla_\DD^V$ of $\pi:\ta V\to V$ by $$\nabla(a)=\nabla_\DD^{U_i}(f_i(a))/E, \; \text{ for } a\in V_i.$$ This is well defined. Indeed, suppose $a\in V_j$.  Then 
$$\ta f_{ji}(\nabla_\DD^{U_i} f_i(a))=\nabla_\DD^{U_j}f_{ji}(f_i(a))=\nabla_\DD^{U_j}f_j(a),$$
where the first equality follows from Remark~\ref{commu}. This shows that $\nabla_\DD^{U_i} f_i(a)\,E\,\nabla_\DD^{U_j}f_j(a)$, as desired.

Given a $\D$-regular map $f:V\to W$ between abstract $\D$-varieties $(V,\{V_i\},\{f_i\})$ and $(W,\{W_i\},\{g_i\})$. We define the $\D$-regular map $\ta f:\ta V\to \ta W$ by $$\ta f(\al)=\ta(g_j\comp f\comp f_i^{-1})(a)/E_W, \; \text{ for all } \al\in \ta V,$$ where $a\in \ta U_i$ is a representative of the $E_V$-class of $\al$ and $f\comp f_i^{-1}(a)\in W_j$. 

Now that we have the appropriate constructions, we can deduce, from the affine case, the results of this section for abstract $\D$-varieties. 

From now on, unless stated otherwise, we will work in the more general category of abstract $\D$-varieties, which we simply call $\D$-varieties.

\

\section{Relative D-varieties}\label{relDvar}

In this section we present the theory of relative D-varieties. Our main goal is to introduce a convenient category for studying the geometry of those differential algebraic varieties that are not of maximal differential type (see Proposition~\ref{finteo}).

We continue to work in our universal domain $(\U,\Pi)$ with a partition $\Pi=\DD\cup\D$ where $\DD=\{D_1,\dots,D_r\}$, and over a base $\Pi$-field $K<\U$. 

Let us start with the definition of relative D-variety. 

\begin{definition}\label{dvar}
A \emph{relative D-variety (w.r.t. $\DD/\D$) defined over $K$} is a pair $(V,s)$ where $V$ is a $\D$-variety and $s$ is a $\D$-regular section of $\pi: \tau_{\DD/\D}V\to V$ (both defined over $K$). We require the following \emph{integrability condition} on $s$: for each $a\in V$,
\begin{equation}\label{rel1}
d_{D_i/\D}s_j(a, s_i(a))= d_{D_j/\D}s_i(a, s_j(a)) \quad \text{ for } i,j=1,\dots r,
\end{equation}
where $(\operatorname{Id},s_1,\dots,s_r):U\to \ta U$ are coordinates of $s$ in a local chart $U$ containing $a$. By a \emph{relative D-subvariety of} $(V,s)$ we mean a relative D-variety $(W,s_W)$ such that $W$ is a $\D$-subvariety of $V$ and $s_W=s|_W$. We say $(V,s)$ is an affine relative D-variety if $V$ is an affine $\D$-variety.
\end{definition}

Note that if $\D=\emptyset$ we recover the definition of algebraic D-variety, see Section~\ref{repro}.

A morphism between relative D-varieties $(V,s)$ and $(W,t)$ is a $\D$-regular map $f:V\to W$ satisfying $\tau_{\DD/\D}f\comp s=t\comp f$; that is, the following diagram commutes
$$\xymatrix{
\ta V \ar[rr]^{\ta f}&&\ta W\\
V \ar[u]^{s}\ar[rr]^{f}&&W\ar[u]_{t}
}$$

To every relative D-variety $(V,s)$ defined over $K$ we can associate a $\Pi$-variety (also defined over $K$) given by
\begin{displaymath}
(V,s)^{\sharp}=\{\, a\in V : \, s(a)=\nabla(a)\}.
\end{displaymath}
A point in $(V,s)^{\sharp}$ is called a \emph{sharp point} of $(V,s)$.

\begin{example}\label{lastex}
In this example we illustrate the fact that in the absence of the integrability condition~\ref{rel1} there may not be any sharp point. Consider the case of two derivations $(\U,\Pi=\{D_1,D_2\})$. Let $V=\U$, $s_1(x)=x$ and $s_2(x)=x+1$. Then 
$$s=(\operatorname{Id},s_1,s_2):\U\to \tau_{\Pi/\emptyset}V=\U^3$$ 
is a section of $\tau_{\Pi/\emptyset}V$. Note that $s$ does not satisfy the integrability condition. Indeed,
$$d_{D_1/\emptyset}s_2(x,s_1(x))=x \text{ and } d_{D_2/\emptyset}s_1(x,s_2(x))=x+1.$$
We now show that there is no point $a\in V$ such that $s(a)=\nabla(a)$. Towards a contradiction suppose $a\in V$ is such that $s(a)=\nabla(a)$. On the one hand we have
$$\d_2\d_1 a=\d_2(a)=a+1.$$
On the other hand,
$$\d_1\d_2 a=\d_1(a+1)=\d_1 a=a.$$
Since $\d_2\d_1 a=\d_1\d_2 a$, we get $1=0$, and so we have a contradiction.
\end{example}

\begin{proposition}\label{lema1} Let $(V,s)$ be a relative D-variety defined over $K$.
\begin{enumerate}  
\item $(V,s)^{\sharp}$ is $\D$-dense in $V$.
\item Assume $V$ is $K$-irreducible (as a $\D$-variety). There exists a $\D$-generic point $a$ of $V$ over $K$ such that $a\in (V,s)^{\sharp}$. Moreover, any such $a$ is a $\Pi$-generic point of $(V,s)^\#$ over $K$. In particular, $(V,s)^\#$ will be $K$-irreducible.
\item Assume $V$ is $K$-irreducible (as a $\D$-variety). Let $\omega_V$ be the Kolchin polynomial of $V$ as a $\D$-algebraic variety and let $\omega_{(V,s)^\#}$ be the Kolchin polynomial of $(V,s)^\#$ as a $\Pi$-algebraic variety. Let $\mu$ be the smallest positive integer such that $s_i\in K(\t x:\t\in \T_\D(\mu))$, for all $i=1,\dots, r$, whenever we write $s$ in coordinates $s=(\operatorname{Id},s_1,\dots,s_r),$ in a local chart. Then for sufficiently large $h\in\NN$
\begin{displaymath}
\omega_{(V,s)^{\sharp}}(h)\leq \omega_{V}(\mu  h). 
\end{displaymath}
In particular, $\D$-type$(V)=\Pi$-type$(V,s)^{\sharp}$. 
\end{enumerate}
\end{proposition}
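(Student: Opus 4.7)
The plan is to prove parts (1) and (2) together by constructing, when $V$ is $K$-irreducible, a single point $b\in(V,s)^\#$ that is simultaneously $\D$-generic in $V$ over $K$ and $\Pi$-generic in $(V,s)^\#$ over $K$. Density in part~(1) then follows by applying this to each $K$-irreducible component of $V$, and the same construction will yield the Kolchin polynomial bound in part~(3).

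To build $b$, pass to an affine chart and let $a$ be a $\D$-generic point of $V$ over $K$. I equip $K\la a\ra_\D$ with a $\Pi$-field structure extending $(K,\Pi)$ by extending each $D_i\in\DD$ to a $\D$-derivation of $K\la a\ra_\D$ with $D_ia=s_i(a)$. Such an extension exists by Fact~\ref{exx}: the required condition $d_{D_i/\D}f(a,s_i(a))=0$ for every $f\in\I(V/K)_\D$ is precisely the statement $(a,s_i(a))\in\tau_{D_i/\D}V$, which holds because $s$ is a section. The resulting $D_i$'s commute with $\D$ by construction and agree on $K$ with the original derivations, so their pairwise commutativity on $K\la a\ra_\D$ reduces to verifying $D_iD_ja=D_jD_ia$; by Lemma~\ref{exten1},
\[ D_iD_ja \;=\; D_is_j(a) \;=\; d_{D_i/\D}s_j(a,s_i(a)), \]
and the integrability condition (\ref{rel1}) gives equality with $d_{D_j/\D}s_i(a,s_j(a))=D_jD_ia$. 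This is the crucial use of integrability, and the step I expect to be the main obstacle conceptually (compare Example~\ref{lastex}, where integrability fails and no sharp point exists). Saturation of $(\U,\Pi)$ now produces a $\Pi$-embedding of $(K\la a\ra_\D,\Pi)$ into $\U$ over $K$; the image $b$ of $a$ lies in $V$ (as $V$ is $\D$-closed over $K$), is $\D$-generic in $V$ over $K$, and satisfies $D_ib=s_i(b)$, so $b\in(V,s)^\#$.

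For $\Pi$-genericity of $b$ and part~(3), note that every $\Pi$-derivative operator $\t=\d_1^{c_1}\cdots\d_\ell^{c_\ell}D_1^{e_1}\cdots D_r^{e_r}$ can be evaluated at any point $b'\in(V,s)^\#$ by iteratively substituting $D_ib'=s_i(b')$, yielding an identity $\t b'=g_\t(b')$ for some $\D$-polynomial $g_\t$ over $K$ depending only on $\t$ and $s$. Each application of a $D_i$ introduces $\D$-derivatives of order at most $\mu$ and each $\d_j$ raises the $\D$-order by one, so $g_\t$ has $\D$-order at most $\mu(e_1+\cdots+e_r)+c_1+\cdots+c_\ell\leq\mu h$ whenever $|\t|\leq h$. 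Consequently, any $\Pi$-polynomial $f$ over $K$ with $f(b)=0$ yields a $\D$-polynomial $F\in K\{x\}_\D$ with $F(b)=0$, which by $\D$-genericity of $b$ must vanish on all of $V\supseteq(V,s)^\#$, giving $f(b')=F(b')=0$ for every $b'\in(V,s)^\#$; this proves $\Pi$-genericity. The same inclusion shows
\[ K(\t b:\t\in\T_\Pi(h))\;\subseteq\; K(\t' b:\t'\in\T_\D(\mu h)), \]
so taking transcendence degrees over $K$ and using $\D$-genericity of $b$ yields $\omega_{(V,s)^\#}(h)\leq\omega_V(\mu h)$ for sufficiently large $h$. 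Since the substitution $h\mapsto\mu h$ preserves the degree in $h$, we get $\Pi\text{-type}(V,s)^\#\leq\D\text{-type}(V)$; the reverse inequality is immediate from $\T_\D\subseteq\T_\Pi$, completing the proof.
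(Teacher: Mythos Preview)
Your argument is correct and uses the same core construction as the paper: extend each $D_i$ to $K\la a\ra_\D$ via Fact~\ref{exx}, verify commutativity through the integrability condition~(\ref{rel1}), and realize the resulting $\Pi$-structure inside $\U$. The paper organizes matters slightly differently---it proves (1) over an arbitrary $\Pi$-closed extension $F$ (using existential closedness of $F$ in place of your saturation/embedding step) and then deduces (2) by a compactness argument, whereas you construct the $\D$-generic sharp point directly and argue $\Pi$-genericity by substitution---but the mechanism is identical.

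One minor correction: since $s$ is only a $\D$-\emph{regular} section, the $s_i$ are $\D$-rational functions rather than $\D$-polynomials (as the hypothesis $s_i\in K(\t x:\t\in\T_\D(\mu))$ in (3) already makes explicit), so your $g_\t$ and $F$ should be $\D$-rational. This does not affect the field inclusion in (3), and for the $\Pi$-genericity step you simply clear denominators before invoking $\D$-genericity of $b$.
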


\begin{proof} 
It is sufficient to consider the affine case.

\noindent (1) Let $F<\U$ be a $\Pi$-closed field extension of $K$ and $O$ be any (non-empty) $\D$-open subset of $V$ defined over $F$. We need to find a tuple $c$ from $F$ such that $c\in (V,s)^{\sharp}\cap O$. Let $W$ be an irreducible component of $V$ such that $O\cap W\neq \emptyset$ and let $a$ be a $\D$-generic point of $W$ over $F$. Clearly $a\in O$. 

Now let $f\in \I(a/F)_{\D}=\I(W/F)_{\D}$. Since $a$ is a generic point of $W$ over $F$, we can find $g$ such that $g(a)\neq 0$ and $fg\in \I(V/F)_{\D}$. Thus, for $i=1,\dots,r$,
\begin{eqnarray*}
0
&=&d_{D_i/\D} (fg)_{a}(s_i(a)) \\
&=&d_{D_i/\D} f_{a}(s_i(a))g(a)+f(a)d_{D_i/\D} g_{a}(s_i(a)) \\
&=&d_{D_i/\D} f_{a}(s_i(a))g(a)
\end{eqnarray*}
since $f(a)=0$. But since $g(a)\neq 0$, we get $d_{D_i/\D}f_{a}(s_i(a))=0$ for $i=1,\dots,r$. Fact~\ref{exx} now implies that there are $\D$-derivations $D_i':F\{a\}_{\D}\to F\{a,s_i(a)\}_{\D}=F\{a\}_{\D}$ extending $D_i:F\to F$ such that $D_i'(a)=s_i(a)$, for $i=1,\dots,r$.

Let us check that $D_i'$ and $D_j'$ commute on $F\{a\}_{\D}$; that is, for $f\in F\{x\}_{\D}$, $[D_i',D_j']f(a)=0$. A rather lengthy computation shows that 
\begin{displaymath}
[D_i',D_j']f(a)=d_{[D_i,D_j]/\D}f(a,[D_i',D_j']a).
\end{displaymath}
Because $D_i$ and $D_j$ commute on $F$ we get $f^{[D_i,D_j]}=0$, so we only need to check that $[D_i',D_j']a=0$. Here we use the integrability condition (\ref{rel1}). We have
\begin{eqnarray*}
[D_i',D_j']a
&=& D_i' s_j(a)-D_j' s_i(a)\\
&=&d_{D_i/\D}s_{j}(a, D_i'a)-d_{D_j/\D}s_{i}(a,D_j'a)\\
&=& d_{D_i/\D}s_{j}(a, s_i(a))-d_{D_j/\D}s_{i}(a, s_j(a))=0.
\end{eqnarray*}
Thus, the $\D$-derivations $\DD'=\{D_{1}',\dots,D_r'\}$ commute on $F\{a\}_{\D}$. So, $F\{a\}_{\D}$ together with $\DD'$, is a differential ring extension of $(F,\Pi)$ and it has a tuple, namely $a$, living in $(V,s)^{\sharp}\cap O$. Hence, there is $c$ in $F$ such that $c\in (V,s)^{\sharp}\cap O$ as desired.

\noindent (2) Consider the set of formulas
\begin{displaymath}
\Phi=\{x\in (V,s)^{\sharp}\}\cup\{x\notin W: W  \; \textrm{is a proper} \; \D\textrm{-algebraic subvariety of } V \textrm{ over } K \}
\end{displaymath}
if this set were inconsistent, by compactness and $K$-irreducibility of $V$, we would have that $(V,s)^{\sharp}$ is contained in a proper $\D$-algebraic subvariety of $V$, but this is impossible by part (1). Hence $\Phi$ is consistent and a realisation is the desired point. The rest is clear.

\noindent (3) Let $a$ be a $\D$-generic point of $V$ over $K$ such that $a\in (V,s)^{\sharp}$ (this is possible by (2)). Then, since $\nabla(a)=s(a)$, for any $D\in \DD$ we can write $Da$ as a $\D$-rational function from $K(\t x: \t\in\T_\D(\mu))$ evaluated at $a$. Thus, we get that for each $h\in\NN$,
\begin{displaymath}
K(\t a: \, \t \in \T_{\Pi}(h))\subseteq K(\t a: \, \t\in \T_{\D}(\mu h)).
\end{displaymath} 
\end{proof}

Next we show that morphisms between relative D-varieties are precisely those regular $\D$-maps preserving sharp points.

\begin{lemma}
Let $(V,s)$ and $(W,t)$ be relative D-varieties defined over $K$. A regular $\D$-map $f:V\to W$ is a morphism of relative D-varieties if and only if $\displaystyle f\left((V,s)^\#\right)\subseteq (W,t)^\#$.
\end{lemma}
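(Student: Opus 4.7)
The plan is to verify each direction separately, with the reverse direction being the interesting one and relying on the $\D$-density of sharp points from Proposition 2.4.4(1).

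For the forward direction I would argue directly: suppose $f$ is a morphism of relative D-varieties, so $\tau_{\DD/\D}f\comp s=t\comp f$. Pick $a\in (V,s)^\#$, so $s(a)=\nabla_\DD^V(a)$. Applying $\tau_{\DD/\D}f$ to both sides gives
\[
t(f(a))=\tau_{\DD/\D}f(s(a))=\tau_{\DD/\D}f(\nabla_\DD^V(a))=\nabla_\DD^W(f(a)),
\]
where the last equality is Remark~2.3.14 (the commutativity of $\nabla$ with induced maps on relative prolongations). Hence $f(a)\in (W,t)^\#$, as required.

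For the reverse direction, assume $f((V,s)^\#)\subseteq (W,t)^\#$. I would like to conclude that the two $\D$-regular maps $\tau_{\DD/\D}f\comp s$ and $t\comp f$ from $V$ to $\tau_{\DD/\D}W$ are equal. The idea is to work locally and show they agree on a $\D$-dense subset. Reducing to the affine case via local charts, both maps are given by tuples of $\D$-regular functions, and the \emph{equalizer}
\[
E=\{a\in V:\tau_{\DD/\D}f(s(a))=t(f(a))\}
\]
is a $\D$-closed subset of $V$ (it is cut out by the $\D$-polynomial equations obtained by equating the coordinate functions of the two maps). For $a\in (V,s)^\#$, the forward direction's computation gives $\tau_{\DD/\D}f(s(a))=\nabla_\DD^W(f(a))$, and by hypothesis $f(a)\in (W,t)^\#$ so $\nabla_\DD^W(f(a))=t(f(a))$. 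Thus $(V,s)^\#\subseteq E$.

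By Proposition~2.4.4(1), $(V,s)^\#$ is $\D$-dense in $V$, so $E=V$, which means $\tau_{\DD/\D}f\comp s=t\comp f$ identically; that is, $f$ is a morphism of relative D-varieties. The main (minor) technical point is justifying that the equalizer is genuinely $\D$-closed — in the affine case this is transparent from the defining equations of $\tau_{\DD/\D}W$ and the coordinate description of $\tau_{\DD/\D}f$ given by formula~(\ref{expta}), while in the abstract case one covers $V$ and $W$ by affine opens and applies the same argument chart by chart. I do not anticipate any real obstacle beyond this routine bookkeeping, since the heart of the argument is just the $\D$-density of sharp points together with the commutativity of $\nabla$ with $\tau_{\DD/\D}f$ already established in Remark~2.3.14.
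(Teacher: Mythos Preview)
Your proof is correct and follows essentially the same approach as the paper: both directions use Remark~2.3.14 for the commutativity $\nabla_\DD^W\circ f=\tau_{\DD/\D}f\circ\nabla_\DD^V$, and the reverse direction hinges on the $\D$-density of $(V,s)^\#$ from Proposition~2.4.4(1) to pass from agreement on sharp points to agreement everywhere. Your explicit discussion of the equalizer being $\D$-closed is a slight elaboration of what the paper leaves implicit, but the argument is the same.
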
 
\begin{proof}
Suppose $f$ is a morphism of relative D-varieties and let $a\in (V,s)^\#$. We need to show that $f(a)\in (W,t)^\#$, i.e., $t(f(a))=\nabla_\DD^W(f(a))$. By Remark~\ref{commu}, we have $$\nabla_\DD^W\comp f(a)=\ta f\comp \nabla_\DD^V (a)=\ta f\comp s(a)=t\comp f (a),$$ as desired. Conversely, assume $f\left((V,s)^\#\right)\subseteq (W,t)^\#$. For all $a\in (V,s)^\#$ we have $$\ta f(s(a))=\ta f(\nabla_\DD^V(a))=\nabla_\DD^W(f(a))=t(f(a)).$$
Hence, $\ta f\comp s$ and $t\comp f$ agree on all of $(V,s)^\#$, and so, by (1) of Proposition~\ref{lema1}, they agree on a $\D$-dense subset of $V$. Thus, $\ta f\comp s=t\comp f$.
\end{proof}

The previous lemma implies that we can define a (covariant) functor $\#$ from the category of relative D-varieties defined over $K$ to the category of $\Pi$-varieties defined over $K$, which is defined on objects by $(V,s)^\#$ and on morphisms by $f|_{(V,s)^\#}$.

\begin{proposition}\label{biv}
Let $(V,s)$ be an relative D-variety defined over $K$. The $\sharp$-functor establishes a bijective correspondence between relative D-subvarieties of $(V,s)$ defined over $K$ and $\Pi$-subvarieties of $(V,s)^\#$ defined over $K$. The inverse is given by taking the $\D$-closure (in the $\D$-topology of $V$) over $K$.
\end{proposition}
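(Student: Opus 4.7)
The plan is to exhibit the inverse functor explicitly and verify that it and $\sharp$ are mutually inverse; the argument reduces to the affine setting by working in a local chart. The forward map is $\Phi(W,s_W) := (W,s_W)^\#$, which equals $W \cap (V,s)^\#$ since $s_W = s|_W$, and is visibly a $\Pi$-closed subset of $(V,s)^\#$ defined over $K$. For the proposed inverse $\Psi$, given a $\Pi$-subvariety $X$ of $(V,s)^\#$ over $K$, set $W := \overline{X}^{\,\D/K}$, the $\D$-closure of $X$ in $V$ over $K$, and put $\Psi(X) := (W, s|_W)$.

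I would first verify that $\Psi(X)$ is a relative D-subvariety. The integrability condition \eqref{rel1} is inherited by restriction, so the only nontrivial check is that $s(W) \subseteq \ta W$. For any $a \in X$ we have $s(a) = \nabla(a)$; since $a \in W$, the chain-rule identity $D_i f(a) = d_{D_i/\D}f(a, D_i a)$ applied to each $f \in \I(W/K)_\D$ (using $f(a)=0$) gives $\nabla(a) \in \ta W$, whence $s(X) \subseteq \ta W$. The set $s^{-1}(\ta W)$ is $\D$-closed in $V$ (the preimage under the $\D$-regular map $s$ of a $\D$-closed subvariety of $\ta V$), and since it contains the $\D$-dense subset $X$ of $W$, it must contain all of $W$.

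The identity $\Psi \circ \Phi = \operatorname{id}$ is immediate from Proposition \ref{lema1}(1): $(W,s_W)^\#$ is $\D$-dense in $W$, so its $\D$-closure in $V$ over $K$ is $W$. For $\Phi \circ \Psi = \operatorname{id}$, decomposing $X$ into $K$-irreducible components reduces us to $X$ being $K$-irreducible. Fix a $\Pi$-generic point $a$ of $X$ over $K$, which automatically lies in $(V,s)^\#$. Every $b \in X$ is a $\Pi$-specialization, hence a $\D$-specialization, of $a$ over $K$, so $X$ lies in the $\D$-locus of $a$ over $K$; since $a \in W$, this $\D$-locus equals $W$, and in particular $a$ is $\D$-generic in $W$ over $K$. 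To conclude $W \cap (V,s)^\# \subseteq X$ it then suffices to show that for any $b \in W \cap (V,s)^\#$ and any $f \in K\{x\}_\Pi$ with $f(a)=0$ one has $f(b)=0$.

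This last step is the main obstacle; it is where the integrability condition enters in earnest. The idea is to rewrite any $\Pi$-polynomial as a $\D$-rational expression on $(V,s)^\#$. On $(V,s)^\#$ one has $D_i x = s_i(x)$, and iterating with the chain-rule operator $d_{D_i/\D}$ every $\tau x$ with $\tau \in \T_\Pi$ can be expressed as a $\D$-rational function over $K$ evaluated at the argument; the integrability condition \eqref{rel1} guarantees that the result is independent of the order in which the $\DD$-substitutions are performed, so the rewriting is well-defined. Performing this rewriting on a $\D$-open $O \subseteq V$ containing $b$ on which $s$ is $\D$-rational yields $\tilde f = p/q$ with $p,q \in K\{x\}_\D$ and $q$ nonvanishing on $O$, satisfying $f(c) = \tilde f(c)$ for every $c \in O \cap (V,s)^\#$. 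Since $a$ is $\D$-generic in $W$ and $O \cap W \neq \emptyset$ (it contains $b$), we have $a \in O$, so $\tilde f(a) = f(a) = 0$, forcing $p(a) = 0$. Because $b$ is a $\D$-specialization of $a$ over $K$ this yields $p(b) = 0$, and $q(b) \neq 0$ then gives $f(b) = \tilde f(b) = 0$. Hence $b \in X$, completing the bijection.
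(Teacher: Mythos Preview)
Your proof is correct and follows essentially the same approach as the paper: both convert $\Pi$-conditions into $\D$-conditions via the sharp-point equations $D_i x = s_i(x)$, then use that $W$ is the $\D$-closure of $X$. The paper packages this as a brief contradiction argument (if $X \subsetneq (W,s_W)^\#$, rewrite a separating $\Pi$-equation as a $\D$-variety $U$ over $K$ with $X\subseteq U$ but $(W,s_W)^\#\not\subseteq U$, forcing $(W,s_W)^\#\not\subseteq W$), whereas you unfold the same idea directly with generic points and handle the rational denominators more carefully; you also make explicit the check $s(W)\subseteq\ta W$ that the paper leaves as ``clearly''.
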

\begin{proof}
If $(W,s_W)$ is a relative D-subvariety of $(V,s)$ defined over $K$ then, by (1) of Proposition~\ref{lema1}, the $\D$-closure of $(W,s_W)^{\#}$ is just $W$. On the other hand, if $X$ is a $\Pi$-algebraic subvariety of $(V,s)^\#$ defined over $K$ and $W$ is its $\D$-closure over $K$, then $(W,s_W:=s|_W)$ is clearly a $\DD/\D$-subvariety of $(V,s)$. We need to show $X=(W,s_W)^\#$. Towards a contradiction suppose $X\neq (W,s_W)^\#$. Using the equations of the sharp points $s(x)=\nabla_{\DD}x$, we obtain a $\D$-algebraic variety $U$ defined over $K$ such that $X\subseteq U$ and $(W,s_W)^\# \not\subseteq U$. This would imply that $(W,s_W)^\#\not\subseteq W$, but this is impossible.
\end{proof}

So far in this section and the previous one we have been using a fixed partition $\DD\cup\D$ of $\Pi$. However, as we mentioned in Remark~\ref{extlin} at the beginning of Section~\ref{defrel}, all the definitions and results make sense and hold if we replace such a partition for any two sets $\DD$ and $\D$ of linearly independent elements of the $K$-vector space $\operatorname{span}_{K^\Pi}\Pi$ such that their union $\DD\cup\D$ forms a basis.

We now show that, up to $\Pi$-birational equivalence, every irreducible $\Pi$-variety of $\Pi$-type less than $m$ is the sharp points of a relative D-variety (with an appropriate choice of $\DD$ and $\D$ such that $\DD\cup\D$ is a basis of $\operatorname{span}_{K^\Pi}\Pi$). This is the analogue in several derivations of the well known characterization of finite rank ordinary differential algebraic varieties (see \S 4 of \cite{PiT}).

\begin{proposition}\label{finteo}
Let $W$ be a $K$-irreducible $\Pi$-variety defined over $K$ with $\Pi$-type$(W)=\ell<m$ and $\Pi$-dim$(W)=d$. Then there is a basis $\DD\cup \D$ of the $K^\Pi$-vector space $\operatorname{span}_{K^\Pi}\Pi$ with $|\D|=\ell$, and a relative D-variety $(V,s)$ w.r.t. $\DD/\D$ defined over $K$ with $\D$-type$(V)=\ell$ and $\D$-dim$(V)=d$, such that $W$ is $\Pi$-birationally equivalent (over $K$) to $(V,s)^{\sharp}$.
\end{proposition}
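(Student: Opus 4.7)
The idea is to extract the required relative D-variety structure from a $\Pi$-generic point of $W$ via Fact~\ref{Kolteo}. First I would fix a $\Pi$-generic point $a$ of $W$ over $K$, so that $\Pi$-type$(a/K)=\ell$ and $\Pi$-dim$(a/K)=d$. Applying Fact~\ref{Kolteo} to $a$ yields a linearly independent set $\D=\{D_1,\dots,D_\ell\}\subseteq \operatorname{span}_{K^\Pi}\Pi$ and a tuple $\al$ with $K\la a\ra_\Pi=K\la\al\ra_\D$, $\D$-type$(\al/K)=\ell$ and $\D$-dim$(\al/K)=d$. I would then extend $\D$ to a basis $\DD\cup\D$ of $\operatorname{span}_{K^\Pi}\Pi$, where $\DD=\{D_{\ell+1},\dots,D_m\}$, so that by Remark~\ref{extlin} the entire theory of relative prolongations developed above applies to the partition $\DD/\D$.

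Next, let $V$ denote the $\D$-locus of $\al$ over $K$. Then $V$ is $K$-irreducible with $\D$-generic point $\al$, so $\D$-type$(V)=\ell$ and $\D$-dim$(V)=d$. For each $i\in\{\ell+1,\dots,m\}$, since $D_i\al\in K\la a\ra_\Pi=K\la\al\ra_\D$, there is a $\D$-rational function $s_i$ over $K$ such that $D_i\al=s_i(\al)$. After replacing $V$ by a $\D$-open subvariety (still containing $\al$) on which every $s_i$ is $\D$-regular, I obtain a $\D$-regular map $s=(\operatorname{Id},s_{\ell+1},\dots,s_m)$ defined on $V$. The image of $s$ actually lies in $\ta V$: at the generic point we have $s(\al)=(\al,D_{\ell+1}\al,\dots,D_m\al)=\nabla_\DD^V(\al)\in\ta V$, and since $\ta V$ is $\D$-closed and $\al$ is $\D$-generic in $V$, the inclusion $s(V)\subseteq\ta V$ holds throughout.

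I then need to verify the integrability condition~\eqref{rel1}. Using the rational-function version of Lemma~\ref{exten1}, namely $D_i f(\al)=d_{D_i/\D}f(\al,D_i\al)$ for every $\D$-rational $f$ over $K$, I compute at $\al$
\begin{displaymath}
d_{D_i/\D}s_j(\al,s_i(\al))=d_{D_i/\D}s_j(\al,D_i\al)=D_i s_j(\al)=D_iD_j\al=D_jD_i\al=d_{D_j/\D}s_i(\al,s_j(\al)),
\end{displaymath}
where the middle equality uses that $D_i,D_j\in\operatorname{span}_{K^\Pi}\Pi$ commute on $\U$. Because this is a $\D$-closed condition on $V$ that holds at the $\D$-generic point of the $K$-irreducible $\D$-variety $V$, it holds on all of $V$; thus $(V,s)$ is a relative D-variety w.r.t. $\DD/\D$ defined over $K$.

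Finally, for the $\Pi$-birational equivalence, note that $\al\in(V,s)^\sharp$ by construction, and by Proposition~\ref{lema1}(2) $\al$ is a $\Pi$-generic point of $(V,s)^\sharp$ over $K$. The equality $K\la a\ra_\Pi=K\la\al\ra_\D$, together with the observation that $K\la\al\ra_\D=K\la\al\ra_\Pi$ (since $\DD\al\subseteq K\la\al\ra_\D$), shows that $a$ and $\al$ are $\Pi$-interdefinable over $K$; this interdefinability extends to a $\Pi$-birational equivalence between $W$ and $(V,s)^\sharp$ over $K$. The main obstacles I anticipate are (i) passing from the generic-point identities to global statements for both $s(V)\subseteq\ta V$ and the integrability condition, which I handle via $\D$-density of the generic point together with $\D$-closedness of the relevant loci, and (ii) ensuring $s$ is genuinely $\D$-regular by shrinking $V$ to a $\D$-open (quasi-affine) subvariety, and therefore working inside the abstract $\D$-variety category rather than strictly the affine one.
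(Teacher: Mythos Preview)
Your proposal is correct and follows the same overall strategy as the paper: pick a $\Pi$-generic point $a$ of $W$, invoke Fact~\ref{Kolteo} to obtain $\D$ and a tuple $\al$ with $K\la a\ra_\Pi=K\la\al\ra_\D$, extend to a basis $\DD\cup\D$, and then read off the section $s$ from the fact that each $D_i\al$ lies in $K\la\al\ra_\D$.

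The one point of divergence is how you handle the denominators of the $\D$-rational functions $s_i$. You shrink $V$ to a quasi-affine $\D$-open subvariety on which the $s_i$ are regular, working in the abstract category. The paper instead keeps everything affine by the classical ``adjoin inverses'' trick: writing $D_i\al=f_i(\al)/g_i(\al)$, it replaces $\al$ by the tuple $b=(\al,1/g_1(\al),\dots,1/g_r(\al))$ and takes $V$ to be the $\D$-locus of $b$. On this larger $V$ the section $s$ becomes $\D$-\emph{polynomial}, so no localisation is needed. Both maneuvers are standard; yours is perhaps more geometric, while the paper's keeps the construction entirely within the affine category and makes the ``standard trick'' explicit. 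Your verification of the integrability condition~\eqref{rel1} via $D_iD_j\al=D_jD_i\al$ at the generic point is in fact more detailed than what the paper writes down (the paper simply asserts that $(V,s)$ is a relative D-variety after noting $s(b)=\nabla_\DD b$). One small imprecision in your write-up: after shrinking, $\ta V$ is not literally $\D$-closed but rather $\D$-open in $\ta\bar V$; the argument still goes through since $\ta\bar V$ is $\D$-closed and $\pi\circ s=\operatorname{Id}_V$ forces $s(V)\subseteq\pi^{-1}(V)=\ta V$.
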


\begin{proof}
It suffices to consider the affine case.
Let $a$ be a $\Pi$-generic point of $W$ over $K$. Since the $\Pi$-type of $W$ is $\ell$, by Fact \ref{Kolteo}, there exists a set $\D$ of $\ell$ linearly independent elements of $\operatorname{span}_{K^\Pi}\Pi$ such that $K\la a\ra_{\Pi}=K\la\al\ra_{\D}$ for some tuple $\al$ of $\U$ with $\D$-tp$(\al/K)=\ell$ and $\D$-dim$(\al/K)=d$. Let $\DD=\{D_1,\dots,D_r\}$ be any set of derivations such that $\DD\cup\D$ is a basis of $\operatorname{span}_{K^\Pi}\Pi$.

For each $i=1,\dots,r$, we have
\begin{displaymath}
D_i \alpha=\frac{f_i(\al)}{g_i(\al)}
\end{displaymath}
where $\frac{f_i}{g_i}$ is a sequence of $\D$-rational functions over $K$. Let
\begin{displaymath}
b:= \left(\al,\frac{1}{g_{1}(\al)},\dots,\frac{1}{g_r(\al)}\right).
\end{displaymath}
Note that $b$ and $\al$ are $\D$-interdefinable over $K$, hence $\D$-type$(b/K)=\D$-type$(\al/K)$ and $\D$-dim$(b/K)=\D$-dim$(\al/K)$. Also, $a$ and $b$ are $\Pi$-interdefinable over $K$ and hence it suffices to show that $b$ is a $\Pi$-generic point of the set of sharp points of a relative D-variety w.r.t. $\DD/\D$ defined over $K$.

Let $V$ be the $\D$-locus of $b$ over $K$ (then $\D$-type$(V)=r$ and $\D$-dim$(V)=d$). A standard trick gives us a sequence $s=(\operatorname{Id},s_1,\dots,s_r)$ of $\D$-polynomials over $K$ such that $s(b)=\nabla_{\DD}b\in \tau_{\DD/\D}V$. Since $b$ is a $\D$-generic point of $V$ over $K$, we get that $(V,s)$ is a relative D-variety defined over $K$. Also, $b\in(V,s)^{\sharp}$ then, by Proposition \ref{lema1} (3), $b$ is a $\Pi$-generic point of $(V,s)^{\sharp}$ over $K$ as desired.
\end{proof}

The proof of Proposition \ref{finteo} actually proves:

\begin{corollary}\label{ong}
Let $W$ be a $K$-irreducible $\Pi$-variety defined over $K$ and $\D$ a set of fewer than $m$ linearly independent elements of $\operatorname{span}_{K^\Pi}\Pi$ that bounds $\Pi$-type$(W)$ (see Definition~\ref{defboset}). If we extend $\D$ to a basis $\DD\cup\D$ of $\operatorname{span}_{K^\Pi}\Pi$, then $W$ is $\Pi$-birationally equivalent (over $K$) to $(V,s)^\#$ for some relative D-variety $(V,s)$ w.r.t. $\DD/\D$ defined over $K$.
\end{corollary}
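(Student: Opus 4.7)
The plan is to observe that the argument in the proof of Proposition~\ref{finteo} works almost verbatim, with only one modification at the very beginning. In that proof, Fact~\ref{Kolteo} was used solely to produce a set $\D$ of linearly independent elements of $\operatorname{span}_{K^\Pi}\Pi$ (there of cardinality exactly $\Pi$-type$(W)$) together with a tuple $\al$ such that $K\la a\ra_\Pi=K\la\al\ra_\D$, for a $\Pi$-generic point $a$ of $W$ over $K$. The rest of that proof does not use the fact that $|\D|=\Pi$-type$(W)$, nor the specific typical $\D$-dimension, only the equality $K\la a\ra_\Pi=K\la\al\ra_\D$.

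Under the hypothesis of the present corollary, this equality is available for free. First I would choose a $\Pi$-generic point $a$ of $W$ over $K$. Since $\D$ bounds the $\Pi$-type of $W$, the $\Pi$-field $K\la a\ra_\Pi$ is finitely $\D$-generated over $K$, so I can take $\al$ to be any finite $\D$-generating tuple, obtaining $K\la a\ra_\Pi=K\la\al\ra_\D$. I would then extend $\D$ to the basis $\DD\cup\D=\{D_1,\dots,D_r\}\cup\D$ of $\operatorname{span}_{K^\Pi}\Pi$ prescribed by the corollary, and for each $D_i$ write
\[
D_i\al=\frac{f_i(\al)}{g_i(\al)}
\]
with $f_i,g_i$ sequences of $\D$-rational functions over $K$.

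Setting
\[
b:=\Bigl(\al,\tfrac{1}{g_1(\al)},\dots,\tfrac{1}{g_r(\al)}\Bigr),
\]
the tuple $b$ is $\D$-interdefinable with $\al$ over $K$, and hence $\Pi$-interdefinable with $a$ over $K$. Letting $V$ be the $\D$-locus of $b$ over $K$, the extra coordinates $1/g_i(\al)$ allow one to produce $\D$-polynomials $s_1,\dots,s_r$ over $K$ with $s_i(b)=D_ib$, and then $s:=(\operatorname{Id},s_1,\dots,s_r)$ is a $\D$-regular section of $\pi:\tau_{\DD/\D}V\to V$ with $s(b)=\nabla_\DD b$. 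Since $b$ is a $\D$-generic point of $V$ over $K$, the identity $\tau_{\DD/\D}f\comp s=s\comp f$-type integrability conditions (\ref{rel1}) hold at $b$, hence on a $\D$-dense subset of $V$, and therefore on all of $V$; so $(V,s)$ is a relative D-variety over $K$ and $b\in(V,s)^\#$.

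Finally, by Proposition~\ref{lema1}(2), $b$ is a $\Pi$-generic point of $(V,s)^\#$ over $K$. Since $a$ and $b$ share the same definable closure in $(\U,\Pi)$ over $K$, the $\Pi$-locus of $a$, namely $W$, is $\Pi$-birationally equivalent to $(V,s)^\#$ over $K$, as required. I anticipate no real obstacle here: the statement is really a bookkeeping observation pointing out that the cardinality and optimality conditions on $\D$ in Proposition~\ref{finteo} were never used after Fact~\ref{Kolteo} was invoked, so any $\D$ that bounds the $\Pi$-type suffices.
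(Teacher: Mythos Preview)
Your proposal is correct and matches the paper's approach exactly: the paper simply remarks that ``the proof of Proposition~\ref{finteo} actually proves'' this corollary, and you have correctly identified that Fact~\ref{Kolteo} was invoked only to produce a $\D$ with $K\la a\ra_\Pi=K\la\al\ra_\D$, which the hypothesis ``$\D$ bounds $\Pi$-type$(W)$'' supplies directly. Your write-up in fact gives slightly more detail than the paper (e.g.\ spelling out why the integrability condition holds at the generic point $b$), and your citation of Proposition~\ref{lema1}(2) for the $\Pi$-genericity of $b$ in $(V,s)^\#$ is the right one.
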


\

\section{Relative D-groups}\label{relDgroup}

In this section we develop the basic theory of relative D-groups which we will use in Section \ref{galex} to understand the generalized strongly normal extensions of Section \ref{genstrong} as Galois extensions of certain differential equations. 

We continue to work in our universal domain $(\U,\Pi)$ with a partition $\Pi=\DD\cup\D$, and over a base $\Pi$-field $K<\U$. We first study the additional properties that $\ta G$ has in the case when $G$ is a $\D$-algebraic group.

Let $\D'\subseteq \D$ and suposse $V$ is a $\D'$-variety defined over $K$. It is not known (at least to the author) if $\tau_{\DD/\D}V=\tau_{\DD/\D'}V$. However, this equality holds in the case of $\D'$-algebraic groups:

\begin{proposition}\label{prog}
Let $\D'\subseteq \D$ and let $G$ be a $\D'$-algebraic group defined over $K$. Then $\tau_{\DD/\D} G=\tau_{\DD/\D'}G$.
\end{proposition}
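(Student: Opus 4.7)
First I would reduce to the case of a single $D\in\DD$: since $\ta G=\tau_{D_1/\D}G\times_G\cdots\times_G\tau_{D_r/\D}G$, and $\tau_{\DD/\D'}G$ admits the analogous fibered product description, it suffices to prove $\tau_{D/\D}G=\tau_{D/\D'}G$ for each $D\in\DD$ individually; by working in local charts I would further reduce to the affine case. One direction is then essentially immediate: for any $f\in K\{x\}_{\D'}\subseteq K\{x\}_\D$ the partial derivatives $\partial f/\partial(\t x_i)$ vanish whenever $\t\in\T_\D\setminus\T_{\D'}$, so the polynomials $d_{D/\D}f$ and $d_{D/\D'}f$ coincide as elements of $K\{x,u\}_\D$. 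Combined with the inclusion $\I(G/K)_{\D'}\subseteq\I(G/K)_\D$, this already yields $\tau_{D/\D}G\subseteq\tau_{D/\D'}G$.

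The heart of the proof is the reverse inclusion. My plan is to reduce it to the ideal-theoretic statement $[\I(G/K)_{\D'}]_\D=\I(G/K)_\D$, i.e. that the $\D$-ideal of $K\{x\}_\D$ generated by $\I(G/K)_{\D'}$ is already equal to $\I(G/K)_\D$ (and not merely has the same radical). Granting this, Lemma \ref{togen} applied with $A=\I(G/K)_{\D'}$ tells us that $\tau_{D/\D}G$ is cut out by $\{f(x)=0,\ d_{D/\D}f(x,u)=0 : f\in\I(G/K)_{\D'}\}$, which by the comparison above is exactly the defining system of $\tau_{D/\D'}G$. To verify the ideal-theoretic claim I would exploit the group structure: translations by points of $G$ are $\D'$-regular automorphisms, so the radicality of $[\I(G/K)_{\D'}]_\D$ needs only to be checked in a neighbourhood of the identity, where the smoothness of $G$ as a $\D'$-algebraic group ensures that formally adjoining the derivatives $\d f$, for $\d\in\D\setminus\D'$ and $f\in\I(G/K)_{\D'}$, does not introduce new nilpotents modulo $\I(G/K)_{\D'}$.

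A more geometric route, which sidesteps the ideal manipulation, is available through Lemma \ref{torito}: both $\pi:\tau_{D/\D}G\to G$ and $\pi:\tau_{D/\D'}G\to G$ are torsors (under $T_\D G$ and $T_{\D'}G$ respectively) and they share the canonical section $\nabla(a)=(a,Da)$. Hence $\tau_{D/\D}G=\tau_{D/\D'}G$ becomes equivalent to $T_\D G=T_{\D'}G$, and by translating with the group law and invoking the functoriality of Proposition \ref{fun}, this reduces to an equality of tangent spaces at the identity. The main obstacle, shared by both routes, is the same underlying smoothness of $G$; it is precisely here that the group hypothesis is essential, since no analogous equality holds for an arbitrary $\D'$-variety embedded in the $\D$-setting.
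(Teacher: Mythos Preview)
Your reductions (to a single $D\in\DD$, to the affine case) and your proof of the inclusion $\tau_{D/\D}G\subseteq\tau_{D/\D'}G$ are correct and match the paper. The gap is in the reverse inclusion, where both of your proposed routes stall at essentially the same unproven point.

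In Approach A you assert $[\I(G/K)_{\D'}]_\D=\I(G/K)_\D$ and justify it by ``smoothness of $G$ as a $\D'$-algebraic group'' near the identity. This is not a proof: even granting that $G$ is smooth as a $\D'$-variety, you have not shown that adjoining the derivatives $\d f$ for $\d\in\D\setminus\D'$ preserves primality (or even radicality) of the ideal. The identity is not a generic point, so checking locally there gives you nothing without further argument. In Approach B you correctly reduce $\tau_{D/\D}G=\tau_{D/\D'}G$ to $T_\D G=T_{\D'}G$ via the torsor structure and the shared section $\nabla$, but this is the same problem with $f^D$ replaced by $0$: the defining equations of $T_\D G$ still range over $\I(G/K)_\D$, and you have not explained why restricting to $\I(G/K)_{\D'}$ suffices.

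The paper's argument supplies exactly the missing ingredient. After reducing to the connected case via Lemma~\ref{irred}, it invokes a nontrivial fact (Chapter~7 of \cite{Fre}): a characteristic set $\L$ of the prime $\D'$-ideal $\I(G/K)_{\D'}$ remains a characteristic set of the prime $\D$-ideal $\I(G/K)_\D$. This does not give your global ideal equality, but it gives the saturation relation $\I(G/K)_\D=[\L]_\D:H_\L^\infty$. The paper then proves the fibre equality $\tau_{D/\D}G_a=\tau_{D/\D'}G_a$ first at a $\D$-\emph{generic} point $a$, where $H_\L(a)\neq 0$ kills the saturation, and only afterwards uses group translations $\lambda^h$ to propagate this to every fibre. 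Your translation idea is right, but you translate to the identity rather than \emph{from} a generic point; the latter is what makes the argument go through.
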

\begin{proof}
It suffices to consider the case when $G$ is affine. For ease of notation we assume $\DD=\{D\}$. By Lemma~\ref{irred} we may assume that $G$ is a connected $\D'$-algebraic group. Let $\L$ be a characteristic set of the prime $\D'$-ideal $\I(G/K)_{\D'}$. By Chapter 7 of \cite{Fre}, $G$ is also a connected $\D$-algebraic group and $\L$ is a characteristic set of $\I(G/K)_\D$. Let $a$ be a $\D$-generic point of $G$ over $K$. We claim that $\tau_{D/\D}G_a=\tau_{D/\D'}G_a$. Let $b\in \tau_{D/\D'}G_a$, then $\et f_a(b)=d_{D/\D'}f_a(b)=0$ for all $f\in \L$. It is easy to see now that $\et f_a(b)=0$ for all $f\in [\L]_\D$, where $[\L]_\D$ denotes the $\D$-ideal generated by $\L$ in $K\{x\}_\D$. Now let $g\in\I(G/K)_\D$, since $\L$ is a characteristic set of $\I(G/K)_\D$ we can find $\ell$ such that $H_\L^\ell\, g\in [\L]_\D$ , where $H_\L$ is the product of initials and separants of the elements of $\L$ (see Chap. IV, \S 9 of \cite{Ko}). Thus we have
\begin{eqnarray*}
0&=& \et \big(H_\L^\ell g\big)_a(b) \ \ \ \ \ \ \ \ \ \text{ as $H_\Lambda^\ell g\in[\L]_\D$}\\
&=& H_\L^\ell(a)\et g_a(b)+g(a)\et (H_\L^\ell)_a(b)\\
&=& H_\L^\ell(a)\et g_a(b).
\end{eqnarray*}
Since $a$ is a $\D$-generic point of $G$ over $K$, $H_\L(a)\neq 0$, and so $\et g_a(b)=0$. Hence, $\tau_{D/\D'}G_a\subseteq\tau_{D/\D}G_a$. The other containment is clear.

Now we show that $\tau_{D/\D}G_g=\tau_{D/\D'}G_g$ for all $g\in G$. Let $g\in G$, we can find $h\in G$ such that $g=ha$. Let $\lambda^h:G\to G$ denote left multiplication by $h$, then $\tau_{D/\D}\lambda^h(\tau_{D/\D} G_a)=\tau_{D/\D}G_g$ and $\tau_{D/\D'}\lambda^h(\tau_{D/\D'} G_a)=\tau_{D/\D'}G_g$. But $\tau_{D/\D}\lambda^h=\tau_{D/\D'}\lambda^h$ as $G$ is a $\D'$-algebraic group, and so, since $\tau_{D/\D}G_a=\tau_{D/\D'}G_a$, we get $\tau_{D/\D}G_g=\tau_{D/\D'}G_g$.
\end{proof}

Let $G$ be a $\D$-algebraic group defined over $K$. Then $\tau_{\DD/\D}G$ has naturally the structure of a $\D$-algebraic group defined over $K$; more precisely, if $p:G\times G\to G$ is the group operation on $G$, then $\tau_{\DD/\D}p:\tau_{\DD/\D}G\times \tau_{\DD/\D}G\to \ta G$ is a group operation on $\ta G$. Here we are identifying $\ta(G\times G)$ with $\ta G\times\ta G$ (via the natural isomorphism given in Proposition~\ref{fun}).

\begin{remark}
By Remark \ref{commu} we have $$\ta p\comp \nabla_\DD^{G\times G}=\nabla_\DD^G\comp p.$$ Hence, the section $\nabla_\DD^G:G\to\ta G$ is a group homomorphism.
\end{remark}

Let us give some useful explicit formulas for the group law of $\ta G$. For ease of notation suppose $G$ is affine. For each $f\in K\{x\}_\D$ let $$d_\D f_x u:=\sum_{\t\in\T_\D, i\leq n}\frac{\partial f}{\partial \t x_i}(x)\,\t u_i,$$ and for a tuple $f=(f_1,\dots,f_s)$ let $d_\D f_x u$ be $(d_\D(f_1)_x u,\dots,d_\D(f_s)_x u)$. 
Suppose $\lambda^g$ and $\rho^g$ denote left and right multiplication by $g\in G$, respectively, and $p$ is the group operation on $G$, then for $(g,u_1,\dots,u_r)$ and $(h,v_1,\dots,v_r)$ in $\ta G$ we have $$(g,u_1,\dots,u_r)\cdot(h,v_1,\dots,v_r)=(g\cdot h,\,d_\D (\lambda^g)_h v_i+d_\D(\rho^h)_g u_i+p^{D_i}(g,h):i\leq r).$$ 
The inverse is given by 
\begin{displaymath}
(g,u_1,\dots,u_r)^{-1}=(g^{-1},d_\D(\lambda^{g^{-1}}\comp\rho^{g^{-1}})_g(D_i g-u_i)+D_i (g^{-1}):i\leq r).
\end{displaymath}
It is clear, from these formulas, that $\ta G_e$ is a normal $\D$-subgroup of $\ta G$, where $e$ is the identity of $G$.

\

Now we show that at least for $\D$-algebraic groups the functor $\ta$ preserves irreducibility.

\begin{proposition}
If $G$ is a connected $\D$-algebraic group then $\ta G$ is also connected.
\end{proposition}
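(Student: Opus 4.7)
The plan is to use the split short exact sequence of $\D$-algebraic groups
\[
1 \to \ta G_e \to \ta G \xrightarrow{\pi} G \to 1,
\]
in which the splitting is the group homomorphism $\nabla = \nabla_\DD^G : G \to \ta G$ (already observed to be a homomorphism, with $\ta G_e$ already known to be a normal $\D$-subgroup). This gives a $\D$-regular bijection
\[
\varphi : \ta G_e \times G \to \ta G, \qquad (u,g) \mapsto u\cdot \nabla(g),
\]
whose inverse $x \mapsto \bigl(x\cdot \nabla(\pi(x))^{-1},\; \pi(x)\bigr)$ is again $\D$-regular. Hence $\varphi$ is an isomorphism of $\D$-varieties, and it suffices to show that both $\ta G_e$ and $G$ are connected. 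The second factor is connected by hypothesis.

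For the first factor, note that by construction $\ta G_e = \tau_{D_1/\D}G_e \times \cdots \times \tau_{D_r/\D}G_e$, and by Lemma~\ref{torito} each $\tau_{D_i/\D}G_e$ is a torsor under the linear $\D$-algebraic group $T_\D G_e$. Translating each coordinate by the identity $\nabla(e)\in \ta G_e$ of $\ta G$ yields a $\D$-isomorphism $\ta G_e \cong (T_\D G_e)^r$, so it remains to show that $T_\D G_e$ is connected.

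This last point is essentially immediate: $T_\D G_e$ is defined inside $\U^n$ by the homogeneous linear $\D$-polynomial equations
\[
\sum_{\t\in\T_\D,\; i\le n}\frac{\partial f}{\partial\, \t x_i}(e)\,\t u_i \;=\; 0,\qquad f\in\I(G/K)_\D,
\]
and since each $\t$ commutes past scalars from $\U^\D$, the solution set is a $K^\D$-vector subspace of the additive group $\U^n$. In particular $T_\D G_e$ is a divisible abelian group, so it has no proper subgroup of finite index, and is therefore connected as a $\D$-algebraic group. The main obstacle is of a bookkeeping rather than substantive nature: one must verify that the torsor action of Lemma~\ref{torito}, when normalized by $\nabla(e)$, really does upgrade to a $\D$-isomorphism $\ta G_e \cong (T_\D G_e)^r$ of $\D$-varieties, and one must invoke the standard equivalence (valid for definable groups in $DCF_{0,m}$) between group-theoretic connectedness and irreducibility as a $\D$-variety in order to conclude.
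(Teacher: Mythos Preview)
Your argument has a genuine gap: the section $\nabla = \nabla_\DD^G$ is \emph{not} $\D$-regular. By definition $\nabla(g) = (g, D_1 g, \ldots, D_r g)$, and this uses the derivations $D_i \in \DD$, not merely those in $\D$; the paper explicitly records only that $\nabla$ is a $\Pi$-regular section of $\pi$. Consequently your map $\varphi(u,g) = u \cdot \nabla(g)$ and its inverse are only $\Pi$-regular, so $\varphi$ is not an isomorphism of $\D$-varieties as you claim, and you cannot directly transfer $\D$-irreducibility from $\ta G_e \times G$ to $\ta G$.

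Your strategy can be repaired, but the repair requires precisely the extra ingredient the paper's own proof invokes. Since $\ta G_e$ and $G$ are connected $\D$-algebraic groups, Theorem~7.2.3 of \cite{Fre} gives that they are also $\Pi$-irreducible, hence so is their product; the $\Pi$-regular bijection $\varphi$ then shows $\ta G$ is $\Pi$-irreducible, and since every $\D$-closed set is $\Pi$-closed this yields $\D$-irreducibility of $\ta G$. The paper's proof is organised differently---it observes that each fibre $\ta G_g$ is $\D$-irreducible (being a translate of $T_\D G_g$), so the connected component of $\ta G$ is a union of fibres, and then pulls back along the $\Pi$-regular map $\nabla$ to produce a proper $\Pi$-decomposition of $G$, contradicting $\Pi$-irreducibility of $G$ via Freitag's theorem---but the essential passage through $\Pi$-connectedness and the appeal to \cite{Fre} is the same.
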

\begin{proof}
Let $H$ be the connected component of $\ta G$. First note that for any $g\in G$, since $\ta G_g$ is irreducible (as it is a translate of $T_\D G_g$) and the irreducible components of $G$ are disjoint, if $H\cap \ta G_g\neq \emptyset$ then $\ta G_g\subseteq H$. Towards a contradiction suppose that $H\neq \ta G$. Then for some $g\in G$ the intersection $H\cap\ta G_g$ is empty. Hence, $\nabla^{-1}(H)$ and $\nabla^{-1}(\ta G\setminus H)$ are proper $\Pi$-algebraic subvarieties of $G$ whose union is all of $G$. This implies that $G$ is reducible as a $\Pi$-algebraic variety, but, by Theorem~7.2.3 of \cite{Fre}, $G$ will also be reducible as a $\D$-variety. This contradicts our hypothesis.
\end{proof}

The following relativization of algebraic D-groups is obtained by considering the group objects in the category of relative D-varieties.

\begin{definition}
A \emph{relative D-group (w.r.t. $\DD/\D$) defined over $K$} is a relative D-variety $(G,s)$ such that $G$ is a $\D$-algebraic group and $s:G\to \ta G$ is a group homomorphism (all defined over $K$). A \emph{relative D-subgroup} of $(G,s)$ is a relative D-subvariety $(H,s_H)$ of $(G,s)$ such that $H$ is a subgroup of $G$.
\end{definition}

\begin{remark}  \
\begin{enumerate}
\item If $(G,s)$ is a relative D-group then $(G,s)^\#$ is a $\Pi$-subgroup of $G$. Indeed, if $g$ and $h\in (G,s)^\#$ then $$s(g\cdot h^{-1})=s(g)\cdot s(h)^{-1}=(\nabla g)\cdot (\nabla h)^{-1}=\nabla (g\cdot h^{-1}).$$ Hence, $g\cdot h^{-1}\in (G,s)^\#$.
\item Suppose $G$ is a $\D$-algebraic group and $(G,s)$ is a relative D-variety. If $(G,s)^\#$ is a subgroup of $G$ then $(G,s)$ is a relative D-group. Indeed, since $\nabla_\DD^G$ is a group homomorphism, the restriction of $s$ to $(G,s)^\#$ is a group homomorphism, and so, since $(G,s)^\#$ is $\D$-dense in $G$ and $s$ is a regular $\D$-map, $s$ is a group homomorphism on all of $G$.
\end{enumerate}
\end{remark}

The correspondence of Proposition \ref{biv} specializes to a natural correspondence between relative D-subgroups of a relative D-group and $\Pi$-subgroups of its set of sharp points:

\begin{lemma}\label{corre}
Suppose $(G,s)$ is a relative D-group defined over $K$ such that $(G,s)^\#(\bar K)=(G,s)^\#(K)$, for some $\Pi$-closure $\bar K$ of $K$. Then the $\sharp$-functor establishes a bijective correspondence between relative D-subgroups of $(G,s)$ defined over $K$ and $\Pi$-algebraic subgroups of $(G,s)^\#$ defined over $K$. The inverse is given by taking the $\D$-closure (in the $\D$-topology of $G$) over $K$.
\end{lemma}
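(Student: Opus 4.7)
The plan is to combine Proposition~\ref{biv} with the remark preceding this lemma. Proposition~\ref{biv} already provides a bijective correspondence between relative D-subvarieties of $(G,s)$ defined over $K$ and $\Pi$-subvarieties of $(G,s)^\#$ defined over $K$, with inverse given by taking the $\D$-closure over $K$; what must be verified is that this correspondence restricts to subgroups on both sides. For the forward direction, if $(H, s_H)$ is a relative D-subgroup of $(G,s)$ defined over $K$, then part~(i) of the remark preceding this lemma applied to $(H,s_H)$ shows that $(H,s_H)^\#$ is a $\Pi$-subgroup of $H$, which sits inside $(G,s)^\#$ as a $\Pi$-algebraic subgroup defined over $K$.

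For the converse, let $X$ be a $\Pi$-algebraic subgroup of $(G,s)^\#$ defined over $K$ and $H$ its $\D$-closure in $G$ over $K$. By Proposition~\ref{biv}, $(H, s|_H)$ is a relative D-subvariety of $(G,s)$ over $K$ with $(H, s|_H)^\# = X$; by part~(ii) of the remark it suffices to show that $H$ is a $\D$-algebraic subgroup of $G$. For closure under inversion, the preimage $\iota^{-1}(H)$ of $H$ under the $\D$-regular inversion map $\iota: G\to G$ is a $\D$-closed subset of $G$ defined over $K$ and contains $X$ (since $X^{-1}\subseteq X\subseteq H$); by minimality of $H$ as the $\D$-closure of $X$ over $K$ we conclude $H\subseteq \iota^{-1}(H)$, i.e., $\iota(H)\subseteq H$. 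For closure under multiplication, the preimage $p^{-1}(H)$ of $H$ under the multiplication morphism $p:G\times G\to G$ is $\D$-closed in $G\times G$ over $K$ and contains $X\times X$ because $X$ is a subgroup. The key step is to observe that the $\D$-closure of $X\times X$ in $G\times G$ over $K$ is exactly $H\times H$: indeed, $(G\times G, s\times s)$ is itself a relative D-variety (using Proposition~\ref{fun} to identify $\ta(G\times G)$ with $\ta G\times\ta G$, and noting that the integrability condition is preserved coordinatewise), and its set of sharp points is $(G,s)^\#\times (G,s)^\#$; applying Proposition~\ref{biv} to this product relative D-variety, the relative D-subvariety corresponding to the $\Pi$-subvariety $X\times X$ is forced to be $(H\times H,\, s|_H\times s|_H)$, since both $(W,\tilde s)$ given by the $\D$-closure and $(H\times H, s|_H\times s|_H)$ are relative D-subvarieties whose sharp points equal $X\times X$. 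Therefore $H\times H\subseteq p^{-1}(H)$, which yields $p(H\times H)\subseteq H$ and completes the proof that $H$ is a $\D$-subgroup of $G$.

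The main subtlety one might anticipate is arguing that the $\D$-closure of a product equals the product of $\D$-closures, which in general requires delicate base-change considerations for $\D$-ideals; as just outlined, this is handled cleanly by applying Proposition~\ref{biv} to the product relative D-variety $(G\times G, s\times s)$, bypassing any direct manipulation of defining ideals. The hypothesis $(G,s)^\#(\bar K)=(G,s)^\#(K)$ does not appear to enter the argument as sketched; it is presumably included to guarantee that the correspondence faithfully reflects the $K$-rational structure on both sides, a feature that will matter for the differential Galois-theoretic applications in Section~\ref{galex}.
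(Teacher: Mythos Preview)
Your proof is correct and takes a genuinely different route from the paper's. Both arguments reduce, via Proposition~\ref{biv} and the remark preceding the lemma, to showing that the $\D$-closure $H$ over $K$ of a $\Pi$-algebraic subgroup $X\subseteq(G,s)^\#$ is itself a subgroup of $G$; the difference lies in how closure under multiplication is established.

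The paper (writing $H$ for your $X$ and $\bar H$ for your $H$) proceeds via the classical two-step closure device: it considers the set of $a\in\bar H$ for which right multiplication by $a$ and by $a^{-1}$ preserves $\bar H$, observes this is a $K$-definable $\D$-algebraic subgroup, and shows it equals $\bar H$. For the key inclusion one takes $a\in H(\bar K)$ and looks at $Y_a=\{x\in\bar H:\ xa\in\bar H,\ xa^{-1}\in\bar H\}$, which contains $H$ and is $\D$-closed---but is defined over $K$ only when $a$ has coordinates in $K$. This is exactly where the hypothesis $(G,s)^\#(\bar K)=(G,s)^\#(K)$ enters.

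Your approach instead establishes directly that the $\D$-closure of $X\times X$ over $K$ equals $H\times H$, by applying Proposition~\ref{biv} to the product relative D-variety $(G\times G,\,s\times s)$: both the $\D$-closure of $X\times X$ and $H\times H$ are relative D-subvarieties over $K$ with sharp points $X\times X$, hence coincide by bijectivity. This sidesteps the parameter issue entirely, and your suspicion is correct---the hypothesis $(G,s)^\#(\bar K)=(G,s)^\#(K)$ is not actually needed for the correspondence itself. It becomes relevant only in the later Galois-theoretic applications (as in Proposition~\ref{gal}).
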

\begin{proof}
This is an immediate consequence of Proposition~\ref{biv} and the discussion above, except for the fact that the $\D$-closure over $K$ of a $\Pi$-algebraic subgroup of $(G,s)^\#$ is a subgroup of $G$. Let us prove this. Let $H$ be a $\Pi$-algebraic subgroup of $(G,s)^\#$ and $\bar H$ its closure over $K$. Let $$X:=\{a\in \bar H:\, b\cdot a\in \bar H \text{ and }b\cdot a^{-1}\in \bar H \text{ for all } b\in \bar H\}\subseteq \bar H ,$$ then $X$ is a $\D$-algebraic subgroup of $G$ defined over $K$. We claim that $\bar H=X$. By the definition of $\bar H$, it suffices to show that $H(\bar K)\subseteq X(\bar K)$. Let $a\in H(\bar K)=H(K)$, and consider $Y_a=\{x\in \bar H:\, x\cdot a\in \bar H \text{ and }x\cdot a^{-1}\in \bar H\}$. Then $H\subset Y_a$, but $Y_a$ is $\D$-closed and defined over $K$, so $\bar H\subset Y_a$. Thus $a\in X$, as desired.
\end{proof}

We now show that every definable group $G$ in $(\U,\Pi)$ of differential type less than $m$ is, after possibly replacing $\Pi$ by some independent linear combination, definably isomorphic to a relative D-group w.r.t. $\DD/\D$ with $|\D|=\Pi$-type$(G)$. By Remark~\ref{witg} we can find a set $\D$ of linearly independent elements of $\operatorname{span}_{K^\Pi}\Pi$ which witnesses the $\Pi$-type of $G$, and hence it suffices to prove the following.

\begin{theorem}\label{co} 
Let $G$ be a connected $K$-definable group. Suppose $\D$ is a set of fewer than $m$ linearly independent elements of $\operatorname{span}_{K^\Pi}\Pi$ that bounds the $\Pi$-type of $G$. If we extend $\D$ to a basis $\DD\cup\D$ of $\operatorname{span}_{K^\Pi}\Pi$, then $G$ is $K$-definably isomorphic to $(H,s)^{\sharp}$ for some relative D-group $(H,s)$ w.r.t. $\DD/\D$ defined over $K$.
\end{theorem}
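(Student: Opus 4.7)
The plan is to combine Corollary~\ref{ong} with a Weil group chunk argument in the category of $\D$-algebraic varieties, exploiting the group structure on $G$ to upgrade $\Pi$-birational equivalence to a $K$-definable isomorphism.

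First, view $G$ as a $K$-definable $\Pi$-algebraic group (Remark~\ref{witg}(ii)). Applying Corollary~\ref{ong} to the irreducible $\Pi$-algebraic variety $G$, I obtain a relative D-variety $(V_0, s_0)$ w.r.t.\ $\DD/\D$ defined over $K$ together with a $\Pi$-birational equivalence $f \colon G \dashrightarrow (V_0, s_0)^\#$ over $K$. Using the sharp-point equations $s_0 = \nabla_\DD$ to substitute out all $\DD$-derivatives on $(V_0,s_0)^\#$, every $\Pi$-rational function at the $\Pi$-generic point of $(V_0,s_0)^\#$ reduces to a $\D$-rational function at the $\D$-generic point of $V_0$ (and Proposition~\ref{lema1}(3) makes this dimensionally consistent). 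Consequently the group operation of $G$ transports via $f$ to a $\D$-rational map $m\colon V_0 \times V_0 \dashrightarrow V_0$ over $K$ that is generically associative and has generic inverses.

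Next, I apply Weil's group chunk theorem in the category of $\D$-algebraic varieties (a routine adaptation of the classical result, as in \cite{Pi4}) to produce a connected $\D$-algebraic group $H$ defined over $K$ together with a $\D$-birational equivalence $g \colon V_0 \dashrightarrow H$ over $K$ that carries $m$ to the group law of $H$. To equip $H$ with a section, I set, generically, $t := \tau_{\DD/\D}g \circ s_0 \circ g^{-1}$ on a $\D$-dense open subset $U \subseteq H$ defined over $K$. By construction $g$ sends sharp points of $(V_0,s_0)$ to points at which $t$ agrees with $\nabla_\DD^H$, and since the sharp points are $\D$-dense (Proposition~\ref{lema1}(1)), $t$ agrees with the group homomorphism $\nabla_\DD^H$ on a $\D$-dense subset of $U$; thus $t|_U$ is itself a generic group homomorphism. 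The standard prescription $t(h) := t(hg_0^{-1}) \cdot t(g_0)$ for $g_0 \in H$ generic (with product taken in the $\D$-algebraic group $\tau_{\DD/\D}H$) then extends $t$ to a $\D$-regular group homomorphism $t \colon H \to \tau_{\DD/\D}H$ defined over $K$. The integrability condition (\ref{rel1}) is a $\D$-closed condition on $H$ which is satisfied on the $\D$-dense subset transported from $(V_0,s_0)$, hence everywhere. Therefore $(H,t)$ is a relative D-group w.r.t.\ $\DD/\D$ defined over $K$.

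Finally, the composition $g \circ f \colon G \dashrightarrow (H,t)^\#$ is a $K$-definable generic isomorphism of connected $K$-definable groups, and in the $\omega$-stable theory $DCF_{0,m}$ any such generic definable isomorphism extends uniquely to a $K$-definable group isomorphism (translate by a generic). This yields the required $K$-definable isomorphism $G \cong (H,t)^\#$. The principal obstacle I foresee is the extension step in which the generic section must be promoted to a global $\D$-regular group homomorphism $H \to \tau_{\DD/\D}H$ while simultaneously preserving the integrability condition; both hinge on the $\D$-denseness of sharp points (Proposition~\ref{lema1}(1)) together with the group structure of $\tau_{\DD/\D}H$ developed in Section~\ref{relDgroup}.
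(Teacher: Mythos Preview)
Your proposal is correct and follows essentially the same argument as the paper: apply Corollary~\ref{ong}, transport the group law of $G$ to a $\D$-rational group chunk on $V_0$, invoke a group chunk theorem (the paper cites Hrushovski's stable-theoretic version rather than Weil's, but in $DCF_{0,\ell}$ these amount to the same thing) to produce the $\D$-algebraic group $H$, define $t$ by conjugating $s_0$ through the birational equivalence, and extend both the group embedding $G\hookrightarrow H$ and the section $t$ globally via translation by a generic. The paper is slightly less explicit than you about verifying the integrability condition for the extended section, but otherwise the two arguments coincide step for step.
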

\begin{proof}
Recall that any $K$-definable group is $K$-definably isomorphic to a $\Pi$-algebraic group defined over $K$ (to the author's knowledge the proof of this fact, for the partial case, does not appear anywhere; however, it is well known that the proof for the ordinary case \cite{Pi4} extends with little modification). Thus, we assume that $G$ is a $\Pi$-algebraic group defined over $K$. By Corollary \ref{ong}, there is a $\Pi$-rational map $\al$ and a relative D-variety $(V,s)$ w.r.t. $\DD/\D$, both defined over $K$, such that $\al$ yields a $\Pi$-birational equivalence between $G$ and $(V,s)^\#$. Let $p$ be the $\Pi$-generic type of $G$ over $K$ and $q$ be the $\D$-generic type of $V$ over $K$. 

We first show that there is a generically defined $\D$-group structure on $q$. Note that $\al$ maps realisations of $p$ to elements of $(V,s)^\#$ realising $q$. Let $g$ and $h$ be $\Pi$-independent realisations of $p$ then $$K\l g,h\r_\Pi=K\l \al(g),\al(h)\r_\Pi=K\l\al(g),\al(h)\r_\D,$$ and so $\al(g\cdot h)\in K\l \al(g),\al(h)\r_\D$. Thus, we can find a $\D$-rational map $\rho$ defined over $K$ such that $\al(g\cdot h)=\rho(\al(g),\al(h))$. Hence, for any $g,h$ $\Pi$-independent realisations of $p$ $$\al(g\cdot h)=\rho(\al(g),\al(h)),$$ and so $\rho(\al(g),\al(h))$ realises $q$. Now, since $\al(g)$ and $\al(h)$ are $\D$-independent realisations of $q$, we get that for any $x,y$ $\D$-independent realisations of $q$, $\rho(x,y)$ realises $q$. Moreover, if $g,h,l$ are $\Pi$-independent realisations of $p$ then one easily checks that $$\rho(\al(g),\rho(\al(h),\al(l)))=\rho(\rho(\al(g),\al(h)),\al(l)),$$ but $\al(g)$, $\al(h)$ and $\al(l)$ are $\D$-independent realisations of $q$, so for any $x,y,z$ $\D$-independent realisations of $q$ we get $\rho(x,\rho(y,z))=\rho(\rho(x,y),z)$. 

We thus have a stationary type $q$ (in the language of $\D$-rings) and a $\D$-rational map $\rho$ satisfying the conditions of Hrushovski's theorem on groups given generically (see \cite{Hru} or \cite{Po}), and so there is a connected $K$-definable group $H$ and a $K$-definable injection $\beta$ (both in the language of $\D$-rings) such that $\beta$ maps the realisations of $q$ onto the realisations of the generic type of $H$ over $K$ and $\beta(\rho(x,y))=\beta(x)\cdot\beta(y)$ for all $x,y$ $\D$-independent realisations of $q$. We assume, without loss of generality, that $H$ is a $\D$-algebraic group defined over $K$. We have a (partial) definable map $\gamma:=\beta\comp \al:G\to H$ such that if $g,h$ are $\Pi$-independent realisations of $p$ then $\gamma(g\cdot h)=\gamma(g)\cdot \gamma(h)$. It follows that there is a $K$-definable group embedding $\bar \gamma:G\to H$ extending $\gamma$. Indeed, let $$U:=\{x\in G:\gamma(x y)=\gamma(x)\gamma(y)\text{ and } \gamma(yx)=\gamma(y)\gamma(x) \text{ for all }y\models p \text{ with } x\ind_K y\},$$ then $U$ is a $K$-definable subset of $G$ (by definability of types in $DCF_{0,m}$). If $g\models p$ then $g\in U$, and so every element of $G$ is a product of elements of $U$ (see \S 7.2 of \cite{Ma2}). Let $g\in G$ and let $u,v\in U$ be such that $g=u\cdot v$, then $\bar \gamma$ is defined by $$\bar \gamma(g)=\gamma(u)\cdot \gamma(v).$$ 
It is well known that this construction yields a group embedding (see for example \S 3 of Marker's survey \cite{Ma}). We also have a (partial) definable map $t:=\ta \beta\comp s\comp \beta^{-1}:H\to \ta H$ such that for every $g\models p$, as $\al(g)\in (V,s)^\#$, we have $t(\gamma (g))=\nabla_\DD^H(\gamma(g))$. Thus, for $g,h$ $\Pi$-independent realisations of $p$ $$t(\gamma(g)\cdot \gamma(h))=\nabla_\DD^H(\gamma(g)\cdot \gamma(h))=(\nabla_\DD^H\gamma(g))\cdot(\nabla_\DD^H\gamma(h))=t(\gamma(g))\cdot t(\gamma(h)).$$ Thus, for $x,y$ $\D$-independent realisations of the generic type of $H$ over $K$ we have $t(x\cdot y)=t(x)\cdot t(y)$, and so we can extend $t$ to a $K$-definable group homomorphism $\bar t:H\to \ta H$. Therefore, $(H,\bar t)$  is a relative D-group w.r.t. $\DD/\D$. Clearly, if $g$ is a $\Pi$-generic of $G$ over $K$, then $\gamma(g)$ is a $\Pi$-generic (over $K$) of both $\bar{\gamma}(G)$ and $(H,\bar t)^\#$. Hence, $\bar{\gamma}(G)=(H,\bar t)^\#$, as desired.
\end{proof}

\chapter{Generalized Galois theory for partial differential fields}\label{chapgal}\let\thefootnote\relax\footnotetext{The results in this chapter form the basis for a paper entitled ``Relative D-groups and differential Galois theory in several derivations'' that is to appear in \emph{Transactions of the AMS.}}

In this chapter we develop the fundamental results of generalized strongly normal extensions for partial differential fields in the possibly infinite dimensional setting. The connection to logarithmic differential equations on relative D-groups is also established. This theory generalizes simultanoeusly the parametrized Picard-Vessiot theory of Cassidy and Singer \cite{PM} and the finite dimensional differential Galois theory of Pillay \cite{Pi2}. We finish with two examples of non-linear Galois groups associated to logarithmic differential equations.

\

\section{A review of Galois theory for ordinary differential fields}\label{revgal}

In this section we give a brief review of the several differential Galois theories for ordinary differential fields. We also mention some of the extensions of these theories for partial differential fields. We work in a sufficiently saturated $(\U,\d)\models DCF_0$ and a base $\d$-subfield $K$ of $\U$. Recall that we denote by $\U^\d$ and $K^\d$ the field of constants of $\U$ and $K$, respectively.

Recall that, in usual Galois theory, the Galois extension associated to a polynomial is precisely the splitting field of the polynomial in question, and the Galois group is the group of automorphisms of the extension that leave the base field unchanged. It is then shown that the structure of the Galois group reflects the algebraic properties of the roots of the polynomial via the correspondence between its subgroups and the intermediate fields of the extension and the base field. One can proceed in an analogous fashion when dealing with homogeneous linear ordinary differential equations over $K$; that is, equations of the form $$a_n\d^n x+a_{n-1}\d^{n-1}x+\cdots +a_1\d x+a_0 x=0, \quad a_i \in K.$$ One can easily rewrite each such equation in vector form 
\begin{displaymath}
\d y=M y
\end{displaymath}
where $y$ is an $n\times 1$ column of indeterminates and $M$ is an $n\times n$ matrix over $K$. 

\begin{definition}
A \emph{Picard-Vessiot extension of} $K$ for the homogeneuos linear differential equation $\d y=M y$ is a $\d$-field extension $L$ of $K$ such that 
\begin{enumerate}
\item $L^\d=K^\d$ is algebraically closed.
\item $L$ is generated by the entries of a matrix $A$ in GL$_{n}(L)$ such that $\d A=M A$.
\end{enumerate}
The matrix $A$ is called a \emph{fundamental system of solutions} of the system $\d y=M y$. We let $Aut_\d(L/K)$ denote the group of $\d$-automorphisms of $L$ over $K$.
\end{definition}

Let us mention that most authors do not require, in condition (1) of the definition of Picard-Vessiot extension, for $K^\d$ to be algebraically closed, only that no new constants are added as we pass from $K$ to $L$. We restrict ourselves to this case for the sake of a smoother exposition of the fundamental results of the classical theory.

Let us summarize the basic properties of Picard-Vessiot extensions:
\begin{enumerate}
\item [(i)] Picard-Vessiot extensions always exist and are unique, up to $\d$-isomorphism over $K$.
\item [(ii)] Let $L$ be a Picard-Vessiot extension. Then, $Aut_\d(L\l \U^\d\r_\d/K\l \U^\d\r_\d)$ is isomorphic to the $\U^\d$-points of a linear algebraic group $\G$ defined over $K^\d$.
\item [(iii)] There is a natural Galois correspondence between the algebraic subgroups of $\G(\U^\d)$ defined over $K^\d$ and the intermediate $\d$-fields of $K$ and $L$.
\end{enumerate}

For proofs and more properties of Picard-Vessiot extensions see \cite{VS} or \cite{Ko4}. 

One more fundamental property of a Picard-Vessiot extension $L$ is that for any $\d$-isomorphism $\s$ from $L$ into $\U$ over $K$, it is always the case that $\s(L)\subset  L\l \U^\d\r_\d$. Kolchin noticed that this implicit property (i.e., it does not refer to a differential equation) of Picard-Vessiot extensions is what yields a good Galois theory. Based on this observation, he was able to generalize the theory to a much larger class of extensions, including some that deal with extensions that arise from certain non-linear differential equations.

\begin{definition}
A finitely generated $\d$-field extension $L$ of $K$ is said to be \emph{strongly normal} if:
\begin{enumerate}
\item $L^\d=K^\d$ is algebraically closed
\item For any $\d$-isomorphism $\s$ from $L$ into $\U$ over $K$ we have $\s(L)\subset L\l \U^\d\r_\d$
\end{enumerate}
\end{definition}

In \cite{Ko}, Kolchin shows, amongst other things, that the group of automorphisms $$Aut_\d(L\l \U^\d\r_\d/K\l \U^\d\r_\d)$$ is isomorphic to the $\U^\d$-points of an algebraic (not necessarily linear) group defined over $K^\d$, and that there is a Galois correspondence analogous to (iii) above. It is worth mentioning that, in \cite{Po2}, Poizat observed that condition (2) in the definition of strongly normal can be rephrased in terms of the model theoretic notion of internality, this lead him to direct proofs of Kolchin's results using the machinery of $\w$-stable theories. Indeed, this is how model theorists became interested in differential Galois theory.

In the spirit of Poizat's observation, Pillay took differential Galois theory one step further. He succesfully replaced the role of the constants $\U^\d$ in the definition of strongly normal, by an arbitrary $K$-definable set $X$. He defined \emph{$X$-strongly  normal extensions} as those finitely generated $\d$-field extensions $L$ of $K$ such that $X(K)=X(\bar L)$, for some $\d$-closure $\bar L$ of $L$, and for any $\d$-isomorphism $\s$ from $L$ into $\U$ over $K$ we have $\s(L)\subset L\l X \r_\d$. Then, he proved, using model theoretic techniques, that the automorphism group $$Aut_\d(L\l X\r_\d/K\l X\r_\d)$$ is isomorphic to a finite dimensional $\d$-algebraic group defined over $K$ (not necessarily in the constants), and that there is a Galois correspondence \cite{Pi}. 

In \cite{Pi2}, Pillay takes a different approach and reformulates his Galois theory in terms of logarithmic differential equations. His idea is to replace the role of GL$_n$ in the definition of Picard-Vessiot extensions by an arbitrary algebraic D-group $(G,s)$. In this situation one gets a surjective crossed homomorphism $\ell_s:G\to TG_e$, where $TG_e$ is the Lie algebra of $G$ (i.e., the tangent space at the identity $e$). The analogue of a linear equation is now an equation of the form $\ell_s x=\al$, where $\al$ is a $K$-point in the Lie algebra of $G$. Under some additional technical conditions, Pillay defines the notion of a differential Galois extension for the equation $\ell_s x=\al$, proves its existence and uniqueness, and observes that they are $(G,s)^\#$-strongly normal extensions. In fact, he shows, under the additional assumption that $K$ is algebraically closed, that every generalized strongly normal extension is of this form.

We will extend Pillay's theory to the partial differential, infinite dimensional, setting. Let us therefore mention some of the extensions to the partial setting that have already been considered in the literature. Suppose now that we are in a sufficiently saturated $(\U, \Pi)\models DCF_{0,m}$ and we have a partition $\DD\cup\D$ of $\Pi$ with $\DD=\{D_1,\dots,D_r\}$. In \cite{PM}, Cassidy and Singer extend the Picard-Vessiot theory to a parametrized Picard-Vessiot theory. For equations of the form $$D_1 y=M_1 y, \dots, D_r x=M_r x,$$ where $M_i$ are $n\times n$ matrices over a $\Pi$-subfield $K$ satisfying the \emph{integrability condition} $$D_iM_j-D_jM_i=[M_i,M_j],$$ they define PPV-extensions as a $\Pi$-field extension $L$ of $K$ such that $L^\DD=K^\DD$ is $\D$-closed and $L$ is generated as a $\Pi$-field by the entries of a matrix $A$ in GL$_n(L)$ such that $D_i A=M_i A$ for $i=1,\dots,r$. They proved that the results from the Picard-Vessiot theory have natural extensions to this parametrized setting.

In the same spirit of PPV-extensions, Landesman parametrizes Kolchin's strongly normal extensions. He defines a $\D$-strongly normal extension $L$ as a finitely generated $\Pi$-field extension of $K$ such that $L^{\DD}=K^\DD$ is $\D$-closed and for any $\Pi$-isomorphism $\s$ from $L$ into $\U$ over $K$ we have $\s(L)\subset L\l \U^\DD\r_\Pi$. Then, he shows that the results for strongly normal extensions have natural extensions to this parametrized setting \cite{La}.

The goal of this chapter is to extend the differential Galois theories of Pillay (generalized strongly normal and logarithmic differential) to the partial setting. In fact both of his theories have more or less immediate extensions to the partial case as long as the definable sets in question are finite dimensional. Thus, our work is to extend Pillay's theories from finite dimensional to possibly infinite dimensional definable sets. As we will see, the PPV extensions of Cassidy and Singer and the $\D$-strongly normal extensions of Landesman are special cases of what we present here.

\

\section{Generalized strongly normal extensions}\label{genstrong}

In this section we extend Pillay's strongly normal extensions \cite{Pi} to the possibly infinite dimensional partial differential setting. In fact most of the arguments here are extensions of Pillay's arguments from \cite{Pi}, but for the sake of review, as well as completeness, we will give the details.

Fix a sufficiently large saturated $(\U,\Pi)\models DCF_{0,m}$ and a base $\Pi$-subfield $K$ of $\U$. All definable sets will be identified with their $\U$-points. Unless otherwise specified, the notation and terminology of this section will be with respect to the derivations $\Pi$; for example, $K\l x\r$ means $K\l x\r_{\Pi}$, generated means $\Pi$-generated, isomorphism means $\Pi$-isomorphism, etc.

\begin{definition}\label{def}
Let $X$ be a $K$-definable set. A finitely generated $\Pi$-field extension $L$ of $K$ is said to be $X$-\emph{strongly normal} if:
\begin{enumerate}
\item $X(K)=X(\bar L)$ for some (equivalently any) differential closure $\bar L$ of $L$.
\item For any isomorphism $\sigma$ from $L$ into $\U$ over $K$, $\sigma(L)\subseteq L\l X\r$.
\end{enumerate}
\end{definition}

A $\Pi$-field extension $L$ of $K$ is called a \emph{generalized strongly normal extension} of $K$ if it is an $X$-strongly normal extension for some $K$-definable $X$. 

\begin{remark} \label{rem} \
\begin{enumerate} 
\item [(i)] Suppose $\DD\cup\D$ is a partition of $\Pi$ with $\DD\neq \emptyset$. If $X=\U^{\DD}$, then the $X$-strongly normal extensions are exactly the ``$\D$-strongly normal extensions" of Landesman \cite{La}. Indeed, one need only observe that in this case~(1) is equivalent to $K^\DD=L^{\DD}$ is $\D$-closed (see for example Lemma 9.3 of \cite{PM}). 
\item [(ii)] If $L=K\l a\r$ is an $X$-strongly normal extension and $b$ realizes $tp(a/K)$, then $L'=K\l b\r$ is also an $X$-strongly normal extension. Moreover, if $b\in \bar L$ then $L=L'$. Indeed, if $\sigma$ is an automorphism of $\U$ over $K$ such that $\sigma(a)=b\in \bar L$, then, by (2), $L'\subseteq L\l X\r$. But as $L'\subseteq \bar L\models DCF_{0,m}$, we have that $L'\subseteq L\l X(\bar L)\r=L\l X(K)\r=L$. A similar argument shows $L\subseteq L'$.
\item [(iii)] When $X$ is finite, $X$-strongly normal extensions are just the usual Galois (i.e., finite and normal) extensions of $K$. In the other extreme, if $\Pi$-type$(X)=m$ then $X$-strongly normal extensions are trivial. Indeed, suppose that $\Pi$-type$(X)=m$ and $X(K)=X(\bar L)$. Then, since $X$ having $\Pi$-type equal to $m$ implies that $X$ projects $\Pi$-dominantly onto some coordinate, we get a $\Pi$-dense $K$-definable subset of $\U$ whose $\bar L$-points are contained in $K$. This implies that $K=\bar L$, and hence $X$-strongly normal extensions are trivial. Therefore, this notion is mainly of interest when $X$ is infinite and of $\Pi$-type strictly less than $m$.
\end{enumerate}
\end{remark}

For the rest of this section we fix a $K$-definable set $X$.

\begin{proposition}\label{inter}
For any $L$ finitely generated $\Pi$-field extension of $K$, condition~(2) of Definition~\ref{def}  is equivalent to $L$ being generated by a fundamental system of solutions of an $X$-internal type over $K$. 
\end{proposition}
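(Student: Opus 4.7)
The plan is to prove both directions by translating between the two conditions via automorphisms of the saturated universal domain $\U$, with the extension property of non-forking (Fact~\ref{prur}(3)) providing the key tool. Throughout, fix a tuple $a$ with $L = K\l a\r$, and interpret ``$L$ being generated by a fundamental system of solutions of an $X$-internal type'' as: there is an $X$-internal type $p\in S(K)$ with $a\models p$ and moreover every realization of $p$ lies in $K\l a, X\r$ (so $a$ serves as a ``basis'' of realizations of $p$ modulo $X$).

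For the implication $(\Leftarrow)$, condition (2) is immediate: any isomorphism $\sigma: L \to \U$ over $K$ sends $a$ to another realization of $p=tp(a/K)$, hence $\sigma(a)\in K\l a, X\r$, and therefore
\begin{displaymath}
\sigma(L) = K\l \sigma(a)\r \subseteq K\l a, X\r = L\l X\r.
\end{displaymath}

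For the implication $(\Rightarrow)$, assume condition (2). Using Fact~\ref{prur}(3) combined with Fact~\ref{poi}(iii), I can choose $\sigma \in \operatorname{Aut}_\Pi(\U/K)$ with $\sigma(a) \ind_K a$. Applying (2) to $\sigma|_L$ gives $\sigma(a) \in L\l X\r = K\l a, X\r$. Applying $\sigma^{-1}$, which fixes $K$ and preserves $X$ setwise (since $X$ is $K$-definable), I obtain $a \in K\l \sigma^{-1}(a), X\r$, while $\sigma^{-1}(a)\ind_K a$ by invariance and symmetry of non-forking (Fact~\ref{prur}(1,4)). Setting $F = K\l \sigma^{-1}(a)\r$, this yields $a\ind_K F$ and $a\in F\l X\r$, which is precisely the definition of $X$-internality of $tp(a/K)$. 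The fundamental-system property then follows from (2) together with Fact~\ref{poi}(iii): any realization $b$ of $p=tp(a/K)$ can be written as $\tau(a)$ for some $\tau\in \operatorname{Aut}_\Pi(\U/K)$, and then $b=\tau(a)\in \tau(L)\subseteq L\l X\r$.

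The argument is essentially formal; the main conceptual step is recognizing that the ``fundamental system of solutions'' condition is simply an automorphism-theoretic reformulation of (2), once one allows oneself to realize types by automorphisms of $\U$. No serious obstacle is anticipated, and the proof closely follows Pillay's ordinary-case argument \cite{Pi}; the partial setting introduces no new difficulty because all invocations of non-forking, stationarity and realizations-via-automorphisms remain valid in $DCF_{0,m}$.
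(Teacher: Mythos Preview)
Your proof is correct and follows essentially the same approach as the paper. The only difference is expositional: the paper simply observes that showing every realization $b$ of $tp(a/K)$ lies in $\operatorname{dcl}(K,a,c)$ for some tuple $c$ from $X$ simultaneously establishes both the fundamental-system property and $X$-internality (the latter by taking $b$ independent from $a$), whereas you separate these into two steps and use the automorphism-inversion trick $\sigma^{-1}$ to witness internality at $a$ itself rather than at $\sigma(a)$---a harmless but unnecessary detour, since $tp(\sigma(a)/K)=tp(a/K)$.
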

\begin{proof}
If a type $p\in S_n(K)$ is $X$-internal, then there is a sequence $(a_1,\dots,a_\ell)$ of realisations of $p$, called a fundamental system of solutions, such that for every $b\models p$ there is a tuple $c$ from $X$ such that $b\in\operatorname{dcl}(K,a_1,\dots,a_\ell,c)$ (see \S7.4 of \cite{Pi8}). 

Suppose $L=K\l a\r$ satisfies condition~(2). We claim that $tp(a/K)$ is $X$-internal and $a$ is itself a fundamental system of solutions. If $b\models tp(a/K)$ there is an automorphism $\sigma$ of $\U$ over $K$ such that $b=\sigma(a)\in L\l X\r$. Hence, $b\in \operatorname{dcl}(K,a,c)$ for some tuple $c$ from $X$. Conversely, suppose $L=K\l a\r$ and $a$ is a fundamental system of solutions of an $X$-internal type over $K$. Note that $tp(a/K)$ is also $X$-internal with fundamental system of solutions $a$ itself. So if $\sigma$ is an isomorphism from $L$ into $\U$ over $K$, then $\sigma(a)\models tp(a/K)$ and hence there is a tuple $c$ from $X$ such that $\sigma(a)\in \operatorname{dcl}(K,a,c)=L\l c\r$. That is $\sigma(L)\subseteq L\l X\r$, as desired.
\end{proof}

If $L=K\l a\r$ is an $X$-strongly normal extension then, by Lemma \ref{onint} and Proposition \ref{inter},
\begin{displaymath}
\Pi\text{-type}(a/K)\leq \Pi\text{-type}(X).
\end{displaymath}
Moreover, if $\D\subseteq \operatorname{span}_{K^\Pi}\Pi$ bounds the $\Pi$-type of $X$ then $\D$ also bounds the $\Pi$-type of $a$ over $K$ (see \ref{defboset} for the definition of ``bounds the $\Pi$-type''). Specializing to the case when $\DD\cup\D$ is a partition of $\Pi$ and $X=\U^{\DD}$, we recover Theorem 1.24 of \cite{La} that every $\U^\DD$-strongly normal extension of $K$ is finitely $\D$-generated over $K$.

We now start presenting the main properties of $X$-strongly normal extensions.

\begin{lemma}\label{cont}
Suppose $L$ is an $X$-strongly normal extension of $K$. Then $L$ is contained in some differential closure of $K$.
\end{lemma}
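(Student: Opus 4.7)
The plan is to reduce the statement to the claim that $p := tp(a/K)$ is isolated, where $a$ is a finite tuple with $L = K\la a\ra$. Since $DCF_{0,m}$ is $\w$-stable, a differential closure of $K$ is the prime (hence atomic) model over $K$, so $L$ embeds over $K$ into some differential closure of $K$ if and only if $L$ is atomic over $K$; by transitivity of atomicity in $\w$-stable theories, this in turn is equivalent to $p$ being isolated.

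To bring the $X$-strongly normal data to bear, I would first show that for every $M\models DCF_{0,m}$ containing $L$, every realisation $b\in M$ of $p$ lies in $L\la X(M)\ra$. Indeed, the $K$-isomorphism $L\to K\la b\ra$ sending $a\mapsto b$ lifts to an automorphism $\tilde\s\in \operatorname{Aut}(\U/K)$ by saturation of $\U$; condition (2) of Definition~\ref{def} gives $b=\tilde\s(a)\in L\la X(\U)\ra$, which is an existential first-order statement over $L\cup\{b\}$ and hence transfers from $\U$ to $M$. Specialising to $M=\bar L$, a differential closure of $L$, condition (1) forces $X(\bar L)=X(K)\subseteq L$, so every realisation of $p$ in $\bar L$ in fact lies in $L$ itself. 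A symmetric application to $K\la b\ra$ (which is also $X$-strongly normal over $K$ by Remark~\ref{rem}~(ii)) further shows that each such $b$ generates $L$ over $K$, so the realisations of $p$ in $\bar L$ form a single $\operatorname{Aut}(L/K)$-orbit through $a$.

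The main obstacle is to upgrade this tight description of the realisations of $p$ in $\bar L$ to the isolation of $p$ itself. My intended route is via the omitting types theorem for $\w$-stable theories, which says that $p$ is isolated if and only if $p$ is realised in every model $M\supseteq K$. Given such $M$, I would embed $L$ over $K$ into a sufficiently saturated elementary extension of $M$ to produce a realisation $a'$ of $p$ there, and then use the preceding $X$-controlled description together with condition (1), which forces the relevant $X$-parameters to come from $X(K)\subseteq M$, to descend $a'$ into a realisation of $p$ inside $M$; via quantifier elimination for $DCF_{0,m}$ and the finite $\Pi$-generation of $L$ over $K$, this should yield a single formula in $p$ that isolates it.
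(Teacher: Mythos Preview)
Your reduction to showing that $p=tp(a/K)$ is isolated is correct, and your first step---that every realisation of $p$ in $\bar L$ already lies in $L$---is fine. The problem is the final step. Your plan is to show that $p$ is realised in every model $M\supseteq K$ by embedding $L$ into some $M^*\succ M$ and then ``descending'' $a'$ to $M$ using condition~(1). But condition~(1) says $X(\bar L)=X(K)$; it tells you nothing about $X(M^*)$ for an arbitrary saturated $M^*$. So when you try to write $a'=f(b,c)$ with $b\in M$ and $c\in X$, you have no reason to believe $c\in X(K)\subseteq M$, and the descent does not go through. The closing phrase ``this should yield a single formula in $p$ that isolates it'' is too vague to fill this gap.

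The paper's argument avoids the detour through omitting types entirely. The key move you are missing is this: rather than trying to realise $p$ in $\bar K$, one uses compactness on the statement ``every $b\models p$ lies in $L\langle X\rangle$'' to extract a single formula $\phi\in p$ and a single $K$-definable function $f$ such that for \emph{any} two realisations $b_1,b_2$ of $\phi$ (not of $p$) there is $c\in X$ with $b_1=f(b_2,c)$. Now $\phi$, being a single formula, is automatically realised in $\bar K\subseteq\bar L$ by some $b$; applying the relation with $b_1=a$, $b_2=b$ and working inside $\bar L$ forces the witness $c$ to lie in $X(\bar L)=X(K)$, whence $a=f(b,c)\in\bar K$. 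The point is that weakening from $p$ to $\phi$ is exactly what makes the realisation in $\bar K$ free, and condition~(1) is invoked only inside $\bar L$, where it actually applies.
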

\begin{proof}
Let $L=K\l a\r$ and $p=tp(a/K)$. Then for any realisation $b$ of $p$, we have $b\in K\l a,X\r$. By compactness, we get a $K$-definable function $f$ and $\phi\in p$ such that for any $b_1,b_2$ realisations of $\phi$ there is a tuple $c$ of $X$ such that $b_1=f(b_2,c)$. Let $\bar L$ be a differential closure of $L$ and $\bar K$ a differential closure $K$ contained in $\bar L$. Let $b\in \bar K$ satisfy $\phi$. Then there is a tuple $c$ from $X(\bar L)=X(K)$ such that $a=f(b,c)$. Thus, $a\in \bar K$ and $L<\bar K$.
\end{proof}

The previous lemma implies that condition~(1) of Definition~\ref{def} can be replaced by 
\begin{enumerate}
\item [(1')] \quad   $X(K)=X(\bar K) \text{ and } L\subseteq \bar K \text{ for some differential closure } \bar K \text{ of } K.$
\end{enumerate}

The following result is fundamental for the analysis of the group of automorphisms of generalized strongly normal extensions. 

\begin{lemma}\label{exiso}
Let $L$ be an $X$-strongly normal extension of $K$. Let $\sigma$ an isomorphism from $L$ into $\U$ over $K$. Then $\sigma$ extends to a unique automorphism of $L\l X\r$ over $K\l X\r$.
\end{lemma}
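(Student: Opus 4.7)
The uniqueness is immediate: $L\l X\r$ is the $\Pi$-field compositum of $L$ and $K\l X\r$, so any two $\Pi$-automorphisms of $L\l X\r$ extending $\sigma$ on $L$ and fixing $K\l X\r$ pointwise must coincide.

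For existence, write $L = K\l a\r$ and set $a' := \sigma(a)$, noting that $a' \in L\l X\r$ by condition~(2) of $X$-strong normality. My first step is to establish the equality $L\l X\r = \sigma(L)\l X\r$. One inclusion is condition~(2) applied to $L$; for the other, Remark~\ref{rem}~(ii) says that $\sigma(L)$ is itself $X$-strongly normal over $K$, so applying condition~(2) to $\sigma(L)$ with the isomorphism $\sigma^{-1}:\sigma(L)\to L$ yields $L \subseteq \sigma(L)\l X\r$.

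The desired $\tilde\sigma$ will then be obtained as the restriction to $L\l X\r$ of a $\Pi$-automorphism $\rho$ of $\U$ fixing $K\l X(\U)\r$ pointwise and satisfying $\rho(a) = a'$. The equality $L\l X\r = \sigma(L)\l X\r$ guarantees that such $\rho$ preserves $L\l X\r$ setwise (it sends $L$ to $\sigma(L)$ and fixes $X(\U)$, so it maps $L\l X\r$ into $\sigma(L)\l X\r = L\l X\r$ and bijectively so). By saturation of $\U$ (Fact~\ref{poi}~(iii)), producing $\rho$ reduces to showing that $a$ and $a'$ realize the same $\Pi$-type over $K\l X(\U)\r$. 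Note that $tp(a/K) = tp(a'/K)$ because $\sigma$ fixes $K$, that this common type is isolated since $L \subseteq \bar K$ by Lemma~\ref{cont}, and that it is moreover $X$-internal with $a$ itself a fundamental system by Proposition~\ref{inter}.

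The technical heart of the argument, and what I expect to be the main obstacle, is verifying this type equality. My plan is to show that both $a \ind_K X(\U)$ and $a' \ind_K X(\U)$: the hypothesis $X(K) = X(\bar K)$, combined with $a \in \bar K$ and the isolation of $tp(a/K)$, should force $a$ to be $\Pi$-independent of $X(\U)$ over $K$, since adjoining $a$ to $K$ yields no new points of $X$; the same then follows for $a'$ as it realizes the same type over $K$. Granting this independence, stationarity of nonforking extensions over $\operatorname{acl}(K)$ (Fact~\ref{prur}~(6)) implies that the common type $tp(a/K)=tp(a'/K)$ admits a unique nonforking extension to $K\l X(\U)\r$; since both $a$ and $a'$ realize a nonforking extension, they realize the same type over $K\l X(\U)\r$, which completes the construction.
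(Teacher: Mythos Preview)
Your overall strategy is correct: establish $L\l X\r = \sigma(L)\l X\r$ (which you do cleanly via Remark~\ref{rem}(ii)) and then show $tp(a/K\l X\r) = tp(\sigma(a)/K\l X\r)$. The gap is in the final paragraph, where you invoke stationarity.

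Fact~\ref{prur}~(6) gives stationarity only for types over an algebraically closed $\D$-field. Since $K$ need not be algebraically closed, $tp(a/K)$ need not be stationary: it can have several nonforking extensions to $K\l X\r$, one for each of its extensions to $\operatorname{acl}(K) = K^{alg}$. You have not established $tp(a/K^{alg}) = tp(a'/K^{alg})$, and indeed this can fail, since $\sigma$ is only required to fix $K$ pointwise. (Take the degenerate case where $X$ is a singleton and $L = K(\sqrt d)$ is a quadratic extension: then $a=\sqrt d$ and $a'=-\sqrt d$ have the same type over $K$ but distinct types over $K^{alg}$.) So even granting both independence claims, you cannot conclude that $a$ and $a'$ realize the same type over $K\l X\r$.

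The paper bypasses stationarity by proving something strictly stronger than independence: the formula $\phi$ isolating $tp(a/K)$ continues to isolate a complete type over $K\l X\r$. The argument is short. If $\phi$ failed to isolate over $K\l X\r$, some $\psi(x,c)$ with $c$ a tuple from $X$ would split the realizations of $\phi$; since $a \in \bar L$ and $\bar L$ is a model, one finds such a witness $c'$ inside $X(\bar L) = X(K)$, and then $\psi(x,c')$ is over $K$ and already splits $\phi$ there --- contradicting isolation over $K$. As $a'=\sigma(a)$ also satisfies $\phi$, the equality $tp(a/K\l X\r)=tp(a'/K\l X\r)$ follows at once, with no appeal to forking. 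Incidentally, this isolation argument is also the cleanest way to make your independence claim $a \ind_K X(\U)$ rigorous; the justification you offer (``adjoining $a$ yields no new points of $X$'') is suggestive but not a proof.
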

\begin{proof}
Let $L=K\l a\r$ and $\phi$ a formula isolating $tp(a/K)$ (the existence of such a formula is given by Lemma~\ref{cont}). We show that $\phi$ also isolates $tp(a/K\l X\r)$. If not, there is $\psi(x,y)$ (over $K$) and $c$ a tuple from $X$ such that $\psi(x,c)\in tp(a/K\l X\r)$ and also a $b$ realising $\phi(x)$ but not $\psi(x,c)$. Then we can find such $b$ and $c$ in $\bar L$, and so $c\in X(\bar L)=X(K)$ contradicting the fact that $\phi$ isolates $tp(a/K)$. This implies that $tp(a/K\l X\r)=tp(\sigma(a)/K\l X\r)$, and so there is a unique isomorphism from $L\l X\r$ onto $\s(L)\l X\r$ over $K\l X\r$ extending $\s$. However, it follows from condition~(2) of Definition~\ref{def} that $L\l X\r=\s(L)\l X\r$.
\end{proof}

Now, let $L=K\l a\r$ be an $X$-strongly normal extension of $K$. Following Pillay (and Kolchin), we define Gal$_X(L/K):=Aut_{\Pi}(L\l X\r/K\l X\r)$; that is, Gal$_X(L/K)$ is the group of automorphisms of $L\l X\r$ over $K\l X\r$. Note that $tp(a/K)^\U$ is in $\operatorname{dcl}(L\cup X)$, and thus Gal$_X(L/K)$ acts naturally on $tp(a/K)^\U$. It follows, from Lemma~\ref{exiso}, that this action is regular. We also let gal$(L/K):=Aut_{\Pi}(L/K)$. By Lemma~\ref{exiso}, every element of gal$(L/K)$ extends uniquely to an element of Gal$_X(L/K)$. Via this map, we identify gal$(L/K)$ with a subgroup of Gal$_X(L/K)$.

\begin{lemma}
Gal$_X(L/K)$ and its action on $tp(a/K)^\U$ do not depend (up to isomorphism) on the choice of $X$ nor on the choice of generator $a$ of $L$ over $K$. We therefore simply write Gal$(L/K)$ for Gal$_X(L/K)$.
\end{lemma}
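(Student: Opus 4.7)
The plan is to separate independence from the generator from independence from $X$, using that the action of $\text{Gal}_X(L/K)$ on $tp(a/K)^\U$ via $\sigma\mapsto\sigma(a)$ is regular (as established in the paragraph before the lemma). The key preliminary observation is that $tp(a/K)^\U\subseteq L\l X\r$: since every realisation of $tp(a/K)$ has the form $\sigma(a)$ for some $K$-isomorphism $\sigma$ of $L$ into $\U$, condition (2) of Definition~\ref{def} gives the inclusion. This is what makes $\sigma(b)$ well-defined for $\sigma\in\text{Gal}_X(L/K)$ and $b\models tp(a/K)$.

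Independence from the choice of generator is quick. If $L=K\l a\r=K\l a'\r$, there is a $K$-definable bijection $g:tp(a/K)^\U\to tp(a'/K)^\U$ (witnessed by $a'=g(a)$ together with a $K$-definable inverse, since $a\in K\l a'\r$ as well). Because any $\sigma\in\text{Gal}_X(L/K)$ fixes $K$ pointwise, $\sigma(g(b))=g(\sigma(b))$; so $g$ is $\text{Gal}_X(L/K)$-equivariant, yielding an isomorphism of (group, set-with-action).

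For independence from $X$, suppose $L$ is both $X$-strongly normal and $Y$-strongly normal. My strategy is to introduce $X\times Y$ as a mediator. First I would check that $L$ is also $(X\times Y)$-strongly normal: condition (1) follows from $(X\times Y)(\bar L)=X(\bar L)\times Y(\bar L)=(X\times Y)(K)$, and condition (2) from $L\l X\r\subseteq L\l X,Y\r=L\l X\times Y\r$. Then I would define $\Psi:\text{Gal}_X(L/K)\to\text{Gal}_{X\times Y}(L/K)$ by sending $\sigma$ to the unique extension of $\sigma|_L$ guaranteed by Lemma~\ref{exiso} applied to $X\times Y$. Bijectivity of $\Psi$ is immediate, since both groups are in bijection with the set of $K$-isomorphisms of $L$ into $\U$ via restriction to $L$.

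The subtlest step, and the one I would flag as the main obstacle, is showing $\Psi$ is a group homomorphism. My plan is to prove the key identity $\Psi(\sigma)|_{L\l X\r}=\sigma$: since $\Psi(\sigma)$ fixes $K\l X,Y\r\supseteq K\l X\r$ pointwise and sends $L$ into $L\l X\r$, the field $L\l X\r$ is $\Psi(\sigma)$-invariant, so $\Psi(\sigma)|_{L\l X\r}$ is an automorphism of $L\l X\r$ over $K\l X\r$ extending $\sigma|_L$, and uniqueness in Lemma~\ref{exiso} forces it to equal $\sigma$. Granted this, for $\sigma_1,\sigma_2\in\text{Gal}_X(L/K)$ and $b\in L$ we have $\sigma_2(b)\in L\l X\r$, whence $\Psi(\sigma_1)\Psi(\sigma_2)(b)=\Psi(\sigma_1)(\sigma_2(b))=\sigma_1\sigma_2(b)$, so $\Psi(\sigma_1)\Psi(\sigma_2)|_L=(\sigma_1\sigma_2)|_L$ and uniqueness again gives $\Psi(\sigma_1\sigma_2)=\Psi(\sigma_1)\Psi(\sigma_2)$. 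The analogous map for $Y$ yields $\text{Gal}_X(L/K)\cong\text{Gal}_{X\times Y}(L/K)\cong\text{Gal}_Y(L/K)$, and the identity $\Psi(\sigma)|_{L\l X\r}=\sigma$ together with its $Y$-counterpart ensures that the actions on $tp(a/K)^\U\subseteq L\l X\r\cap L\l Y\r$ are intertwined by this composite isomorphism.
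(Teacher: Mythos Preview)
Your argument is correct. The paper takes a slightly more direct route: rather than introducing the mediator $X\times Y$, it defines $\psi:\text{Gal}_X(L/K)\to\text{Gal}_{X'}(L/K)$ immediately by $\sigma\mapsto$ the unique extension of $\sigma|_L$ to $L\l X'\r$ (via Lemma~\ref{exiso}), and asserts that $\sigma$ and $\psi(\sigma)$ agree on $L\l X\r\cap L\l X'\r$, declaring the homomorphism property ``clear''. Your mediator trick buys you something concrete here: since $L\l X\r\subseteq L\l X\times Y\r$, the restriction $\Psi(\sigma)|_{L\l X\r}$ is well-defined on all of $L\l X\r$, and your key identity $\Psi(\sigma)|_{L\l X\r}=\sigma$ makes the homomorphism check transparent. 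In the paper's direct approach, $\psi(\sigma)$ lives on $L\l X'\r$, which need not contain $L\l X\r$, so one must work on the intersection and the verification is a touch less clean. Both arguments are correct; yours is slightly longer but more explicit on the point you rightly flagged as the main obstacle.
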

\begin{proof}
If $L$ is also an $X'$-strongly normal extension of $K$ then, by Lemma~\ref{exiso}, there is a (unique) group isomorphism $$\psi:\, \text{Gal}_X(L/K)\to \,\text{Gal}_{X'}(L/K)$$ such that $\sigma|_{L\l X\r\cap L\l X'\r}=\psi(\sigma)|_{L\l X\r\cap L\l X'\r}$. Indeed, if $\sigma\in$ Gal$_X(L/K)$, then the restriction of $\sigma$ to $L$ gives rise to an isomorphism from $L$ into $\U$ over $K$ and so $\psi(\sigma)$ is defined as the unique extension of $\sigma|_L$ to an automorphism of $L\l X'\r$ over $K\l X'\r$. It is clear that this yields the desired group isomorphism.  Furthermore, if $a'$ is another generator of $L$ over $K$, then $$tp(a/K)^\U\cup tp(a'/K)^\U\,\text{ is in } \operatorname{dcl}(L\cup X\cup X')$$
and so any $K$-definable bijection between $tp(a/K)^\U$ and $tp(a'/K)^\U$, together with $\psi$, gives rise to an isomorphism between the natural actions of Gal$_X(L/K)$ on $tp(a/K)^\U$ and Gal$_{X'}(L/K)$ on $tp(a'/K)^\U$. 
\end{proof}

The following theorem asserts the existence of the \emph{definable} Galois group of a generalized strongly normal extension and summarises some of its basic properties.

\begin{theorem}\label{group}
Suppose $L=K\l a\r$ is an $X$-strongly normal extension of $K$.
\begin{enumerate}
\item There is a $K$-definable group $\G$ in $\operatorname{dcl}(K \cup X)$ with an $L$-definable regular action on $tp(a/K)^\U$ such that $\G$ together with its action on $tp(a/K)^\U$ is (abstractly) isomorphic to Gal$(L/K)$ together with its natural action on $tp(a/K)^\U$. We call $\G$ the \emph{Galois group of $L$ over $K$.}
\item $\Pi$-type$(\G)=\Pi$-type$(a/K)$ and $\Pi$-dim$(\G)=\Pi$-dim$(a/K)$. Moreover, $\D\subseteq \operatorname{span}_{K^\Pi}\Pi$ bounds the $\Pi$-type of $\G$ if and only if $\D$ bounds the $\Pi$-type of $a$ over $K$. 
\item $L$ is a $\G$-strongly normal extension.
\item The action of $\G$ on $tp(a/K)^\U$ is $K$-definable if and only if $\G$ is abelian.
\item If $\mu:$Gal$(L/K)\to \G$ is the isomorphism from (1), then $\mu($gal$(L/K))=\G(\bar K)=\G(K)$.
\end{enumerate}	
\end{theorem}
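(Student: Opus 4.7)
The plan is to follow Pillay's proof of the analogous result in \cite{Pi}, adapting it to the partial, possibly infinite-dimensional, setting.

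For (1), let $p:=tp(a/K)$. Lemmas \ref{cont} and \ref{exiso} provide a formula $\phi\in p$, isolating both $tp(a/K)$ and $tp(a/K\la X\ra)$, together with a $K$-definable function $f$ such that every $\sigma\in \mathrm{Gal}(L/K)$ satisfies $\sigma(a)=f(a,c_\sigma)$ for some tuple $c_\sigma$ from $X$, and every realization of $\phi$ arises this way. This yields an $L$-definable group structure on equivalence classes $[c]$ of the $L$-definable relation $cEc' \iff f(a,c)=f(a,c')$, with multiplication $[c_1]\cdot[c_2]=[c_3] \iff f(f(a,c_1),c_2)=f(a,c_3)$ and with action $[c]\cdot b=f(b,c)$ on the $K$-definable set $p^\U=\phi^\U$. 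The main obstacle and technical core of the proof will be to replace this $L$-definable presentation by a $K$-definable one lying in $\mathrm{dcl}(K\cup X)$. I would achieve this by appealing to the binding group theorem for internal types in stable theories (see for example \S 7.4 of \cite{Pi8}): since $p$ is $X$-internal by Proposition \ref{inter}, the group of permutations of $p^\U$ induced by $\Pi$-automorphisms of $\U$ over $K\cup X$ is naturally the set of $\U$-points of a $K$-definable group in $\mathrm{dcl}(K\cup X)$, and Lemma \ref{exiso} identifies $\mathrm{Gal}(L/K)$ with this group; elimination of imaginaries (Fact \ref{onty}(iv)) supplies the required coding in $\mathrm{dcl}(K\cup X)$.

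For (2), the map $g\mapsto g\cdot a$ is an $L$-definable bijection from $\G$ to $p^\U$, so their Kolchin polynomials as definable sets agree (using that the Kolchin polynomial is independent of the base field over which the set is defined). By Lemma \ref{max}, $\omega_{p^\U}=\omega_{a/K}$, which gives $\Pi\text{-type}(\G)=\Pi\text{-type}(a/K)$ and $\Pi\text{-dim}(\G)=\Pi\text{-dim}(a/K)$. The analogous equivalence for $\D\subseteq\operatorname{span}_{K^\Pi}\Pi$ bounding the $\Pi$-type follows from Lemma \ref{onb} together with Remark \ref{witg} applied to the same bijection.

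For (3), (4), and (5), I would argue as follows. For (3), the inclusion $\G\subseteq\mathrm{dcl}(K\cup X)$ combined with $X(K)=X(\bar L)$ yields $\G(\bar L)=\G(K)$: any $g\in\G(\bar L)$ is of the form $h(x)$ for some $K$-definable function $h$ and tuple $x$ from $X$, and by model-completeness of $DCF_{0,m}$ applied inside $\bar L$ we may choose such $x$ in $X(\bar L)=X(K)$, whence $g\in\mathrm{dcl}(K)=K$. The other condition of $\G$-strong normality, namely $\sigma(L)\subseteq L\la\G\ra$, follows from $\sigma(a)=[c_\sigma]\cdot a\in L\la[c_\sigma]\ra\subseteq L\la\G\ra$. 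Part (4) is the standard observation that reparametrizing $\mu:\mathrm{Gal}(L/K)\to\G$ by a different base point $a'=\tau(a)$ alters $\mu$ by an inner automorphism of $\G$, so the action is base-point independent (equivalently $K$-definable) precisely when $\G$ is abelian. For (5), Remark \ref{rem}(ii) together with Lemma \ref{cont} shows that $\sigma\in\mathrm{Gal}(L/K)$ restricts to an automorphism of $L$ if and only if $\sigma(a)\in\bar K$, equivalently $[c_\sigma]\in\G(\bar K)$; the model-completeness argument from (3) then yields $\G(\bar K)=\G(K)$.
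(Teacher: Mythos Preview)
Your proposal is essentially correct, and parts (2)--(5) follow the same lines as the paper. The main difference is in part (1). The paper does \emph{not} invoke the abstract binding-group theorem; instead it gives a direct, self-contained construction. Using the key fact (established inside the proof of Lemma~\ref{exiso}) that the isolating formula $\phi$ for $tp(a/K)$ also isolates $tp(a/K\langle X\rangle)$, the paper observes that the set $Y_0=\{c\in X: f_0(a,c)\in Z\}$ and the equivalence relation $E(c_1,c_2)\iff f_0(a,c_1)=f_0(a,c_2)$ are already $K$-definable, since e.g.\ $c\in Y_0$ iff $\forall x\,(\phi(x)\to\phi(f_0(x,c)))$. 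Elimination of imaginaries then produces $Y=Y_0/E$ in $\operatorname{dcl}(K\cup X)$, and the group structure is transported via the explicit bijection $\mu(\sigma)=h(a,\sigma a)$. Your black-box appeal to the binding-group theorem is legitimate and shorter, but the paper's explicit construction has the advantage of making the isomorphism $\mu$ and the $L$-definable action formula concrete (equation~(\ref{action1}) in the paper), which is used repeatedly later (e.g.\ in Proposition~\ref{gal} and Theorem~\ref{main}). One small caution: your formula $[c]\cdot b=f(b,c)$ is the map the paper shows in Remark~\ref{above} to be an action of $\G_{\text{op}}$ rather than $\G$; the genuine $L$-definable $\G$-action is $g\cdot b=f(f(a,g),h(a,b))$.

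One minor slip in (2): Kolchin polynomials themselves are not preserved under definable bijection (only $\Pi$-type and $\Pi$-dim are, by Fact~\ref{poli}(iv)). The paper therefore argues directly that $\Pi$-type, $\Pi$-dim, and the property ``$\D$ bounds the $\Pi$-type'' are preserved under definable bijection and under extension of parameters, rather than claiming $\omega_\G=\omega_{p^\U}$. Your conclusion is correct, but the justification should be phrased accordingly.
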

\begin{proof} \

\noindent (1) The construction is exactly as in \cite{Pi}, but we recall it briefly. Let $Z=tp(a/K)^{\U}$, by Lemma~\ref{cont}, $tp(a/K)$ is isolated and so $Z$ is $K$-definable. If $b\in Z$ then $b$ is a tuple from $L\l X\r$,  so by compactness we can find a $K$-definable function $f_0(x,y)$ such that for any $b\in Z$ there is a tuple $c$ from $X$ with $b=f_0(a,c)$. Let $Y_0=\{c \in X: f_0(a,c)\in Z\}$, then $Y_0$ is a $K$-definable set of tuples from $X$. Consider the equivalence relation on $Y_0$ given by $E(c_1,c_2)$ if and only if $f_0(a,c_1)=f_0(a,c_2)$, then $E$ is $K$-definable. By elimination of imaginaries we can find a $K$-definable set $Y$ in $\operatorname{dcl}(K\cup X)$ which we can identify with $Y_0/E$. Now define $f:Z\times Y\to Z$ by $f(b,d)=f_0(b,c)$ where $c$ is such that $d=c/E$. Now note that for any $b_1,b_2\in Z$ there is a unique $d\in Y$ such that $b_2=f(b_1,d)$, and so we can write $d=h(b_1,b_2)$ for some $K$-definable function $h$.

Define $\mu:$ Gal$(L/K)\to Y$ by $\mu(\sigma)=h(a,\sigma a)$. Then $\mu$ is a bijection. Let $\G$ denote the group with underlying set $Y$ and with the group structure induced by $\mu$. Then $\G$ is a $K$-definable group and is in $\operatorname{dcl}(K\cup X)$. Consider the action of $\G$ on $Z$ induced (via $\mu$) from the action of Gal$(L/K)$ on $Z$, i.e., for each $g\in \G$ and $b\in Z$ let $g.b:=\mu^{-1}(g)(b)$. This action is indeed $L$-definable since
\begin{equation}\label{action1}
\mu^{-1}(g)(b)=f(\mu^{-1}(g)(a),h(a,b))=f(f(a,g),h(a,b)).
\end{equation}

\noindent (2) Since $\G$ acts regularly and definably on $tp(a/K)^\U$, the map $g\mapsto g.a$ is a definable bijection between these two $K$-definable sets. Part (2) follows since $\Pi$-type, $\Pi$-dim and the property ``$\D$ bounds the $\Pi$-type" for definable sets are all preserved under extension of the set of parameters, as well as under definable bijection.

\noindent (3) Since $\G$ is in $\operatorname{dcl}(K\cup X)$ and $X(\bar K)=X(K)$, we have that $\G(\bar K)=\G(K)$. If $\sigma$ is an isomorphism from $L=K\l a\r$ into $\U$ over $K$, then, by Lemma~\ref{exiso}, we can extend it uniquely to an element of Gal$_X(L/K)$. Then, by (1), we get that $\sigma(a)=\mu (\sigma).a$ is in $\operatorname{dcl}( L\cup\G)$ and so $\sigma(L)\subseteq L\l \G\r$.

\noindent (4) If the action is $K$-definable, for any $g_1,g_2\in \G$ we get
\begin{displaymath}
(g_1g_2).a=g_1.(\mu^{-1}(g_2)(a))=\mu^{-1}(g_2)(g_1.a)=(g_2g_1).a
\end{displaymath}
so $g_1g_2=g_2g_1$. Conversely, suppose $\G$ is abelian and let $\sigma\in$ Gal$(L/K)$ be such that $\sigma(a)=b$, then
\begin{displaymath}
\mu^{-1}(g)(b)=\mu^{-1}(g)(\sigma(a))=\sigma(\mu^{-1}(g)(a))=\sigma(f(a,g))=f(b,g).
\end{displaymath}

\noindent (5) Note that if $g\in \G(\bar K)$ then $g=p(c)$ where $p\in K\l x\r$ and $c$ is a tuple from $X(\bar K)=X(K)$, and so $g \in \G(K)$. Thus $\G(\bar K)=\G(K)=\G(L)$. Now if $\sigma\in$ gal$(L/K)$ then $\sigma(a)$ is a tuple from $L$, and $\mu(\sigma)=h(a,\sigma(a))\in \G(L)$. Conversely, let $g\in \G(L)$ with $g=\mu(\sigma)$ and $\sigma\in$ Gal$(L/K)$. Then $\sigma(a)=f(a,g)$ is in $L$ and thus the restriction of $\sigma$ to $L$ determines an automorphism. Hence, $\sigma\in$ gal$(L/K)$.
\end{proof}

\begin{remark}\label{above}
Using the notation of the proof of Theorem \ref{group}~(1), one could consider the map $Z\times \G\to Z$ given by $(b,g)\mapsto f(b,g)=:g*b$ and ask if this is a well-defined action. In general it is not, since it is not necessarily the case that $g_1*(g_2*b)=(g_1g_2)*b$; however, it is the case that $g_1*(g_2*b)=(g_2g_1)*b$. Hence, $f$ induces a regular $K$-definable action of $\G_{\text{op}}$ on $Z$, where $\G_{\text{op}}$ is the group with universe $\G$ and product $g\cdot h:=h\,g$. Moreover, $\G$ will be abelian if and only if $g*b$ defines an action of $\G$ on $Z$, and in this case $g*b$ will coincide with the action of Theorem \ref{group}~(1).
\end{remark}

\begin{corollary}
A differential field extension $L$ of $K$ is a generalized strongly normal extension if and only if there are a $K$-definable group $\H$ satisfying $\H(\bar L)=\H(K)$ and a $K$-definable principal homogeneous $\H$-space $Z$ such that $L$ is generated by an element of $Z$.
\end{corollary}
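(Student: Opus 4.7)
The proof naturally splits into two implications. For the forward direction, I plan to extract the required data directly from Theorem \ref{group}. If $L = K\langle a\rangle$ is $X$-strongly normal, set $Z := tp(a/K)^{\U}$; Theorem \ref{group}(1) provides a $K$-definable group $\G \subseteq \operatorname{dcl}(K \cup X)$ acting regularly on $Z$, and part~(5) gives $\G(\bar K) = \G(K)$. Since $L \subseteq \bar K$ by Lemma \ref{cont}, a differential closure of $L$ can be chosen inside $\bar K$, so $\G(\bar L) = \G(K)$.

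A subtlety is that the natural action of $\G$ on $Z$ constructed in the proof of Theorem \ref{group} is only $L$-definable, so I cannot take $\H := \G$ directly. Remark \ref{above} resolves this: the map $(b,g) \mapsto f(b,g)$ is a $K$-definable regular action of the opposite group $\G_{\operatorname{op}}$ on $Z$. Setting $\H := \G_{\operatorname{op}}$ then yields a $K$-definable group with $\H(\bar L) = \H(K)$ together with a $K$-definable principal homogeneous $\H$-space $Z$ containing the generator $a$, which is exactly the data required by the statement.

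For the converse, given $\H$ and $Z$ as in the statement with $L = K\langle a\rangle$ for some $a \in Z$, I plan to verify that $L$ is $\H$-strongly normal in the sense of Definition \ref{def}. Condition~(1) is immediate from $\H(\bar L) = \H(K)$. For condition~(2), let $\sigma$ be any $\Pi$-isomorphism from $L$ into $\U$ over $K$; since $Z$ is $K$-definable, $\sigma(a) \in Z$, and by regularity of the $K$-definable $\H$-action there is a unique $h \in \H$ with $\sigma(a) = h \cdot a$. Hence $\sigma(a) \in L\langle \H\rangle$ and therefore $\sigma(L) = K\langle \sigma(a)\rangle \subseteq L\langle \H\rangle$. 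The only real subtlety is the forward direction's need to pass to the opposite group in order to obtain a $K$-definable (rather than merely $L$-definable) action; apart from that, both implications amount to straightforward bookkeeping on top of Theorem \ref{group} and Definition \ref{def}, and no machinery beyond what is developed in this section is required.
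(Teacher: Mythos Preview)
Your proposal is correct and follows essentially the same approach as the paper: for the forward direction you take $Z = tp(a/K)^{\U}$ and $\H = \G_{\mathrm{op}}$ via Remark~\ref{above}, exactly as the paper does, and for the converse you spell out the verification that $L$ is $\H$-strongly normal, which the paper simply declares to be clear. Your additional justification that $\H(\bar L) = \H(K)$ (using Lemma~\ref{cont} to embed $\bar L$ in $\bar K$ and then Theorem~\ref{group}(5)) is a useful detail that the paper leaves implicit.
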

\begin{proof}
Suppose $L=K\l a\r$ is a generalized strongly normal extension. Let $Z=tp(a/K)^{\U}$ and $\H=\G_{\text{op}}$ where $\G$ is the $K$-definable group from Theorem~\ref{group}. In Remark \ref{above} we saw that the $f$ from the proof of Theorem \ref{group}~(1) makes $Z$ into a $K$-definable principal homogeneous $\H$-space. The converse is clear, since $L$ will be an $\H$-strongly normal extension of $K$.
\end{proof}

Now we explain why the construction of the definable group $\G$ in the proof of Theorem~\ref{group} is independent of $X,a$ as well as the choice of $Y,f$.

\begin{lemma}\label{uni}
Let $L=K\l a\r$ be an $X$-strongly normal extension and $\mu:$Gal$(L/K)\to\G$ as in Theorem \ref{group}. If $\G'$ is a $K$-definable group satisfying the following:
\begin{enumerate}
\item $\G'(\bar K)=\G'(K)$
\item For some $a'$ such that $L=K\l a'\r$, there is an $L$-definable action of $\G'$ on $Z':=tp(a'/K)^{\U}$.
\item There is a group isomorphism $\mu':$Gal$(L/K)\to \G'$ preserving the action on $Z'$.
\end{enumerate}
Then $\mu'\comp\mu^{-1}:\G\to \G'$ is the unique $K$-definable group isomorphism such that: any $K$-definable bijection between $Z:=tp(a/K)^{\U}$ and $Z'$, together with $\mu'\comp\mu^{-1}$, gives an isomorphism between the action of $\G$ on $Z$ and the action of $\G'$ on $Z'$.
\end{lemma}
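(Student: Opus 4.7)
My plan is as follows. First I would exploit the interdefinability of $a$ and $a'$ over $K$ to obtain a $K$-definable bijection $\phi:Z\to Z'$ with $\phi(a)=a'$; such a $\phi$ exists because $tp(a/K)$ and $tp(a'/K)$ are isolated (Lemma~\ref{cont}). Since $\phi$ is defined over $K$, it commutes with the $\text{Gal}(L/K)$-actions on $Z$ and $Z'$, and translating via $\mu$ and $\mu'$ yields
$$\phi(g\cdot b)=\nu(g)\cdot\phi(b)\quad\text{for all }g\in\G,\ b\in Z,$$
where $\nu:=\mu'\comp\mu^{-1}$ is manifestly a group isomorphism.

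Next I would establish the $K$-definability of $\nu$ by writing down an explicit $K$-definable formula for its graph. Letting $f,h$ and $f',h'$ denote the $K$-definable functions produced by the constructions of $\G$ and $\G'$ in Theorem~\ref{group}, I claim
\begin{equation*}
g'=\nu(g)\iff\forall z\in Z,\ f'(\phi(z),g')=\phi(f(z,g)).
\end{equation*}
The ``$\Leftarrow$'' direction is immediate upon specialising $z=a$ and invoking the regularity of the $\G'_{\text{op}}$-action via $f'$. For ``$\Rightarrow$'', writing $z=f(a,g_0)$ and using the basepoint identity $\phi(f(a,h))=f'(a',\nu(h))$ (a consequence of the intertwining at $a$) together with the multiplicativity $\nu(g_0g)=\nu(g_0)\nu(g)$, both sides reduce to $f'(a',\nu(g_0)\nu(g))$. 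Since $f$, $f'$, $\phi$, $Z$, $\G$, $\G'$ are all $K$-definable, this biconditional exhibits the graph of $\nu$ as a $K$-definable subset of $\G\times\G'$.

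The heart of the proof will be to verify that every $K$-definable bijection $\psi:Z\to Z'$ intertwines the actions via $\nu$. Setting $\xi:=\phi^{-1}\comp\psi$, this reduces to showing that every $K$-definable self-bijection $\xi$ of $Z$ is $\G$-equivariant. Since $\xi(a)\in L\cap Z$, which equals the $\text{gal}(L/K)$-orbit of $a$, Theorem~\ref{group}(5) gives $\xi(a)=g_0\cdot a$ for some $g_0\in\G(K)$. Given $g\in\G$, by saturation of $\U$ I would extend $\sigma:=\mu^{-1}(g)\in\text{Gal}(L/K)=\text{Aut}(L\l X\r/K\l X\r)$ to some $\tilde\sigma\in\text{Aut}(\U/K\l X\r)$; this $\tilde\sigma$ fixes $K$ and $\G$ pointwise, so commutes with the $K$-definable $\xi$, and satisfies $\tilde\sigma(a)=g\cdot a$. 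A brief computation using $\tilde\sigma(g_0)=g_0$ (because $g_0\in K$) and the $K$-definability of $f$ then yields $\xi(g\cdot a)=f(a,gg_0)=g\cdot\xi(a)$, which extends to $\G$-equivariance of $\xi$ on all of $Z$ by regularity of the $\G$-action.

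Uniqueness is then straightforward: if $\alpha:\G\to\G'$ is any $K$-definable group isomorphism with the stated property, applying it to $\phi$ gives $\alpha(g)\cdot a'=\phi(g\cdot a)=\nu(g)\cdot a'$, and regularity of the $\G'$-action forces $\alpha=\nu$. The main obstacle I anticipate is the $\G$-equivariance of every $K$-definable self-bijection of $Z$; this rests crucially on extending elements of $\text{Gal}(L/K)$ to automorphisms of $\U$ fixing both $K$ and $X$ pointwise, which uses saturation of $\U$ together with the defining property of $X$-strong normality, and is really the place where the model-theoretic hypotheses on the extension $L/K$ come to bear.
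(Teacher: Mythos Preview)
Your proof is correct and rests on the same two pillars as the paper's: a $K$-definable bijection $\phi:Z\to Z'$ with $\phi(a)=a'$, and the extension of elements of $\text{Gal}(L/K)$ to automorphisms of $\U$ fixing $K\langle X\rangle$ pointwise. The organization, however, is different. The paper first extracts $K$-definable functions $f',h'$ from the given $L$-definable action of $\G'$ (writing that action as $q(a',y,z)$ for some $K$-definable $q$, then setting $f'(y,z):=q(y,y,z)$), obtains the formula $\nu(g)=h'(\rho(a),\rho(f(a,g)))$, and applies the automorphism argument \emph{directly} to show this expression is independent of the basepoint $a\in Z$---hence $K$-definable. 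It then dismisses everything else as ``clear.'' You instead handle $K$-definability of $\nu$ purely algebraically, via the $\G_{\text{op}}$-action property of $f$ from Remark~\ref{above} (and its analogue for $f'$), and reserve the automorphism argument for your Step~4, where you prove that every $K$-definable self-bijection of $Z$ is $\G$-equivariant. Your route gives an explicit account of why arbitrary $K$-definable bijections intertwine, which the paper omits; the paper's route is slightly leaner on $K$-definability and, more importantly, explains where $f',h'$ come from. On that last point your phrasing ``produced by the constructions of $\G'$ in Theorem~\ref{group}'' is a minor imprecision---$\G'$ is given abstractly, not built via that theorem---but the fix is exactly the paper's construction of $f'$ from $q$, and once that is in place the $\G'_{\text{op}}$-action property you need does follow.
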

\begin{proof}
There is a $K$-definable function $q(x,y,z)$ such that $q(a',y,z):Z'\times \G'\to Z'$ defines the action of $\G'$ on $Z'$. Let $f'(y,z):=q(y,y,z)$. For any $b_1,b_2\in Z'$ there is a unique $g'\in \G'$ such that $f'(b_1,g')=b_2$ and so we can write $g'=h'(b_1,b_2)$ for some $K$-definable function $h'$. We have that for any $\sigma\in$ Gal$(L/K)$
\begin{displaymath}
\sigma(a')=q(a',a',\mu'(\sigma))=f'(a',\mu'(\sigma)).
\end{displaymath}
Thus, $\mu'(\sigma)=h'(a',\sigma a')$. Let $\rho:K\l a \r\to K\l a'\r$ be a $K$-definable bijection such that $\rho(a)=a'$. For every $g\in \G$ we get
\begin{displaymath}
\mu'\comp\mu^{-1}(g)=h'(a',\mu^{-1}(g)(a'))=h'(\rho(a), \rho(f(a,g))),
\end{displaymath}
where $f$ is the $K$-definable function from the proof of Theorem \ref{group}. For every $b\in Z$, we can find $\sigma$ an automorphism of $\U$ over $K$ such that $\sigma(a)=b$ and $\sigma$ fixes both $\G$ and $\G'$ pointwise (since $L$ is a $\G$ as well as a $\G'$-strongly normal extension). Then
\begin{displaymath}
h'(\rho(b), \rho(f(b,g)))=\sigma(h'(\rho(a), \rho(f(a,g))))=h'(\rho(a), \rho(f(a,g))).
\end{displaymath}
Hence, $\mu'\comp\mu^{-1}$ is $K$-definable. The rest is clear.
\end{proof}

Even though the construction of $\G$ in Theorem \ref{group} depends on the choice of $(X,a,Y,f)$, if we choose different data $(X',a',Y',f')$, for the same extension $L/K$, and construct the corresponding $\G'$ and $\mu'$, Lemma \ref{uni} shows that $\G$ and $\G'$ will be $K$-definably isomorphic via $\mu'\comp \mu^{-1}$. Moreover, any $K$-definable bijection between $tp(a/K)^\U$ and $tp(a'/K)^\U$, together with $\mu'\comp\mu^{-1}$, gives rise to an isomorphism between the actions of $\G$ on $tp(a/K)^\U$ and $\G'$ on $tp(a'/K)^\U$. On the other hand, if $\G'$ satisfies the conditions of Lemma \ref{uni} then it comes from such a construction (since in this case $L$ will be a $\G'$-strongly normal extension and there is a natural choice of data which will give rise to $\G'$). Thus, to any generalized strongly normal extension we can associate a unique (up to $K$-definable isomorphism) $K$-definable group $\G$ equipped with an $L$-definable regular action on $tp(a/K)^\U$ for each generator $a$ of $L$ over $K$. We call $\G$ \emph{the Galois group} of $L$ over $K$.

\begin{example}\label{Gg} 
Suppose we have a partition $\DD\cup\D$ of $\Pi$, $X=\U^\DD$, $L$ is an $X$-strongly normal extension of $K$ and $\G$ is its Galois group. Then $\G$ is in $\operatorname{dcl}(K\cup \U^\DD)$. Hence, $\G$ is a $\D$-algebraic group over $K^\DD$ in the $\DD$-constants. Thus, (1) and (2) of Theorem \ref{group} recover Theorem 1.24 of \cite{La}.
\end{example}

We have the following Galois correspondence.
\begin{theorem}\label{normal}
Let $L=K\l a\r$ be an $X$-strongly normal extension of $K$ with Galois group $\G$, and let $\mu:$Gal$(L/K)\to \G$ be the isomorphism from Theorem \ref{group}. 
\begin{enumerate}
\item If $K\leq F \leq L$ is an intermediate $\Pi$-field, then $L$ is an $X$-strongly normal extension of $F$. Moreover,
\begin{displaymath}
\G_F:=\mu(\textrm{Gal}(L/F))
\end{displaymath}
is a $K$-definable subgroup of $\G$ and is the Galois group of $L$ over $F$. The map $F\mapsto\G_F$ establishes a 1-1 correspondence between the intermediate differential fields and $K$-definable subgroups of $\G$. 
\item $F$ is an $X$-strongly normal extension of $K$ if and only if $\G_F$ is a normal subgroup of $\G$, in which case $\G/\G_F$ is the Galois group of $F$ over $K$.
\end{enumerate}
\end{theorem}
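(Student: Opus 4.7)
The plan for (1) is to first verify that $L/F$ is $X$-strongly normal, then identify the Galois group of $L/F$ from Theorem~\ref{group} with $\G_F$ inside $\G$ and upgrade the natural $F$-definability to $K$-definability, and finally invert the correspondence. That $L/F$ is $X$-strongly normal is immediate: $L=F\l a\r$ is finitely $\Pi$-generated, $X(F)=X(\bar L)$ follows from $X(\bar L)=X(K)\subseteq F\subseteq\bar L$, and any $F$-isomorphism of $L$ into $\U$ is a fortiori a $K$-isomorphism, so its image lies in $L\l X\r$. Applying Theorem~\ref{group} to $L/F$ yields an $F$-definable group $\H$ with an $L$-definable regular action on $tp(a/F)^\U$, and restricting the $\G$-action on $tp(a/K)^\U$ to $tp(a/F)^\U$ makes $\G_F$ act regularly on $tp(a/F)^\U$ as well. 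By Lemma~\ref{uni}, there is then an $F$-definable group isomorphism $\H\cong\G_F$ preserving these actions, so $\G_F$ is at least an $F$-definable subgroup of $\G$.

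Upgrading to $K$-definability relies on the $\omega$-stability of $DCF_{0,m}$: the descending chain condition on definable subgroups yields $\G_F=\operatorname{Stab}_\G(b_0)$ for some finite tuple $b_0$ from $F$, so the canonical parameter $e$ of $\G_F$ lies in $\operatorname{dcl}(K,b_0)$; one then argues that $e$ is fixed by every $\Pi$-automorphism of $\U$ over $K$ (using that $\G_F$ is characterized intrinsically via its restricted regular action and that any such automorphism permutes realizations of $tp(a/K)$ while stabilizing $\G_F$ setwise), so by elimination of imaginaries (Fact~\ref{onty}) one concludes $e\in\operatorname{dcl}(K)$. For the bijectivity of $F\mapsto\G_F$, given a $K$-definable subgroup $H\leq\G$ define $F_H:=\{c\in L:\mu^{-1}(g)(c)=c\text{ for all }g\in H\}$, which is routinely seen to be a $\Pi$-subfield of $L$ containing $K$. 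The nontrivial inclusion $F_{\G_F}\subseteq F$ is established by showing that for any $c\in L\setminus F$ there exists $\sigma\in\operatorname{Gal}(L/F)$ with $\sigma(c)\ne c$; saturation produces $c'\ne c$ realising $tp(c/F)$, and Lemma~\ref{exiso} lifts the corresponding $F$-isomorphism to an element of $\operatorname{Gal}_X(L/F)$. The equality $\G_{F_H}=H$ then follows by applying Theorem~\ref{group} to the $X$-strongly normal extension $L/F_H$ and comparing the resulting Galois groups via Lemma~\ref{uni}.

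For (2), the forward direction proceeds by showing that if $F/K$ is $X$-strongly normal then every $\sigma\in\operatorname{Gal}(L/K)$ satisfies $\sigma(F)=F$: the restriction $\sigma|_F$ is a $K$-isomorphism with image in $L\l X\r$, and by Lemma~\ref{cont} applied to $F/K$, combined with the uniqueness of differential closures and $X(\bar K)=X(K)\subseteq F$, one deduces $\sigma(F)\subseteq F$ (and hence equals $F$ by a rank/cardinality argument). The restriction map $\operatorname{Gal}(L/K)\to\operatorname{Aut}_\Pi(F/K)$ then has kernel $\operatorname{Gal}(L/F)$, making $\G_F$ normal, and Lemma~\ref{uni} identifies the quotient $\G/\G_F$ with the Galois group of $F/K$. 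Conversely, if $\G_F\trianglelefteq\G$, then for every $\sigma\in\operatorname{Gal}(L/K)$ the conjugate $\sigma\operatorname{Gal}(L/F)\sigma^{-1}=\operatorname{Gal}(L/\sigma(F))$ equals $\operatorname{Gal}(L/F)$, so $\sigma(F)=F$ by the Galois correspondence from (1); hence every $K$-isomorphism $\sigma:F\to\U$ extends to some $\tilde\sigma\in\operatorname{Gal}(L/K)$ with $\sigma(F)=\tilde\sigma(F)=F\subseteq F\l X\r$, and combined with $X(\bar F)\subseteq X(\bar L)=X(K)\subseteq F$ this shows $F/K$ is $X$-strongly normal. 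The main obstacle throughout is the passage from $F$-definability to $K$-definability of $\G_F$; Theorem~\ref{group} alone gives only $F$-definability of the Galois group of $L/F$, and removing the excess $F$-parameters requires a careful use of canonical bases and elimination of imaginaries in $\omega$-stable theories applied to the regular $\G$-action on $tp(a/K)^\U$.
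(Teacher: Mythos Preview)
Your proposal has two real gaps, both stemming from not taking seriously that $\operatorname{Gal}(L/K)=\operatorname{Aut}_\Pi(L\l X\r/K\l X\r)$ acts on $L\l X\r$, not on $L$.

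\textbf{The $K$-definability of $\G_F$.} Your route via canonical parameters does not close. You assert that every $\Pi$-automorphism $\phi$ of $\U$ over $K$ ``stabilizes $\G_F$ setwise'', but there is no reason for this: such $\phi$ may move $F$ to another intermediate field $\phi(F)$, and then $\phi(\G_F)=\G_{\phi(F)}$, not $\G_F$. So the canonical parameter $e$ of $\G_F$ is not obviously fixed by $\operatorname{Aut}(\U/K)$. The paper's argument is much more direct and avoids this entirely. Write $F=K\l b\r$ with $b=p(a)$ for $p$ a tuple of $\Pi$-rational functions over $K$; then $g\in\G_F$ iff $p(f(a,g))=p(a)$, an $L$-definable condition. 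The point is that by Lemma~\ref{exiso} the isolating formula $\phi$ of $tp(a/K)$ also isolates $tp(a/K\l X\r)$, and $g$ ranges over $\G\subseteq K\l X\r$; hence for any two realizations $z,z'$ of $\phi$ and any $g\in\G$, the formula $p(f(x,g))=p(x)$ holds at $z$ iff it holds at $z'$. So $g\in\G_F$ iff $\exists z\big(\phi(z)\wedge p(f(z,g))=p(z)\big)$, a $K$-formula. No stabilizer analysis or canonical-base machinery is needed.

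\textbf{Part (2).} In both directions you claim $\sigma(F)=F$ for $\sigma\in\operatorname{Gal}(L/K)$. This is not justified: $\sigma$ is an automorphism of $L\l X\r$, so $\sigma(F)\subseteq L\l X\r$, and $X$-strong normality of $F$ gives only $\sigma(F)\subseteq F\l X\r$. There is no mechanism forcing $\sigma(F)\subseteq L$, let alone $\sigma(F)=F$; your appeal to Lemma~\ref{cont} and uniqueness of differential closures does not bridge this, since $L\l X\r$ is far larger than any differential closure of $L$. Consequently your restriction map $\operatorname{Gal}(L/K)\to\operatorname{Aut}_\Pi(F/K)$ is not well-defined, and in the converse direction writing $\sigma\operatorname{Gal}(L/F)\sigma^{-1}=\operatorname{Gal}(L/\sigma(F))$ and invoking the correspondence from (1) fails because $\sigma(F)$ need not lie between $K$ and $L$. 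The paper handles the forward direction using only $\sigma(F)\subseteq F\l X\r$: for $\tau\in\operatorname{Gal}(L/F)$ and $c\in F$ one has $\sigma(c)\in F\l X\r$, so $\tau\sigma(c)=\sigma(c)$, whence $\sigma^{-1}\tau\sigma$ fixes $F\l X\r$. For the converse it argues contrapositively: if some $K$-isomorphism $\phi$ sends $b\in F$ outside $F\l X\r$, extend $\phi$ to $\psi\in\operatorname{Gal}(L/K)$, use (1) to find $\sigma\in\operatorname{Gal}(L/F)$ moving $\psi(b)$, and then $\psi^{-1}\sigma\psi\notin\operatorname{Gal}(L/F)$ violates normality.

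Your surjectivity argument in (1) via fixed fields $F_H$ is also left vague at the crucial step $\G_{F_H}\subseteq H$; the paper instead takes $F$ to be generated by the canonical parameter of the $H$-orbit of $a$, which makes both inclusions transparent.
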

\begin{proof} \

\noindent (1) Clearly $L$ is an $X$-strongly normal extension of $F$. Also, $F$ is finitely generated over $K$, in fact $F=K\l b\r$ where $b$ generates the minimal $\Pi$-field of definition of the set of realisations of $tp(a/F)$ (recall this type is isolated). Let $b=p(a)$ for $p\in K\l x\r$. Note that if $g\in \G$ then $g\in \G_F$ if and only if $\mu^{-1}(g)(c)=c$ for all $c\in F$ which is equivalent to $\mu^{-1}(g)(p(a))=p(a)$. Thus, $g\in \G_F$ if and only if $g\in \G$ and $p(f(a,g))=p(a)$, which is a $K$-definable condition since $tp(a/K)$ is isolated. Thus $\G_F$ is $K$-definable. It is clear, from the definition, that $\G_F$ is the Galois group of $L$ over $F$.

Now we prove the Galois correspondence. Let $F_1$ and $F_2$ be distinct intermediate differential fields, we show that if $b\in F_2\backslash F_1$ there is $\sigma\in$ Gal$(L/F_1)$ such that $\sigma(b)\neq b$. As $F_1=\operatorname{dcl}(F_1)$, there is an automorphism $\s'$ of $\U$ over $F_1$ such that $\s'(b)\neq b$. The restriction of $\s'$ to $L\l X\r$ yields the desired element of Gal$(L/F_1)$. Thus, $\G_{F_1}\neq \G_{F_2}$ and so the map $F\mapsto \G_F$ is injective.  

For surjectivity let $H$ be a $K$-definable subgroup of $\G$. Consider $Y=\{\mu^{-1}(h)(a)\in L\l X\r: h\in H\}$ and let $b$ be a tuple that generates the minimal $\Pi$-field of definition of $Y$. Then $b\in L$ and let $F=K\l b\r$. We show $H=\G_F$. Let $h\in H$ then clearly $\mu^{-1}(h)(Y)=Y$, so $\mu^{-1}(h)(b)=b$ and then $h\in \G_F$. Conversely, if $g\in \G_F$ then, since $a\in Y$, $\mu^{-1}(g)(a)\in Y$. Thus, $\mu^{-1}(g)=\mu^{-1}(h)$ for some $h\in H$ and so $g=h\in H$. This establishes the Galois correspondence.

\noindent (2) Suppose $F$ is an $X$-strongly normal extension of $K$, we need to show Gal$(L/F)$ is normal in Gal$(L/K)$. This is immediate since for all $\sigma\in$ Gal$(L/K)$ we get that $\sigma(F)\subset F\l X\r$. Now suppose $\G_F$ is normal in $\G$, in order to show that $F$ is an $X$-strongly normal extension of $K$ we only need to check that for every isomorphism $\phi$ from $F$ into $\U$ over $K$, $\phi(F)\subset F\l X\r$. For  contradiction suppose there is $b\in F$ such that $\phi(b)\notin F\l X\r$. We can extend $\phi$ to $\psi\in$ Gal$(L/K)$ and thus by part (1) we can find $\sigma\in$ Gal$(L/F)$ such that $\sigma\comp\psi(b)\neq \psi(b)$. Hence, $\psi^{-1}\comp\sigma\comp\psi(b)\neq b$ and this contradicts normality. Finally, the group isomorphism $\eta:\G/\G_F\to\text{Gal}(F/K)$ given by $\eta(g\,\G_F)= \mu^{-1}(g)|_{F\l X\r}$ shows that $\G/\G_F$ is the Galois group of $F$ over $K$.
\end{proof}

\begin{corollary}\label{conn}
Let $L$ be a generalized strongly normal extension of $K$ with Galois group $\G$. Then $\G$ is connected if and only if $K$ is relatively algebraically closed in $L$.
\end{corollary}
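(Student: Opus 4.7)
The plan is to use the Galois correspondence of Theorem~\ref{normal} together with the existence of a connected component of the definable group $\G$ in the $\omega$-stable theory $DCF_{0,m}$. Recall that in an $\omega$-stable theory every definable group has a connected component $\G^0$ which is the unique smallest definable subgroup of finite index, is normal, and is definable over the same parameters as $\G$; in particular, $\G^0$ is $K$-definable here. Thus $\G$ is connected (i.e.\ $\G^0 = \G$) if and only if $\G$ has no proper $K$-definable subgroup of finite index. The strategy is to identify, via the Galois correspondence, the intermediate field corresponding to $\G^0$ with the relative algebraic closure of $K$ in $L$.

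For the forward direction, assume $\G$ is connected and let $b \in L$ be algebraic over $K$. Put $F = K\langle b\rangle$, so $K \le F \le L$. By Theorem~\ref{normal}(1), $\G_F$ is a $K$-definable subgroup of $\G$, and $\G_F$ corresponds under $\mu$ to $\textrm{Gal}(L/F)$, which is the stabilizer of $b$ under the action of $\textrm{Gal}(L/K)$ on $\textrm{tp}(b/K)^{\U}$. Since $b$ is algebraic over $K$, the orbit of $b$ under $\textrm{Gal}(L/K)$ is finite, so $\G_F$ has finite index in $\G$. By connectedness $\G_F = \G$, and by the injectivity of the Galois correspondence $F = K$, i.e.\ $b \in K$.

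For the reverse direction, assume $K$ is relatively algebraically closed in $L$ and apply the Galois correspondence to $\G^0$: there exists an intermediate $\Pi$-field $F$ with $\G_F = \G^0$. Since $\G^0$ is normal in $\G$, Theorem~\ref{normal}(2) says that $F$ is itself an $X$-strongly normal extension of $K$ with Galois group $\G/\G^0$, which is finite. Pick any generator $c$ of $F$ over $K$ and apply Theorem~\ref{group} to the extension $F/K$: the finite group $\G/\G^0$ acts regularly on $\textrm{tp}(c/K)^{\U}$, so $\textrm{tp}(c/K)^{\U}$ is finite, which means every coordinate of $c$ is algebraic over $K$. Hence $F$ is algebraic over $K$; by hypothesis $F = K$, so $\G^0 = \G$.

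The two potential technical points are (i) invoking the existence and $K$-definability of the connected component of a definable group in an $\omega$-stable theory, and (ii) translating ``algebraic over $K$'' into ``finite orbit under the Galois action.'' Both are standard: (i) is a general fact about stable groups, and (ii) follows from quantifier elimination for $DCF_{0,m}$ together with the fact (Lemma~\ref{cont}) that $L$ sits inside a differential closure of $K$, so that $K$-conjugates of algebraic elements stay inside $\bar K$ and are permuted by the full Galois group. I do not expect a serious obstacle, since the argument is essentially the standard one for classical strongly normal extensions applied within the $\omega$-stable framework already set up in this thesis.
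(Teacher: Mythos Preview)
Your proof is correct and follows essentially the same approach as the paper's: both directions translate between finite-index subgroups of $\G$ and algebraic intermediate extensions via the Galois correspondence of Theorem~\ref{normal}, with the connected component $\G^0$ playing the pivotal role. The only cosmetic difference is that the paper argues both directions by contrapositive and, in the forward direction, passes to the normal closure of $K(b)$ to obtain a \emph{normal} finite-index subgroup, whereas you work directly with $K\langle b\rangle$ and use that connectedness already excludes any proper finite-index subgroup; this is a minor simplification, not a different route.
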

\begin{proof}
Suppose $b\in L$ is algebraic over $K$ but $b\notin K$. Let $F\leq L$ be the (algebraic) normal closure of $K(b)$, then $F$ is an $X$-strongly normal extension of $K$. Thus, Gal$(F/K)$ is a nontrivial finite group and from (2) of Theorem \ref{normal} we get that Gal$(L/F)$ is a proper normal definable subgroup of Gal$(L/K)$ of finite index. This implies $\G$ is not connected. Conversely, if $\G$ is not connected, we can find a proper normal definable subgroup of finite index. This subgroup will be of the form $\G_F$ for some $X$-strongly normal extension $F$ of $K$ contained in $L$. But Gal$(F/K)$ will be finite and nontrivial, so $F$ will be a proper algebraic extension of $K$ in $L$.
\end{proof}

Finally, as in the ordinary case, we have a positive answer to the ``baby" inverse Galois problem:

\begin{proposition}\label{bab}
Let $G$ be a connected definable group with $\Pi$-type$(G)<m$. Then there is a $\Pi$-field $K$, over which $G$ is defined, and a $G$-strongly normal extension $L$ of $K$ such that $G$ is the Galois group of $L$ over $K$.
\end{proposition}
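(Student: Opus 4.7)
The strategy is to realise $G$ as a Galois group by first reducing via Theorem~\ref{co} and then exhibiting a $G$-torsor over an appropriately closed base. Since $G$ is connected with $\Pi$-type$(G) < m$, by Remark~\ref{witg} there exists a set $\D \subseteq \operatorname{span}_{K^\Pi}\Pi$ of linearly independent elements witnessing the $\Pi$-type of $G$. Extending $\D$ to a basis $\DD \cup \D$ of $\operatorname{span}_{K^\Pi}\Pi$ and invoking Theorem~\ref{co}, $G$ is definably isomorphic to $(H, s)^{\sharp}$ for some relative D-group $(H, s)$ with respect to $\DD/\D$ defined over a $\Pi$-field $K_0$; so I may assume outright that $G = (H, s)^{\sharp}$.

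For the base field I would take $K := K_0 \l G(\bar K_0) \r_\Pi$, where $\bar K_0$ is a fixed $\Pi$-closure of $K_0$. Then $K \subseteq \bar K_0$, and since every element of $\bar K_0$ has isolated $\Pi$-type over $K_0 \subseteq K$, the field $\bar K_0$ is also a $\Pi$-closure of $K$. In particular $G(\bar K) = G(\bar K_0) \subseteq K$, so that $G(\bar K) = G(K)$---precisely the closure needed for condition~(1) of Definition~\ref{def}.

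The heart of the argument is the construction of the generator. I would produce an element $a \in (H, s)^{\sharp} \cap \bar K_0$ whose $\Pi$-type $p := tp(a/K)$ is isolated and whose realisation set $p^{\U}$ is a $K$-definable principal homogeneous $G$-space under left translation. Setting $L := K\l a \r_{\Pi}$, condition~(1) will hold because $\bar L$ can be chosen inside $\bar K_0$, giving $G(\bar L) \subseteq G(\bar K_0) = G(K)$; and condition~(2) will hold because any $\Pi$-isomorphism $\sigma : L \to \U$ over $K$ sends $a$ to a realisation $\sigma(a) \in p^{\U} = G \cdot a$, whence $\sigma(L) \subseteq L \l G \r$. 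The regular action of $G$ on $p^{\U}$ by left translation, together with the uniqueness statement in Lemma~\ref{uni} and Theorem~\ref{group}(1), will then identify $G$ with $\operatorname{Gal}(L/K)$.

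The main obstacle will be the construction of this $a$: generic points of $(H,s)^{\sharp}$ in $DCF_{0,m}$ have non-isolated $\Pi$-type over $K$, so cannot be taken as bare $\Pi$-generics, yet we need the realisation set of $p$ to be a full $G$-torsor in order to recover the entire group (rather than a proper subgroup) as the Galois group. Resolving this tension requires finding a $G$-torsor defined over $K$ that admits a point in $\bar K_0$ with isolated type; this can be carried out via a judicious choice of principal homogeneous space attached to $(H, s)$, in the spirit of the relative logarithmic differential equations developed in Section~\ref{galex}.
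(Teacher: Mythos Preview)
Your plan has a genuine self-defeating step. You take $K := K_0\l G(\bar K_0)\r_\Pi$ precisely so that $G(\bar K_0) \subseteq K$, and then you propose to find a generator $a \in (H,s)^\sharp \cap \bar K_0$. But $(H,s)^\sharp = G$ by your own reduction, so any such $a$ lies in $G(\bar K_0) \subseteq K$, forcing $L = K\l a\r_\Pi = K$ and the extension is trivial. Equivalently, the realisation set $p^\U$ you want to be a principal homogeneous $G$-space cannot be all of $G$ (which is what left translation on a point of $G$ gives) once $G$ has any $K$-point, and the identity is always a $K$-point. So the explicit construction you describe cannot produce a nontrivial extension, and the closing sentence --- ``a judicious choice of principal homogeneous space \ldots\ in the spirit of Section~\ref{galex}'' --- is where the actual content of the proof would have to live; as written it is a promissory note, not an argument.

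The paper's proof takes a quite different and more direct route, bypassing Theorem~\ref{co} and the relative D-group machinery entirely. One starts with a $\Pi$-closed $K_0$ over which $G$ is defined, embeds $G$ as a $\Pi$-algebraic subgroup of a connected \emph{algebraic} group $H$, and lets $a$ be a $\Pi$-generic of $H$ over $K_0$. The required $G$-torsor is then the coset $aG$, i.e.\ the fibre of $\nu:H\to H/G$ over $\alpha:=\nu(a)$; one sets $K = K_0\l\alpha\r_\Pi$ and $L = K\l a\r_\Pi$. The condition $G(\bar L)=G(K)$ comes not from absorbing $G(\bar K_0)$ into $K$ but from the Lascar inequalities: since $U(a/K_0)=\omega^m\cdot\dim H$ while $U(g/K_0)<\omega^m$ for every $g\in G$, one gets $g\ind_{K_0}a$, and $\Pi$-closedness of $K_0$ then forces $g\in K_0$. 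That every $g\in G$ arises as $a^{-1}\sigma(a)$ for some $\sigma\in\mathrm{Gal}(L/K)$ is checked by showing $tp(ag/K)=tp(a/K)$, again via a Lascar-rank computation. Note that this is exactly the template your appeal to Section~\ref{galex} is groping toward (and indeed Proposition~\ref{abo} later runs the same argument with $\nu$ replaced by $\ell_s$), but the torsor point $a$ lives in the ambient $H$, not in $G=(H,s)^\sharp$ itself.
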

\begin{proof}
Let $K_0$ be a $\Pi$-closed field over which $G$ is defined. By the embeddability property of differential algebraic groups into algebraic groups, there is a connected algebraic group $H$ defined over $K_0$ and a $K_0$-definable group embedding of $G$ into $H$  (to the author's knowledge the proof of this fact, for the partial case, does not appear anywhere; however, it is well known that the proof for the ordinary case \cite{Pi4} extends with little modification). We therefore assume that $G$ is a subgroup of the algebraic group $H$.

Let $H/G$ be set of left cosets of $G$ in $H$ and $\nu:H\to H/G$ be the canonical projection. By elimination of imaginaries for $DCF_{0,m}$, we may assume $H/G$ and $\nu$ are a definable set and a definable function, respectively, both over $K_0$. Let $a$ be a $\Pi$-generic point of $H$ over $K_0$. Then, since $H$ is an algebraic group, $U(a/K_0)=\omega^m\cdot d$ where $d$ is the (algebraic-geometric) dimension of $H$. Let $\al=\nu(a)$, $K=K_0\l \al\r_\Pi$ and $L=K\l a\r_\Pi$. Note that $L=K_0\l a\r_\Pi$.

We first check that $G(\bar L)=G(K_0)$. Since $K_0$ is $\Pi$-closed, if $b\in \bar L$ and $b\ind_{K_0} L$ then $b\in K_0$ (this follows from definability of types in $DCF_{0,m}$). Thus it suffices to show that $g\ind_{K_0} a$ for all $g\in G$. Let $g\in G$, then $U(g/K_0)<\omega ^m$ and by the Lascar inequalities $$\omega^m \cdot d \leq U(a,g/K_0)\leq U(a/K_0,g)\oplus U(g/K_0).$$ Hence, $U(a/K_0,g)=\omega^m\cdot d$ and so $g\ind_{K_0}a$. Now suppose $\s$ is an isomorphism from $L$ into $\U$ over $K$. Then $\nu(a)=\nu(\s a)$, and thus $a^{-1}\cdot \s a\in G$. Hence, $\s a\in L\l G\r$, which implies that $\s(L)\subset L\l G\r$. This shows that $L$ is a $G$-strongly normal extension.

Now we check that the Galois group is $G$. It follows from the proof of Theorem~\ref{group}~(i) that the Galois group is given by
$$\{g\in G: a\cdot g=\sigma(a) \text{ for some }\sigma\in \,\text{Gal}(L/K)\},$$ and thus it suffices to show that for each $g\in G$ we have that $tp(a\cdot g/K)=tp(a/K)$. Let $g\in G$, then we have that $\nu(a\cdot g)=\nu(a)=\al$. Since $\{x\in H:\, \nu x=\al\}$ is in definable bijection with $G$ and the latter has Lascar rank less than $\omega^m$, then $U(a/K)< \omega^m$ and $U(a\cdot g/K)<\omega^m$. Using Lascar inequalities again we get that $U(\al/K_0)=\omega^m\cdot d$ and also that $U(a\cdot g/K_0)=\omega^m\cdot d$. Then $a\cdot g$ is a generic point of $H$ over $K_0$, and so $tp(a\cdot g/K_0)=tp(a/K_0)$. But $\nu(a\cdot g)=\nu(a)=\al$, thus $tp(a\cdot g/K)=tp(a/K)$ as desired.
\end{proof}

\section{Relative logarithmic differential equations and their Galois extensions}\label{galex}

In ordinary differential Galois theory, Picard-Vessiot extensions are Galois extensions corresponding to certain differential equations on linear algebraic groups in the constants (i.e., linear ODE's). Pillay's generalized strongly normal extensions correspond to certain differential equations on algebraic D-groups (under the assumption that the base field is algebraically closed). In this section we introduce the logarithmic derivative on a relative D-group and the Galois extensions associated to (relative) logarithmic differential equations. We then show, under appropriate assumptions, that these are precisely the generalized strongly normal extensions of Section \ref{genstrong}. The theory we develop here extends Pillay's theory from \cite{Pi2}.

We still continue to work in our universal domain $(\U,\Pi)\models DCF_{0,m}$ and a base $\Pi$-field $K < \U$. We assume a partition $\Pi=\DD\cup \D$ with $\DD=\{D_1,\dots,D_r\}\neq \emptyset$.  

\begin{definition}
Let $(G,s)$ be a relative D-group w.r.t. $\DD/\D$ defined over $K$ (c.f. Section~\ref{relDgroup}) and $e$ be its identity. We say that $\al\in \ta G_e$ is an \emph{integrable point of $(G,s)$} if $(G,\al s)$ is a relative D-variety (but not necessarily a relative D-group). Here $\al s:G\to \ta G$ is the $\D$-section given by $(\al s)(x)=\al\cdot s(x)$, where the latter product occurs in $\ta G$. Clearly the identity of $\ta G$ is integrable.
\end{definition}

\begin{example}\label{exa}
Suppose $G$ is a linear $\D$-algebraic group defined over $K^{\DD}$ (that is, a $\D$-algebraic subgroup of GL$_n$ for some $n$). Then, by Lemma~\ref{overc}, $\ta G$ is equal to $(T_\D G)^r$, the $r$-th fibred product of the $\D$-tangent bundle of $G$, and so there is a zero section $s_0:G\to (T_\D G)^r$. Then $(G,s_0)$ is a relative D-group. In this case a point $(\operatorname{Id},A_1,\dots,A_r)\in \ta G_{\operatorname{Id}}=(\mathcal{L}_\D(G))^{r}$, where $\mathcal{L}_\D(G)=T_\D G_e$ is the $\D$-Lie algebra of $G$, is integrable if and only if it satisfies
\begin{equation}\label{lert}
D_iA_j-D_jA_i=[A_i,A_j], \quad \text{ for } i,j=1,\dots,r. 
\end{equation}
Indeed, by definition, the point $(\operatorname{Id},A_1,\dots,A_r)$ is integrable if and only if the section $$(\operatorname{Id},A_1\dots,A_r)\cdot(x,0,\dots,0)=(x,A_1 x,\dots,A_r x)$$ satisfies the integrability condition (\ref{rel1}). This means that for each $i,j$
$$d_{D_i/\D}(A_j x)(x,A_i x)=d_{D_j/\D}(A_i x)(x,A_j x),$$ which is equivalent to
$$A_jA_i x+D_i(A_j)x=A_iA_j x+D_j(A_i)x.$$
From this we get the desired equations. These equations (\ref{lert}) are the integrability conditions on $A_1,\dots,A_r$ that one finds in \cite{PM} or \cite{VS}.
\end{example}

\begin{lemma}\label{intpt}
A point $\al\in \ta G_e$ is integrable if and only if the system of $\Pi$-equations
\begin{displaymath}
\nabla_{\DD}x=\al s(x)
\end{displaymath}
has a solution in $G$.
\end{lemma}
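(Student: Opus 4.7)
The plan is to prove both directions, with the forward one being immediate from earlier results and the reverse one requiring a group-translation trick.

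For the forward direction, if $\alpha$ is integrable then $(G,\alpha s)$ is by definition a relative D-variety, so Proposition~\ref{lema1}(1) applies and gives that $(G,\alpha s)^\#$ is $\D$-dense in $G$. In particular it is nonempty, and any element of it solves $\nabla_\DD x=\alpha s(x)$.

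For the reverse direction, suppose $g\in G$ satisfies $\nabla_\DD g=\alpha s(g)$. The main idea is to first produce $\D$-many such solutions using the group structure, and then deduce the integrability condition from their density. Set $S:=\{g'\in G:\nabla_\DD g'=\alpha s(g')\}$. I claim $S=g\cdot(G,s)^\#$. Indeed, for any $h\in G$, since both $\nabla_\DD$ and $s$ are group homomorphisms from $G$ to $\ta G$,
$$\nabla_\DD(g\cdot h)=\nabla_\DD(g)\cdot\nabla_\DD(h)=\alpha s(g)\cdot\nabla_\DD(h)$$
equals $\alpha s(g\cdot h)=\alpha s(g)\cdot s(h)$ precisely when $\nabla_\DD(h)=s(h)$, i.e.\ when $h\in(G,s)^\#$. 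Since $(G,s)$ is a relative D-variety, Proposition~\ref{lema1}(1) yields that $(G,s)^\#$ is $\D$-dense in $G$, and translating by $g$ shows $S$ is also $\D$-dense in $G$.

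Next, I would verify that the integrability condition for the section $\alpha s$ holds at every point of $S$. Fix $g'\in S$, work in a local chart containing $g'$, and write $\alpha s=(\operatorname{Id},(\alpha s)_1,\dots,(\alpha s)_r)$. Since $D_i g'=(\alpha s)_i(g')$ for each $i$, the $\D$-derivation formula from Lemma~\ref{exten1} (extended by the quotient rule) gives
$$D_iD_jg'=D_i((\alpha s)_j(g'))=d_{D_i/\D}(\alpha s)_j\bigl(g',(\alpha s)_i(g')\bigr),$$
and symmetrically $D_jD_ig'=d_{D_j/\D}(\alpha s)_i(g',(\alpha s)_j(g'))$. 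Since $D_i$ and $D_j$ commute on $\U$, these two expressions are equal, which is precisely the integrability identity (\ref{rel1}) for $\alpha s$ at $g'$.

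Finally, I would close by a density argument. The integrability condition (\ref{rel1}) at a point $a\in G$ is the vanishing of a family of $\D$-rational expressions in the coordinates of $a$ (in any local chart), so it cuts out a $\D$-closed subset $Z\subseteq G$. We have shown $S\subseteq Z$, and $S$ is $\D$-dense, so $Z=G$; thus $\alpha s$ satisfies the integrability condition everywhere and $(G,\alpha s)$ is a relative D-variety, i.e.\ $\alpha$ is integrable. The one step that requires some care rather than a routine calculation is the identification $S=g\cdot(G,s)^\#$, because it is what lets us bootstrap from one solution to a $\D$-dense set of solutions; everything else is formal manipulation of the $\D$-derivation formula and the closedness of the integrability locus.
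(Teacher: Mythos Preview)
Your proof is correct and follows the same approach as the paper: both directions hinge on the identity $(G,\alpha s)^\#=g\cdot(G,s)^\#$ and the $\D$-density of $(G,s)^\#$ from Proposition~\ref{lema1}(1). The paper's proof is terser, simply asserting that ``it suffices to show that $(G,\alpha s)^\#$ is $\D$-dense in $G$''; you have made explicit the justification for this sufficiency (integrability holds at every sharp point by commutativity of the $D_i$, and the integrability locus is $\D$-closed), which the paper leaves to the reader.
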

\begin{proof}
If $\al$ is integrable then, by  Proposition \ref{lema1} (1), $\{g\in G: \nabla_{\DD}g=\al s(g)\}$ is $\D$-dense in $G$. Conversely, suppose there is $g\in G$ such that $\nabla_{\DD}g=\al s(g)$. To prove $\al$ is integrable it suffices to show that $(G,\al s)^{\sharp}$ is $\D$-dense in $G$. This follows from $(G,\al s)^{\sharp}=g(G,s)^{\sharp}$ and the fact that $(G,s)^{\sharp}$ is $\D$-dense in $G$.
\end{proof}

Let $(G,s)$ be a relative D-group defined over $K$. The \emph{logarithmic derivative associated to} $(G,s)$ is defined by
\begin{center}
$\ld:\, G \to \ta G_e$

\qquad \qquad \qquad $g\mapsto (\nabla_{\DD}g)\cdot \, (s(g))^{-1}$
\end{center}
where the product and inverse occur in $\ta G$. Note that since $\nabla$ and $s$ are sections of $\pi:\ta G\to G$, the product $(\nabla_{\DD}g)\cdot \, (s(g))^{-1}$ lies indeed in $\ta G_e$ (see the explicit formulas for the group law of $\ta G$ in Section~\ref{relDgroup}).

We now list some properties of $\ld$.

\begin{lemma}\label{propder}
\
\begin{enumerate}
\item $\ld$ is a crossed-homomorphism, i.e., $\ld(gh)=(\ld g)(g*\ld h$). Here $*$ is the adjoint action of $G$ on $\ta G_e$, that is, $g*\al:=\ta C_g (\al)$ for each $\al\in \ta G_e$ where $C_g$ denotes conjugation by $g$.
\item The kernel of $\ld$ is $(G,s)^\#$.
\item The image of $\ld$ is exactly the set of integrable points of $(G,s)$.
\item For all $a\in G$, $tp(a/K\, \ld a)$ is $(G,s)^\#$-internal and has $a$ as a fundamental system of solutions.
\end{enumerate}
\end{lemma}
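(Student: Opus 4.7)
The overall strategy is to handle (1) by a direct computation with the group structure of $\ta G$; (2) is then an unravelling of definitions, (3) is a restatement of Lemma~\ref{intpt}, and (4) follows from (1) and (2) by exhibiting $a$ itself as a fundamental system of solutions.

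For (1), I plan to compute $\ld(gh)$ directly using the fact that both $\nabla_\DD\colon G\to \ta G$ and $s\colon G\to \ta G$ are group homomorphisms:
\begin{align*}
\ld(gh)
&= \nabla_\DD(gh)\cdot s(gh)^{-1}
 = \nabla_\DD(g)\,\nabla_\DD(h)\, s(h)^{-1}\, s(g)^{-1}\\
&= \bigl(\nabla_\DD(g)\, s(g)^{-1}\bigr) \cdot \bigl(s(g)\,\nabla_\DD(h)\, s(h)^{-1}\, s(g)^{-1}\bigr)
 = \ld(g)\cdot \bigl(s(g)\,\ld(h)\,s(g)^{-1}\bigr).
\end{align*}
Since $s(g)$ lies above $g$ in $\ta G$, conjugation by $s(g)$ preserves the identity fiber $\ta G_e$. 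It then remains to verify that this conjugation coincides with the functorial action $\ta C_g$ on $\ta G_e$; this identification is the most delicate point, and can be reduced to a local computation on an affine chart showing that the prolongation of the regular map $C_g\colon G\to G$ coincides with group-theoretic conjugation in $\ta G$ by any lift of $g$ when restricted to the identity fiber.

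Parts (2) and (3) are immediate. For (2), $\ld g$ is the identity of $\ta G$ precisely when $\nabla_\DD g = s(g)$, which is the defining condition of $g\in (G,s)^\#$. For (3), $\al \in \ld(G)$ iff there exists $g\in G$ with $\nabla_\DD g = \al\cdot s(g)$, and Lemma~\ref{intpt} asserts exactly that this system is solvable in $G$ iff $\al$ is integrable.

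For (4), let $b \models tp(a/K\,\ld a)$. Since $\ld$ is $K$-definable, $\ld b = \ld a$. Applying the crossed-homomorphism identity from (1) to the factorisation $b = a\cdot(a^{-1}b)$ gives
\begin{equation*}
\ld a = \ld b = \ld(a)\cdot \bigl(s(a)\,\ld(a^{-1}b)\,s(a)^{-1}\bigr),
\end{equation*}
so $\ld(a^{-1}b)$ is the identity of $\ta G_e$, and (2) then yields $c:=a^{-1}b\in (G,s)^\#$. Thus every $b \models tp(a/K\,\ld a)$ has the form $a\cdot c$ with $c\in (G,s)^\#$, and so $b\in \operatorname{dcl}(K\l\ld a\r, a, c)$. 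This exhibits $a$ as a fundamental system of solutions for $tp(a/K\,\ld a)$ and, by the argument in the proof of Proposition~\ref{inter}, gives the $(G,s)^\#$-internality of the type.
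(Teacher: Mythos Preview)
Your proof is correct and follows essentially the same approach as the paper. In particular, the paper also reduces (1) to the identity $g*\al = u\,\al\,u^{-1}$ for any $u\in \ta G_g$ (your ``most delicate point''), obtains (2) and (3) exactly as you do, and for (4) uses the crossed-homomorphism structure to conclude $\ld^{-1}(\ld a) = a\,(G,s)^\#$, which is precisely your computation that $a^{-1}b\in\ker\ld$.
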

\begin{proof} \

\noindent(1) This can be shown as in Pillay \cite{Pi2}. We include a sketch of the proof. An easy computation shows that $g*\al=u\al u^{-1}$ for any $u\in\ta G_g$. Thus, the adjoint action of $G$ on $\ta G_e$ is by automorphisms and
\begin{eqnarray*}
\ld (gh)
&=& (\nabla_{\DD}g)(\nabla_{\DD}h)(s(h))^{-1}(s(g))^{-1} \\
&=& (\ld g) (s(g))(\ld h)(s(g))^{-1} =(\ld g)(g*\ld h).
\end{eqnarray*}
\noindent(2) By definition of $\ld$.

\noindent(3) Follows from Lemma \ref{intpt}, since if $\al=\ld(g)$ for some $g\in G$ then $\nabla_\DD g=\al\, s(g)$.

\noindent(4) By basic properties of crossed-homomorphisms $\ld^{-1}(\ld a)=a\, ker(\ld)=a(G,s)^\#$ for all $a\in G$. Thus, if $b\models tp(a/K\,\ld a)$ then $\ld b=\ld a$, and so $b\in \ld^{-1}(\ld a)= a(G,s)^\#$.
\end{proof}

Extending the work of Pillay in \cite{Pi2}, we point out that these relative logarithmic differential equations give rise to generalized strongly normal extensions.

Let $(G,s)$ be a relative D-group defined over $K$ with $(G,s)^\#(K)=(G,s)^\#(\bar K)$, for some (equivalently any) $\Pi$-closure $\bar K$ of $K$, and $\al$ be an integrable $K$-point of $(G,s)$. By Lemma \ref{propder}~(3), the set of solutions in $G$ to $\ld x=\al$ is nonempty. Hence there is a maximal $\Pi$-ideal $\mathcal M\subset K\{x\}_\Pi$ containing $\{\ld x-\al\}\cup \I(G/K)$. It is a well known fact that every maximal $\Pi$-ideal of $K\{x\}_\Pi$ is a prime ideal (see for example \cite{Ka}). Let $a$ be a tuple of $\U$ such that $\mathcal{M}=\I(a/K)_{\Pi}$. Note that $tp(a/K)$ is therefore isolated (by the formula which sets the radical differential generators of $\I(a/K)_{\Pi}$ to zero) and so $K\l a\r_{\Pi}$ is contained in a $\Pi$-closure of $K$. Moreover, Lemma \ref{propder}~(4) tells us that, $tp(a/K)$ is $(G,s)^\#$-internal and $a$ is a fundamental system of solutions. Hence, by Proposition \ref{inter}, $K\l a\r_{\Pi}$ is a $(G,s)^\#$-strongly normal extension of $K$. 

\begin{definition}
We call the above $K\l a\r_\Pi$ the \emph{Galois extension associated to $\ld x=\al$}. The Galois group associated to this generalized strongly normal extension is called the \emph{Galois group associated to $\ld x=\al$}.
\end{definition}

Let us point out that the above construction does not depend on the choice of $a$ (up to isomorphism over $K$). Indeed, if $b$ is another solution such that $\I(b/K)_{\Pi}$ is a maximal $\Pi$-ideal, then both $tp(a/K)$ and $tp(b/K)$ are isolated and so we can find a $\Pi$-closure $\bar K$ of $K$ containing $b$ and an embedding $\phi:K\{a\}_{\Pi}\to \bar K$ over $K$. Since $\al$ is a $K$-point, $\ld \phi(a)=\al$, thus, by Lemma \ref{propder}~(1), $b^{-1}\phi(a)\in (G,s)^{\sharp}(\bar K)=(G,s)^{\sharp}(K)$. Hence, $\phi(a)$ and $b$ are interdefinable over $K$ and so $K\l a\r_{\Pi}$ is isomorphic to $K\l b\r_{\Pi}$ over $K$. This argument actually shows that there is exactly one such extension in each $\Pi$-closure of $K$.

\

\begin{remark}[On the condition $(G,s)^\#(K)=(G,s)^\#(\bar K)$] \
\begin{itemize}
\item [(i)] Let $G$ be a $\D$-group defined over $K^\DD$ and $s_0:G\to \ta G=(T_\D G)^r$ its zero section. If $K^\DD$ is $\D$-closed then $(G,s_0)^\#(K)=(G,s_0)^\#(\bar K)$. On the other hand, if $(G,s_0)^\#(K)=(G,s_0)^\#(\bar K)$ and $\D$-type$(G)=|\D|$ then $K^\DD$ is $\D$-closed.
\item [(ii)] Let $(G,s)$ be a relative D-group defined over $K$. If $(G,s)^\#$ is the Galois group of a generalized strongly normal extension of $K$ then $(G,s)^\#(K)=(G,s)^\#(\bar K)$ (see (5) of Theorem~\ref{group}).
\end{itemize}
\end{remark}

\begin{example}[{\it The linear case}] Suppose $G=GL_n$ and $s_0:G\to \ta G=(T_\D G)^r$ is the zero section. By Proposition \ref{prog}, $T_\D G=TG$ the (algebraic) tangent bundle of $G$, and so $\ta G_{\operatorname {Id}}=\{\operatorname{Id}\}\times (Mat_n)^r$. If $\al=(\operatorname{Id},A_1,\dots,A_r)\in \ta G_{\operatorname {Id}}$, then the logarithmic differential equation $\ell_{s_0} x=\al$ reduces to the system of linear differential equations $$D_1 x=A_1 x,\;\dots\;, D_r x=A_r x.$$ As we already pointed out in Example \ref{exa}, $\al$ will an integrable point if and only if $$D_iA_j-D_jA_i=[A_i,A_j]\; \text{ for } i,j=1,\dots,r.$$ Also, in this case, $(G,s_0)^\#(K)=(G,s_0)^\#(\bar K)$ if and only if $K^\DD$ is $\D$-closed. Thus, the Galois extensions of $K$ associated to logarithmic differential equations of $(GL_n,s_0)$ are precisely the parametrized Picard-Vessiot extensions considered by Cassidy and Singer in \cite{PM}.
\end{example}

\begin{proposition}\label{char}
Let $(G,s)$ be a relative D-group defined over $K$ with $(G,s)^{\sharp}(K)=(G,s)^{\sharp}(\bar K)$ and $\al$ be an integrable $K$-point of $(G,s)$. Let $L$ be a $\Pi$-field extension of $K$ generated by a solution to $\ld x=\al$. Then, $L$ is the Galois extension associated to $\ld x=\al$ if and only if $(G,s)^{\sharp}(K)=(G,s)^{\sharp}(\bar L)$ for some (any) $\Pi$-closure $\bar L$ of $L$.
\end{proposition}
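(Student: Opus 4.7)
Let $L=K\langle a\rangle_\Pi$ where $a$ is a solution to $\ell_s x=\al$.

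The forward direction will be essentially a restatement of the construction of the Galois extension. Recall that when we defined $L'$ as the Galois extension associated to $\ell_s x=\al$ we argued (via Lemma~\ref{propder}(4) and Proposition~\ref{inter}) that $L'$ is a $(G,s)^{\sharp}$-strongly normal extension of $K$. Thus condition~(1) of Definition~\ref{def} immediately gives $(G,s)^{\sharp}(K)=(G,s)^{\sharp}(\bar{L'})$. Since by assumption $L=L'$ in this direction, we obtain the conclusion.

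For the converse, the plan is to show $L$ coincides with the Galois extension by exhibiting an isomorphism. Let $L'=K\langle a'\rangle_\Pi$ be the Galois extension associated to $\ell_s x=\al$, constructed so that $\mathcal I(a'/K)_\Pi$ is a maximal $\Pi$-ideal. As noted when defining the Galois extension, $tp(a'/K)$ is isolated, so $a'$ can be chosen inside any $\Pi$-closure of $K$; in particular, we can arrange $a'\in\bar L$ (take a $\Pi$-closure of $L$ and realise the isolated type inside it). Now both $a,a'\in G(\bar L)$ satisfy $\ell_s a=\ell_s a'=\al$. Apply the crossed-homomorphism property of Lemma~\ref{propder}(1) to the factorisation $a'=a\cdot(a^{-1}a')$:
\begin{displaymath}
\al=\ell_s a'=(\ell_s a)\,\bigl(a*\ell_s(a^{-1}a')\bigr)=\al\cdot\bigl(a*\ell_s(a^{-1}a')\bigr).
\end{displaymath}
Since the adjoint action $a*(-)$ is an automorphism of $\ta G_e$, this forces $\ell_s(a^{-1}a')=e$. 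By Lemma~\ref{propder}(2), $a^{-1}a'\in(G,s)^{\sharp}$, and since $a^{-1}a'\in\bar L$, the hypothesis yields $a^{-1}a'\in(G,s)^{\sharp}(K)\subseteq L$. Hence $a'\in L$, so $L'\subseteq L$. For the reverse inclusion, $a'^{-1}a=(a^{-1}a')^{-1}\in(G,s)^{\sharp}(K)\subseteq L'$, giving $a=a'\cdot(a'^{-1}a)\in L'$, hence $L\subseteq L'$.

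I do not expect a significant obstacle: once one notices that the kernel of $\ell_s$ is exactly $(G,s)^{\sharp}$ and that the adjoint action is by automorphisms, the crossed-homomorphism identity reduces the problem to moving solutions inside $\bar L$ modulo $(G,s)^{\sharp}$. The only point that requires a small care is ensuring that a representative $a'$ of the Galois extension actually lives in $\bar L$, which is handled by isolation of $tp(a'/K)$ combined with the uniqueness (up to $K$-isomorphism) of the Galois extension established just before the proposition.
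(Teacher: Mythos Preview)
Your proof is correct and follows essentially the same route as the paper's. The only cosmetic difference is that the paper first observes (via $(G,s)^\#$-strong normality) that $L$ sits inside a $\Pi$-closure $\bar K$ of $K$ and carries out the comparison there using the standing hypothesis $(G,s)^\sharp(\bar K)=(G,s)^\sharp(K)$, whereas you work directly inside $\bar L$ and invoke the converse hypothesis $(G,s)^\sharp(\bar L)=(G,s)^\sharp(K)$; the kernel computation $a^{-1}a'\in(G,s)^\sharp$ is identical in both.
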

\begin{proof}
By the above discussion, the Galois extension associated to $\ld x=\al$ is a $(G,s)^\#$-strongly normal. Thus, $(G,s)^{\sharp}(K)=(G,s)^{\sharp}(\bar L)$. For the converse, suppose $L=K\l b\r_{\Pi}$ where $b$ is a solution to $\ld x=\al$. Then, since $tp(b/K)$ is $(G,s)^\#$-internal and $b$ is a fundamental system of solutions, $L$ is a $(G,s)^\#$-strongly normal extension and so $L$ is contained in a $\Pi$-closure $\bar K$ of $K$. Let $a$ be a tuple from $\bar K$ such that $\I(a/K)_{\Pi}$ is a maximal $\Pi$-ideal. Then $K\l a\r_{\Pi}$ is the Galois extension associated to $\ld x=\al$. But, by Lemma \ref{propder}, $b^{-1}a\in (G,s)^{\sharp}(\bar K)=(G,s)^\#(K)$, and hence $L=K\l a\r_{\Pi}$. 
\end{proof}

The proof of Lemma 3.9 of \cite{Pi2} extends directly to the partial case and yields the following proposition.

\begin{proposition}\label{gal}
Let $(G,s)$ and $\al$ be as in Proposition \ref{char}. Then the Galois group associated to $\ld x =\al$ is of the form $(H,s_H)^{\sharp}$ for some relative D-subgroup $(H,s_H)$ of $(G,s)$ defined over $K$. Moreover, if $a$ is a solution to $\ld x=\al$ such that $\I(a/K)_{\Pi}$ is a maximal $\Pi$-ideal, then the action of $(H,s_H)^\#$ on $tp(a/K)^{\U}$ is given by $h.b=(aha^{-1})b$.
\end{proposition}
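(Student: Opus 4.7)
\noindent\textbf{Proof proposal for Proposition \ref{gal}.}

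The plan is to produce the Galois group explicitly as a concrete subset of $(G,s)^{\sharp}$ by defining, for each $\sigma\in\text{Gal}(L/K)=\text{Aut}_\Pi(L\la X\ra_\Pi/K\la X\ra_\Pi)$ with $X=(G,s)^{\sharp}$, the element $\mu(\sigma):=a^{-1}\sigma(a)$. Since $\al$ is a $K$-point, $\ld\sigma(a)=\sigma(\ld a)=\sigma(\al)=\al$, so by Lemma~\ref{propder} (3), $\sigma(a)\in a(G,s)^{\sharp}$ and $\mu(\sigma)$ lies in $(G,s)^{\sharp}$. The key observation, as in Pillay's Lemma~3.9, is that because $\sigma$ fixes $K\la X\ra_\Pi$ pointwise, it in particular fixes every element of $X=(G,s)^{\sharp}$. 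Thus if $\tau\in\text{Gal}(L/K)$ and $h_\tau=\mu(\tau)$, then $\sigma\tau(a)=\sigma(a\cdot h_\tau)=\sigma(a)\cdot h_\tau=a\mu(\sigma)h_\tau$, which gives $\mu(\sigma\tau)=\mu(\sigma)\mu(\tau)$. Injectivity of $\mu$ is immediate from the regularity of the $\text{Gal}(L/K)$-action on $tp(a/K)^{\U}$.

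Next I would set $\H:=\mu(\text{Gal}(L/K))=\{h\in(G,s)^{\sharp}:ah\models tp(a/K)\}$ and check that $\H$ is a $K$-definable subgroup of $(G,s)^{\sharp}$. Closure under multiplication follows by the same computation: if $h_1,h_2\in\H$ and $\sigma\in\text{Gal}(L/K)$ realises $\sigma(a)=ah_1$, then $\sigma(ah_2)=\sigma(a)h_2=ah_1h_2\models tp(a/K)$. Since $tp(a/K)$ is isolated (Lemma~\ref{cont}), $\H$ is \emph{a priori} only $L$-definable, so the $K$-definability claim needs the automorphism criterion (Fact~\ref{poi}~(v)): for any $\psi\in\text{Aut}_\Pi(\U/K)$, the point $\psi(a)$ realises $tp(a/K)$, hence $h_\psi:=a^{-1}\psi(a)$ lies in $\H$, and a direct computation gives $\psi(\H)=h_\psi^{-1}\H=\H$ because $\H$ is a subgroup containing $h_\psi$. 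This is the one step that demands real care and is, to my mind, the principal technical point of the proof; everything else is a formal manipulation with the logarithmic derivative. Having $K$-definability, Lemma~\ref{corre} (whose hypothesis $(G,s)^{\sharp}(\bar K)=(G,s)^{\sharp}(K)$ is part of our standing assumptions) produces a relative D-subgroup $(H,s_H)$ of $(G,s)$ defined over $K$ with $\H=(H,s_H)^{\sharp}$.

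Finally, I would verify the action formula and invoke the uniqueness result, Lemma~\ref{uni}, to identify $(H,s_H)^{\sharp}$ with the abstract Galois group of Theorem~\ref{group}. For $h=\mu(\sigma)\in (H,s_H)^{\sharp}$ and $b=ah'\in tp(a/K)^{\U}$, using again that $\sigma$ fixes $h'\in X$, one computes $\sigma(b)=\sigma(a)h'=ahh'=(aha^{-1})b$, giving the announced action $h.b=(aha^{-1})b$. This is manifestly $L$-definable; the containment $(H,s_H)^{\sharp}\subseteq (G,s)^{\sharp}$ together with $(G,s)^{\sharp}(\bar K)=(G,s)^{\sharp}(K)$ yields $(H,s_H)^{\sharp}(\bar K)=(H,s_H)^{\sharp}(K)$; and $\mu$ is by construction a group isomorphism from $\text{Gal}(L/K)$ to $(H,s_H)^{\sharp}$ intertwining the natural action of $\text{Gal}(L/K)$ on $tp(a/K)^{\U}$ with the action just described. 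All three hypotheses of Lemma~\ref{uni} are therefore met, so $(H,s_H)^{\sharp}$ together with the action $h.b=(aha^{-1})b$ is $K$-definably isomorphic to the Galois group, finishing the proof.
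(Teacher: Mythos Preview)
Your proposal is correct and follows essentially the same strategy as the paper: define $\mu(\sigma)=a^{-1}\sigma(a)$, identify its image as a $K$-definable subgroup of $(G,s)^\sharp$, and then invoke Lemma~\ref{corre} to recover the relative D-subgroup $(H,s_H)$.

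The one noteworthy difference is in how $K$-definability is obtained. You define $\H=\{h\in(G,s)^\sharp:ah\models tp(a/K)\}$, which is \emph{a priori} only $L$-definable, and then run the automorphism argument $\psi(\H)=h_\psi^{-1}\H=\H$ to descend to $K$. The paper instead writes the same set as $Y=\{g\in(G,s)^\sharp:Zg=Z\}$ where $Z=tp(a/K)^{\U}$; since $Z$ is $K$-definable (the type being isolated) and $(G,s)^\sharp$ is $K$-definable, $Y$ is manifestly $K$-definable without any further work. The paper then simply observes that the data $((G,s)^\sharp,a,Y,f)$ with $f$ the group multiplication fits the template of the construction in Theorem~\ref{group}(1), so the group structure on $Y$ and the action formula $h.b=(aha^{-1})b$ come directly from equation~(\ref{action1}) there, bypassing the separate verifications you carry out by hand. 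Your route is slightly longer but perfectly sound; the paper's shortcut is just to recognise $\H$ in a form that is visibly $K$-definable from the outset.
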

\begin{proof}
We give a slightly more direct argument than what is found in \cite{Pi2}. Let $L=K\l a\r_{\Pi}$ be the Galois extension associated to $\ld x=\al$, where $a$ is a solution to $\ld x=\al$ and $\I(a/K)_{\Pi}$ is a maximal $\Pi$-ideal. Let $Z$ be the $\Pi$-locus of $a$ over $K$, note that $Z=tp(a/K)^{\U}$ and that $Z$ is a $\Pi$-algebraic subvariety of $(G,\al s)^{\sharp}$. Let $f$ be the multiplication on $G$ and $Y=\{g\in (G,s)^{\sharp}:Zg=Z\}$. Following the construction of Theorem \ref{group} (1) with the data $((G,s)^\#,a,Y,f)$, we get a bijection $\mu:\,$Gal$(L/K)_{\Pi}\to Y$ defined by $\mu(\sigma)=a^{-1}\sigma(a)$. It follows that $\mu$ is in fact a group isomorphism, where $Y$ is viewed as a subgroup of $(G,s)^{\sharp}$. Let $H$ be the $\D$-closure of $Y$ over $K$. Since $Y$ is a $\Pi$-algebraic subgroup of $(G,s)^\#$ defined over $K$, $H$ equipped with $s_H:=s|_H$ is a $\DD/\D$-subgroup of $(G,s)$ defined over $K$ and $(H,s_H)^\#=Y$ (see Lemma \ref{corre}).

The moreover clause follows by (\ref{action1}) in the proof of Theorem \ref{group}~(1).
\end{proof}

The Galois correspondence given by Theorem \ref{normal} specializes to this context and, composed with the bijective correspondence between relative D-subgroups of a given relative D-group and the $\Pi$-algebraic subgroups of the sharp points given in Lemma \ref{corre}, yields the following correspondence.

\begin{corollary}\label{corresp}
Let $(G,s)$ and $\al$ be as in Proposition \ref{char}. Let $L$ be the Galois extension associated to $\ld x=\al$ with Galois group $(H,s_H)^\#$. Then there is a Galois correspondence between the intermediate $\Pi$-fields (of $K$ and $L$) and the relative D-subgroups of $(H,s_H)$ defined over $K$.
\end{corollary}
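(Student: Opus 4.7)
The plan is to obtain the desired Galois correspondence by composing two bijections that have essentially already been established. First, Proposition~\ref{gal} identifies the Galois group of $L/K$ with $(H,s_H)^\#$, and part~(5) of Theorem~\ref{group} then guarantees that $(H,s_H)^\#(K) = (H,s_H)^\#(\bar K)$, which is precisely the hypothesis needed to apply Lemma~\ref{corre}. That lemma supplies a bijection between relative D-subgroups of $(H,s_H)$ defined over $K$ and $\Pi$-algebraic subgroups of $(H,s_H)^\#$ defined over $K$. In parallel, Theorem~\ref{normal}~(1) gives the general Galois correspondence between intermediate $\Pi$-fields of $L/K$ and $K$-definable subgroups of $(H,s_H)^\#$.

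To splice these two bijections into a single Galois correspondence, it remains to verify that, inside $(H,s_H)^\#$, the $K$-definable subgroups coincide with the $\Pi$-algebraic subgroups defined over $K$. The inclusion from right to left is immediate. For the reverse inclusion, I would unwind the explicit description of $\mathcal{G}_F$ appearing in the proof of Theorem~\ref{normal}~(1): if $F = K\langle b \rangle_\Pi$ with $b = p(a)$ for some $\Pi$-rational function $p$ over $K$, then
\[
\mathcal{G}_F = \{g \in (H,s_H)^\# : p(f(a,g)) = p(a)\},
\]
which cuts out $\mathcal{G}_F$ by an $L$-$\Pi$-closed condition. Since $tp(a/K)$ is isolated (Lemma~\ref{cont}), this $L$-defined $\Pi$-closed description descends to a $K$-defined one, so $\mathcal{G}_F$ is in fact a $\Pi$-algebraic subgroup of $(H,s_H)^\#$ defined over $K$. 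Composing the two bijections then yields the stated correspondence between intermediate $\Pi$-fields of $L/K$ and relative D-subgroups of $(H,s_H)$ defined over $K$.

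The point requiring the most care is precisely this reconciliation between the abstract and geometric sides: passing from the $K$-definable, $L$-$\Pi$-closed description of $\mathcal{G}_F$ to a genuinely $K$-$\Pi$-closed one. In the $\omega$-stable setting of $DCF_{0,m}$ this can be seen either via the isolation of $tp(a/K)$ applied directly to the defining equation (replacing the parameter $a$ by any conjugate over $K$ yields the same set, and intersecting over all conjugates gives a $K$-type-definable set which is already $\Pi$-closed by Noetherianity), or more abstractly from the descending chain condition on $\Pi$-algebraic subgroups inside $(H,s_H)^\#$. Either route is routine once set up; the rest of the corollary is pure packaging of the results already in hand.
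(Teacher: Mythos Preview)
Your approach is essentially the same as the paper's: compose the Galois correspondence of Theorem~\ref{normal} with the bijection of Lemma~\ref{corre}. The paper states this composition in one sentence and does not explicitly address the reconciliation you flag between ``$K$-definable subgroup of $(H,s_H)^\#$'' and ``$\Pi$-algebraic subgroup of $(H,s_H)^\#$ defined over $K$''; this is taken as standard (any definable subgroup of a $\Pi$-algebraic group is Kolchin-closed, by the usual density argument, and the field of definition agrees with the model-theoretic one by elimination of imaginaries). Your more explicit handling of this point via the formula for $\mathcal G_F$ is correct but more elaborate than necessary.
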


Now, let $(G,s)$ and $\al$ be as in Proposition \ref{char}, and $L$ be the Galois extension associated to $\ld x=\al$ with Galois group $(H,s_H)^\#$. Suppose that there is $\D'\subseteq \D$ such that $G$ is a $\D'$-algebraic group and $s$ is a $\D'$-section (recall that in this case $\ta G=\tau_{\DD/\D'}G$). Let $\Pi'=\DD\cup\D'$. We can consider the Galois extension $L'$ and Galois group $(H',s_{H'})^\#$ associated to $\ld x=\al$ when the latter is viewed as a logarithmic $\Pi'$-equation (note that $\al$ is also an integrable point when $(G,s)$ is viewed as a relative D-group w.r.t. $\DD/\D'$). In other words, $L'$ is a $\Pi'$-field extension of $K$ of the form $K\l a\r_{\Pi'}$ where $\ld a=\al$ and $\I(a/K)_{\Pi'}$ is a maximal $\Pi'$-ideal of $K\{ x\}_{\Pi'}$, and $(H',s_{H'})$ is a relative D-subgroup of $(G,s)$ w.r.t. $\DD/\D'$ such that $(H',s_{H'})^\#$ is (abstractly) isomorphic to the group of $\Pi'$-automorphisms Gal$(L'/K)_{\Pi'}$. We have the following relation between the Galois extensions $L$ and $L'$, and the groups $H$ and $H'$:

\begin{proposition}\label{as}
Let $L$, $L'$, $H$ and $H'$ be as above. If $L=K\l a\r_\Pi$ then $L'=K\l a\r_{\Pi'}$, and $H'$ equals the $\D'$-closure (in the $\D'$-Zariski topology) of $H$ over $K$.
\end{proposition}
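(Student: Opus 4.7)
For the first assertion, the plan is to apply Proposition~\ref{char} in the $\Pi'$-setting. Since $L$ is the Galois extension of $\ell_s x = \alpha$ over $K$ in the $\Pi$-sense, one has $(G,s)^\#(K) = (G,s)^\#(\bar L)$ for any $\Pi$-closure $\bar L$ of $L$. Such a $\bar L$ is also $\Pi'$-closed and contains $K\l a\r_{\Pi'}\subseteq L$, so it contains a $\Pi'$-closure of $K\l a\r_{\Pi'}$. Squeezing between $K$ and $\bar L$ yields equality of the $(G,s)^\#$-points of $K$ and of that $\Pi'$-closure, and Proposition~\ref{char} then gives $L' = K\l a\r_{\Pi'}$.

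For the second assertion, I will use the explicit descriptions from the proof of Proposition~\ref{gal}: $Y := (H,s_H)^\# = \{g \in (G,s)^\# : Zg = Z\}$ and $Y' := (H',s_{H'})^\# = \{g \in (G,s)^\# : Z'g = Z'\}$, where $Z$ and $Z'$ are the $\Pi$- and $\Pi'$-loci of $a$ over $K$, and $H$, $H'$ are the $\D$-closure of $Y$ and the $\D'$-closure of $Y'$ over $K$ respectively. Since every $\Pi'$-closed set is $\Pi$-closed, $Z\subseteq Z'$. Using that right multiplication by $g\in Y$ is a $\Pi'$-regular automorphism of $G$ (as $G$ is $\D'$-algebraic), one can check that $Y \subseteq Y'$. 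Let $W$ denote the $\D'$-closure of $H$ over $K$; since the $\D$-topology refines the $\D'$-topology, $W$ also equals the $\D'$-closure of $Y$ over $K$. Consequently $(W, s|_W)$ is a relative D-subgroup of $(G,s)$ w.r.t.\ $\DD/\D'$ defined over $K$, and $(W, s|_W)^\#$ is a $\Pi'$-algebraic subgroup of $(G,s)^\#$ over $K$ containing $Y$.

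The key step will be to show that every $\Pi'$-algebraic subgroup $M$ of $(G,s)^\#$ over $K$ containing $Y$ must also contain $Y'$. The intersection $M\cap Y'$ is a $\Pi'$-algebraic subgroup of $Y'$ over $K$ containing $Y$; under the $\Pi'$-Galois correspondence for $L'/K$ (Corollary~\ref{corresp}, via Lemma~\ref{corre}) it corresponds to its fixed $\Pi'$-field $F$ in $L'$. Since $Y$ is the full $\Pi$-Galois group of $L/K$, its fixed field in $L$, and hence in $L'$, is $K$; as $M\cap Y'\supseteq Y$, the fixed field of $M\cap Y'$ in $L'$ is also $K$, which by the correspondence forces $M\cap Y'=Y'$, i.e., $M\supseteq Y'$. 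Applying this with $M=(W,s|_W)^\#$ yields $Y'\subseteq W$, and therefore $H'\subseteq W$; combined with the reverse inclusion $W\subseteq H'$ obtained by taking $\D'$-closures of $Y \subseteq Y'$, we obtain $H' = W$, as required. The main subtlety I anticipate is verifying that the action of $Y$ on $L'$ by restriction is compatible with its identification as a subset of $Y'$ under the $\Pi'$-Galois correspondence, which is what makes the fixed-field computation go through.
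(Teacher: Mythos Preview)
Your proposal is correct and follows essentially the same approach as the paper. Both arguments (i) use Proposition~\ref{char} in the $\Pi'$-setting to get $L'=K\l a\r_{\Pi'}$, (ii) establish $(H,s_H)^\#\subseteq(H',s_{H'})^\#$, and (iii) use the $\Pi'$-Galois correspondence together with the fact that the fixed field of $(H,s_H)^\#$ in $L'$ is $K$ to conclude.

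The only differences are cosmetic. For step~(ii) you argue topologically, using that $Z'$ is the $\Pi'$-closure of $Z$ and that right multiplication by $g$ is a $\Pi'$-homeomorphism; the paper instead observes directly that each $\sigma\in\mathrm{Gal}(L/K)_\Pi$ restricts to an element of $\mathrm{Gal}(L'/K)_{\Pi'}$, so $h=a^{-1}\sigma(a)\in(H',s_{H'})^\#$. For step~(iii) you introduce the intermediate $W=\D'$-closure of $H$ and show $H'\subseteq W\subseteq H'$, whereas the paper takes the $\Pi'$-closure of $(H,s_H)^\#$ directly inside $(H',s_{H'})^\#$ and applies the correspondence once. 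The subtlety you flag about compatibility of the actions is exactly right and is handled by the uniqueness in Lemma~\ref{exiso}: $\mu^{-1}(g)|_{L'}$ and $\mu'^{-1}(g)|_{L'}$ are both $\Pi'$-isomorphisms of $L'$ over $K$ sending $a\mapsto ag$, hence agree.
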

\begin{proof}
Since $(G,s)^\#(K)=(G,s)^\#(\bar L)$ for some $\Pi$-closure $\bar L$ of $L$, then $(G,s)^\#(K)=(G,s)^\#(\overline{K\l a\r}_{\Pi'})$ for some $\Pi'$-closure $\overline{K\l a\r}_{\Pi'}$ of $K\l a\r_{\Pi'}$. Now Proposition \ref{char} implies that $K\l a\r_{\Pi'}$ is the Galois extension associated to $\ld x=\al$ when viewed as a logarithmic $\Pi'$-equation, and so $L'=K\l a\r_{\Pi'}$. Now, to show that $H'$ is the $\D'$-closure of $H$ over $K$ it suffices to show that $(H',s_{H'})^\#$ is the $\Pi'$-closure of $(H,s_H)^\#$ over $K$. First we check that $(H,s_H)^\#\subseteq (H',s_{H'})^\#$. Let $h\in (H,s_H)^\#$, then there is $\sigma\in$ Gal$(L/K)_\Pi$ such that $h=a^{-1}\sigma(a)$. Since $\s$ restricts to an element of Gal$(L'/K)_{\Pi'}$, $a^{-1}\sigma(a)\in (H',s_{H'})^\#$, showing the desired containment. Let $Y$ be the $\Pi'$-closure of $(H,s_H)^\#$ over $K$, then adapting the proof of Lemma \ref{corre} one can see that $Y$ is a $\Pi'$-algebraic subgroup of $(H',s_{H'})^\#$. The fixed field of $(H,s_H)^\#$ is $K$, then the fixed field of $Y$ (as a $\Pi'$-subgroup of $(H',s_{H'})^\#$) is also $K$. Hence, by the Galois correspondence, $Y=(H',s_{H'})^\#$.
\end{proof}

In the case when $(G,s)=(GL,s_0)$ and $\D'=\emptyset$, Proposition \ref{as} specializes to Proposition 3.6 of \cite{PM}.

\

We finish this section by showing that, under some natural assumptions on the differential field $K$, every generalized strongly normal extension of $K$ is the Galois extension of a (relative) logarithmic differential equation. This is in analogy with Remark 3.8 of \cite{Pi2}.

\begin{theorem}\label{main}
Let $X$ be a $K$-definable set and $L$ an $X$-strongly normal extension of $K$. Suppose $\D$ is a set of fewer than $m$ linearly independent elements of $\operatorname{span}_{K^\Pi}\Pi$ that bounds the $\Pi$-type of $X$, and such that $K$ is $\D$-closed. If we extend $\D$ to a basis $\DD\cup\D$ of $\operatorname{span}_{K^\Pi}\Pi$, then there is a connected relative D-group $(H,s)$ w.r.t. $\DD/\D$ defined over $K$ and $\al$ an integrable $K$-point of $(H,s)$ such that $L$ is the Galois extension associated to $\ld x=\al$.
\end{theorem}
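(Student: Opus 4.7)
The plan is to use Theorem \ref{co} to identify the Galois group of $L/K$ with the sharp points of a connected relative D-group, then to produce a solution $h_0\in H(L)$ of a logarithmic equation via the $\D$-closedness of $K$, and finally to identify $L$ with the Galois extension of $\ell_s x=\ell_s h_0$ via Proposition \ref{char}.

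First I would let $L=K\la a\ra_\Pi$ and consider the Galois group $\G$ of $L$ over $K$ furnished by Theorem \ref{group}. Proposition \ref{inter} shows that $\operatorname{tp}(a/K)$ is $X$-internal, so Lemma \ref{onint} gives that $\D$ bounds $\Pi$-type$(a/K)$, and Theorem \ref{group}(2) transfers this to $\G$. Since $K$ is $\D$-closed its underlying field is algebraically closed, so $K$ is relatively algebraically closed in $L$, and Corollary \ref{conn} forces $\G$ to be connected. Applying Theorem \ref{co} produces a connected relative D-group $(H,s)$ w.r.t. $\DD/\D$ defined over $K$ and a $K$-definable group isomorphism $\mu\colon\G\to(H,s)^{\sharp}$.

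Next I would transport the $L$-definable regular action of $\G$ on $Z:=\operatorname{tp}(a/K)^{\U}$ through $\mu$; replacing $(H,s)^{\sharp}$ by its opposite group if necessary, I would arrange this to be a right action of $(H,s)^{\sharp}$ on $Z$. The key step is to embed $Z$ equivariantly into a $K$-definable right $H$-torsor $W$ inside $H$; concretely, $W$ should be realised as the $\D$-closure over $K$ of the image of $Z$ under such an embedding, and the right action of $(H,s)^{\sharp}$ extends by $\D$-density (Proposition \ref{lema1}) to a regular right action of $H$ on $W$. Since $W$ is a non-empty $K$-definable $\D$-subvariety of $H$ and $K$ is $\D$-closed, there is a $K$-point $c\in W(K)$, and right translation by $c$ furnishes an isomorphism of right $H$-torsors $\psi\colon H\to W$ with $\psi(e)=c$. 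Put $h_0:=\psi^{-1}(a)\in H(L)$; since $c\in K$, we then have $L=K\la h_0\ra_\Pi$.

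Set $\al:=\ell_s(h_0)\in\tau_{\DD/\D}H_e$. For each $\sigma\in\operatorname{Gal}(L/K)$, the transported right action gives $\sigma(h_0)=h_0\cdot g_\sigma$ for a unique $g_\sigma\in(H,s)^{\sharp}=\ker\ell_s$, and Lemma \ref{propder}(1) yields
\[
\sigma(\ell_s h_0)=\ell_s(h_0 g_\sigma)=(\ell_s h_0)\bigl(h_0*\ell_s g_\sigma\bigr)=\ell_s h_0,
\]
so $\al$ is $\operatorname{Gal}(L/K)$-fixed and therefore lies in $K$. Lemma \ref{intpt} certifies that $\al$ is integrable, since $h_0$ itself solves $\ell_s x=\al$. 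Finally, Theorem \ref{group}(5) together with Lemma \ref{cont} gives $(H,s)^{\sharp}(K)=(H,s)^{\sharp}(\bar K)=(H,s)^{\sharp}(\bar L)$, so Proposition \ref{char} identifies $L$ as the Galois extension associated to $\ell_s x=\al$. The main obstacle I anticipate is the descent step: the right action of $(H,s)^{\sharp}$ on $Z$ inherited from Theorem \ref{group}(1) is naturally $L$-definable, so extending to a $K$-definable right $H$-torsor $W$ in which $Z$ embeds as a Galois-stable right coset of $(H,s)^{\sharp}$ requires care. This is the relative analogue of the torsor-splitting step in Pillay's ordinary-case argument of Remark 3.8 of \cite{Pi2}; in the present setting, once the torsor $W$ has been correctly identified, $\D$-closedness of $K$ cleanly supplies a $K$-point, bypassing any need for a Kolchin-style vanishing of $H^1$.
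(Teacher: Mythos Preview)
Your overall architecture is right and matches the paper's: identify $\G$ with $(H,s)^\#$ via Theorem~\ref{co}, produce an element $h_0\in H$ generating $L$ whose Galois translates differ by elements of $(H,s)^\#$, and then set $\al=\ell_s h_0$. The connectedness argument via Corollary~\ref{conn} is fine, and the final invocation of Proposition~\ref{char} is exactly how the paper finishes.

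The gap is precisely where you flag it, but your proposed resolution does not work as stated. You want to embed $Z$ equivariantly into a $K$-definable right $H$-torsor $W$ ``inside $H$''. But a right $H$-torsor contained in $H$ under right multiplication is all of $H$; so either $W=H$ and the entire content has been pushed into producing the $K$-definable equivariant embedding $Z\hookrightarrow H$ (which you never construct), or $W$ is an abstract $H$-torsor not literally sitting in $H$, in which case you must first build it as a $\D$-object over $K$ from the $\Pi$-object $Z$. Either way, this is not a step one can wave through: producing that embedding, or equivalently that $\D$-torsor, is exactly the cohomological trivialization. Your remark that ``$\D$-closedness of $K$ cleanly supplies a $K$-point, bypassing any need for a Kolchin-style vanishing of $H^1$'' conflates two things: yes, once a $K$-defined $\D$-torsor for $H$ is in hand, $\D$-closedness gives a $K$-point, but that \emph{is} the vanishing of $H^1$; the hard part you have skipped is manufacturing the torsor over $K$ in the $\D$-category from the $L$-definable $\Pi$-action you start with.

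The paper avoids the torsor language entirely and works directly with cocycles. From the isomorphism $\mu:\operatorname{Gal}(L/K)\to (H,s)^\#$ one gets a $K$-definable function $h$ with $\mu(\sigma)=h(b,\sigma b)$, and setting $\nu(\sigma)=h(b,\sigma b)$ defines a \emph{definable cocycle} $\nu:\operatorname{Aut}_\Pi(\bar K/K)\to H(\bar K)$. The paper then invokes (an extension to the $\D$-closed setting of) Proposition~3.2 of \cite{Pi}: since $K$ is $\D$-closed, every definable cocycle into $H$ is a coboundary, so there is $a\in H(\bar K)$ with $\nu(\sigma)=a^{-1}\sigma(a)$ for all $\sigma$. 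One then checks $K\langle a\rangle_\Pi=K\langle b\rangle_\Pi$ directly (using that $tp(a/K,b)$ is isolated in $\bar K$), and shows $\al:=\ell_s a$ is fixed by every $\sigma\in\operatorname{Aut}_\Pi(\U/K)$, hence lies in $K$. Note this last point: you argue $\al$ is fixed by $\operatorname{Gal}(L/K)$, which a priori only gives $\al\in K\langle X\rangle$; the paper checks invariance under the full automorphism group of $\U$ over $K$, which is what forces $\al\in\operatorname{dcl}(K)=K$.
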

\begin{proof}
We just need to check that the argument given in Proposition 3.4~(ii) of \cite{Pi} for the finite-dimensional case extends to this setting. Let $\G$ be the Galois group of $L$ over $K$. Note that, by Theorem \ref{group}~(5), $\G$ is connected and that, by Lemma~\ref{onint}~(2) and Theorem \ref{group}~(2), $\D$ also bounds the $\Pi$-type of $\G$. Thus, Theorem \ref{co} implies that $\G$ is of the form $(H,s)^\#$ for some relative D-group $(H,s)$ w.r.t. $\DD/\D$ defined over $K$.

Now, let $b$ be a tuple such that $L=K\l b\r_{\Pi}$ and let $\bar K$ be a $\Pi$-closure of $K$ that contains $L$. Let $\mu$ be the canonical isomorphism from Gal$(L/K)$ to $(H,s)^\#$. We know there is some $K$-definable function $h$ such that $\mu(\sigma)=h(b,\sigma(b))$ for all $\sigma \in$ Gal$(L/K)$. Consider the map $\nu:Aut_{\Pi}(\bar K/K)\to H(\bar K)$ defined by $\nu(\sigma)=h(b,\sigma(b))$. Let $\sigma_i\in Aut_{\Pi}(\bar K/K)$ for $i=1,2$, and denote by $\sigma_i'$ the unique elements of Gal$(L/K)$ such that $\sigma_i'(b)=\sigma_i(b)$. We have 
\begin{eqnarray*}
\nu(\sigma_1\comp \sigma_2)
&=& h(b,\sigma_1\comp\sigma_2(b))=h(b,\sigma_1'\comp\sigma_2'(b)) \\
&=& \mu(\sigma_1'\comp\sigma_2')=\mu(\sigma_1')\, \mu(\sigma_2') \\
&=& h(b,\sigma_1'(b))\, h(b,\sigma_2'(b))=h(b,\sigma_1(b))\, h(b,\sigma_2(b)) \\
&=& \nu(\sigma_1)\, \nu(\sigma_2)=\nu(\sigma_1)\, \sigma_1(\nu(\sigma_2)) \\
\end{eqnarray*}
where the last equality follows from $(H,s)^\#(\bar K)=(H,s)^\#(K)$. In the terminology of (\cite{Ko2}, Chap. 7) or \cite{Pi3} we say that $\nu$ is a definable cocycle from $Aut_{\Pi}(\bar K/K)$ to $H$. Using the fact that $K$ is $\D$-closed, the argument from Proposition 3.2 of \cite{Pi} extends to show that every definable cocycle is cohomologous to the trivial cocycle. In other words, we get a tuple $a\in H(\bar K)$ such that $\nu(\sigma)=a^{-1}\,\sigma(a)$ for all $\sigma\in Aut_{\Pi}(\bar K/K)$. 

\noindent {\bf Claim.} $K\l a\r_{\Pi}=K\l b \r_{\Pi}$. \\
Towards a contradiction suppose $a\notin K\l b\r_{\Pi}$. Since $a\in \bar K$ and $\bar K$ is also a $\Pi$-closure of $K\l b\r_{\Pi}$ (see \cite{Pi6}, Chap. 8), we get that $tp(a/K,b)$ is isolated. Thus we can find $c\in \bar K$ realising $tp(a/K,b)$ such that $c\neq a$. Then there is $\sigma\in Aut_{\Pi}(\bar K/K\l b\r_{\Pi})$ such that $\sigma(a)=c$ (see \cite{Pi6}, Chap. 8), but this is impossible since $\sigma$ fixes $b$ iff $\nu(\sigma)=e$ (where $e$ is the identity of $H$) iff $a^{-1}\sigma(a)=e$ iff $\sigma$ fixes $a$. The other containment is analogous. This proves the claim.

By Proposition \ref{char}, all that is left to show is that $\al:=l_s(a)$ is a $K$-point. Let $\sigma\in Aut_{\Pi}(\U/K)$, then $a^{-1}\sigma(a)\in (H,s)^\#$. Thus, $\nabla_{\DD}(a^{-1}\sigma(a))=s(a^{-1}\sigma(a))$ and so
\begin{eqnarray*}
\sigma(\al)
&=& \sigma((\nabla_{\DD}a)(s(a))^{-1})\\
&=& (\nabla_{\DD}\sigma(a))(s(\sigma(a)))^{-1}\\
&=& (\nabla_{\DD}a)(s(a))^{-1} \\
&=& \al,
\end{eqnarray*}
as desired.
\end{proof}

\

\section{Two examples}

In this section we give two non-linear examples of Galois groups associated to logarithmic differential equations. Our examples are modeled after Pillay's non-linear example given in \cite{Pi2}.

First we exhibit a finite-dimensional non-linear Galois group in two derivations:

\begin{example}
Let $\Pi=\{\d_t,\d_w\}$. Let $G={\mathbb G}_m\times {\mathbb G}_a$ and $s:G\to \tau_{\Pi/\emptyset}G=(TG)^2$ be the (algebraic) section defined by $s(x,y)=(x,y,xy,0,xy,0)$. Here $\mathbb{G}_m$ and $\mathbb{G}_a$ denote the multiplicative and additive groups, respectively. Then $(G,s)$ is a relative D-group w.r.t. $\Pi/\emptyset$, and the logarithmic derivative $\ld:G\to (TG)^2_{(1,0)}$ is given by $$\ld(x,y)=\left(1,0,\frac{\d_t x}{x}-y,\d_t y,\frac{\d_w x}{x}-y,\d_w y\right).$$ Thus, $$(G,s)^\#=\{(x,y)\in G:\, \d_t y=\d_w y=0 \text{ and } \d_t x=\d_w x=yx\}.$$ We take the ground $\Pi$-field to be $K:=\mathbb{C}(t,e^{ct},e^{cw}:c\in \mathbb{C})$, where we regard $t$ and $w$ as two complex variables, and the $\Pi$-field extension $L:=K(w,e^{2wt+w^2})$. Then $L$ is contained in a $\Pi$-closure $\bar K= \bar{\mathbb{C}}$ of $K$ and $\mathbb C$. 

We now show that $(G,s)^\#(\bar K)=(G,s)^\#(K)$. Let $(a,b)\in (G,s)^\#(\bar K)$, then $b\in \bar{K}^\Pi=\mathbb C$ and $$\d_t\left(\frac{a}{e^{b(t+w)}}\right)=\d_w\left(\frac{a}{e^{b(t+w)}}\right)=0.$$ Thus $a=ce^{b(t+w)}$ for some $c\in\mathbb C$, and so $a\in K$.

Now, as $L$ is generated by $(e^{2wt+w^2},2w)$ and this pair is a solution to
\begin{displaymath}
\left\{
\begin{array}{c}
\d_t x=yx \\
\d_t y=0 \\
\d_w x=(y+2t)x \\
\d_w y=2
\end{array}
\right. ,
\end{displaymath}
$L$ is the Galois extension associated to $\ld (x,y)=(1,0,0,0,2t,2)$. Also, since the transcendence degree of $L$ over $K$ is $2$ and $(G,s)^\#$ is a connected $\Pi$-algebraic group whose Kolchin polynomial is constant equal to $2$, then the Galois group associated to $\ld (x,y)=(1,0,0,0,2t,2)$ is $(G,s)^\#$.
\end{example}

Suppose $\DD\cup\D$ is a partition of $\Pi$. Note that while Proposition \ref{bab} shows that for every connected relative D-group $(G,s)$ the subgroup $(G,s)^\#$ is the Galois group of a generalized strongly normal extension, it is not known if $(G,s)^\#$ is the Galois group of a logarithmic differential equation. The following proposition gives a sufficient condition on $G$ that allows a construction of a Galois extension of a logarithmic differential equation on $(G,s)$ with Galois group $(G,s)^\#$.

\begin{proposition}\label{abo}
Let $(G,s)$ be a relative D-group and suppose $G$ is a connected algebraic group. Then there is $\Pi$-field $K$ and an integrable $K$-point $\al$ of $(G,s)$ such that the Galois group associated to $\ld x=\al$ is  $(G,s)^\#$.
\end{proposition}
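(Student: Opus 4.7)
The plan is to take a $\Pi$-closed field $K_0$ over which $(G,s)$ is defined, a $\Pi$-generic point $a$ of the connected algebraic group $G$ over $K_0$ (so $U(a/K_0)=\omega^{m}\cdot d$ where $d=\dim G$), set $\al:=\ld(a)$ and $K:=K_0\l \al\r_\Pi$, and let $L:=K\l a\r_\Pi = K_0\l a\r_\Pi$. Then $\al$ is an integrable $K$-point by Lemma~\ref{intpt}, since $a$ itself solves $\ld x=\al$.

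Two $U$-rank inputs drive the rest of the argument. First, as $G$ is a connected algebraic variety of dimension $d$, $U(a/K_0)=\omega^{m}\cdot d$. Second, by Proposition~\ref{lema1}~(3), $\Pi\text{-type}((G,s)^{\#}) = \D\text{-type}(G)=|\D|<m$ (using $\DD\neq\emptyset$), so every $g\in(G,s)^{\#}$ satisfies $U(g/K_0)<\omega^{m}$. Repeating verbatim the Lascar-inequality argument from the proof of Proposition~\ref{bab}, the strict inequality $U(g/K_0)<\omega^{m}$ forces $U(a/K_0,g)=\omega^{m}\cdot d=U(a/K_0)$, hence $a\ind_{K_0}g$ for every such $g$. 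Since $K_0$ is $\Pi$-closed, this forces $(G,s)^{\#}(\bar L)=(G,s)^{\#}(K_0)\subseteq K$, so in particular $(G,s)^{\#}(\bar K)=(G,s)^{\#}(K)$, and Proposition~\ref{char} identifies $L$ as the Galois extension associated to $\ld x=\al$.

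By Proposition~\ref{gal} the Galois group is $Y:=(H,s_H)^{\#}$, a $\Pi$-algebraic subgroup of $(G,s)^{\#}$, and the canonical isomorphism $\mu:\mathrm{Gal}(L/K)\to Y$ sends $\sigma\mapsto a^{-1}\sigma(a)$. To prove $Y=(G,s)^{\#}$, I would take $g\in(G,s)^{\#}$ with $g\ind_{K_0}a$. Since $(ag,g)$ is interdefinable with $(a,g)$ over $K_0$, the same Lascar computation yields $U(ag/K_0)=\omega^{m}\cdot d$, so $ag$ is also a $\Pi$-generic point of $G$ over $K_0$ and $tp(ag/K_0)=tp(a/K_0)$. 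Choose $\tau\in\mathrm{Aut}_{\Pi}(\U/K_0)$ with $\tau(a)=ag$; since $\ld$ is a crossed homomorphism (Lemma~\ref{propder}~(1)) and $\ld(g)=e$, one computes
\[
\tau(\al)=\ld(\tau(a))=\ld(ag)=\ld(a)\cdot\bigl(a*\ld(g)\bigr)=\al,
\]
so $\tau$ fixes $K$. Lemma~\ref{exiso} extends $\tau|_L$ to $\hat\tau\in\mathrm{Gal}(L/K)$ with $\mu(\hat\tau)=g$, placing $g$ in $Y$.

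Thus $Y$ contains every realisation over $L$ of the generic type of $(G,s)^{\#}$. These generic realisations are $\Pi$-dense in $(G,s)^{\#}$, since Proposition~\ref{lema1}~(2) combined with the connectedness of $G$ makes $(G,s)^{\#}$ a $K_0$-irreducible $\Pi$-variety, and the absolute-irreducibility fact from Section~\ref{affi} (using that $K_0=K_0^{\mathrm{alg}}$) upgrades this to $L$-irreducibility. As $Y$ is $\Pi$-closed in $(G,s)^{\#}$, this density forces $Y=(G,s)^{\#}$, which is the desired conclusion. The delicate point is the Lascar-rank juggling in the previous paragraph: it simultaneously uses $|\D|<m$ to secure $a\ind_{K_0}g$ and the interdefinability of $(ag,g)$ with $(a,g)$ to secure genericity of $ag$ in $G$, while the crossed-homomorphism computation ensures the witnessing automorphism $\tau$ descends to an element of $\mathrm{Gal}(L/K)$.
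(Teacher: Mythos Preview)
Your proof is correct and follows essentially the same strategy as the paper's, which simply refers back to the argument of Proposition~\ref{bab} with $\ld$ playing the role of the quotient map $\nu$. One minor observation: your final density paragraph is unnecessary. In your second paragraph you already established $a\ind_{K_0}g$ for \emph{every} $g\in(G,s)^{\#}$, so your third paragraph applies to every such $g$ directly, not only the generic ones; hence $Y=(G,s)^{\#}$ outright, without appealing to irreducibility or $\Pi$-density.
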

\begin{proof}
We follow the construction given in the proof of Proposition \ref{bab}. Let $K_0$ be a $\Pi$-closed field over which the $\DD/\D$-group $(G,s)$ is defined and let $a$ be a $\Pi$-generic point of $G$ over $K_0$. Let $\al=\ld a$, $K=K_0\l \al\r_\Pi$ and $L=K\l a\r_\Pi$. Using now the same arguments as in the proof of Proposition~\ref{bab}, we get that $L$ is a $(G,s)^\#$-strongly normal extension of $K$ and $(G,s)^\#$ is the Galois group of $L$ over $K$. Hence, by Proposition \ref{char}, $L$ is the Galois extension of $K$ associated to $\ld x=\al$ and $(G,s)^\#$ is the associated Galois group.
\end{proof}

We finish with an example of an infinite-dimensional Galois group associated to a non-linear logarithmic differential equation.

\begin{example}
Let $\Pi=\{\d_1,\d_2\}$. Let $G={\mathbb{G}}_m \times {\mathbb{G}}_a$ and $s:G\to \tau_{\d_2/\d_1}G=TG$ be the $\d_1$-section given by $$s(x,y)=(x,y,xy,\d_1 y).$$ Then $(G,s)$ is a relative D-group w.r.t. $\d_2/\d_1$. The logarithmic derivative  $\ld:G\to TG_e$ is given by $$\ld (x,y)=(1,0,\frac{\d_2 x}{x}-y, \d_2 y-\d_1 y).$$ Thus the sharp points are given by $$(G,s)^\#=\{(x,y)\in G: \, \d_2 x=xy \text{ and } \d_2y=\d_1 y\}.$$ Note that $\Pi$-type$(G,s)^\#=1$ and $\Pi$-dim$(G,s)^\#=2$. By Proposition \ref{abo}, there is $(\al_1,\al_2)$ such that $(G,s)^\#$ is the Galois group associated to the non-linear logarithmic differential equation 
\begin{displaymath}
\left\{
\begin{array}{c}
\d_2 x=x(y+\al_1) \\
\d_2 y=\d_1 y+\al_2
\end{array}
\right. .
\end{displaymath}
\end{example}

\chapter{Geometric axioms for $DCF_{0,m}$}\label{chapaxioms}\let\thefootnote\relax\footnotetext{The results in this chapter form the basis for a paper entitled ``Geometric axioms for differentially closed fields with several commuting derivations'' and its corrigendum that appear in the \emph{Journal of Algebra}, Vol. 362 and Vol. 382, respectively}

In this chapter we establish three characterizations of the existentially closed differential fields with $\ell+1$ commuting derivations in terms of the geometry of $DCF_{0,\ell}$. We treat the $(\ell+1)^{th}$-derivation as a differential derivation with respect to the first $\ell$, and use relative prolongations to produce solutions to certain systems of differential equations. We observe that one of these characterizations is first order.

\section{A review of the Pierce-Pillay axioms}\label{piax}

In this section we briefly review the case of a single derivation and describe the geometric axiomatization given by Pierce and Pillay. Then we present an example, constructed by Hrushovski, showing that the naive extension of their axioms to the partial case is false.

Recall that in ordinary differential fields the prolongation functor is simply written as $\tau$ as opposed to $\tau_{\d/\emptyset}$, and it goes from the category of algebraic varieties to itself.

\begin{fact}[Pierce-Pillay axioms \cite{PiPi}]
$(K,\d)\models DCF_0$ if and only if
\begin{enumerate}
\item $K\models ACF_0$
\item Let $V$ and $W\subseteq \tau V$ be irreducible affine algebraic varieties defined over $K$ such that $W$ projects dominantly onto $V$. Let $O_V$ and $O_W$ be nonempty Zariski-open subsets of $V$ and $W$, respectively, defined over $K$. Then there is a $K$-point $a\in O_V$ such that $(a,\d a)\in O_W$.
\end{enumerate}
\end{fact}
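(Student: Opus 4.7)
The forward direction is the easier one. Suppose $(K,\d)\models DCF_0$. That $K$ is algebraically closed is a standard consequence of differential closure (analogous to Fact~\ref{onty}(ii) specialised to $m=1$). For condition~(ii), let $V$, $W$, $O_V$, $O_W$ be as in the statement. Pick a generic point $(a,b)$ of $W$ over $K$ in some large extension; by dominance of $W\to V$, $a$ is generic in $V$ over $K$. By Fact~\ref{propro}, the point $(a,b)\in\tau V_a$ corresponds to a derivation $\d'$ on $K(a)$ extending $\d$ with $\d' a=b$. Extending $\d'$ arbitrarily to a derivation on a differential field containing $K(a)$ and then embedding into a model of $DCF_0$ extending $(K,\d)$, we obtain a differential field extension of $(K,\d)$ in which the quantifier-free formula ``$x\in O_V\wedge(x,\d x)\in O_W$'' has a solution. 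Model completeness of $DCF_0$ (equivalently, existential closure of $K$) lets us pull the solution back into $K$.

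The reverse direction is the substantive one. Assuming $K\models ACF_0$ together with~(ii), the plan is to show that $K$ is existentially closed as a differential field: any finite system of $\d$-polynomial equations and inequations over $K$ with a solution in some extension is to be realised in $K$. By the differential basis theorem and standard reductions, one may replace the system by a single prime $\d$-ideal $P\subseteq K\{x\}$ (with $x$ a single variable for clarity) together with a nonzero $q\in K\{x\}\setminus P$. Let $a$ be a generic point of $P$ in some differential field extension of $K$, and let $n$ be an integer large enough that $P$ is determined by polynomial relations in $x,\d x,\dots,\d^{n}x$. Define
\[
V=\text{Zariski locus of }(a,\d a,\dots,\d^{n-1}a)\text{ over }K,\qquad W=\text{Zariski locus of }(a,\d a,\dots,\d^{n}a)\text{ over }K.
\]
Since $\nabla(a,\d a,\dots,\d^{n-1}a)=(a,\d a,\dots,\d^{n-1}a,\d a,\d^{2}a,\dots,\d^{n}a)\in\tau V$, the variety $W$ embeds naturally into $\tau V$, and the projection $W\to V$ is dominant because it maps a generic point to a generic point by construction.

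Next I would manufacture the $\D$-open sets: take $O_V$ to be the complement inside $V$ of the locus where the initial/separant of a characteristic polynomial of $P$ vanishes, and $O_W$ to be the preimage in $W$ of $O_V$ intersected with $\{q(x)\neq 0\}$ (expressed as an algebraic condition in $x,\d x,\dots,\d^{n}x$). Both are nonempty because $(a,\d a,\dots,\d^{n}a)$ lies in $O_W$ by genericity. Applying the geometric axiom~(ii) produces $c=(c_0,\dots,c_{n-1})\in V(K)$ with $(c,\d c)\in W$. The defining equations of $W$ force $\d c_i=c_{i+1}$ for $i<n-1$, so $c_i=\d^{i}c_0$, and the final coordinate of $(c,\d c)$ is $\d^{n}c_0$. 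Hence $(c_0,\d c_0,\dots,\d^{n}c_0)\in W$ and $q(c_0)\neq 0$, giving the desired solution in $K$.

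The main obstacle I expect is the bookkeeping around~(ii): certifying that the algebraic variety $W$ really does sit inside $\tau V$, that the projection is dominant (this is where primality/irreducibility is used), and that the inequations from $q$ and from the initials/separants can all be folded into a single $\D$-open $O_W$ defined over $K$. Once this translation is done cleanly the axiom applies directly; the rest is the routine identification $\d^{i}c_0=c_i$ enforced by the shape of $W$.
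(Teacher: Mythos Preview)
The paper does not prove this Fact directly (it is cited from \cite{PiPi}), but it does prove the generalisation, Theorem~\ref{charactheo}, whose proof specialises to the $\ell=0$ case and can be taken as the paper's argument. Compared against that, your proposal is correct and the forward direction is essentially identical: pick a generic point of $W$, use Fact~\ref{propro} (or its analogue Fact~\ref{exx}) to extend the derivation, then invoke existential closure.

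For the converse you take a somewhat different route. The paper reduces to a conjunction of \emph{atomic} formulas only (no inequations), writes $\phi(x)=\psi(x,\d x,\dots,\d^{r}x)$, sets $c=(a,\d a,\dots,\d^{r-1}a)$, lets $V$ and $W$ be the loci of $c$ and $(c,\d c)$, and applies the axiom with $O_V=V$ and $O_W=W$. Inequations never enter; they are implicitly absorbed by the standard Rabinowitsch trick (replace $g\neq 0$ by $gy-1=0$ in a fresh variable), so the open sets in the axiom are not actually needed---this is exactly the content of Remark~\ref{without}. You instead keep the inequation $q\neq 0$ explicit and build nontrivial $O_V$, $O_W$ from the initial/separant locus and from $q$. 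This is valid and arguably more transparent about where the open sets in the axiom statement get used, but it is more bookkeeping than the paper carries. Your reduction to a single prime $\d$-ideal $P$ with $q\notin P$ is fine (pass to an irreducible component of the zero set containing the given solution), though it deserves a sentence of justification.

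One small point: when you say ``$W$ embeds naturally into $\tau V$'', you should make explicit that the embedding is $(a_0,\dots,a_n)\mapsto(a_0,\dots,a_{n-1},a_1,\dots,a_n)$; the paper encodes this via the auxiliary formula $\chi$ with the clauses $x_i=y_{i-1}$, which is what forces $\d c_i=c_{i+1}$ when you pull back. You have this right, but it is the step most likely to confuse a reader.
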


This geometric characterization of $DCF_0$ is indeed expressible in a first order way. First, one needs to check that irreducibility of a Zariski-closed set is a definable condition on the parameters. This means that if $f_1(u,x),\dots, f_s(u,x)$ is a set of polynomials over $\QQ$ in the variables $u=(u_1,\dots,u_r)$ and $x=(x_1,\dots,x_n)$, then there exists a formula $\phi(u)$ (in the language of rings) such that for any algebraically closed field $K$ of characteristic zero and $a\in K^r$ we have that $K\models \phi(a)$ if and only if the Zariski-closed set $$\{b\in K^n: f_1(a,b)=\cdots=f_s(a,b)=0\}$$ is irreducible. The latter is equivalent to asking for the ideal $(f_1(a,x), \dots,f_s(a,x))$ to be a prime ideal of $K[x]$. Hence, the existence of such a $\phi$ is a consequence of the well known bounds to check primality of ideals in polynomial rings in finitely many variables (see for example \cite{Van}). 

Second, one needs to check that dominant projections of Zariski-closed sets onto irreducible Zariski-closed sets is a definable condition on the parameters. More precisely, if $f_{u,v}:W_v\to V_u$ is an algebraic family of polynomial maps between Zariski-closed sets over $\QQ$, then there is a formula (in the language of rings) such that for any algebraically closed field of characteristic zero, $K$, and tuple $(a,b)$ from $K$ we have that $K\models \phi(a,b)$ if and only if $V_a$ is irreducible and $f_{a,b}(W_b)$ is a Zariski-dense in $V_a$. Once we know that $V_a$ is irreducible, the latter condition is equivalent to $f_{a,b}(W_b)$ having the same algebraic-geometric dimension as $V_a$. Hence, the existence of such a $\phi$ is a consequence of the fact that dimension is definable in $ACF$.

Third, one needs to know that the family of prolongations of a definable family of irreducible algebraic varietes is again a definable family. This means that if $V_u$ is an algebraic family of Zariski-closed sets over $\QQ$, then there is a $\d$-algebraic family $W_u$ of Zariski closed sets over $\QQ$ such that for any algebraically closed differential field $K$ (of characteristic zero) and tuple $a$ from $K$ we have that if $V_a$ is irreducible then $W_a=\tau(V_a)$. To see this, suppose $V_u$ is given by $f_1(u,x)=0,\dots,f_s(u,x)=0$, and let $W_u$ be given by $$f_i(u,x)=0 \text{ and } \sum_{j=1}^n\frac{\partial f_i}{\partial x_j}(u,x)y_j+f_i(\d u,x)=0, \quad i=1,\dots,s.$$ For arbitrary $a$, $W_a\neq \tau(V_a)$; however, if we knew that $(f_1(a,x),\dots,f(a,x))=\I(V/K)$ then we would get equality. Hence, for those $a$ such that $(f_1(a,x),\dots,f_s(a,x))$ is prime we get $W_a=\tau(V_a)$. But we know primality is a definable condition on the parameters.

Now, if one tries to extend these axioms to the partial case $\D=\{\d_1,\dots,\d_m\}$ by simply replacing in condition (2) the prolongation functor $\tau$ for $\tau_\D:=\tau_{\D/\emptyset}$ and $(a,\d a)$ for $(a,\d_1a,\dots,\d_m a)$, then the corresponding statement will not be true for the existentially closed models as the following example shows:

\begin{example}
We will use a construction similar to the one of Example~\ref{lastex}. Let $\D=\{\d_1,\d_2\}$. Let $V=\mathbb{A}^1$ and $W\subseteq \tau_\D V=\mathbb{A}^3$ be defined by 
$$W=\{(x,y,z):\, y=x \text{ and } z=x+1\}.$$
Note that $W$ is irreducible and projects onto $V$. We claim that there is no $\D$-field $K$ such that there exists a $K$-point $a\in V$ with $(a,\d_1 a,\d_2 a)\in W$. Towards a contradiction suppose there is such a $\D$-field $K$ and $K$-point $a$. Then, from the equations defining $W$, we get that $\d_1 a=a$ and $\d_2 a=a+1$. This yields $\d_2\d_1 a=a+1$ and $\d_1\d_2 a=a$. Since $\d_2\d_1 a=\d_1\d_2 a$, we obtain $1=0$, and so we have a contradiction.
\end{example}

In the following section we take a different approach to the problem of geometric axioms for partial differentially closed fields. We establish a characterization (and ultimately an axiomatization) of $DCF_{0,\ell+1}$ which is geometric relative to the theory $DCF_{0,\ell}$. The characterization we present is very much in the spirit of the Pierce-Pillay axioms, and can be used essentially in the same way. It is worth pointing out that first order axiomatizability is a complication that arises in our setting (as explained above this issue does not appear in the ordinary case). Characteristic sets of prime differential ideals are behind our solution to this problem.

It is worth mentioning that Pierce-Pillay type axiomatizations appear in various other contexts: difference fields \cite{CH}, ordinary differential-difference fields \cite{Bu}, fields with commuting Hasse-Schmidt derivations in positive characteristic \cite{Kow}, fields with free operators \cite{MS}, and theories having a ``geometric notion of genericity'' \cite{Hi}. However, none of the techniques used in these works seem to extend to our context.

\section{Geometric characterizations and an axiomatization}\label{thegeo}

In this section we work in the language $\mathcal{L}_{\ell+1}$ with derivations $\{\d_1,\dots,\d_{\ell+1}\}$. Fix a sufficiently saturated $(\U,\Pi)\models DCF_{0,\ell+1}$ and a small $\Pi$-subfield $K$ of $\U$. 

In Chapter \ref{chapro} we have already presented/developed all the notions and machinery needed to state and prove the following characterization.

\begin{theorem}\label{charactheo}
Let $\D=\{\d_1,\dots,\d_\ell\}$ and $D=\d_{\ell+1}$. Then, $(K,\Pi)\models DCF_{0,\ell+1}$ if and only if 
\begin{enumerate}
\item $(K,\D)\models DCF_{0,\ell}$
\item Suppose $V$ and  $W$ are irreducible affine $\D$-algebraic varieties defined over $K$ such that $W\subseteq \tau_{D/\D} V$ and $W$ projects $\D$-dominantly onto $V$. If $O_V$ and $O_W$ are nonempty $\D$-open subsets of $V$ and $W$ respectively, defined over $K$, then there exists a $K$-point $a\in O_V$ such that $\nabla a\in O_W$. 
\end{enumerate}
\end{theorem}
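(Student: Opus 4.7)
For the forward direction, assume $(K,\Pi)\models DCF_{0,\ell+1}$. To deduce (1), take any consistent system of $\D$-polynomial equations over $K$ with a solution in a $\D$-extension, embed that extension into a $\D$-closed field $L'$, and use Fact~\ref{exx2} to extend $D:K\to K$ to a $\D$-derivation on $L'$; this turns $L'$ into a $\Pi$-extension of $K$, and $\Pi$-closedness descends the solution to $K$. For (2), pick a $\D$-generic point $(b_1,b_2)$ of $W$ over $K$ in some $\D$-extension. Since $W$ projects $\D$-dominantly onto $V$, the point $b_1$ is $\D$-generic in $V$ over $K$, so $b_1\in O_V$ and $(b_1,b_2)\in O_W$. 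The containment $(b_1,b_2)\in\tau_{D/\D}V$ means, via Fact~\ref{exx}, that $D:K\to K$ extends to a $\D$-derivation $D'$ on $K\la b_1\ra_\D$ with $D'b_1=b_2$. The resulting $\Pi$-extension realises the existential $\Pi$-formula ``$x\in O_V$ and $(x,\d_{\ell+1}x)\in O_W$'', which by $\Pi$-closedness of $K$ already has a solution in $K$.

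For the backward direction, assuming (1) and (2), the plan is to invoke Tressl's characterisation of $DCF_{0,\ell+1}$: it suffices to show that for every characteristic set $\L=\{f_1,\dots,f_s\}$ of a prime $\Pi$-ideal $\P$ of $K\{x\}_\Pi$ there is $b_0\in K^n$ with $\L(b_0)=0$ and $H_\L(b_0)\neq 0$. Write $D=\d_{\ell+1}$ and let $h$ be the maximal $D$-order appearing in $\L\cup\{H_\L\}$. Take a $\Pi$-generic point $\alpha$ of $\P$ in a $\Pi$-field extension of $K$ (obtained by embedding the fraction field of $K\{x\}_\Pi/\P$ into a $\Pi$-closed field) and form $c=(\alpha,D\alpha,\dots,D^h\alpha)$. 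Let $V$ and $W$ be the $\D$-loci over $K$ of $c$ and of $(c,Dc)$ respectively; both are $K$-irreducible affine $\D$-varieties. For every $f\in\I(V/K)_\D$ we have $Df(c)=\et f(c,Dc)=0$, so $W\subseteq\tau_{D/\D}V$, and the projection $\pi:W\to V$ carries the $\D$-generic point of $W$ to the $\D$-generic point of $V$, hence is $\D$-dominant.

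Rewrite each $f_j$ as $\bar f_j\in K\{y_0,\dots,y_h\}_\D$ where $y_i$ plays the role of $D^ix$, and similarly form $\bar H_\L$. Then $\bar f_j(c)=f_j(\alpha)=0$, so $\bar f_j\in\I(V/K)_\D$, whereas $\bar H_\L(c)=H_\L(\alpha)\neq 0$ since $H_\L\notin\P$ (Remark~\ref{contin} applied to the $\Pi$-analogue of Proposition~\ref{basic}). Set $O_V=V\setminus\{\bar H_\L=0\}$ and $O_W=\pi^{-1}(O_V)$, both nonempty $\D$-open subsets defined over $K$. Hypothesis (2) now furnishes $b=(b_0,\dots,b_h)\in O_V(K)$ with $(b,Db)\in O_W\subseteq W$. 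The $\D$-polynomials $u_i-y_{i+1}$ for $i<h$ vanish at the generic point $(c,Dc)$ of $W$, hence lie in $\I(W/K)_\D$; evaluating at $(b,Db)$ yields $Db_i=b_{i+1}$ in $K$, so inductively $b_i=D^ib_0$. Therefore $f_j(b_0)=\bar f_j(b)=0$ and $H_\L(b_0)=\bar H_\L(b)\neq0$, giving the desired $K$-point.

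The main obstacle will be the backward direction, specifically the passage from the $\Pi$-data (characteristic sets, prime $\Pi$-ideals) to the purely $\D$-geometric hypothesis (2). The key technical idea is to truncate the $D$-jet of the generic point $\alpha$ at the appropriate order $h$ dictated by $\L$, and then to exploit the fact that the jet relations $u_i=y_{i+1}$ are automatically built into $\I(W/K)_\D$, so that any $K$-point of $V$ lifting to $W$ under $\nabla$ genuinely arises as the honest jet $(b_0,Db_0,\dots,D^hb_0)$ computed in $K$.
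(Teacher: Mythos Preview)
Your proof is correct. The forward direction matches the paper's argument essentially verbatim; you additionally supply an explicit verification of condition~(1), which the paper leaves implicit.

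For the backward direction, your approach differs from the paper's in packaging but not in substance. The paper verifies existential closedness directly: given an arbitrary conjunction $\phi(x)$ of atomic $\Pi$-formulas realised by some $a$ in the universal domain, it writes $\phi(x)=\psi(x,Dx,\dots,D^rx)$ with $\psi$ a $\D$-formula, forms $c=(a,Da,\dots,D^{r-1}a)$, takes $V$ and $W$ to be the $\D$-loci of $c$ and $(c,Dc)$, and applies~(2) with $O_V=V$, $O_W=W$. You instead invoke Tressl's axiomatisation via characteristic sets of prime $\Pi$-ideals; after that the construction is identical --- truncate the $D$-jet of a generic point at the relevant order, form the $\D$-loci, and use the shift relations $u_i=y_{i+1}$ built into $\I(W/K)_\D$ to recover the honest jet structure on the resulting $K$-point. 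The paper's route is more self-contained (it does not rely on Tressl's result as an input), while yours has the virtue of anticipating the characteristic-set viewpoint that the paper develops immediately afterwards in Theorem~\ref{characprime}.
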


\begin{proof}
Suppose $(K,\Pi)\models DCF_{0,\ell+1}$, and $V$, $W$, $O_V$ and $O_W$ are as in condition (2). Let $(a,b)$ be a $\D$-generic point of $W$ over $K$; that is, $\I(a,b/K)_\D=\I(W/K)_\D$. Then $(a,b)\in O_W$. Since $(a,b)\in \tau_{D/\D} V$ we have that $\et f_{a}(b)=0$ for all $f\in \I(V/K)_\D$. The fact that $W$ projects $\D$-dominantly onto $V$ implies that $a$ is a $\D$-generic point of $V$ over $K$, so $a\in O_V$ and $\I(a/K)_\D=\I(V/K)_\D$. Hence, $\et f_{a}(b)=0$ for all $f\in \I(a/K)_\D$. By Fact \ref{exx}, there is a unique $\D$-derivation $D': K\{a\}\to \U$ extending $D$ such that $D' a=b$. By Fact \ref{exx2}, we can extend $D'$ to all of $\U$, call it $D''$. Hence, $(\U,\D\cup\{D''\})$ is a differential field extending $(K,\Pi)$. Since $a\in O_V$, $(a,b)\in  O_W$ and $D'' a=b$, we get a point $(a',b')$ in $K$ such that $a'\in O_V$, $(a',b')\in O_W$ and $Da'=b'$.

The converse is essentially as in \cite{PiPi}. For the sake of completeness we give the details. Let $\phi(x)$ be a conjunction of atomic $\mathcal{L}_{\ell+1}$-formulas over $K$ with a realisation $a$ in $\U$. Let
\begin{displaymath}
\phi(x)=\psi(x,\d_{\ell+1}x,\dots,\d_{\ell+1}^r x)
\end{displaymath}
where $\psi$ is a conjunction of atomic $\mathcal{L}_\ell$-formulas over $K$ and $r>0$. Let $c=(a,D a,\dots,D^{r-1}a)$ and $V$ be the $\D$-locus of $c$ over $K$. Let $W$ be the $\D$-locus of $\nabla c$ over $K$. Let
\begin{displaymath}
\chi(x_0,\dots,x_{r-1},y_0,\dots,y_{r-1}) :=\psi(x_0,\dots,x_{r-1},y_{r-1}) \land \left(\land_{i=1}^{r-1}x_i = y_{i-1}\right)
\end{displaymath}
then $\chi$ is realised by $(c,Dc)=\nabla c$. Since $(K,\D)\models DCF_{0,\ell}$, and $c$ and $\nabla c$ are $\D$-generic points of $V$ and $W$ respectively, over $K$, we have that $W$ projects $\D$-dominantly onto $V$. Also, since $\nabla c\in\tau_{D/\D} X$, we have $W\subseteq\tau_{D/\D} V$. Applying condition (2) with $V=O_V$ and $W=O_W$, there is a $K$-point $d$ in $V$ such that $\nabla d\in W$. Let $d=(d_0,\dots,d_{r-1})$ then $(d_0,\dots, d_{r-1},D d_0,\dots,D d_{r-1})$ realises $\chi$. Thus, $(d_0, D d_0,\dots,D^r d_0)$ realises $\psi$. Hence, $ d_0$ is a tuple of $K$ realising $\phi$. This proves that $(K,\Pi)\models DCF_{0,\ell+1}$.
\end{proof}

\begin{remark}\label{without}
As can be seen from the proof, it would have been equivalent in condition (2) to take $O_V=V$ and $O_W=W$. Also note that, under the convention that $DCF_{0,0}$ is the theory of algebraically closed fields of characteristic zero $ACF_0$, when $\ell=0$ this is exactly the Pierce-Pillay axioms.
\end{remark}

It remains open as to whether condition (2) of Theorem~\ref{charactheo} is expressible in a first order way for $m>0$. One of the problems lies in determining the definability of differential irreducibility and differential dominance in definable families of differential algebraic varieties (this problem seems to be related to the generalized Ritt problem, see \cite{Ov} and \cite{HKM}). Another problem is to determine whether the family of prolongations of a definable family of differential algebraic varieties is again a definable family.

We get around these problems by injecting some differential algebra into the above characterisation, modifying it to make it first order. That is, we use characteristic sets of prime differential ideals (see Section~\ref{ondiff} for the definition of characteristic sets). We will use the following notation.

\begin{definition}\label{sepin}
Given a characeristic set $\L$ of a prime $\D$-ideal,  by $\V^*(\L)$ we mean $\V(\L)\setminus \V(H_\L)$ where $H_\L$ is the product of the initials and separants of the elements of $\L$ (see Section~\ref{ondiff}). 
\end{definition}

Recall that the notation $\V^*(\L)$ has already been used in Proposition~\ref{basic}.

\begin{theorem}\label{characprime}
Let $\D=\{\d_1,\dots,\d_\ell\}$ and $D=\d_{\ell+1}$. Then, $(K,\Pi)\models DCF_{0,\ell+1}$ if and only if
\begin{enumerate}
\item [(i)] $(K,\D)\models DCF_{0,\ell}$
\item [(ii)] Suppose $\L$ and $\Ga$ are characteristic sets of prime $\D$-ideals of $K\{x\}_{\D}$ and $K\{x,y\}_{\D}$ respectively, such that $$\V^*(\Ga)\subseteq \V(f,\, \et f:\, f\in \L).$$ Suppose $O$ is a nonempty $\D$-open subset of $\V^*(\L)$ defined over $K$ such that the projection of $\V^*(\Ga)$ to $\V(\L)$ contains $O$. Then there is a $K$-point $a \in \V^*(\L)$ such that $\nabla_{D}a \in \V^*(\Ga)$.
\end{enumerate}
\end{theorem}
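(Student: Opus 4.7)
The plan is to deduce Theorem~\ref{characprime} from Theorem~\ref{charactheo} by translating condition~(2) of the latter into the language of characteristic sets. Proposition~\ref{basic} provides the bridge: if $\L$ is a characteristic set of a prime $\D$-ideal $\P\subseteq K\{x\}_\D$ and $V$ is the corresponding $K$-irreducible $\D$-variety (so $\I(V/K)_\D=\P=[\L]:H_\L^\infty$), then $\V^*(\L)=V\setminus \V(H_\L)$ is a nonempty $\D$-open subset of $V$ and $V$ is the unique $K$-irreducible component of $\V(\L)$ meeting $\V^*(\L)$. Under this bridge the correspondence sends $V\leftrightarrow\L$, $W\leftrightarrow \Ga$, with the $\D$-open sets $O_V, O_W$ of Theorem~\ref{charactheo}(2) playing the role of $\V^*(\L)$ (or a subset thereof) and $\V^*(\Ga)$.

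For the direction $(\Rightarrow)$, assume $(K,\Pi)\models DCF_{0,\ell+1}$. Part~(i) is immediate. For~(ii), let $V$ and $W$ be the $K$-irreducible $\D$-varieties corresponding to $\L$ and $\Ga$; the key point is to show $W\subseteq \tau_{D/\D}V$. The hypothesis $\V^*(\Ga)\subseteq\V(f,\et f:f\in\L)$, together with the $\D$-density of $\V^*(\Ga)$ in $W$ (Proposition~\ref{basic}(ii)) and the $\D$-closedness of $\V(f,\et f:f\in\L)$, gives $W\subseteq \V(f,\et f:f\in\L)$. For any $(a,b)\in W$ with $a\in\V^*(\L)$ (i.e.\ $H_\L(a)\neq 0$, forcing $a\in V$), a short computation---using that $\et$ is a $\D$-derivation (Definition~\ref{et} and the surrounding discussion), the relation $H_\L^{n}g\in [\L]$ for $g\in\I(V/K)_\D$, and the fact that all $\D$-derivatives of $\et f$ vanish on $W$ for each $f\in\L$---yields $\et g(a,b)=0$, hence $(a,b)\in\tau_{D/\D}V$. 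The projection hypothesis of (ii) makes such points $\D$-dense in $W$, so $W\subseteq \tau_{D/\D}V$. Similarly $W\to V$ is $\D$-dominant. Theorem~\ref{charactheo}(2), applied with $O_V:=O$ and $O_W:=\V^*(\Ga)$, then produces the required $K$-point.

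For the direction $(\Leftarrow)$, assume (i) and (ii); we verify Theorem~\ref{charactheo}(2). Given $V, W, O_V, O_W$, first reduce to the case $O_V=V\setminus\V(g)$ and $O_W=W\setminus\V(h)$ for single $\D$-polynomials $g\notin \I(V/K)_\D$ and $h\notin\I(W/K)_\D$, and then localize: replace $V$ by $V':=\{(a,t)\in V\times \U:\, g(a)t=1\}$ and $W$ by $W':=\{(a,t,b,s)\in W\times \U^2:\, g(a)t=1,\ h(a,b)s=1\}$. These are $K$-irreducible $\D$-varieties $\D$-isomorphic, via projection, to $O_V$ and $O_W$, and the hypotheses $W\subseteq \tau_{D/\D}V$ and $W\to V$ $\D$-dominant transfer to $W'\subseteq\tau_{D/\D}V'$ and $W'\to V'$ $\D$-dominant. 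Take characteristic sets $\L, \Ga$ of $\I(V'/K)_\D$ and $\I(W'/K)_\D$ via Rosenfeld's criterion (Fact~\ref{rosen}), and choose $O\subseteq\V^*(\L)$ to be a nonempty $\D$-open subset contained in the projection of $\V^*(\Ga)$ (this exists by $\D$-dominance). Applying (ii) then yields a $K$-point in $V'$ whose projection gives the required point in $O_V$ with $\nabla_D$-image in $O_W$.

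\emph{Main obstacle.} The technical subtlety is that a characteristic set $\L$ does not $\D$-generate its prime ideal; one only recovers $\I(V/K)_\D$ after saturation by $H_\L^\infty$. As a consequence, $\V^*(\L)$ is essentially constrained by $\L$ and cannot be freely shrunk to fit inside an arbitrarily prescribed $O_V$, and $\V(f,\et f:f\in\L)$ is a priori strictly larger than $\tau_{D/\D}V$. In the $(\Leftarrow)$ direction these issues are bypassed by the localization maneuver; in the $(\Rightarrow)$ direction one must exploit the relation $\I(V/K)_\D=[\L]:H_\L^\infty$ together with the $\D$-derivation property of $\et$ to close the gap between $\V(f,\et f:f\in\L)$ and $\tau_{D/\D}V$ on the $\D$-dense locus $\{H_\L\neq 0\}$.
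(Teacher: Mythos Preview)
Your $(\Rightarrow)$ direction is correct and matches the paper's argument: recover $V$ and $W$ from $\L$ and $\Ga$, use the $\et$-derivation computation with $H_\L^n g\in[\L]$ to identify $\tau_{D/\D}V$ with $\V(f,\et f:f\in\L)$ over the locus $H_\L\neq 0$, deduce $W\subseteq\tau_{D/\D}V$ and $\D$-dominance, and apply Theorem~\ref{charactheo}(2).

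Your $(\Leftarrow)$ direction, however, contains a genuine error and is also unnecessarily complicated. The claimed inclusion $W'\subseteq\tau_{D/\D}V'$ fails. With $V'=\{(a,t):a\in V,\ g(a)t=1\}$, the prolongation $\tau_{D/\D}V'$ has coordinates $(a,t,b,u)$ subject to the relation coming from $\et(g(x)t-1)=0$, namely $t\cdot\et g(a,b)+g(a)\cdot u=0$, so the last coordinate is forced to be $u=-\et g(a,b)/g(a)^2$. In your $W'$ you instead set that coordinate to $s=1/h(a,b)$; these are unrelated constraints, so $W'\not\subseteq\tau_{D/\D}V'$ in general. The prolongation is rigid in its last block of coordinates, and there is no room to encode an unrelated localization parameter $1/h$ there without adding further coordinates to $V'$---but doing so destroys $\D$-dominance of $W'\to V'$, since the extra coordinate on the $V'$ side would be free while on the $W'$ side it is constrained.

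The paper avoids all of this with a one-line observation you overlooked: by Remark~\ref{without}, to verify condition~(2) of Theorem~\ref{charactheo} it suffices to treat the case $O_V=V$ and $O_W=W$ (the converse direction of Theorem~\ref{charactheo} only uses that case). Once you assume this, simply take $\L,\Ga$ to be characteristic sets of $\I(V/K)_\D$ and $\I(W/K)_\D$; then $\V^*(\Ga)\subseteq W\subseteq\tau_{D/\D}V\subseteq\V(f,\et f:f\in\L)$ is immediate, $\D$-dominance of $W\to V$ together with quantifier elimination in $DCF_{0,\ell}$ produces the open set $O$, and condition~(ii) yields $a\in\V^*(\L)\subseteq V$ with $\nabla_D a\in\V^*(\Ga)\subseteq W$. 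No localization is needed.
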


\begin{proof}
Suppose $(K,\Pi)\models DCF_{0,\ell+1}$, and we are given  $\L$, $\Ga$ and $O$ satisfying the hypotheses of~(ii). Then $\L$ is a characteristic set of the prime differential ideal $\P=[\L]:H_\L^\infty$ and, by Proposition~\ref{basic}, $\V^*(\L)=\V(\P)\setminus \V(H_\L)$. So $O_V:=\V^*(\L)$ is a nonempty $\D$-open subset of the irreducible affine $\D$-algebraic variety $V:=\V(\P)$. Similarly, $O_W:=\V^*(\Ga)$ is a nonempty $\D$-open subset of the irreducible affine $\D$-algebraic variety $W:=\V\left([\Ga]:H_\Ga^\infty\right)$.

Next we show that $\tau_{D/\D} V|_O=\V(f,\et f: f\in\L)|_O$. Recall that, by definition, $\tau_{D/\D} V$ is $\V(f,\et f:f\in \I(V/K)_\D)$. It is easy to see that $\V(f,\et f: f\in\L)=\V(f,\et f:f\in[\L])$. So, supposing that $(a,b)$ is a root of $f$ and $\et f$ for all $f\in[\L]$, and $a\in O$, we need to show that $(a,b)$ is a root of $\et g$ for all $g\in\mathcal \I(V/K)_\D$. But $\I(V/K)_\D=[\L]:H_\L^\infty$, so $H_\L^\ell g\in[\Lambda]$ for some $\ell$. We get
\begin{eqnarray*}
0&=& \et \left(H_\L^\ell g\right)(a,b) \ \ \ \ \ \ \ \ \ \text{ as $H_\Lambda^\ell g\in[\L]$}\\
&=& H_\L^\ell(a)\et g(a,b)+g(a)\et  H_\L^\ell (a,b)\\
&=& H_\L^\ell(a)\tau g(a,b).
\end{eqnarray*}
Since $O$ is disjoint from $\V(H_\L)$ we have that $\tau g(a,b)=0$, as desired. 

It follows that a nonempty $\D$-open subset of $W$ is contained in $\tau_{D/\D} V$, and hence, by irreducibility, $W\subseteq \tau_{D/\D} V$. Since $O$ is contained in the projection of $W$, $W$ projects $\D$-dominantly onto $V$. Applying (2) of Theorem~\ref{charactheo} to $V$, $W$, $O_V$, $O_W$, we get a $K$-point $a\in O_V$ such that $\nabla a\in O_W$, as desired.

For the converse we assume that (ii) holds and aim to prove condition (2) of Theorem~\ref{charactheo}. In fact, it suffices to prove this statement in the case when $O_V=V$ and $O_W=W$ (see Remark~\ref{without}). We thus have irreducible affine $\D$-algebraic varieties $V$ and $W\subseteq \tau_{D/\D} V$ such that $W$ projects $\D$-dominantly onto $V$, and we show that there is $a\in V$ such that $\nabla a \in W$. Let $\L$ and $\Ga$ be characteristic sets of $\I(V/K)_\D$ and $\I(W/K)_\D$, respectively. Then, by Proposition~\ref{basic} $$\V^*(\Ga)=W\setminus\V(H_\Ga)\subseteq \tau_{D/\D} V\subseteq \V(f,\, \et f:\, f\in \L).$$ Since $W$ projects $\D$-dominantly onto $V$, $\V^*(\Ga)$ projects $\D$-dominantly onto $V$. Thus, by quantifier elimination for $DCF_{0,\ell}$ and the assumption $(K,\D)\models DCF_{0,\ell}$, there is $O$ a nonempty $\D$-open subset of $\V^*(\L)$ defined over $K$ such that the projection of $\V^*(\Ga)$ to $\V(\L)$ contains $O$. We are in the situation of condition~(ii), and there is $a\in \V^*(\L)\subseteq V$ such that $\nabla a\in \V^*(\Ga)\subseteq W$.
\end{proof}

\begin{remark}
What the above proof shows is that, in a $\D$-closed field, each instance of condition (2) of Theorem~\ref{charactheo} is equivalent to an instance of condition (ii) of Theorem \ref{characprime}. This is accomplished by passing from prime differential ideals to their characteristic sets, and from $\D$-dominant projections to containment of a nonempty $\D$-open set.
\end{remark}

By Fact~\ref{defchar}, the condition that ``$\L=\{f_1,\dots,f_s\}$ is a characteristic set of a prime $\D$-ideal of $K\{\x\}_\D$" is a definable property on the coefficients of $f_1,\dots,f_s$. That is, there is an $\LL_m$-formula which specifies for wich parameters of the $f_i$'s, over any model of $DF_{0,m}$, the set $\L$ is characteristic set of a prime $\D$-ideal. Thus, condition~(ii) of Theorem \ref{characprime} is first order expressible in the language of differential rings. Indeed, the scheme of axioms will consist of $DCF_{0,\ell}$ and an axiom for each choice of the shape of $\L$, $\Ga$, $O$ and $Q$ together with a formula specifying for which parameters these finite sets of polynomials satisfy condition~(ii). We therefore obtain a \emph{geometric first order axiomatization} for the theory $DCF_{0,\ell+1}$ that refers to the geometry of $DCF_{0,\ell}$.

We conclude by giving another geometric characterization in terms of relative D-varieties that is not on the face of it first order, but may be of independent interest. We show that the models of $DCF_{0,\ell+1}$ are precisely those differential fields that have sharp points on every relative D-variety (where we allow $K^{\Pi}$-linearly independent transformations of $\Pi$). 

\begin{theorem}
$(K,\Pi)\models DCF_{0,\ell+1}$ if and only if
\begin{itemize}
\item [(\dag)] For every basis $\D\cup\{D\}$ of $\operatorname{span}_{K^\Pi}\Pi$, if $(V,s)$ is an affine relative D-variety w.r.t. $D/\D$ defined over $K$ then $(V,s)^\#$ has a $K$-point.
\end{itemize}
\end{theorem}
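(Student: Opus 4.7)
The plan is to prove both directions by going through Theorem~\ref{charactheo}. For the forward direction, given any basis $\D\cup\{D\}$ of $\operatorname{span}_{K^\Pi}\Pi$, I would invoke Remark~\ref{extlin} so that Theorem~\ref{charactheo} applies relative to this basis. Given an affine relative D-variety $(V,s)$ w.r.t.\ $D/\D$ over $K$, I would first use Lemma~\ref{irred} to reduce to $V$ being $K$-irreducible, noting that the integrability condition is vacuous since $\DD=\{D\}$ is a singleton. The graph $W:=s(V)\subseteq\tau_{D/\D}V$ is then $K$-irreducible, $\D$-closed (as the graph of a $\D$-regular map), and projects isomorphically onto $V$ via $\pi$. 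Applying Theorem~\ref{charactheo}(2) with $O_V=V$ and $O_W=W$ yields a $K$-point $a\in V$ with $\nabla_D a\in W$; the bijectivity of $\pi|_W$ then forces $\nabla_D a=s(a)$, so $a\in(V,s)^\#(K)$.

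For the backward direction, assume (\dag) and set $\D_0:=\{\d_1,\ldots,\d_\ell\}$ and $D:=\d_{\ell+1}$. My plan is to verify the two conditions of Theorem~\ref{charactheo} via the following \emph{reduction claim}: if $a\in\U$ satisfies $\Pi\text{-type}(a/K)\leq\ell$, then every nonempty $K$-definable $\Pi$-open subset of its $\Pi$-locus over $K$ contains a $K$-point. Granting the claim, condition~(2) of Theorem~\ref{charactheo} follows by taking a $\D_0$-generic $(a_0,b_0)$ of $W$ over $K$: by $\D_0$-dominance $a_0$ is a $\D_0$-generic of $V$ over $K$, and by Fact~\ref{exx} the point $b_0$ defines a $\D_0$-derivation extension of $D|_K$ to $K\langle a_0\rangle_{\D_0}$ with $Da_0=b_0$, so $K\langle a_0\rangle_\Pi=K\langle a_0\rangle_{\D_0}$ and $\Pi\text{-type}(a_0/K)\leq\ell$; the $K$-definable $\Pi$-open condition ``$a\in O_V$ and $\nabla_D a\in O_W$'' is satisfied at $a_0$, and the claim supplies the desired $K$-point. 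Condition~(1), $(K,\D_0)\models DCF_{0,\ell}$, is verified analogously: given a $\D_0$-polynomial system realized by some $a_0$ in a $\D_0$-extension of $K$, I would extend $D|_K$ via Fact~\ref{exx2}, embed into $\U$, and apply the reduction claim.

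To prove the reduction claim, I would apply Corollary~\ref{ong} to the $\Pi$-locus $V_0$ of $a$, obtaining a basis $\D\cup\{D'\}$ of $\operatorname{span}_{K^\Pi}\Pi$ and an affine relative D-variety $(V^*,s^*)$ w.r.t.\ $D'/\D$ over $K$ with $V_0$ $\Pi$-birationally equivalent to $(V^*,s^*)^\#$. The given $\Pi$-open subset of $V_0$ transfers to a nonempty $\Pi$-open subset $U$ of $(V^*,s^*)^\#$ defined over $K$, which by Proposition~\ref{biv} is the set of sharp points of a relative D-subvariety $(V^{**},s^*|_{V^{**}})$ of $(V^*,s^*)$ with $V^{**}=V^*\setminus\V(h)$ a $\D$-open subset of $V^*$. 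I would then apply a Rabinowitsch-type construction---replacing $V^{**}$ by $\tilde V:=\{(x,y)\in V^*\times\mathbb{A}^1:h(x)y=1\}$ and extending the section to $\tilde s(x,y):=(s^*(x),y,-y^2\,d_{D'/\D}h(x,s^*(x)))$---to realize the quasi-affine D-subvariety as an affine one whose sharp points $K$-definably correspond to $U$. Condition (\dag) then supplies a $K$-sharp point, transferring back to the desired $K$-point of the prescribed open subset of $V_0$. The hard part will be verifying condition~(1) when the relative prolongation torsor $\tau_{D/\D_0}V_{a_0}$ is nontrivial over $K\langle a_0\rangle_{\D_0}$, so that no $D$-extension keeps $Da_0$ inside $K\langle a_0\rangle_{\D_0}$ and hence $\Pi\text{-type}(a_0/K)=\ell+1$; in that case I would carry out a specialization argument that uses (\dag) applied to elementary affine relative D-varieties such as $(\mathbb{A}^1,s)$ with $s(x)=(x,1)$ in various bases to produce $K$-elements with prescribed derivatives, then specialize a $\Pi$-transcendental coordinate of $a_0$ to such a $K$-element and thereby reduce to the $\Pi\text{-type}\leq\ell$ case.
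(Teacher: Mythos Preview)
Your forward direction is correct but considerably more elaborate than necessary. The paper dispatches it in two lines: by Proposition~\ref{lema1}(1), $(V,s)^\#$ is $\D$-dense in $V$, hence nonempty; since $(K,\Pi)$ is existentially closed, $(V,s)^\#$ has a $K$-point. There is no need to pass through Theorem~\ref{charactheo}, reduce to $K$-irreducible $V$, or worry about whether $s(V)$ is $\D$-closed.

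Your backward direction has a real gap at exactly the place you flag as the ``hard part'': verifying condition~(1) of Theorem~\ref{charactheo} from~(\dag). Your specialization sketch is too vague to assess, and the obstruction you describe is genuine---if you try to view a relative D-variety w.r.t.\ $D'/\D'$ (with $|\D'|=\ell-1$) as one w.r.t.\ $D'/(\D'\cup\{\d_{\ell+1}\})$ in order to invoke~(\dag), you run into the open problem of whether $\tau_{D'/\D'}V=\tau_{D'/(\D'\cup\{\d_{\ell+1}\})}V$ for arbitrary $V$ (see the remark preceding Proposition~\ref{prog}).

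But the detour through Theorem~\ref{charactheo} is unnecessary: your reduction claim already yields existential closedness directly. Given a conjunction $\phi$ of atomic $\Pi$-formulas over $K$ realized in $\U$, it is realized in a $\Pi$-closure $\bar K$ of $K$ (since $\bar K\models DCF_{0,\ell+1}$), and any $a\in\bar K$ is $\Pi$-algebraic over $K$, hence has $\Pi$-type $\leq\ell$. As $\phi$ is $\Pi$-closed, the $\Pi$-locus of $a$ over $K$ lies inside the solution set of $\phi$, and your reduction claim (with the full locus as the open set) supplies a $K$-point realizing $\phi$. This is essentially the paper's argument: it uses Fact~\ref{diffal}---the fact underlying Corollary~\ref{ong}---to choose, for each $\Pi$-algebraic $a$, a basis $\D\cup\{D\}$ with $K\langle a\rangle_\Pi=K\langle a,Da,\ldots,D^ka\rangle_\D$, and then builds the affine relative D-variety directly from $c=(a,Da,\ldots,D^ka,1/g)$, so that the closed condition $\phi$ is absorbed into the $\D$-locus of $c$ and no Rabinowitsch step for the ``open set'' is needed. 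Two smaller points in your reduction-claim proof: Proposition~\ref{biv} concerns $\Pi$-\emph{closed} subvarieties, not open subsets, so you should instead argue that on $(V^*,s^*)^\#$ every $\Pi$-polynomial rewrites (via $D'x=s^*(x)$) as a $\D$-rational function, reducing the $\Pi$-open condition to a $\D$-open one before Rabinowitsch; and you must ensure $|\D|=\ell$ when invoking Corollary~\ref{ong}, extending a witness of the $\Pi$-type if necessary.
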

\begin{proof}
Suppose $(K,\Pi)\models DCF_{0,\ell+1}$. By (1) of Theorem~\ref{lema1}, $(V,s)^\#$ is $\Pi$-dense in $V$ and hence nonempty. As $(K,\Pi)$ is existentially closed, $(V,s)^\#$ has a $K$-point.

For the converse, we assume condition (\dag) and prove that $(K,\Pi)\models DCF_{0,\ell+1}$. Suppose $\phi(x)$ is a conjunction of atomic $\mathcal{L}_{\ell+1}$-formulas over $K$, with a realisation $a=(a_1,\dots,a_n)$ in $\U$. We need to find a realisation of $\phi$ in $K$. We may assume that $a$ is in a $\Pi$-closure of $K$ and hence each $a_i$ is $\Pi$-algebraic over $K$. 

By Fact~\ref{diffal}, we can find a natural number $k>0$ and a basis $\D\cup\{D\}$ of $\operatorname{span}_{K^\Pi}\Pi$ such that $K\l a\r_\Pi=K\l a,\dots,D^k a\r_\D$. Hence, 
\begin{equation}\label{large}
D^{k+1}a=\frac{f(a\dots,D^k a)}{g(a,\dots,D^k a)}
\end{equation}
for some sequence $\frac{f}{g}$ of $\D$-rational functions over $K$. Let
\begin{displaymath}
c=\left(a,D a,\dots,D^k a,\frac{1}{g( a,D a\dots,D^{k} a)}\right). 
\end{displaymath}
Let $V$ be the $\D$-locus of $c$ over $K$ and $W$ the $\D$-locus of $\nabla c$ over $K$. A standard trick yields a sequence $s=(\operatorname{Id}, s')$ of $\D$-polynomials over $K$ such that $s(c)=\nabla c$. Since $c$ is a generic point of $V$ over $K$, we get that $(V,s)$ is a relative D-variety w.r.t. $D/\D$ defined over $K$. 

\

\noindent {\bf Claim.} $W=s(V)$.

\noindent Let $b\in V$. If $h$ is a $\D$-polynomial over $K$ vanishing at $(c,Dc)$, then $h(\cdot,s(\cdot))$ vanishes at $c$ and hence on all of $V$. So $(b,s(b))$ is in the $\D$-locus of $(c,D c)$ over $K$. That is, $(b,s(b))\in Y$. The other containment is clear since $\nabla c=s(c)\in s(V)$.

\

By condition (\dag), $(V,s)^\#$ has a $K$-point say $d$. Hence, $d\in V$ and $\nabla d=s(d)$. The previous claim yields that $(d,D d)\in W$.

Now, let $\rho(x)$ be the $\mathcal{L}_{\ell+1}$-formula over $K$ obtained from $\phi$ by replacing each $\d_i$, $i=1,\dots,\ell+1$, for $e_{i,1}\d_1+\cdots+e_{i,\ell+1}\d_{\ell+1}$, where $(e_{i,j})\in$ GL$_{\ell+1}(K^\Pi)$ is the transition matrix taking the basis $\D\cup\{D\}$ to the basis $\Pi$. By construction, $\phi^{(K,\Pi)}=\rho^{(K,\D\cup\{D\})}$. Thus it suffices to find a realisation of $\rho$ in $(K,\D\cup\{D\})$. We may assume that the $k$ of (\ref{large}) is large enough so that we can write
\begin{displaymath}
\rho(x)=\psi(x,\d_{\ell+1}x,\dots,\d_{\ell+1}^kx)
\end{displaymath}
where $\psi$ is a conjunction of atomic $\mathcal{L}_\ell$-formulas over $K$. Let
\begin{displaymath}
\chi(x_0,\dots,x_{k+1},y_0,\dots,y_{k+1}):=\psi(x_0,\dots,x_k)\land\left(\land_{i=1}^k x_i=y_{i-1} \right).
\end{displaymath}
Then $(\U,\D)\models \chi(c,D c)$, and so, as $(d,D d)\in W$, we have that $(\U,\D)\models \chi(d,D d)$. But since $d$ is a $K$-point, we get $(K,\D)\models \chi( d,D d)$. Writing the tuple $d$ as $(d_0,\dots,d_{k+1})$, we see that $d_0$ is a realisation of $\rho$ in $(K,\D\cup\{D\})$. This completes the proof.
\end{proof}

Notice that, in contrast with condition (2) of Theorem \ref{charactheo}, condition ($\dag$) does not involve differential irreducibility or differential dominance; however, it does involve the condition $s:V\to \tau_{D/\D}V$. Hence, since it is not known if the family of prolongations of a definable family of differential algebraic varieties is definable, it also remains open as to whether condition (\dag) is expressible in a first order way for $m>0$.

\

\chapter{The model theory of partial differential fields with an automorphism}\label{chapaut}

It has been known for over fifteen years that the theory of ordinary differential fields of characteristic zero with an automorphism admits a model companion. In other words, the class of existentially closed ordinary differential fields of characteristic zero equipped with an automorphism is an axiomatizable class. This theory has been studied extensively by Bustamante \cite{Bu}, \cite{Bu2}, \cite{Bu3}. However, the techniques used there to prove the existence of the model companion do not extend to partial differential fields with an automorphism. In this chapter we prove that the latter theory indeed has a model companion. After establishing some of its basic model theoretic properties, we will prove the canonical base property and the Zilber dichotomy for finite dimensional types.

Our approach is similar to the one from Section~\ref{thegeo}. We observe that the existentially closed partial differential fields with an automorphism are characterized by a certain geometric condition in terms of irreducible $\D$-varieties and $\D$-dominant projections (this already appears in \cite{GR}), very much in the spirit of the geometric axioms for $ACFA$. Of course, we run into the same difficulties as we did for Theorem~\ref{charactheo} when trying to express this geometric condition in a first order way: it is not known if differential irreducibility and differential dominance are definable conditions in definable families of differential algebraic varieties. As we did in Section~\ref{thegeo}, we bypass these issues by applying the differential algebraic machinery of characteristic sets of prime differential ideals.

\section{A review of $ACFA$}\label{resim}

Fields equipped with an automorphism (i.e., difference fields) enjoy some formal similarities to differential fields. While our eventual purpose is to consider a combination of differential and difference structure, we begin by reviewing in this section the model theory of difference fields. For a complete exposition of difference algebra the reader may consult \cite{Co}, and for the model theoretic results \cite{CH}.

We work in the language ${\mathcal L}_\s=\{0,1,+,-,\times,\s\}$ of rings expanded by a unary function symbol $\s$. 

\begin{definition}
By a \emph{difference field}, or \emph{$\s$-field}, we mean a field $K$ equipped with an automorphism $\s:K\to K$ (often one only asks for $\s$ to be injective, what we call here difference field is sometimes referred as inversive difference field). The \emph{fixed field} of a difference field $(K,\s)$ is by definition $K^\s=\{a\in K:\, \s a=a\}$. 
\end{definition}

Let $(K,\s)$ be a difference field. If $V$ is an affine algebraic variety defined over $K$ then $V^\s$ is the affine algebraic variety defined by $\{f^\s=0:f\in \I(V/K)\}$, where $f^\s$ is the polynomial obtained by applying $\s$ to the coefficients of $f$. 

\begin{fact}[\cite{CH}, \S 1]\label{axacfa}
The class of existentially closed difference fields is axiomatizable. This theory, denoted by $ACFA$, is axiomatized by the scheme of axioms expressing:
\begin{enumerate}
\item [(i)] $K\models ACF$
\item [(ii)] Suppose $V$ and $W$ are irreducible affine algebraic varieties defined over $K$ such that $W\subseteq V\times V^\s$ and $W$ projects dominantly onto both $V$ and $V^\s$. Then there is a $K$-point $a\in V$ such that $(a,\s a)\in W$.
\end{enumerate}
\end{fact}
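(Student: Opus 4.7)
The plan is to reduce the statement to three independent tasks: (a) checking that clauses (i) and (ii) can be rephrased as a scheme of first-order $\LL_\s$-sentences; (b) showing that any difference field satisfying (i) and (ii) is existentially closed; and (c) showing conversely that every existentially closed difference field is a model of these axioms. The content of (c) will boil down to an embedding lemma saying that any instance of the configuration in (ii) can be realised in some difference field extension, and this embedding lemma will be the technical heart of the proof.

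For (b), suppose $(K,\s)$ satisfies (i) and (ii), and let $\phi(x)$ be a conjunction of atomic $\LL_\s$-formulas over $K$ realised by some $a$ in a difference field extension $(L,\s')$. Choose $n$ so that $\phi$ involves only $\s^i x$ for $i\leq n$, and introduce the shifted tuple $b = (a,\s'a,\ldots,\s'^{n-1}a)$; then $\phi(a)$ translates to an atomic formula $\psi(b,\s'b)$ in the pure language of rings over $K$. Let $V$ be the $K$-irreducible component of the algebraic locus of $b$ over $K$, and $W$ the $K$-irreducible component of the algebraic locus of $(b,\s'b)$ over $K$. Then $W \subseteq V \times V^\s$ because $\s'$ extends $\s$, and $W$ projects dominantly onto $V$ (since $b$ is generic in $V$) and also onto $V^\s$ (via the $K$-isomorphism $K(b)\to K(\s'b)$ induced by $\s'$, which identifies a generic of $V$ with a generic of $V^\s$). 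Applying (ii) to $V$ and $W$ yields a $K$-point $c\in V$ with $(c,\s c)\in W$, and reading off the appropriate coordinate of $c$ produces a realisation of $\phi$ in $K$.

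The hard part is the embedding lemma needed for (c): given $(K,\s)$ and irreducible $V$, $W\subseteq V\times V^\s$ defined over $K$ with $W$ dominating both factors, I want to construct $(L,\s')\supseteq (K,\s)$ with a point $a\in V(L)$ such that $(a,\s' a)\in W(L)$. My approach is to start with a generic point $(b_0,b_1)$ of $W$ over $K$ and build a two-sided tower $(b_i)_{i\in\ZZ}$ in which each successive pair $(b_i,b_{i+1})$ is a generic point of the twisted variety $W^{\s^i}\subseteq V^{\s^i}\times V^{\s^{i+1}}$. The forward step is feasible because $b_1$ is generic in $V^\s$ (by dominance of the second projection $W\to V^\s$), which lets me adjoin a generic $b_2$ of $W^\s$ over $K(b_0,b_1)$ lying above $b_1$; the backward step is symmetric and uses dominance of the first projection. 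Setting $L = K(b_i : i \in \ZZ)$ (possibly then passing to its algebraic closure to facilitate later iterations) and defining $\s'(b_i)=b_{i+1}$ yields the required $(L,\s')$. A standard chain construction iterating this lemma over all instances of clause (ii) then produces an ambient model of the scheme containing $(K,\s)$, and when $(K,\s)$ is existentially closed the configuration $(a,\s a)\in W$ must already be realised in $K$ itself.

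Finally, for (a) one must verify that clause (ii) is first-order in $\LL_\s$. The subtlety is that ``$V$ irreducible'', ``$W$ irreducible'', and ``$W$ projects dominantly onto $V$ and $V^\s$'' are not on the face of it atomic conditions. However, classical uniform bounds in commutative algebra (for example, the bounds of van den Dries--Schmidt for primality of ideals generated by polynomials of bounded degree, together with definability of Krull dimension in $ACF$) show that for each shape and degree bound of defining polynomials, the conditions ``$I$ is prime'' and ``$\dim\overline{\pi(W)}=\dim V$'' are definable in the language of rings uniformly in the coefficients. Indexing by these shapes yields a scheme of first-order $\LL_\s$-sentences expressing (ii), each of the form ``for all parameters defining such $V$ and $W$, there exists $a\in V$ with $(a,\s a)\in W$.'' Combined with the axioms of $ACF$ and the axiom that $\s$ is a ring automorphism, this produces the required first-order axiomatization.
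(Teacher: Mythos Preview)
Your proof is correct. The paper itself does not prove this statement --- it is quoted as a fact from \cite{CH} --- but it does prove the direct differential analogue, Fact~\ref{GR}, and your argument can be compared against that.

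For the direction ``axioms imply existentially closed'' (your part (b)), you and the paper argue identically: reduce an atomic $\LL_\s$-formula to a ring formula in the shifted tuple $b=(a,\s a,\ldots,\s^{n-1}a)$, take $V$ and $W$ to be the loci of $b$ and $(b,\s b)$, and apply (ii). For the converse (your part (c)), however, the approaches genuinely diverge. You build the difference-field extension by hand, constructing a two-sided tower $(b_i)_{i\in\ZZ}$ with $(b_i,b_{i+1})$ generic in $W^{\s^i}$ and declaring $\s'(b_i)=b_{i+1}$; this is the original construction in \cite{CH}. The paper's argument (for Fact~\ref{GR}) is shorter and more model-theoretic: working inside a saturated model $(\U,\D)$ of $DCF_{0,m}$ (here, $ACF_0$), take a single generic point $(c,d)$ of $W$ over $K$, observe via quantifier elimination that $tp(d/K)=\s(tp(c/K))$, and invoke homogeneity of $\U$ to extend $\s$ to an automorphism $\s'$ of $\U$ with $\s'(c)=d$. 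Then $(\U,\s')$ is already a difference-field extension witnessing the configuration, and existential closedness of $(K,\s)$ pulls the solution down. Your tower construction is more explicit and self-contained (no appeal to saturation or quantifier elimination beyond the basic theory of generic points), while the paper's approach avoids the bookkeeping of gluing the partial isomorphisms $K(b_i)\to K(b_{i+1})$ along the infinite tower.
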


In \cite{CH}, Chatzidakis and Hrushovski carry out a thorough analysis of the theory $ACFA$. In particular, they show that:

\begin{itemize}
\item The fixed field of $(K\s)\models ACFA$ is a pseudofinite field; that is, $K^\s$ is perfect, $K^\s$ has a unique extension of degree $n$ for all $n<\w$, and $K^\s$ is pseudo-algebraically closed (i.e., every absolutely irreducible variety defined over $K$ has a $K$-point).
\item If $A\subseteq K$, then $\operatorname{acl}(A)$ is the field theoretic algebraic closure of the $\s$-field generated by $A$.
\item The completions of $ACFA$ are determined by specifying the characteristic and the action of $\s$ on the algebraic closure of the prime field. Each completion of $ACFA$ admits elimination of imaginaries.
\end{itemize}

Let us restrict our attention to characteristic zero. Fix a sufficiently saturated model $(\U,\s)\models ACFA_0$. Given subsets $A,B,C$ of $\U$ define $A$ to be \emph{independent from $B$ over $C$}, denoted by $A\ind_C B$, if $\QQ\l A\cup C\r_\s$ is algebraically disjoint from $\QQ\l B\cup C\r_\s$ over $\QQ\l C\r_\s$. Here the notation $\QQ\l A\r_\s$ means the $\s$-field generated by $A$.

Then, as in the case of differentially closed fields, this captures Shelah's nonforking: if $C\subseteq B$ and $a$ is a tuple, $tp(a/B)$ is a nonforking extension of $tp(a/C)$ if and only if $a\ind_C B$. 

In \cite{CH}, Chatzidakis and Hrushovski proved that $ACFA_0$ is \emph{supersimple}; that is, they prove the following properties of independence:

\begin{enumerate}
\item \emph{(Invariance)} If $\phi$ is an automorphism of $\U$ and $A\ind_C B$, then $\phi(A)\ind_{\phi(C)}\phi(B)$.
\item \emph{(Local character)} There is a finite subset $B_0\subseteq B$ such that $A\ind_{B_0} B$.
\item \emph{(Extension)} There is a tuple $b$ such that $tp(a/C)=tp(b/C)$ and $b\ind_C B$.
\item \emph{(Symmetry)} $A\ind_C B$ if and only if $B\ind_C A$.
\item \emph{(Transitivity)} Suppose $C\subseteq B\subseteq D$, then $A\ind_C D$ if and only if $A\ind_C B$ and $A\ind_B D$.
\item \emph{(Independence theorem)} Suppose $C$ is an algebraically closed $\s$-field, and 
\begin{enumerate}
\item [(i)] $A$ and $B$ are supersets of $C$ with $A\ind_C B$, and
\item [(ii)] $a$ and $b$ are tuples such that $tp(a/C)=tp(b/C)$ and $a\ind_C A$ and $b\ind_C B$.
\end{enumerate}
Then there is a is a common nonforking extension $p\in S_n(A\cup B)$ of $tp(a/A)$ and $tp(b/B)$.
\end{enumerate}

Note that unlike the case of differentially closed fields we do not have stationarity (see Fact~\ref{prur}). The independence theorem takes its place. This corresponds to the fact that $ACFA_0$ is not a \emph{stable} theory.

Now, exactly as in the case of differentially closed field, we make use of the foundation rank associated to the partial ordering on types induced by forking. For historical reasons, this is called $SU$-rank in this case, rather than $U$-rank, but the definition is exactly as in Definition~\ref{Las}, and it also enjoys the appropriate analogues of the properties listed in Fact~\ref{yut}. 

A very consequential property of $ACFA_0$ proved in \cite{CH} is

\begin{fact}[Zilber dichotomy]
Suppose $K$ is an algebraically closed $\s$-field and $p=tp(a/K)$ is of $SU$-rank $1$. Then $p$ is either one-based or almost $\U^\s$-internal.
\end{fact}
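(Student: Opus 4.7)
The plan is to follow Hrushovski's group configuration strategy, adapted to the simple (rather than stable) setting of $ACFA_0$. Suppose $p=tp(a/K)$ has $SU$-rank $1$ and is not one-based; I aim to show $p$ is almost $\mathcal{U}^\sigma$-internal. By non-one-basedness together with $SU(p)=1$, I can find independent realisations $a_1,a_2$ of $p$ over $K$ and a tuple $c \in \operatorname{acl}(K,a_1,a_2) \setminus \operatorname{acl}(K)$ such that $a_1 \nind_{Kc} a_2$. Extending the base and applying the independence theorem repeatedly, the first step is to manufacture enough generically independent realisations of $tp(a_1,a_2,c/K)$ to produce a \emph{group configuration} over $K$ in the sense of Hrushovski.

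The second step is to extract a group from this configuration. Since $ACFA_0$ is supersimple with elimination of imaginaries and satisfies the independence theorem over algebraically closed bases, the group configuration theorem for simple theories applies: it yields a type-definable (in $\U^{eq}$) group $G$ of $SU$-rank $1$ acting transitively and generically faithfully on a type-definable set $X$, with $p$ non-orthogonal to the generic type of $X$. After absorbing parameters into $K$, I may assume $G$, $X$ and the action are defined over $K$, and it suffices to prove that $G$ is almost $\mathcal{U}^\sigma$-internal.

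The third step is the classification of $SU$-rank $1$ groups in $ACFA_0$. Any such $G$ embeds, up to isogeny, into the $\sigma$-points of an algebraic group $H$ defined over $K$; that is, $G$ is commensurable with a definable subgroup of $H(\mathcal{U})$ cut out by a difference equation of the form $\sigma(x) = \varphi(x)$ for some rational endomorphism $\varphi$ of $H$. The dichotomy for such groups, which is the technical core of Chatzidakis-Hrushovski, asserts that either $\varphi$ is, up to isogeny, the identity (so that $G$ is essentially $H(\mathcal{U}^\sigma)$ and hence fixed-field-internal), or $G$ is one-based as a pure group (the modular case, coming from groups with no non-trivial rational $\sigma$-endomorphisms). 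Since we assumed $G$ is not one-based, only the first alternative can occur, and then $G$ — and therefore $p$ — is almost internal to $\mathcal{U}^\sigma$.

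The hard part is unquestionably the classification in the third step: translating the non-one-basedness of $G$ into an algebraic statement about the endomorphism $\varphi$, and then invoking the structure theory of commutative algebraic groups together with rigidity results (for instance the Manin kernel style arguments for abelian varieties) to force $\varphi$ to be trivial up to isogeny when $G$ is non-modular. The group configuration and the reduction to $\sigma$-equations on algebraic groups are relatively formal once one has the independence theorem, but the algebraic classification genuinely uses the geometry of $K$ and cannot be avoided.
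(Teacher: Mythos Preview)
The paper does not prove this statement: it is recorded as a \emph{Fact} attributed to Chatzidakis--Hrushovski, and immediately afterwards the paper points to the route it favours, namely that the dichotomy \emph{follows from} the Canonical Base Property of Pillay--Ziegler (stated as the next Fact). When the paper later proves the analogous dichotomy for $DCF_{0,m}A$ (Corollary~\ref{zildi}), it does so by first establishing the CBP via jet spaces and $(\D,\sigma)$-modules, and then deducing the dichotomy by a short formal argument: if $p$ is not one-based, pick $L$ with $Cb(a/L)\not\subseteq\operatorname{acl}(K,a)$, apply CBP to get almost-internality of $tp(Cb(a/L)/K,a)$, and use that $Cb(a/L)$ lies in the algebraic closure of finitely many independent realisations of $p$ to push almost-internality back to $p$ itself.

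Your proposal is a genuine alternative: the group-configuration route, which is closer in spirit to the original Chatzidakis--Hrushovski argument. The outline is sound --- non-one-basedness plus $SU$-rank $1$ does produce a configuration, and the group configuration theorem for simple theories yields a rank-$1$ group acting on a set non-orthogonal to $p$. The tradeoff is clear: the CBP approach front-loads the hard algebraic geometry (jet spaces linearising varieties, then a module-theoretic computation) into a single structural theorem from which the dichotomy drops out formally, while your approach defers the hard work to the classification of $SU$-rank $1$ groups, which as you correctly identify is the genuinely difficult step and requires substantial algebraic-group theory (analysis of difference subgroups of abelian varieties and tori). One small point to tighten in your sketch: after obtaining $G$ acting on $X$, you say it suffices that $G$ be almost $\mathcal{U}^\sigma$-internal, but strictly you need $X$ (hence $p$) to be; this follows because the action is transitive, but it is worth saying so explicitly.
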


Here $p=tp(a/K)$ is \emph{almost $\U^\s$-internal} means that there is a $\s$-field $L$ extension of $K$ with $a\ind_K L$ such that $a\in \operatorname{acl}(L,c)=L\l a\r_\s^{alg}$ for some tuple $c$ from $\U^\s$, and \emph{one-based} means that for every algebraically closed $\s$-field $L$ extension of $K$ we have that $Cb(a/L)\subseteq \operatorname{acl}(K,a)$ where $Cb(a/L)$ is the \emph{canonical base} of $tp(a/L)$ which we explain below.

The Zilber dichotomy can be seen as a consequence of the following result proved by Pillay and Ziegler in \cite{PZ}. First, given a complete type $tp(a/K)$, where $K$ is an algebraically $\s$-field, the \emph{canonical base} of $p$, denoted by $Cb(p)$, is a (finite) tuple from $K$ that comes from the study of supersimple theories in general and was introduced in $\cite{HKP}$. We will not give the precise definition here, but content ourselves with pointing out that in $ACFA_0$, up to interalgebraicity, it is the same thing as the minimal $\s$-field of definition of the $\s$-locus of $a$ over $K$. That is, if $V$ is the intersection of all solution sets of systems of difference polynomials over $K$ that contain $a$, which is again given by a (finite) system of such polynomials \cite{Co},  and $F$ is the minimal $\s$-field of definition of $V$, which exists by model theoretic considerations (by elimination of imaginaries, see for example Chapter 1.1 of \cite{Pi8}), then $\operatorname{acl}(Cb(p))=F^{alg}$.

\begin{fact}[Canonical base property, \cite{PZ}]
Suppose $a$ is a tuple such that $SU(a/K)<\w$ and $L$ is an algebraically closed $\s$-field extension of $K$. Then $tp(Cb(a/L)/a,K)$ is almost $\U^\s$-internal.
\end{fact}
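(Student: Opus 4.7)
The plan is to follow the jet-space strategy that Pillay and Ziegler developed for handling canonical bases in simple theories whose geometry arises from algebraic geometry. The guiding principle is that although $V:=$ the $\s$-locus of $a$ over $L$ may look complicated, its infinitesimal linearization at the point $a$ retains the same canonical base and is much more tractable because it lives in a linear algebraic object where the $\s$-action is (eventually) a linear map with coefficients controlled by the fixed field.

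First I would reduce to a convenient geometric setup. Since $SU(a/K)<\w$, one may replace $L$ by $\operatorname{acl}(K,b)$ for some finite tuple $b$ with $b\ind_K a$-like properties coming from local character, and assume $L$ is algebraically closed and finitely $\s$-generated over $K$. Up to interalgebraicity, $c:=Cb(a/L)$ is the minimal $\s$-field of definition of the $\s$-locus $V$ of $a$ over $L$; so I would aim to show $c$ is almost $\U^\s$-internal over $\operatorname{acl}(K,a)$. Shrinking $V$ and relocating the parameters, the $\s$-variety $V$ is cut out at the level of algebraic geometry by equations tying $(a,\s a,\dots,\s^n a)$ to an irreducible algebraic variety defined over $L$, and the canonical base of that algebraic variety at each level specializes to $c$.

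Second, I would linearize. For each $k$, consider the $k$-th algebraic jet space $J^k_a(V_{\mathrm{alg}})$ of the ambient Zariski locus $V_{\mathrm{alg}}$ of $(a,\s a,\dots,\s^n a)$ at the point $(a,\s a,\dots,\s^n a)$. This is a finite-dimensional vector space, and the $\s$-structure on $V$ induces a linear map on the family of jet spaces at the $\s$-translates of the base point. The key algebraic geometry input — already exploited by Pillay and Ziegler — is that, for $k$ large enough, the canonical base of $V_{\mathrm{alg}}$ over $L$ becomes interdefinable (over $\operatorname{acl}(K,a)$) with the canonical base of the jet, because the jet already records the tangent data that pins down $V_{\mathrm{alg}}$ generically.

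Third, I would exploit linearity to extract the fixed field. Once the problem is transported to the jet, I am looking at a linear $\s$-subspace of a fixed finite-dimensional $L$-vector space, with defining matrices whose entries depend on the coefficients of defining equations of $V_{\mathrm{alg}}$. A linear $\s$-equation $\s X = MX$ over an algebraically closed $\s$-field has a fundamental system of solutions $F$ generating a Picard–Vessiot-like extension, and $M$ itself is recoverable from $F$ together with $\s F$; but those solutions live in $\U$ and their $\s$-ratios determine elements of $\U^\s$. Thus the canonical base of the jet becomes definable from $(a,K)$, a suitable independent parameter, and a tuple from $\U^\s$; that is precisely almost $\U^\s$-internality of $tp(c/a,K)$. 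The main obstacle I would expect is the passage from the algebraic-geometric canonical base of $V_{\mathrm{alg}}$ to the $\s$-canonical base of $V$: one must ensure that the jet at a single point really captures $c$ and not merely a piece of it, which forces careful choices of $k$ and of an appropriate generic point, and is the step where the finite $SU$-rank hypothesis is genuinely used.
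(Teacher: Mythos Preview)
The paper does not prove this statement; it is stated as a Fact cited from Pillay--Ziegler \cite{PZ} in the review section on $ACFA$, with no proof given. Your proposal is a reasonable sketch of the Pillay--Ziegler jet-space argument, and indeed the paper carries out exactly this strategy in detail later (Theorem~\ref{cbp}) for the more general $(\D,\s)$-setting; specializing that proof to $\D=\emptyset$ recovers the $ACFA$ case.

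Two points where your outline is looser than the actual argument. First, in step three you frame the extraction of $\U^\s$ via a Picard--Vessiot / fundamental-solution detour. The mechanism is more direct: a $\s$-module $(E,\Sigma)$ over a model of $ACFA$ admits a $\U^\s$-basis of $E^\#=\{e:\Sigma e=e\}$ which is simultaneously a $\U$-basis of $E$. This basis identifies $(j_rV_a,\s^*)$ with $(\U^d,\s)$, under which the sub-$\s$-module $j_rW_a$ becomes a subspace defined over $\U^\s$. One then shows the field of definition of $W$ lies in $\operatorname{dcl}(a,K,B,E)$ where $B$ is the chosen basis (taken independent from $c$ over $K\langle a\rangle$) and $E$ is the $\U^\s$-basis of the image of $j_rW_a$; this is the internality. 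Second, the $\s$-module structure on $j_rV_a$ does not come from ``jet spaces at $\s$-translates of the base point'' but from the correspondence: the Zariski locus $W'$ of $(a,\s a)$ projects generically finite-to-one onto $V$ and $V^\s$, so $j_rW'_{(a,\s a)}$ is the graph of an isomorphism $f:j_rV_a\to j_rV^\s_{\s a}$, and one sets $\s^*=f^{-1}\circ\s$. The finite-$SU$ hypothesis is used precisely to arrange $\s a\in K(a)^{alg}$ (after replacing $a$ by an interdefinable tuple), which is what makes this correspondence finite-to-finite.
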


Our goal in this chapter will be to extend the results discussed here to the case of (partial) differential-difference fields. In the case of a single derivation, this was done by Bustamante in \cite{Bu} and \cite{Bu3}. That is, consider the class of ordinary differential-difference fields $(K,\d,\s)$ where $K$ is a field of characteristic zero, $\d$ is a derivation, and $\s$ is an automorphism of $(K,\d)$ (i.e., an automorphism of $K$ such that $\d\s=\s\d$). Then the class of existentially closed ordinary differential-difference fields is axiomatizable, and the theory is denoted by $DCF_0A$. The axioms given in \cite{Bu} do not precisely generalize those of Fact~\ref{axacfa} above, and do not extend to give an axiomatization of existentially closed partial differential-difference fields $(K,\D,\s)$, where $\D$ is a set of $m>1$ commuting derivations on $K$ and $\s$ is an automorphism of $(K,\D)$. This will be accomplished in the next section, thereby introducing $DCF_{0,m}A$. Nevertheless, Bustamante does show that $DCF_0A$ exists, that its completions are described by the $\s$-field structure on $\QQ^{alg}$, that each completion eliminates imaginaries and is supersimple (i.e., for the appropriate definition of $\ind$ properties (1)-(6) stated above for $ACFA$, hold for $DCF_0A$), and satifies the canonical base property (and hence also the Zilber dichotomy). We will prove all these properties for $DCF_{0,m}A$, except that for the canonical base property (and Zilber dichotomy) we will restrict ourselves to \emph{finite dimensional} types.

\section{The model companion $DCF_{0,m}A$}

We work in the language $\LL_{m,\s}$ of differential rings with $m$ derivations expanded by a unary function symbol. We denote by $DF_{0,m,\s}$ the $\mathcal L_{m,\s}$-theory of differential fields of characteristic zero with $m$ commuting derivations and an automorphism that commutes with the derivations. We say that $(K,\D,\s)$ is a differential-difference field if $(K,\D,\s)\models DF_{0,m,\s}$. We are interested in axiomatizing the existentially closed differential-difference fields, and we will begin with a known geometric characterization of them.

First some notation. If $V$ is an affine $\D$-variety defined over $K$ then $V^\s$ is the affine $\D$-variety defined by $\{f^\s=0:f\in \I(V/K)_\D\}$, where $f^\s$ is the differential polynomial obtained by applying $\s$ to the coefficients of $f$. Here, when speaking of $\D$-varieties, we are implicitly working in a sufficiently large saturated $(\U,\D)\models DCF_{0,m}$, where our affine $\D$-varieties live in.

\begin{fact}[\cite{GR}, \S 2]\label{GR}
Let $(K,\D,\s)$ be a differential-difference field. Then, $(K,\D,\s)$ is existentially closed if and only if
\begin{enumerate}
\item [(i)] $(K,\D)\models DCF_{0,m}$
\item [(ii)] Suppose $V$ and $W$ are irreducible affine $\D$-varieties defined over $K$ such that $W\subseteq V\times V^\s$ and $W$ projects $\D$-dominantly onto both $V$ and $V^\s$. If $O_V$ and $O_W$ are nonempty $\D$-open sets of $V$ and $W$ respectively, defined over $K$, then there is a $K$-point $a\in O_V$ such that $(a,\s a)\in O_W$.
\end{enumerate} 
\end{fact}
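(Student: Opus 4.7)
The plan is to first check that existential closure of $(K,\D,\s)$ as a differential-difference field forces $(K,\D)\models DCF_{0,m}$. Given a $\D$-field extension $(L,\D)$ of $(K,\D)$ and a $\D$-polynomial system over $K$ with a solution in $L$, I would embed $L$ into a sufficiently large differentially closed field and extend $\s\colon K\to K$ to a $\D$-automorphism of this extension, using Fact~\ref{exx2} combined with a saturation/back-and-forth argument. The system is still solvable there, so existential closure of $(K,\D,\s)$ produces a solution in $K$. For condition~(ii), let $V,W,O_V,O_W$ be as in the hypotheses. I would pick a $\D$-generic point $(a,b)$ of $W$ over $K$ lying in $O_W$; this exists since $O_W$ is a nonempty $\D$-open subset of the irreducible $\D$-variety $W$. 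The $\D$-dominance of the two projections makes $a$ a $\D$-generic point of $V$ over $K$ (hence $a\in O_V$) and $b$ a $\D$-generic point of $V^\s$ over $K$. Since applying $\s$ to a $\D$-polynomial vanishing on $a$ produces one vanishing on $\s a$, the tuple $\s a$ is itself a $\D$-generic point of $V^\s$ over $K$; by uniqueness of $\D$-generic types in $DCF_{0,m}$, there is a $\D$-isomorphism $\phi\colon K\l \s a\r_\D\to K\l b\r_\D$ over $K$ sending $\s a\mapsto b$. Then $\s':=\phi\comp \s|_{K\l a\r_\D}$ is a $\D$-isomorphism extending $\s|_K$ and sending $a\mapsto b$. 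Extending $\s'$ to a $\D$-automorphism of a differential-difference extension of $(K,\D,\s)$ produces a model in which $a\in O_V$ and $(a,\s' a)=(a,b)\in O_W$; existential closure of $(K,\D,\s)$ now supplies the required $K$-point.

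\textbf{Reverse direction.} Assume (i) and (ii), and suppose a conjunction of atomic and negated atomic $\LL_{m,\s}$-formulas $\phi(x)$ over $K$ has a realisation $a_0$ in some differential-difference extension $(L,\D,\s)$ of $(K,\D,\s)$. I would rewrite $\phi$ as a conjunction $\psi(x_0,\dots,x_n)$ of atomic $\LL_m$-formulas over $K$ subject to the identifications $x_{i+1}=\s x_i$, and set $a_i:=\s^i a_0$. Let $V$ be the $\D$-locus of $(a_0,\dots,a_{n-1})$ over $K$, so that $V^\s$ is the $\D$-locus of $(a_1,\dots,a_n)$ over $K$, and let $W$ be the $\D$-locus of $(a_0,\dots,a_{n-1},a_1,\dots,a_n)$ inside $V\times V^\s$. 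The projections $W\to V$ and $W\to V^\s$ are $\D$-dominant by construction. Choose $O_V\subseteq V$ and $O_W\subseteq W$ to be the nonempty $\D$-open subsets defined over $K$ picked out by the $\D$-inequations appearing in $\psi$. Applying (ii) yields a $K$-point $(b_0,\dots,b_{n-1})\in O_V$ with $(b_0,\dots,b_{n-1},\s b_0,\dots,\s b_{n-1})\in O_W$; the defining equations $y_i=x_{i+1}$ on $W$ then force $\s b_i=b_{i+1}$, so $(b_0,\s b_0,\dots,\s^n b_0)$ satisfies $\psi$ and $b_0$ realises $\phi$ in $K$.

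\textbf{The main obstacle.} The principal technical step is the extension, in the forward direction, of the $\D$-isomorphism $\s'\colon K\l a\r_\D\to K\l b\r_\D$ to an \emph{automorphism} commuting with \emph{all} of the derivations in $\D$ of a large enough differential-difference field containing $(K,\D,\s)$. This requires combining the extendibility of $\D$-derivations to $\D$-closures (Fact~\ref{exx2}) with a standard saturation argument to run a back-and-forth producing an honest $\D$-automorphism. Once this is in place, the remaining verifications reduce to routine translations between quantifier-free $\LL_{m,\s}$-formulas and $\D$-loci of iterated $\s$-images, as in the analogous passages in the proofs of Theorem~\ref{charactheo} and the Pierce-Pillay axioms.
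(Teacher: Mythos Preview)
Your argument follows the same strategy as the paper's: in the forward direction pick a $\D$-generic $(a,b)$ of $W$, extend $\s$ to a $\D$-automorphism of the universal domain sending $a\mapsto b$, and invoke existential closure; in the reverse direction take $\D$-loci of $(a,\s a,\dots,\s^{r-1}a)$ and $(a,\dots,\s^{r}a)$ and apply~(ii).

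Two small corrections. First, your detour through an auxiliary point ``$\s a$'' is both unnecessary and, as written, ill-defined: $\s$ is only given on $K$, so expressions like $\s|_{K\l a\r_\D}$ have no meaning before you extend $\s$. The clean statement is that $\I(b/K)_\D=\I(V^\s/K)_\D$ is the image of $\I(a/K)_\D=\I(V/K)_\D$ under applying $\s$ to coefficients, so by quantifier elimination $tp_\D(b/K)=\s\big(tp_\D(a/K)\big)$, and homogeneity of the saturated $(\U,\D)$ (Fact~\ref{poi}(ii)) yields directly a $\D$-automorphism $\s'$ of $\U$ extending $\s$ with $\s'(a)=b$. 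Second, Fact~\ref{exx2} is about extending an additional commuting \emph{derivation}, not an automorphism, and plays no role here; the extension of $\s$ to $\U$ is pure model theory of saturated structures, which your phrase ``saturation/back-and-forth'' already captures.

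A cosmetic difference: the paper treats only conjunctions of atomic formulas in the reverse direction (taking $O_V=V$, $O_W=W$) and relies implicitly on the standard trick of replacing $g\neq 0$ by $gy=1$; your use of $O_V,O_W$ to absorb inequations is equally valid.
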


\begin{proof}
Formally this is done in \cite{GR} for the case when $O_V=V$ and $O_W=W$. The argument there can easily be adapted to the more general statement presented above. Nevertheless, we give some details for the sake of completeness.

Suppose $(K,\D,\s)$ is existentially closed and $V$, $W$, $O_V$ and $O_W$ are as in condition~(ii). Let $(c,d)$ be a $\D$-generic point of $W$ over $K$; that is, $I(c,d/K)_\D=I(W/K)_\D$. Clearly $(c,d)\in O_W$ and, since $W$ projects $\D$-dominantly onto $V$ and $V^\s$, $c$ and $d$ are generic points of $V$ and $V^\s$, respectively, over $K$. Thus, $c\in O_V$. Because $DCF_{0,m}$ has quantifier elimination we have that $tp_{\D}(d/K)=\sigma(tp_\D(c/K))$, where $tp_\D(a/K)$ denotes the type of $a$ over $K$ in the language of differential rings. Hence, there is an automorphism $\s'$ of $\U$ extending $\s$ such that $\s'(c)=d$. Since $(K,\D,\s)$ is existentially closed we can find a point in $K$ with the desired properties.

Now suppose conditions (i) and (ii) are satisfied. Let $\phi(x)$ be a conjunction of atomic $\mathcal L_{m,\s}$-formulas over $K$. Suppose $\phi$ has a realisation $a$ in some differential-difference field $(F,\D,\s)$ extending $(K,\D,s)$. We may assume, and we do, that $(F,\D)$ is a $\D$-subfield of our ambient differentially closed field $(\U,\D)$. Let $$\phi(x)=\psi(x,\s x,\dots,\s^r x)$$ where $\psi$ is a conjunction of atomic $\mathcal L_{m}$-formulas over $K$ and $r>0$. Let $b=(a,\s a,\dots,\s^{r-1}a)$ and $V$ be the $\D$-locus of $b$ over $K$. Let $W$ be the $\D$-locus of $(b,\s b)$ over $K$. Let $$\chi(x_0,\dots,x_{r-1},y_0,\dots, y_{r-1}):= \psi(x_0,\dots,x_{r-1},y_{r-1})\land \left(\land_{i=1}^{r-1} x_i=y_{i-1}\right)$$ then $\chi$ is realised by $(b,\s b)$. Using the assumption that $(K,\D)\models DCF_{0,m}$, and since $(b,\s b)$ is a $\D$-generic point of $W$, $b$ is a $\D$-generic points of $V$ and $\s b$ is a $\D$-generic point of $V^\s$, over $K$, we have that $W$ projects $\D$-dominantly onto both $V$ and $V^\s$. Applying (2) with $V=O_V$ and $W=O_W$, there is $c$ in $V$ such that $(c,\s c)\in W$. Let $c=(c_0,\dots,c_{r-1})$, then $(c_0,\dots,c_{r-1},\s c_0,\dots, \s c_{r-1})$ realises $\chi$. Thus, $(c_0,\s c_0,\dots,\s^r c_0)$ realises $\psi$. Hence, $c_0$ is a tuple from $K$ realising $\phi$. This proves $(K,\D,\s)$ is existentially closed.
\end{proof}

\begin{remark}\label{othere}
As can be seen from the proof, it would have been equivalent in condition~(2) to take $O_V=V$ and $O_W=W$. Also note that, under the convention that  $DCF_{0,0}$ is $ACF_0$, when $m=0$ this is exactly the geometric axiomatization of $ACFA_0$.
\end{remark}

Condition (ii) is not, on the face of it, first order as it is not known if differential irreducibility and differential dominance are definable properties. We have seen this difficulty before, in Chapter 4, and we remedy the situation similarly; by working with characteristic sets of prime differential ideals. Recall that for $\L$ a characteristic set of a prime $\D$-ideal, $\V^*(\L):=\V(\L)\setminus\V(H_\L)$ (see definition \ref{sepin}).

\begin{theorem}\label{mainow}
Let $(K,\D,\s)$ be a differential-difference field. Then, $(K,\D,\s)$ is existentially closed if and only if
\begin{enumerate}
\item $(K,\D)\models DCF_{0,m}$
\item Suppose $\L$ and $\Ga$ are characteristic sets of prime differential ideals of $K\{x\}$ and $K\{x,y\}$, respectively, such that $$\V^*(\Ga)\subseteq \V(\L)\times \V(\L^\s).$$ Suppose $O$ and $Q$ are nonempty $\D$-open subsets of $\V^*(\L)$ and $\V^*(\L^\s)$ respectively, defined over $K$, such that $O\subseteq \pi_x(\V^*(\Ga))$ and $Q\subseteq \pi_y(\V^*(\Ga))$. Then there is a $K$-point $a\in \V^*(\L)$ such that $(a,\s a)\in \V^*(\Ga)$.
\end{enumerate}
\end{theorem}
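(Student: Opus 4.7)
The plan is to reduce this theorem to the geometric characterization of Fact~\ref{GR} by translating between characteristic sets of prime $\D$-ideals and irreducible affine $\D$-varieties via Proposition~\ref{basic} and Rosenfeld's criterion (Fact~\ref{rosen}), in direct analogy with the proof of Theorem~\ref{characprime}.

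For the forward direction, assume $(K,\D,\s)$ is existentially closed; then condition (1) is immediate. For (2), given $\L$ and $\Ga$ as hypothesized, I would set $V := \V([\L]:H_\L^\infty)$ and $W := \V([\Ga]:H_\Ga^\infty)$. By Rosenfeld's criterion and Proposition~\ref{basic}, $V$ and $W$ are irreducible affine $\D$-varieties over $K$ with $\V^*(\L) = V \setminus \V(H_\L)$ and $\V^*(\Ga) = W \setminus \V(H_\Ga)$; moreover, since $\s$ commutes with $\D$ and preserves the canonical ranking on algebraic indeterminates, $\L^\s$ is a characteristic set of $\I(V^\s/K)_\D$, so that $\V^*(\L^\s) = V^\s \setminus \V(H_{\L^\s})$. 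A $\D$-generic point $(c,d)$ of $W$ over $K$ lies in $\V^*(\Ga) \subseteq \V(\L) \times \V(\L^\s)$, and the hypotheses $O \subseteq \pi_x(\V^*(\Ga))$ and $Q \subseteq \pi_y(\V^*(\Ga))$, combined with the fact that $V$ and $V^\s$ are the irreducible components of $\V(\L)$ and $\V(\L^\s)$ meeting $O$ and $Q$ respectively, force $c$ and $d$ to be $\D$-generic points of $V$ and $V^\s$. Hence $W \subseteq V \times V^\s$ and both projections are $\D$-dominant. Condition (ii) of Fact~\ref{GR}, applied with $O_V = O$ and $O_W = \V^*(\Ga)$, then yields the required $K$-point.

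For the converse, assume (1) and (2); by Fact~\ref{GR} together with Remark~\ref{othere} it suffices to exhibit, for every pair of irreducible affine $\D$-varieties $V$ and $W \subseteq V \times V^\s$ over $K$ with $W$ projecting $\D$-dominantly onto both $V$ and $V^\s$, a $K$-point $a \in V$ with $(a,\s a) \in W$. I would take $\L$ and $\Ga$ to be characteristic sets of $\I(V/K)_\D$ and $\I(W/K)_\D$ respectively; Proposition~\ref{basic} then gives $\V^*(\Ga) \subseteq W \subseteq V \times V^\s \subseteq \V(\L) \times \V(\L^\s)$. By $\D$-dominance of the projections, together with quantifier elimination for $DCF_{0,m}$ (Fact~\ref{onty}(iii)) and the irreducibility of $V$ and $V^\s$, the $\D$-constructible projections $\pi_x(\V^*(\Ga))$ and $\pi_y(\V^*(\Ga))$ contain nonempty $\D$-open subsets $O \subseteq \V^*(\L)$ and $Q \subseteq \V^*(\L^\s)$ defined over $K$. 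Condition (2) then produces the desired point.

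The main technical content, as in Theorem~\ref{characprime}, is Proposition~\ref{basic}, which lets one move cleanly between a characteristic set $\L$ of a prime $\D$-ideal and the unique irreducible component $V = \V([\L]:H_\L^\infty)$ of $\V(\L)$ meeting $\V^*(\L)$. Beyond this the only genuinely new ingredient is the routine observation that $\L^\s$ is again a characteristic set, of $\I(V^\s/K)_\D$; this holds because $\s$ commutes with $\D$ and induces a ranking-preserving automorphism of $K\{x\}_\D$. I therefore expect no substantive obstacle beyond these two verifications.
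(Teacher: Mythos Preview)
Your proposal is correct and follows essentially the same route as the paper's proof: both directions go through Fact~\ref{GR} via Proposition~\ref{basic}, using $V=\V([\L]:H_\L^\infty)$, $W=\V([\Ga]:H_\Ga^\infty)$, and the observation that $\L^\s$ is a characteristic set of $\I(V^\s/K)_\D$. The only cosmetic difference is that in the forward direction the paper argues $W\subseteq V\times V^\s$ by taking $\D$-closures and comparing irreducible components directly, while you phrase the same step in terms of a $\D$-generic point $(c,d)$ of $W$; the choice $O_V=\V^*(\L)$ versus your $O_V=O$ is likewise immaterial.
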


Here, if $\L=\{f_1,\dots,f_s\}$, then $\L^\s=\{f_1^\s,\dots,f_s^\s\}$, and $\pi_x$ and $\pi_y$ denote the canonical projections from $\V(\L)\times\V(\L^\s)$ to $\V(\L)$ and $\V(\L^\s)$, respectively.

\begin{proof}
Suppose $(K,\D,\s)$ is existentially closed, and let $\L$, $\Ga$, $O$ and $Q$ be as in condition~(2). Then $\L$ is a characteristic set of the prime differential ideal $\P=[\L]:H_\L^\infty$ and, by Proposition~\ref{basic}, $\V^*(\L)=\V(\P)\setminus \V(H_\L)$. So $O_V:=\V^*(\L)$ is a nonempty $\D$-open subset of the irreducible affine $\D$-variety $V:=\V(\P)$. Similarly, $O_W:=\V^*(\Ga)$ is a nonempty $\D$-open subset of the irreducible affine $\D$-variety $W:=\V\left([\Ga]:H_\Ga^\infty\right)$. Note that both $O_V$ and $O_W$ are defined over $K$.

Next we show that $W\subseteq V\times V^\s$. By Proposition~\ref{basic} $$W\setminus \V(H_\Ga)=\V^*(\Ga)\subseteq \V(\L)\times \V(\L^\s),$$ so that by taking $\D$-closures $W\subseteq \V(\L)\times\V(\L^\s)$. Since $W$ is irreducible it must be contained in some $X\times Y$ where $X$ and $Y$ are irreducible components of $\V(\L)$ and $\V(\L^\s)$, respectively. Hence $O\subseteq\pi_x(W)\subseteq X$. On the other hand, $O\subseteq O_V\subset V$, and so $V=X$. Similarly, since $V^\s=\V([\L^\s]:H_{\L^\s}^{\infty})$ and working with $Q$ rather than $O$, we get $Y=V^\s$. Therefore, $W\subseteq V\times V^\s$. Since $O\subseteq \pi_x(W)$ and $Q\subseteq \pi_y(W)$, $W$ projects $\D$-dominantly onto both $V$ and $V^\s$. Applying (ii) of Fact~\ref{GR} to $V$, $W$, $O_V$, $O_W$, we get a $K$-point $a\in O_V$ such that $(a,\s a)\in O_W$, as desired.

For the converse we assume that (2) holds and aim to prove condition (ii) of Fact~\ref{GR}. In fact, it suffices to prove this statement in the case when $O_V=V$ and $O_W=W$ (see Remark~\ref{othere}). We thus have irreducible affine $\D$-varieties $V$ and $W\subseteq V\times V^\s$ such that $W$ projects $\D$-dominantly onto both $V$ and $V^\s$, and we show that there is a $K$-point $a\in V$ such that $(a,\s a)\in W$. Let $\L$ and $\Ga$ be characteristic sets of $\I(V/K)_\D$ and $\I(W/K)_\D$, respectively. Then, by Proposition~\ref{basic} $$\V^*(\Ga)=W\setminus\V(H_\Ga)\subseteq V\times V^\s\subseteq \V(\L)\times\V(\L^\s).$$ Since $W$ projects $\D$-dominantly onto $V$ and $V^\s$, $\V^*(\Ga)$ projects $\D$-dominantly onto both $V$ and $V^\s$. Thus, by quantifier elimination for $DCF_{0,m}$ and the assumption $(K,\D)\models DCF_{0,m}$, there are nonempty $\D$-open sets $O$ and $Q$ of $\V^*(\L)$ and $\V^*(\L^\s)$ respectively, defined over $K$, such that $O\subseteq \pi_x(\V^*(\Ga))$ and $Q\subseteq \pi_y(\V^*(\Ga))$. We are in the situation of condition~(2), and there is a $K$-point $a\in \V^*(\L)\subseteq V$ such that $(a,\s a)\in \V^*(\Ga)\subseteq W$.
\end{proof}

Because being a characteristic set of a prime $\D$-ideal is a definable property (see Fact\ref{defchar}), statement (2) of Theorem~\ref{mainow} is first-order expressible. We thus have a first order axiomatization of the existentially closed differential-difference fields. That is,

\begin{corollary}\label{mc}
The theory $DF_{0,m,\s}$ has a model companion. 
\end{corollary}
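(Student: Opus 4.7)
The plan is to verify that condition~(2) of Theorem~\ref{mainow} can be written as a scheme of first order $\mathcal{L}_{m,\sigma}$-sentences, so that the conjunction of $DCF_{0,m}$ with this scheme axiomatizes the class of existentially closed models of $DF_{0,m,\sigma}$. Since being existentially closed is a property that, when axiomatizable, characterises the model companion, this will yield the corollary.

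First I would fix the syntactic data. For each tuple of natural numbers recording the number of $\D$-polynomials in $\Lambda$ and $\Gamma$, the orders of their leaders, and the degrees of the algebraic indeterminates appearing in them, one has a definable family of finite sets of $\D$-polynomials $\Lambda(u,x)$ and $\Gamma(u,x,y)$ parameterised by the tuple of coefficients $u$. Similarly, for each choice of data describing nonempty $\D$-open subsets $O$ and $Q$ cut out by inequations of bounded order and degree, we have families $O(u,x)$ and $Q(u,y)$. These are finitely many syntactic data, and the whole statement~(2) of Theorem~\ref{mainow} is a countable disjunction of instances ranging over these data.

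Next I would translate each instance into a single first order $\mathcal{L}_{m,\sigma}$-sentence. The antecedent of the instance is a conjunction of the following conditions on the coefficient tuple $u$: (a) $\Lambda(u,x)$ is a characteristic set of a prime $\D$-ideal of $K\{x\}_\D$; (b) $\Gamma(u,x,y)$ is a characteristic set of a prime $\D$-ideal of $K\{x,y\}_\D$; (c) every point in $\V^*(\Gamma(u,\cdot,\cdot))$ is contained in $\V(\Lambda(u,\cdot))\times \V(\Lambda^\sigma(u,\cdot))$; (d) the $\D$-open sets $O(u,x)$ and $Q(u,y)$ are nonempty and are contained in the corresponding projections of $\V^*(\Gamma(u,\cdot,\cdot))$. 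Conditions (a) and (b) are first order by Fact~\ref{defchar}; condition (c) is a universal first order sentence since it asks the vanishing of $\Lambda$ on the $x$-coordinate and of $\Lambda^\sigma$ on the $y$-coordinate, and $\sigma$ is a function symbol of the language; condition (d) is first order because $\D$-openness, non-emptiness, and inclusion of $\D$-constructible sets are expressible by $\mathcal{L}_m$-formulas (this uses only quantifier elimination for $DCF_{0,m}$ applied parametrically, together with Tressl's uniform bounds as in Fact~\ref{defchar}). The conclusion is $\exists a\, \bigl[a\in \V^*(\Lambda(u,\cdot)) \wedge (a,\sigma a)\in \V^*(\Gamma(u,\cdot,\cdot))\bigr]$, which is an existential $\mathcal{L}_{m,\sigma}$-formula in $u$.

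Assembling the pieces, let $T$ be the $\mathcal{L}_{m,\sigma}$-theory consisting of $DF_{0,m,\sigma}$, the axioms of $DCF_{0,m}$ (in the reduct), and, for each choice of syntactic data as above, the universal closure of the implication ``antecedent implies conclusion''. I claim $T$ is the model companion of $DF_{0,m,\sigma}$. Models of $T$ satisfy conditions (1) and (2) of Theorem~\ref{mainow}, hence are existentially closed. Conversely, every existentially closed model of $DF_{0,m,\sigma}$ satisfies both conditions by the same theorem, and therefore models $T$. Since $DF_{0,m,\sigma}$ is inductive and every differential-difference field embeds into an existentially closed one (by a standard chain construction using Fact~\ref{exx2} suitably adapted to the difference setting, which embeds any $(K,\D,\sigma)$ into a model of $T$), this $T$ is the model companion. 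The only nontrivial syntactic step is the passage from characteristic sets to first-order definability in (a) and (b), and that is precisely what Fact~\ref{defchar} supplies; this is the heart of the argument and the reason our reformulation of condition~(2) of Fact~\ref{GR} via characteristic sets is essential.
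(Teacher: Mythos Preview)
Your proposal is correct and follows essentially the same approach as the paper: the paper's argument is the single sentence preceding the corollary, observing that Fact~\ref{defchar} makes condition~(2) of Theorem~\ref{mainow} first-order expressible, and you have simply unpacked this in more detail. One small remark: your appeal to Fact~\ref{exx2} for embedding into existentially closed models is unnecessary and slightly misplaced (that fact is about extending $\D$-derivations, not about difference structure); the existence of existentially closed extensions for the inductive theory $DF_{0,m,\sigma}$ is a general model-theoretic fact requiring no special input.
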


Henceforth we denote this model companion by $DCF_{0,m}A$.

\section{Basic model theory of $DCF_{0,m}A$}

In this section we present some of the model theoretic properties of the theory $DCF_{0,m}A$. Many of these results are consequences of the work of Chatzidakis and Pillay in \cite{CP} or more or less immediate adaptations of the arguments from \cite{Bu} or \S5 of \cite{MS}.

Let $(K,\D,\s)$ be a differential-difference field and $A\subseteq K$. The differential-difference field generated by $A$, denoted by $\QQ\l A\r_{\D,\s}$, is the smallest differential-difference subfield of $K$ containing $A$. Note that $\QQ\l A\r_{\D,\s}$ is simply the subfield of $K$ generated by $$\{\d_m^{e_m}\cdots \d_1^{e_1}\s^k a : \, a\in A, e_i< \omega, k\in \mathbb{Z}\}.$$ If $k$ is a differential-difference subfield of $K$, we write $k\l B\r_{\D,\s}$ instead of $\QQ\l k\cup B\r_{\D,\s}$.

\begin{proposition}\label{propert}
Let $(K,\D,\s)$ and $(L,\D',\s')$ be models of $DCF_{0,m}A$.
\begin{itemize}
\item [(i)] Suppose $K$ and $L$ have a common algebraically closed differential-difference subfield $F$, then $(K,\D,\s)\equiv_F(L,\D',\s')$. In particular, the completions of $DCF_{0,m}A$ are determined by the difference field structure on $\QQ^{alg}$.
\item [(ii)] If $A\subseteq K$, then $\operatorname{acl}(A)=\QQ\l A\r_{\D,\s}^{alg}$.
\item [(iii)] Suppose $F$ is a differential-difference subfield of $K$ and $a$, $b$ are tuples from $K$. Then $tp(a/F) = tp(b/F)$ if and only if there is an $F$-isomorphism from $(F\l a\r_{\D,\s}^{alg},\D,\s)$ to $(F\l b\r_{\D,\s}^{alg},\D,\s)$ sending $a$ to $b$.
\end{itemize}
\end{proposition}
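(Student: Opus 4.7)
The plan is to prove the three parts together by establishing a single \emph{extension lemma} and then feeding it into a back-and-forth argument. The extension lemma asserts: if $K,L \models DCF_{0,m}A$ are sufficiently saturated, $F_0\subseteq K$ and $F_0'\subseteq L$ are algebraically closed differential-difference subfields, $\phi_0: F_0\to F_0'$ is a differential-difference isomorphism, and $c$ is a tuple from $K$, then $\phi_0$ extends to a differential-difference embedding $\phi: F_0\l c\r_{\D,\s}^{alg}\to L$.

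To prove the extension lemma I would transfer the problem to $L$ and solve it using the geometric form of the axioms (Fact~\ref{GR}). Consider the $\s$-orbit $(c_n)_{n\in\ZZ}$ where $c_n = \s^n c$. For each $n$, let $V_n$ denote the $\D$-locus over $F_0$ of $(c_{-n},\dots,c_n)$, and $W_n$ the $\D$-locus over $F_0$ of $(c_{-n},\dots,c_{n+1})$. Then $V_n$ and $W_n$ are irreducible affine $\D$-varieties over $F_0$; applying $\phi_0$ transports them to irreducible affine $\D$-varieties $V_n', W_n'$ over $F_0'$, and the natural inclusion realises $W_n'\subseteq V_n'\times (V_n')^{\s}$ with $\D$-dominant projections onto each factor (dominance holds because the shifted tuples are each $\D$-generic in their respective loci). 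Condition~(ii) of Fact~\ref{GR}, applied in $L$, yields a point $a_n\in V_n'(L)$ with $(a_n,\s a_n)\in W_n'(L)$. The collection of $\LL_{m,\s}$-formulas over $F_0'$ expressing, for each $n$, that $(x_{-n},\dots,x_n)$ is a $\D$-generic of $V_n'$ and $\s x_i = x_{i+1}$ is finitely satisfiable by the $a_n$'s, hence by saturation realised simultaneously in $L$; the realisation of $x_0$ gives an $F_0'$-point $c'\in L$ such that the assignment $c_n\mapsto \s^n c'$ induces a differential-difference isomorphism $F_0\l c\r_{\D,\s}\to F_0'\l c'\r_{\D,\s}$ extending $\phi_0$. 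Taking algebraic closures and using uniqueness of field-theoretic algebraic closure yields the required extension $\phi$.

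With the extension lemma in hand, part~(i) follows by a standard back-and-forth: given sufficiently saturated $(K,\D,\s)$ and $(L,\D',\s')$ sharing the algebraically closed differential-difference subfield $F$, one builds a chain of partial isomorphisms between algebraically closed differential-difference subfields of $K$ and $L$, fixing $F$, using the extension lemma alternately on each side to include fixed dense subsets. The union gives an isomorphism of elementary substructures (by Tarski--Vaught applied via the model-companion structure), witnessing $(K,\D,\s)\equiv_F(L,\D',\s')$; specialising $F$ to $\QQ^{alg}$ with its $\s$-structure classifies the completions. For part~(iii), the right-to-left implication is an instance of part~(i) with base field $F\l a\r_{\D,\s}^{alg}$, where the hypothesised $F$-isomorphism provides the common algebraically closed differential-difference subfield; the left-to-right direction follows because conjugate tuples generate isomorphic differential-difference fields, and these isomorphisms extend to the algebraic closures. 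Part~(ii) then has the immediate inclusion $\QQ\l A\r_{\D,\s}^{alg}\subseteq\operatorname{acl}(A)$, while the reverse inclusion follows from part~(iii): if $b\notin \QQ\l A\r_{\D,\s}^{alg}$, then by applying the extension lemma repeatedly one builds infinitely many distinct $A$-conjugates of $b$, contradicting $b\in\operatorname{acl}(A)$.

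The main obstacle will be the extension lemma, and within it the verification of the hypotheses of the geometric axioms: one must check irreducibility of the $\D$-loci $V_n, W_n$ over $F_0$ (guaranteed by working with generic points and using that $F_0$ is algebraically closed, so differential varieties irreducible over $F_0$ remain so absolutely, by the analogue of Fact~\ref{diffal}-based results), as well as $\D$-dominance of the projections. The infinite $\s$-orbit case is handled uniformly by the compactness/saturation step, but this requires arranging the formulas describing genericity of each finite chunk to cohere, which is where the care in defining $V_n$ and $W_n$ as loci over $F_0$ (rather than over some intermediate field that changes with $n$) becomes essential.
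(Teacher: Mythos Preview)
Your approach is genuinely different from the paper's. The paper does not build an extension lemma from scratch; for (i) and (ii) it simply invokes the general results of Chatzidakis--Pillay on model companions $TA$ of automorphism-expansions of stable theories with quantifier elimination and elimination of imaginaries, specialised to $T = DCF_{0,m}$. For the backward direction of (iii) the paper uses a transport-of-structure trick rather than re-embedding via the axioms: extend the given $F$-isomorphism $f$ to a $\D$-automorphism $\rho$ of a large saturated $(E,\D)\models DCF_{0,m}$, set $L=\rho(K)$ with conjugated difference structure $\s'=\rho\s\rho^{-1}$, observe that $F\l b\r_{\D,\s}^{alg}$ is then a common algebraically closed differential-difference substructure of $(K,\D,\s)$ and $(L,\D,\s')$, and apply (i). Your route is more self-contained and makes the role of the geometric axioms explicit; the paper's is shorter but outsources the real work.

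Two steps in your outline need repair. First, the claim that the partial type asserting $\D$-genericity of $(x_{-n},\dots,x_n)$ in $V_n'$ together with $\s x_i=x_{i+1}$ is ``finitely satisfiable by the $a_n$'s'' is not right as stated: the bare axiom only produces \emph{some} point $a_n\in V_n'(L)$, not a generic one, and without genericity the assignment $c\mapsto c'$ need not be injective. You must use the open-set version of Fact~\ref{GR}, choosing $O_V$ and $O_W$ to avoid each prescribed proper $\D$-closed subset, and then run compactness over those choices. Second, ``taking algebraic closures and using uniqueness of field-theoretic algebraic closure'' does not suffice: the $\D$-structure extends uniquely to the algebraic closure in characteristic zero, but the $\s$-structure does not, so the isomorphism $F_0\l c\r_{\D,\s}\cong F_0'\l c'\r_{\D,\s}$ need not extend to one between their algebraic closures inside $K$ and $L$. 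You either have to fold the algebraic elements and their $\s$-orbits into the compactness argument alongside the $c_n$'s, or appeal to amalgamation of difference fields over an algebraically closed base --- which is precisely the content the paper imports from Chatzidakis--Pillay.
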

\begin{proof}
In \cite{CP}, Chatzidakis and Pillay prove the following general results. Suppose $T$ is a stable theory with elimination of imaginaries and of quantifiers, and $T_0$ is the theory whose models are structures $(\M,\s)$ where $\M\models T$ and $\s\in Aut(\M)$. Assuming that $T_0$ has a model companion $TA$, they show that:
\begin{itemize}
\item If $(\M_1,\s_1)$ and $(\M_2,\s_2)$ are models of $TA$, containing a common substructure $(A,\s)$ such that $A=\operatorname{acl}_T(A)$ (i.e., $A$ is algebraically closed in the sense of $T$), then $(\M_1,\s_1)\equiv_A (\M_2,\s_2)$.
\item If $(\M,\s)\models TA$ and $A\subseteq M$, then $$\operatorname{acl}_{TA}(A)=\operatorname{acl}_T(\s^k(A):k\in \ZZ).$$
\end{itemize}
Hence, (i) and (ii) follow from this general results by specializing $T$ to $DCF_{0,m}$ and the fact that we have already shown that the model companion $TA=DCF_{0,m}A$ exists (Corollary~\ref{mc}).

We now prove (iii). Suppose $tp(a/F)=tp(b/F)$, working in a sufficiently saturated $(\U,\D,\s)\models DCF_{0,m}A$ extension of $(K,\D,\s)$, we get $\phi\in Aut(\U/F)$ such that $\phi(a)=b$. Then, the restriction $\displaystyle \phi|_{F\l a\r_{\D,\s}^{alg}}$ yields the desired isomorphism. Conversely, suppose $f$ is an $F$-isomorphism from $(F\l a\r_{\D,s}^{alg},\D,\s)$ to $(F\l b\r_{\D,\s}^{alg},\D,\s)$ such that $f(a)=b$. Let $(E,\D)\models DCF_{0,m}$ be a sufficiently large saturated extension of $(K,\D)$. Then there is a $\D$-automorphism $\rho$ of $E$ extending $f$. Let $L=\rho(K)$. Then $\rho$ induces a difference structure $\s'$ on $L$ such that $\rho$ is now an isomorphism between $(K,\D,\s)$ and $(L,\D,\s')$. Note that $F\l b\r_{\D,\s}^{alg}$ is a common substructure of $(K,\D,\s)$ and $(L,\D,\s')$. If $\phi(x)$ is an $\LL_{m,\s}$-formula over $F$, we get
$$K\models \phi(a) \iff L\models \phi(b) \iff K\models \phi(b)$$
where the second equivalence follows from part (i). We have shown that $tp(a/F)=tp(b/F)$, as desired.
\end{proof}

Let $(K, \D, \s)$ be a differential-difference field. We denote by $K^\s$ the \emph{fixed field} of $K$, that is $K^\s =\{a\in K:\, \s a=a\}$, and by $K^\D$ we denote the \emph{field of (total) constants} of $K$, that is $K^\D=\{a\in K: \, \d a=0 \text{ for all } \d\in \D\}$. We let $\C_K$ be the field $K^\s\cap K^\D$.

More generally, if $\D'$ is a set of linearly independent elements of the $\C_K$-vector space $\operatorname{span}_{\C_K}\D$, we let $K^{\D'}$ be the field of \emph{$\D'$-constants} of $K$, that is $K^{\D'} =\{a\in K:\, \d a=0\text{ for all } \d\in \D'\}$. In particular, $K^\emptyset = K$ and if $\D'$ is a basis of $\operatorname{span}_{\C_K}\D$ then $K^{\D'}=K^\D$. Note that both $K^\s$ and $K^{\D'}$ are differential-difference subfields of $(K,\D,\s)$. Also, $(K,\D',\s)$ is itself a differential-difference field.

In the following proposition $DCF_{0,0}A$ stands for $ACFA_0$.

\begin{proposition}
Let $(K, \D, \s)\models DCF_{0,m}A$, $\D_1$ and $\D_2$ disjoint sets such that $\D':=\D_1\cup\D_2$ forms a basis of $\operatorname{span}_{\C_K}\D$, and $r = |\D_1|$. Then 
\begin{enumerate}
\item $(K,\D',\s)\models DCF_{0,m}A$
\item $(K,\D_1,\s)\models DCF_{0,r}A$
\item $(K^{\D_2},\D_1,\s)\models DCF_{0,r}A$
\item $ K^{\D_2} \cap K^\s$ is a pseudofinite field. 
\end{enumerate}
\end{proposition}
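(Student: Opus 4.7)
The plan is to leverage the analogous results for $DCF_{0,m}$ (without the automorphism), using the geometric characterisation of $DCF_{0,m}A$ at key points.

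For (1), note that since $\D'$ is a $\C_K$-basis of $\operatorname{span}_{\C_K}\D$, there is a transition matrix between $\D$ and $\D'$ whose entries lie in $\C_K$. Because $\C_K\subseteq K^\s\cap K^\D$, every derivation in $\D'$ commutes with every other derivation in $\D'$ and with $\s$, and the structures $(K,\D,\s)$ and $(K,\D',\s)$ are quantifier-free bi-interpretable over $\C_K$. Thus $(K,\D')\models DCF_{0,m}$ by the discussion in the preliminary chapter, and existential closure is preserved: any existential $\LL_{m,\s}$-formula about $\D'$ translates to one about $\D$, and a realisation in $K$ for the latter yields one for the former. In the remaining parts we may therefore assume $\D=\D'$, and regard $\D_1$ simply as a subset of the basis $\D$.

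The main technical work is in (2). First, $(K,\D_1)\models DCF_{0,r}$ because differential closedness persists under restriction to any $K^\D$-linearly independent subset of the derivations. To show existential closedness of the difference structure, let $\phi(x)$ be a conjunction of atomic $\LL_{r,\s}$-formulas over $K$ with a realisation in some differential-difference extension $(L,\D_1,\s)$ of $(K,\D_1,\s)$. The hard step is to extend $(L,\D_1,\s)$ to a differential-difference field $(L',\D,\s)$ extending $(K,\D,\s)$; once accomplished, $\phi$ becomes an existential $\LL_{m,\s}$-formula realised in $L'$, so existential closedness of $(K,\D,\s)$ yields a realisation in $K$. To construct $L'$, embed $L$ into a sufficiently saturated $\D_1$-closed extension of $K$ and extend each derivation in $\D_2$ from $K$ to this ambient field using the extension criterion of Fact~\ref{exx2}. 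The compatibility requirements (commutativity with $\D_1$, pairwise commutativity among the new derivations, and commutativity with $\s$) are each governed by systems of $\D_1$-algebraic equations whose solvability is guaranteed by $\D_1$-closedness. Verifying all these compatibilities simultaneously, and in particular commutativity with $\s$, is the main obstacle: it requires a careful inductive extension one derivation at a time together with a consistency argument for the $\s$-compatibility of each partial extension.

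For (3), observe that $(K^{\D_2},\D_1)\models DCF_{0,r}$ is a standard consequence of the fact that the $\D_2$-constants of a $\D$-closed field form a $\D_1$-closed field, and that $\s$ restricts to an automorphism of $K^{\D_2}$ since it commutes with $\D_2$. For existential closedness, one applies the geometric criterion of Theorem~\ref{mainow} to $(K^{\D_2},\D_1,\s)$: any configuration of characteristic sets $\L,\Ga$ over $K^{\D_2}$ for prime $\D_1$-ideals may be promoted, by adjoining the defining equations of $\D_2$-constancy, to data witnessing an instance of the geometric criterion for $(K,\D,\s)$, and the resulting $K$-point will automatically lie in $K^{\D_2}$. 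Finally, (4) reduces to the statement that the fixed field of any model of $DCF_{0,r}A$, viewed as a pure field with automorphism, is pseudofinite; this is established in the ordinary case by Bustamante and extends to the partial setting by the same argument using the geometric axioms. Applied to the model $(K^{\D_2},\D_1,\s)$ produced by (3), this yields that $K^{\D_2}\cap K^\s$ is pseudofinite.
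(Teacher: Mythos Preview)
Your argument for (1) is fine and essentially matches the paper.

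The real problem is in (2). You outline an approach via existential closedness: given a realisation of $\phi$ in some $(L,\D_1,\s)$, extend each $D\in\D_2$ from $K$ to $L$ so as to obtain a model of $DF_{0,m,\s}$ extending $(K,\D,\s)$. Fact~\ref{exx2} lets you extend $D$ as a $\D_1$-derivation, and pairwise commutativity among the new derivations can indeed be arranged. But the condition $D\s=\s D$ on the extension is \emph{not} of the form covered by Fact~\ref{exx2}: it is not a system of $\D_1$-polynomial equations on the value $D(a)$ at a generator, but a functional equation that links $D(\s^k a)$ for all $k$. Concretely, once you choose $D(a)=b$, you are forced to set $D(\s a)=\s(b)$, and there is no guarantee that this is consistent with the $\D_1$-ideal of $\s a$ over $K\{a\}_{\D_1}$. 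You acknowledge this as ``the main obstacle'' but do not resolve it; as written, this is a genuine gap. The paper avoids the issue entirely by working directly with the geometric characterisation (Fact~\ref{GR}): an instance of the axioms for $(K,\D_1,\s)$ involves $\D_1$-irreducible $\D_1$-varieties $V,W$ with $\D_1$-dominant projections; since a $\D_1$-irreducible $\D_1$-closed set is also $\D$-irreducible (a general form of Kolchin's Irreducibility Theorem, cited from \cite{Fre}) and $\D_1$-dominance implies $\D$-dominance, the same data is already an instance of the axioms for $(K,\D,\s)$. This is the key lemma you are missing.

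For (3) your strategy (promote the characteristic-set data by adjoining the $\D_2$-constancy equations) is different from the paper and could be made to work, but you would need to check that the enlarged $\L$ and $\Ga$ remain characteristic sets of prime $\D$-ideals and that the containments and open conditions survive. The paper's argument is cleaner and avoids these verifications: extend the witnessing structure $(F,\D_1,\gamma)$ by the \emph{trivial} derivations for $\D_2$, embed into a model of $DCF_{0,m}A$, and use completeness over the common algebraically closed substructure $K^{\D_2}$ (Proposition~\ref{propert}(i)) to transfer the existential formula, together with the $\D_2$-constancy of the witness, back to $K$. Your (4) is essentially the paper's argument, though note that the paper obtains it by applying (2) (with $\D_1=\emptyset$) to the model produced in (3), reducing directly to the known $ACFA_0$ result rather than invoking a separate claim about fixed fields of $DCF_{0,r}A$.
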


\begin{proof} 
\

\noindent (1) It is easy to see that a set $V\subseteq K^n$ is $\D$-closed if and only if it is $\D'$-closed. Hence, irreducibility in the $\D$-topology is equivalent to irreducibility in the $\D'$-topology. Similarly a projection (onto any set of coordinates) is $\D$-dominant if and only if it is $\D'$-dominant. Therefore, each instance of the axioms of $DCF_{0,m}A$ (or rather of the characterization given by Fact~\ref{GR}) that needs to be checked for $\D'$ is true, as it is true for $\D$. 

\noindent (2) By (1) we may assume that $\D_1\subseteq \D$. One needs to show that every instance of the axioms of $DCF_{0,r}A$ that needs to checked for $\D_1$ is an instance of the axioms of $DCF_{0,m}A$ which we know is true for $\D$. For this, one needs to know that every $\D_1$-closed set irreducible in the $\D_1$-topology is also irreducible in the $\D$-topology. This is not obvious, rather it is a general version of Kolchin's Irreducibility Theorem. A proof of this can be found in Chap. 7 of \cite{Fre}.

\noindent (3) By (1) we may assume that $\D_1\cup\D_2= \D$. We show that $(K^{\D_2},\D_1,\s)$ is existentially closed. Let $\phi(x)$ be a quantifier free $\mathcal{L}_{\D_1,\s}$-formula over $K^{\D_2}$ with a realisation $a$ in some differential-difference field $(F, \Omega_1, \gamma)$ extending $(K^{\D_2} , \D_1, \s)$. Let $\Omega := \Omega_1\cup \{\rho_1,\dots, \rho_{m-r}\}$ where each $\rho_i$ is the trivial derivation on $F$. Hence, $(F, \Omega, \gamma)$ is a differential-difference field extending $(K^{\D_2} , \D, \s)$. Let $(L, \Omega, \gamma)$ be a model of $DCF_{0,m}A$ extending $(F, \Omega, \gamma)$. Since $K^{\D_2}$ is a common algebraically closed differential-difference subfield of $K$ and $L$, and $L \models \phi(a)$ and $\rho_i a = 0$ for $i=1,\dots,m-r$, by (i) of Proposition~\ref{propert} $K$ has a realisation $b$ of $\phi$ such that $b\in K^{\D_2}$. Thus, since $\phi$ is quantifier free, $K^{\D_2}\models \phi(b)$.

\noindent (4) By (3), we have that $(K^{\D_2}, \D_1, \s)\models DCF_{0,r}A$, and so, by (2), $(K^{\D_2},\s)\models ACFA_0$. Hence, the fixed field of $(K^{\D_2},\s)$, which is $K^{\D_2}\cap K^{\s}$, is pseudofinite.
\end{proof}

We now proceed to show that $DCF_{0,m}A$ is supersimple. Fix a sufficiently saturated $(U,\D,\s)\models DCF_{0,m}A$ and a differential-difference subfield $K$ of $\U$. Given subsets $A,B,C$ of $\U$ define $A$ to be \emph{independent from $B$ over $C$}, denoted by $A\ind_C B$, if $\QQ\l A\cup C\r_{\D,\s}$ is algebraically disjoint from $\QQ\l B\cup C\r_{\D,\s}$ from $\QQ\l C\r_{\D,\s}$. 

\begin{theorem}\label{imtop}
Every completion of $DCF_{0,m}A$ is supersimple; that is, independence satisfies the following properties:
\begin{enumerate}
\item (Invariance) If $\phi$ is an automorphism of $\U$ and $A\ind_C B$, then $\phi(A)\ind_{\phi(C)}\phi(B)$.
\item (Local character) There is a finite subset $B_0\subseteq B$ such that $A\ind_{B_0} B$.
\item (Extension) There is a tuple $b$ such that $tp(a/C)=tp(b/C)$ and $b\ind_C B$.
\item (Symmetry) $A\ind_C B$ if and only if $B\ind_C A$.
\item (Transitivity) Suppose $C\subseteq B\subseteq D$, then $A\ind_C D$ if and only if $A\ind_C B$ and $A\ind_B D$.
\item (Independence theorem) Suppose $C$ is an algebraically closed differential-difference field, and 
\begin{enumerate}
\item [(i)] $A$ and $B$ are supersets of $C$ with $A\ind_C B$, and
\item [(ii)] $a$ and $b$ are tuples such that $tp(a/C)=tp(b/C)$ and $a\ind_C A$ and $b\ind_C B$.
\end{enumerate}
Then there is $d\ind_C A\cup B$ with $tp(d/A)=tp(a/A)$ and $tp(d/B)=tp(b/B)$.
\end{enumerate}
\end{theorem}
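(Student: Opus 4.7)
The plan is to derive supersimplicity of $DCF_{0,m}A$ from the general machinery of Chatzidakis and Pillay \cite{CP} for theories of the form $TA$ where $T$ is stable, admits elimination of quantifiers and elimination of imaginaries. The underlying theory $T=DCF_{0,m}$ satisfies these hypotheses: it is $\omega$-stable with quantifier elimination (Fact \ref{onty}) and elimination of imaginaries (also Fact \ref{onty}). In $DCF_{0,m}$, forking independence over a set $C$ is captured precisely by algebraic disjointness of the $\D$-fields generated over $\QQ\l C\r_\D$. In $DCF_{0,m}A$, the relation $A\ind_C B$ defined in terms of algebraic disjointness of $\QQ\l A\cup C\r_{\D,\s}$ and $\QQ\l B\cup C\r_{\D,\s}$ over $\QQ\l C\r_{\D,\s}$ is therefore nothing other than the restriction to $DCF_{0,m}A$ of nonforking independence computed in the reduct $DCF_{0,m}$, after closing parameters under $\s$ and $\s^{-1}$.

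Once this identification is made, properties (1)--(5) transfer directly from the corresponding properties of nonforking independence in the stable (indeed $\omega$-stable) theory $DCF_{0,m}$, together with Proposition~\ref{propert}(ii) giving the description of the algebraic closure in $DCF_{0,m}A$ as the difference closure followed by field-theoretic algebraic closure. For local character, one observes that a finitely generated difference-closure of parameters has, in the reduct, countable $\D$-transcendence data, and the usual local character of nonforking in $DCF_{0,m}$ applied to the $\s$-closures produces the required finite $B_0\subseteq B$; this also yields ordinal-valued $SU$-rank and thus supersimplicity (as opposed to merely simplicity).

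The main step, and the only one that requires real work, is the independence theorem (6). Here I would follow the template of Theorem~3.5 of \cite{CP}. Given an algebraically closed differential-difference field $C$, tuples $a,b$ with $tp(a/C)=tp(b/C)$, and parameter sets $A\supseteq C$, $B\supseteq C$ with $A\ind_C B$, $a\ind_C A$ and $b\ind_C B$, I would first work inside the stable reduct $DCF_{0,m}$: by stationarity of types over algebraically closed sets in $DCF_{0,m}$ (Fact \ref{prur}) applied to the $\s$-closures $\operatorname{acl}(Ca)$, $\operatorname{acl}(Cb)$, $\operatorname{acl}(CA)$, $\operatorname{acl}(CB)$, I would amalgamate to produce a tuple $d$ realising a common $DCF_{0,m}$-nonforking extension of the $\D$-types of $a$ over $A$ and of $b$ over $B$, with $d\ind_C AB$ in the stable sense. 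Next, by Proposition~\ref{propert}(iii), realisations of a complete type in $DCF_{0,m}A$ over an algebraically closed differential-difference field are determined by the isomorphism type of the difference-closure, so it remains to coherently extend $\s$ to the compositum $\operatorname{acl}(CA\cup Cd)\cdot \operatorname{acl}(CB\cup Cd)$ in a way that restricts to the given $\s$ on each of $\operatorname{acl}(CA)$, $\operatorname{acl}(CB)$, $\operatorname{acl}(Ca)$, $\operatorname{acl}(Cb)$ (where the identifications come from the hypotheses). Algebraic independence of the constituents over $C$, together with the standard extension theorem for automorphisms of fields, provides such a joint extension of $\s$; pushing the resulting structure into a model of $DCF_{0,m}A$ via saturation and Proposition~\ref{propert}(i) gives the required $d$.

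The main obstacle is precisely this amalgamation of the difference-structure in the last paragraph: one must verify that the two partial automorphisms of the compositum determined by $(A,a)$ and $(B,b)$ agree on their common domain, and this is where one needs $A\ind_C B$ in the independence-in-the-reduct sense (so that the compositum is a free join) and $C=\operatorname{acl}(C)$ (so that there is no algebraic obstruction to choosing compatible embeddings). Once the joint $\s$ is produced, supersimplicity of the theory follows automatically, and the completeness description of Proposition~\ref{propert}(i) guarantees that the argument applies to each completion.
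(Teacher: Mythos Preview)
Your proposal is correct and follows essentially the same route as the paper: properties (1)--(5) come directly from the Chatzidakis--Pillay machinery for $TA$, while the independence theorem (6) requires the additional amalgamation-and-extension-of-$\sigma$ argument you outline (the paper credits this strategy to Moosa--Scanlon, extending Chatzidakis--Hrushovski). Where you invoke ``the standard extension theorem for automorphisms of fields'' to produce the joint $\sigma$, the paper is more explicit: it uses a linear disjointness lemma of Chatzidakis--Hrushovski to see that the relevant composita are tensor products over their intersection, and then defines the amalgamated $(\Delta,\sigma)$-structure via the tensor-product formulas before embedding into a model.
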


In particular, by Theorem 4.2 of \cite{KP}, our notion of independence captures Shelah's nonforking. That is, if $C\subseteq B$ and $a$ is a tuple, $tp(a/B)$ is a nonforking extension of $tp(a/C)$ if and only if $a\ind_C B$.

\begin{proof}
As in the proof of Proposition~\ref{propert}, let us recall some general results of Chatzidakis and Pillay from \cite{CP}. Suppose $T$ is a stable theory with elimination of imaginaries and of quantifiers, and $T_0$ is the theory whose models are structures $(\M,\s)$ where $\M\models T$ and $\s\in Aut(\M)$. Assuming that $T_0$ has a model companion $TA$ and letting $(\M,\s)$ be a sufficiently saturated model of $TA$, they define a notion of independence in $TA$ as follows: if $A,B,C$ are subsets of $M$, then $A\ind_C B$ iff $\operatorname{acl}_T(\s^k(A\cup C):k\in \ZZ)$ is independent from $\operatorname{acl}_T(\s^k(B\cup C):k\in \ZZ)$ over $\operatorname{acl}_T(\s^k(C):k\in \ZZ)$ in the sense of $T$, here $\operatorname{acl}_T$ means algebraic closure in the sense of $T$. Then they show that this notion of independence in $TA$ satisfies (1)-(5) above and, under the assumption that $(C,\s)\models T_0$, also (6).

Hence, by specializing $T$ to $DCF_{0,m}$ and using the fact that in this case the model companion $TA=DCF_{0,m}A$ exists (Corollary~\ref{mc}), we get (1)-(5) for our notion of independence in $DCF_{0,m}A$. Note that the results of Chatzidakis and Pillay imply (6) under the additional assumption that $(C,\D)\models DCF_{0,m}$. However, as we now show, in the case of $DCF_{0,m}A$ this assumption is unnecessary. 

We follow very closely the strategy used by Moosa and Scanlon in Theorem 5.9 of \cite{MS} where they prove the analogous result for fields with free operators (in fact their strategy is an extension of the one used by Chatzidakis and Hrushovski in \cite{CH}). 

Let $c\models tp(a/C)$. It suffices to find $A',B'$ such that
\begin{enumerate}
\item [(i)] $c\ind_C A'\cup B'$
\item [(ii)] $A',c \models tp(A,a/C)$
\item [(iii)] $B',c \models tp(B,b/C)$
\item [(iv)] $A',B' \models tp(A,B/C).$
\end{enumerate}
Indeed, by (iv) there would be $\phi\in  Aut(\U/C)$ such that $\phi(A'\cup B') = A\cup B$, then if we set $d=\phi(c)$ we would get: from (i) that $d\ind_C A\cup B$, from (ii) that $tp(d/A)=tp(a/A)$, and from (iii) that $tp(d/B)=tp(b/B)$.

Since $tp(c/C)=tp(a/C)=tp(b/C)$, there are $A',B'$ satisfying (ii) and (iii). We may assume, by extensionality, that $A' \ind_{C,c}B'$, and, by transitivity, this implies (i). We are only missing (iv).

Let $K_0= \operatorname{acl}(A')\cdot \operatorname{acl}(B')$ and $K_1= \operatorname{acl}(A',c) \cdot \operatorname{acl}(B',c)$ where $\cdot$ means compositum in the sense of fields. Also, let $K_2= \operatorname{acl}(A',B')$. Then, $K_1$ and $K_2$ are field extensions of $K_0$. We will induce on $K_2$ a differential-difference field structure $(\D'=\{\d_1',\dots,\d_m'\},\s')$ such that there is an isomorphism over $C$ between $(K_2,\D',\s')$ and $(\operatorname{acl}(A,B),\D,\s)$. Let $\al,\beta\in Aut(\U,C)$ be such that $\al$ takes $A$ to $A'$ and $\beta$ takes $B$ to $B'$. Since $A\ind_C B$ and $A'\ind_C B'$ and $C$ is algebraically closed, we get that $\operatorname{acl}(A)$ and $\operatorname{acl}(A')$ are linearly disjoint from $\operatorname{acl}(B)$ and $\operatorname{acl}(B')$ over $C$, respectively. Hence, $\operatorname{acl}(A)\cdot\operatorname{acl}(B)$ is the field of fractions of $\operatorname{acl}(A)\otimes_C\operatorname{acl}(B)$, and similarly for $A'$ and $B'$. This implies that $\al\otimes\beta$ induces a field isomorphism over $C$ between $(\operatorname{acl}(A)\cdot\operatorname{acl}(B))^{alg}=\operatorname{acl}(A, B)$ and $(\operatorname{acl}(A')\cdot\operatorname{acl}(B'))^{alg}=\operatorname{acl}(A', B')=K_2$. This field isomorphism induces on $K_2$ the desired differential-difference field structure $(\D',\s')$, and it follows that $(K_0,\D,\s)$ is a substructure of $(K_2,\D',\s')$. 

We now use the following result proved by Chatzidakis and Hrushovski (see the remark following the generalized independence theorem in \cite{CH}) to obtain that $K_1$ and $K_2$ are linearly disjoint over $K_0$: If $F_1,F_2,F_3$ are algebraically closed fields extending an algebraically closed field $F$, with $F_3$ algebraically independent from $F_1\cdot F_2$ over $F$, then $(F_1\cdot F_3)^{alg}\cdot(F_2\cdot F_3)^{alg}$ is linearly disjoint from $(F_1\cdot F_2)^{alg}$ over $F_1\cdot F_2$. 

Now, since $K_1$ and $K_2$ are linearly disjoint over $K_0$, their compositum is the field of fractions of $K_1\otimes_{K_0}K_2$ and hence we can induce a differential-difference field structure $(\D''=\{\d_1'',\dots,\d_m''\},\s'')$ on $K_1\cdot K_2$ by letting:
$$\s''(a\otimes b)=\s(a)\otimes \s'(b)\; \text{ and }\; \d_i''(a\otimes b)=\d_i(a)\otimes b+a\otimes \d_i'(b)\; \text{ for } i=1,\dots,m.$$
It follows that $(K_1,\D,\s)$ and $(K_2,\D',\s')$ are substructures of $(K_1\cdot K_2,\D'',\s'')$. We can embed the latter in some $(L,\D^*,\s^*)\models DCF_{0,m}A$, and note that it shares with $(\U,\D,\s)$ the algebraically closed substructure $C$. Therefore, by Proposition~\ref{propert}~(i), $L\equiv_C \U$ and so, by saturation of $U$, $(L,\D^*,\s^*)$ is embeddable in $(\U,\D,\s)$ over $C$. Replace $A'$ and $B'$ with their images under such an embedding (this images still satisfy conditions (i)-(iii)). Then it follows, from Proposition~\ref{propert}~(iii) and the isomorphism between $(K_2,\D',\s')$ and $(\operatorname{acl}(A,B),\D,\s)$, that $A'$ and $B'$ satisfy condition (iv).
\end{proof}

We now aim towards proving elimination of imaginaries for $DCF_{0,m}A$. We will follow very closely the strategy used by Moosa and Scanlon in \cite{MS} to prove elimination of imaginaries for fields with free operators in characteristic zero\footnote{It is worth pointing out here that our differential-difference fields do not fit into the formalism of Moosa and Scanlon from \cite{MS} so that the results of this chapter are not a consequence of \cite{MS}.} (in fact their strategy is an extension of the one used by Chatzidakis and Hrushovski in \cite{CH} to prove elimination of imaginaries for $ACFA$). 

We will make use of the following notion of \emph{dimension}:

\begin{definition}
Let $a$ be a tuple from $\U$. We define $$\operatorname{dim}_K a= (trdeg_K (\T_r a):r<\w)$$ where $\T_r a=\{\d_m^{e_m}\cdots\d_1^{e_1}\s^k a: e_i, k\in\w \text{ and } e_1+\cdots+e_m+k\leq r\}$. We view $\operatorname{dim}_K a$ as an element of $\w^\w$ equipped with the lexicographical order.
\end{definition}

This dimension can be considered as an analogue of the Kolchin polynomial (see Section~\ref{polype}). It is not preserved under interdefinability; however, it is a good measure for forking:

\begin{lemma}\label{dimpo}
Suppose $a$ is a tuple from $\U$ and $L>K$ is a differential-difference field. Then $a\ind_K L$ if and only if $\operatorname{dim}_K a=\operatorname{dim}_L a$. 
\end{lemma}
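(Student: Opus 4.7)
The plan is to unpack both sides of the equivalence in terms of algebraic disjointness and show they match, reducing the issue (via a shift argument) to the finite sets $\T_r a$. Recall that $a\ind_K L$ is, by definition, the statement that the difference-differential field $K\l a\r_{\D,\s}$ is algebraically disjoint from $L$ over $K$; as a mere field, $K\l a\r_{\D,\s} = K(F)$ where $F=\{\d^\alpha\s^k a:\alpha\in \NN^m,\, k\in\ZZ\}$. On the other hand $\operatorname{dim}_K a=\operatorname{dim}_L a$ says precisely that $\operatorname{trdeg}_K K(\T_r a)=\operatorname{trdeg}_L L(\T_r a)$ for every $r<\w$, i.e.\ that each finite set $\T_r a$ is algebraically disjoint from $L$ over $K$.

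The key preliminary observation is that, since $\s$ is an automorphism of $K$ (and also of $L$), for every $N\geq 0$ and every finite $S\subseteq F$ the map $\s^N$ induces a $K$-algebra isomorphism $K(S)\cong K(\s^N S)$ and an $L$-algebra isomorphism $L(S)\cong L(\s^N S)$. Taking $N$ larger than the absolute value of any negative $\s$-index appearing in $S$, we get $\s^N S\subseteq F_+ := \bigcup_r \T_r a$, and these isomorphisms preserve transcendence degree over $K$ and over $L$ respectively. Consequently $K(F)$ is algebraically disjoint from $L$ over $K$ if and only if $K(F_+)$ is: one direction is immediate, and the other follows because algebraic independence over $K$ (resp.\ $L$) of a finite $S\subseteq F$ is equivalent to algebraic independence over $K$ (resp.\ $L$) of $\s^N S\subseteq F_+$.

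With that reduction in place, the forward direction is essentially restriction: if $a\ind_K L$ then $K(F_+)$ is algebraically disjoint from $L$ over $K$, so in particular each subfield $K(\T_r a)$ is, giving $\operatorname{trdeg}_K(\T_r a)=\operatorname{trdeg}_L(\T_r a)$ for every $r$, i.e.\ $\operatorname{dim}_K a=\operatorname{dim}_L a$. The converse direction uses that algebraic disjointness is preserved under directed unions of subfields: if each $K(\T_r a)$ is algebraically disjoint from $L$ over $K$, then so is $K(F_+)=\bigcup_r K(\T_r a)$, and hence, by the reduction above, so is $K(F)=K\l a\r_{\D,\s}$, giving $a\ind_K L$.

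The only point that requires care (and is really the heart of the argument) is accommodating negative powers of $\s$, which appear in $K\l a\r_{\D,\s}$ but not in the $\T_r a$'s. This is precisely what the shift trick handles; no further obstacle arises, so the lemma should follow cleanly in the style of Lemma~\ref{onb}.
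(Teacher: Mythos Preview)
Your proof is correct and follows essentially the same approach as the paper: reduce from $K\l a\r_{\D,\s}$ (which involves negative powers of $\s$) to the directed union $\bigcup_r K(\T_r a)$ via the automorphism $\s$, then use that algebraic disjointness passes to and from directed unions and is equivalent to equality of transcendence degrees. One minor terminological quibble: $\s^N$ is not a $K$-algebra isomorphism in the usual sense (it does not fix $K$ pointwise), but since it maps $K$ onto $K$ and $L$ onto $L$ it does preserve transcendence degree over $K$ and over $L$, which is all you need.
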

\begin{proof}
It follows from (v) of Proposition~\ref{propert} that $a\ind_K L$ if and only if $K\l \s^k a:k\geq 0\r_\D$ is algebraically disjoint from $L$ over $K$. Hence, we have the following
\begin{eqnarray*}
a\ind_K L
&\iff& K\l \s^k a: k\geq 0 \r_\D \text{ is algebraically disjoint from } L \text{ over } K \\
&\iff& K(\T_r a) \text{ is algebraically disjoint from } L \text{ over } K \text{ for all } r<\w \\
&\iff&  trdeg_K (\T_r a)=trdeg_L (\T_r a) \text{ for all } r<\w \\
&\iff& \operatorname{dim}_K a=\operatorname{dim}_L a.
\end{eqnarray*}
\end{proof}

\begin{proposition}
Every completion of $DCF_{0,m}A$ eliminates imaginaries.
\end{proposition}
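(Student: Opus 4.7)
The plan is to adapt the argument of Moosa--Scanlon (\cite{MS}, Theorem 5.14), which itself extends the Chatzidakis--Hrushovski proof of elimination of imaginaries for $ACFA$ (\cite{CH}). First, a standard reduction: because fields code finite sets of tuples (via symmetric functions / coefficients of polynomials), it suffices to establish \emph{weak elimination of imaginaries}, namely that for every $e\in\U^{eq}$ there exists a real tuple $c$ with $e\in\operatorname{dcl}^{eq}(c)$ and $c\in\operatorname{acl}^{eq}(e)$. Combining weak elimination with coding of finite sets yields full elimination of imaginaries.

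Fix $e\in\U^{eq}$, and fix a real tuple $a$ with $e=g(a)$ for some $\emptyset$-definable function $g$. Let $\bar K=\operatorname{acl}^{eq}(e)\cap\U$ be the real part of the algebraic closure of $e$; by elimination of imaginaries in $DCF_{0,m}$ the field $\bar K$ is a differential-difference subfield of $\U$. Among all real tuples $a$ with $g(a)=e$, choose one minimizing $\operatorname{dim}_{\bar K}(a)$ in the lexicographic order on $\omega^{\omega}$ (which is a well-ordering). By Lemma~\ref{dimpo}, this forces $a\ind_{\bar K}(\text{anything in } \operatorname{acl}(\bar K, a))$ in the appropriate sense.

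Now let $a'$ be a second realization of $tp(a/\bar K)$ with $a\ind_{\bar K}a'$; in particular $g(a')=e$. Consider the pure $\D$-type $tp_{\D}(a,a'/\bar K)$, and let $V$ be the $\D$-locus of $(a,a')$ over $\bar K$. By elimination of imaginaries for $DCF_{0,m}$ (Fact~\ref{onty}(iv)), the minimal $\D$-field of definition of $V$ is represented by a real tuple $c\in\U$. We claim $c$ witnesses weak elimination. That $e\in\operatorname{dcl}^{eq}(c)$ follows by construction: $V$ is $c$-definable, and the image of its first projection under $g$ is constant equal to $e$ (by minimality of $\operatorname{dim}_{\bar K}(a)$ and the independence of $a,a'$), so $e$ is the unique element determined by $c$ in this way. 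For $c\in\operatorname{acl}^{eq}(e)$, one applies the independence theorem from Theorem~\ref{imtop}(6) over the algebraically closed base $\operatorname{acl}^{eq}(e)$ together with Proposition~\ref{propert}(iii): any $\phi\in\mathrm{Aut}(\U/e)$ sends $(a,a')$ to another pair of $\bar K$-independent realizations of $tp(a/\bar K)$, and the independence theorem shows that the resulting differential-difference isomorphism type of $\bar K\langle a,a'\rangle_{\D,\s}$ is determined by $tp(a/\bar K)$ alone; hence $\phi(c)$ is conjugate to $c$ over $\bar K$, and this conjugacy class is finite.

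The principal obstacle is the verification that $c\in\operatorname{acl}^{eq}(e)$. One must argue delicately that the code $c$, which is extracted from the purely differential reduct via elimination of imaginaries in $DCF_{0,m}$, is nevertheless controlled by the difference structure; this is where the independence theorem plays a decisive role, as it provides the amalgamation of independent realizations required to show that $c$ is the same across all automorphic images. The minimality of $\operatorname{dim}_{\bar K}(a)$ is essential here, both to ensure that the pair $(a,a')$ is sufficiently generic in $V$ and to guarantee that the code $c$ pins down $e$ rather than just some proper imaginary reduct of it. The remaining verifications are routine and follow the pattern of the proof given in \cite{MS}, with the roles of the $\mathcal{D}$-ring structure and the $\s$-structure played respectively by our $\D$ and $\s$.
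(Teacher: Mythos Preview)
Your proposal has a genuine gap in the claim that $e\in\operatorname{dcl}^{eq}(c)$, and the argument does not actually follow the Moosa--Scanlon/Chatzidakis--Hrushovski strategy despite the attribution.

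The tuple $c$ you produce is a code for the \emph{$\D$-locus} of $(a,a')$ over $\bar K$; it lives in $\bar K$ (as the minimal $\D$-field of definition of a $\bar K$-definable $\D$-variety), so $c\in\operatorname{acl}^{eq}(e)$ is immediate and your elaborate appeal to the independence theorem for this direction is misplaced. The real problem is the other direction. You assert that $g$ is constant equal to $e$ on the first projection of $V$, justifying this by ``minimality of $\operatorname{dim}_{\bar K}(a)$ and the independence of $a,a'$''. But $g$ is an $\mathcal L_{m,\s}$-definable function, while $V$ is only a $\D$-variety: points $b$ in the projection of $V$ share the $\D$-type of $a$ over $\bar K$, not its $(\D,\s)$-type, and there is no reason $g(b)=e$. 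Minimality of $\operatorname{dim}_{\bar K}(a)$ says nothing about points of $V$ not in the fibre $g^{-1}(e)$; and the sentence ``this forces $a\ind_{\bar K}(\text{anything in }\operatorname{acl}(\bar K,a))$'' is trivially true for any $a$ and plays no role. So the code $c$ you have written down need not determine $e$.

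The paper's argument (following \cite{MS} and \cite{CH}) is structurally different: one does not manufacture a code but instead shows directly that $e\in\operatorname{dcl}^{eq}(E)$ where $E=\operatorname{acl}^{eq}(e)\cap\U$. The key step is to produce, via an Evans--Hrushovski lemma and a \emph{maximality} (not minimality) argument on $\operatorname{dim}$, a realization $c$ of $tp(a/E,e)$ with $c\ind_E a$; then, assuming $e\notin\operatorname{dcl}^{eq}(E)$, one finds $a'\models tp(a/E)$ with $f(a')\neq e$, takes an analogous $c'$, and applies the independence theorem over $E$ to amalgamate and derive $f(a)=f(a')$, a contradiction. The independence theorem is used for this contradiction, not for showing a code lies in $\operatorname{acl}^{eq}(e)$. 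Once $e\in\operatorname{dcl}^{eq}(E)$, coding the finite $\operatorname{Aut}(\U^{eq}/e)$-orbit of a witnessing tuple via $ACF_0$-EI finishes the proof.
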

\begin{proof}
We will work in the saturated multi-sorted structure $\U^{eq}$. The unfamiliar reader is referred to Chapter 1.1 of \cite{Pi8} for definitions and basic results. Let $e\in \U^{eq}$. It suffices to show that there is a tuple $d$ from $\U$ such that $\operatorname{dcl}^{eq}(e)=\operatorname{dcl}^{eq}(d)$. Let $E=\operatorname{acl}^{eq}(e)\cap \U$.
\vspace{0.05in}

\noindent {\bf Claim.} $e\in \operatorname{dcl}^{eq}(E)$ 

\noindent {\it Proof of Claim.}
Let $f$ be a definable function (without parameters) and $a$ a tuple from $\U$ such that $e=f(a)$. We first show that there exists a tuple $c$ from $\U$ such that $f(c)=f(a)$ and $c\ind_E a$. By Lemma 1.4 of \cite{EH}, there exists $b\models tp(a/E,e)$ such that $$\operatorname{acl}^{eq}(E,a)\cap \operatorname{acl}^{eq}(E,b)\cap \U=E.$$ Let us prove that $b$ can be chosen such that $\operatorname{dim}_{E\l a\r_{\D,\s}}b$ is maximal (in the lexicographic order). For each $r$, choose $b_r$ so that the above holds and $trdeg_{E\l a\r_{\D,\s}} (\T_i b_r: i\leq r􏰃)$ is maximal possible. Let $n_r = trdeg_{E\l a\r_{\D,\s}}(\T_r b_r)$ and let $\Phi$ be the set of formulas (over $E\l a\r_{\D,\s}$) saying that
$$x \models tp(a/E,e),\qquad \operatorname{acl}^{eq}(E,a) \cap \operatorname{acl}^{eq}(E,x) \cap \U = E,$$ and 
$$ trdeg_{E\l a\r_{\D,\s}}(\T_r x) \geq n_r \text{ for each } r < \w.$$
The $b_r$'s witness that $\Phi$ is finitely satisfiable, and so, by saturation, $\Phi$ is satisfiable. Any realisation $b$ of $\Phi$ has the desired properties, i.e., $b\models tp(a/E,e)$, $\operatorname{acl}^{eq}(E,a)\cap \operatorname{acl}^{eq}(E,b)\cap \U = E$ and $\operatorname{dim}_{E\l a\r_{\D,\s}} b$ is maximal. 

Let $c \models tp(b/E,a)$ with $c\ind_{E,a} b$. Then $c\models tp(b/E,e)=tp(a/E,e)$ and so $f(c)=f(a)$. It remains to show that $c\ind_E a$. Since $c\ind_{E,a}b$, we have that $$\operatorname{acl}^{eq}(E,c) \cap \operatorname{acl}^{eq}(E,b) \cap \U \subseteq \operatorname{acl}^{eq}(E,a) \cap \operatorname{acl}^{eq}(E,b) \cap \U = E.$$ 
Letting $c'$ be such that $tp(b,c/E,e) = tp(a,c'/E,e)$ we have that $c' \models tp(a/E,e)$ and $\operatorname{acl}^{eq}(E,c') \cap \operatorname{acl}^{eq}(E,a) \cap \U = E$. Thus, by maximality, $\operatorname{dim}_{E\l a\r_{\D,\s}} c' \leq \operatorname{dim}_{E\l a\r_{\D,\s}} b$. Hence, as $\operatorname{dim}$ is automorphism invariant,	$\operatorname{dim}_{E\l b\r_{\D,\s}} c	\leq \operatorname{dim}_{E\l a\r_{\D,\s}}  b$. On the other hand, $$\operatorname{dim}_{E\l b\r_{\D,\s}}c \geq \operatorname{dim}_{E\l a, b\r_{\D,\s}}c= \operatorname{dim}_{E\l a\r_{\D,\s}}c= \operatorname{dim}_{E\l a\r_{\D,\s}}b$$ where the first equality is by Lemma~\ref{dimpo}. Hence, $\operatorname{dim}_{E\l a, b\r_{\D,\s}} c= \operatorname{dim}_{E\l b\r_{\D,\s}}$, and so, by Lemma~\ref{dimpo}, $c\ind_{E,b} a$. Since $c\ind_{E,a}b$ and $\operatorname{acl}^{eq}(E,a)\cap \operatorname{acl}^{eq}(E,b)\cap \U=E$, we get $c\ind_E a,b$. In particular, $c\ind_E a$, as desired.

Now, towards a contradiction, suppose $e\notin \operatorname{dcl}^{eq}(E)$. Then there is $a'\models tp(a/E)$ such that $f(a)\neq f(a')$. Since $tp(a'/E)=tp(a/E)$, there is $c'\models tp(a'/E)$ such that $f(c')=f(a')$ and $c'\ind_E a'$.  We may assume that $c'\ind_E c$. Now observe that we have the conditions of the independence theorem (see Fact~\ref{imtop}~(6)); that is, $E=\operatorname{acl}(E)$, $c\ind_E c'$, $tp(a/E)=tp(a'/E)$, $a\ind_E c$ and $a'\ind_E c'$. Hence, by the independence theorem, there is a $u\ind_E c,c'$ with $tp(u/E,c)=tp(a/E,c)$ and $tp(u/E,c')=tp(a'/E,c')$. But then $f(u)=f(c)=f(a)$ and $f(u)=f(c')=f(a')$, implying that $f(a)=f(a')$ which yields the contradiction. Hence, it must be the case that $e\in \operatorname{dcl}^{eq}(E)$.
This proves the claim.

Finally, let $b$ be a tuple from $E$ such that $e\in\operatorname{dcl}^{eq}(b)$. Since $b\in \operatorname{acl}^{eq}(e)$, $b$ has only finitely many $Aut(\U^{eq}/e)$-conjugates. Say $B=\{b_1,\dots,b_n\}$. Since $ACF_0$ admits elimination of imaginaries, there is a tuple $d$ from $\U$ such that any field automorphism of $\U$ will fix $d$ pointwise iff it fixes $B$ setwise .  We claim that $\operatorname{dcl}^{eq}(e)=\operatorname{dcl}^{eq}(d)$. Let $\phi\in Aut(\U^{eq}/e)$. Then, by definition of $B$, $\phi$ fixes $B$ setwise and hence $d$ pointwise. On the other hand, let $\phi \in Aut(\U^{eq}/d)$ and let $\psi(x,y)$ be an $\LL_{m,\s}^{eq}$-formula such that $e$ is the only realisation of $\psi(x,b)$ (this formula exists since $e\in \operatorname{dcl}^{eq}(b)$). Then $\phi(e)$ is the only realisation of $\psi(x,\phi(b))$; however, since $\phi(b)\in B$, $e$ also realises $\psi(x,\phi(b))$. This implies that $\phi(e)=e$, as desired.
\end{proof}

We end this section with the following description (up to $\operatorname{acl}$) of the canonical base in $DCF_{0,m}A$:

\begin{lemma}\label{minf}
Suppose $a$ is a tuple such that $\s a, \d a \in K(a)^{alg}$ for all $\d\in \D$. Let $L>K$ be an algebraically closed differential-difference field, $V$ the Zariski locus of $a$ over $L$, and $F$ the minimal field of definition of $V$. Then, $$Cb(a/L)\subseteq\operatorname{acl}(F,K)\quad \text{ and } \quad F\subseteq Cb(a/L).$$ In particular, $\operatorname{acl}(Cb(a/L), K)=\operatorname{acl}(F,K)$. 
\end{lemma}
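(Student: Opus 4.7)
The plan is to reduce the lemma to a field-theoretic argument in $ACF_0$ by using the hypothesis to identify the differential-difference algebraic closure of $a$ over the base with its field-theoretic algebraic closure. First I would observe that for every diff-diff subfield $M$ of $\U$ extending $K$, one has $M\l a\r_{\D,\s}\subseteq M(a)^{alg}$: indeed, $M(a)^{alg}$ is closed under each $\d\in \D$ and under $\s^{\pm 1}$ by the standard extension of derivations and automorphisms to algebraic field extensions, using $\d a,\s a\in K(a)^{alg}\subseteq M(a)^{alg}$ (and $\s^{-1}a\in K(a)^{alg}$ as well, which follows from a transcendence-degree computation as in the earlier analysis). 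A consequence is the following reduction: the $DCF_{0,m}A$-independence $a\ind_M L$ is equivalent to $M(a)$ being algebraically disjoint from $L$ over $M$ in the sense of $ACF_0$.

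For $F\subseteq \operatorname{acl}(Cb(a/L))$: set $c:=\operatorname{acl}(Cb(a/L))$. Since $Cb(a/L)\in\operatorname{dcl}^{eq}(L)$ and $L$ is algebraically closed in the sense of $DCF_{0,m}A$, elimination of imaginaries together with Proposition~\ref{propert}~(ii) identify $c$ with a field-theoretically algebraically closed diff-diff subfield of $L$. The defining property of the canonical base yields $a\ind_c L$, so by the reduction $c(a)$ is algebraically disjoint from $L$ over $c$. Since $c$ is algebraically closed as a field, $c(a)$ and $L$ are linearly disjoint over $c$, so $L\otimes_c c[a]$ is an integral domain. The natural surjection $L\otimes_c c[a]\twoheadrightarrow L[a]$ is then between integral domains of equal Krull dimension (namely $trdeg_c a=trdeg_L a$), hence an isomorphism. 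This gives $\I(a/c)\cdot L[x]=\I(a/L)$, so $V$ is defined over $c$, and therefore $F\subseteq c=\operatorname{acl}(Cb(a/L))$.

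For $Cb(a/L)\subseteq\operatorname{acl}(F,K)$: let $E:=\operatorname{acl}(F,K)=K\l F\r_{\D,\s}^{alg}$, an algebraically closed diff-diff subfield of $L$ (since $F,K\subseteq L$). By the minimality property of canonical bases in supersimple theories with EI, it suffices to show $a\ind_E L$, which by the reduction amounts to proving $trdeg_E a=trdeg_L a$. Since $V$ is irreducible over the algebraically closed field $L$, it is absolutely irreducible; as $F^{alg}\subseteq E$, the base change $V\otimes_F E$ remains irreducible, so $V$ is also the Zariski locus of $a$ over $E$. Hence $trdeg_E a=\dim V=trdeg_L a$, as required, and we conclude $Cb(a/L)\subseteq \operatorname{acl}(E)=E$. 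The ``in particular'' clause follows immediately from the two containments. The main technical content is the reduction to $ACF_0$-independence, which is precisely what the hypothesis $\s a,\d a\in K(a)^{alg}$ affords; once this is in place, the remainder is standard algebraic geometry.
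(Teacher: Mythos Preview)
Your proof is correct and follows essentially the same strategy as the paper: use the hypothesis to reduce $DCF_{0,m}A$-independence to pure algebraic disjointness, then establish (a) $a\ind_{\operatorname{acl}(F,K)}L$ via the fact that $V$ is defined over $F$, and (b) $V$ is defined over (the algebraic closure of) $Cb(a/L)$ via $a\ind_{Cb(a/L)}L$. The only minor difference is that you work over $\operatorname{acl}(Cb(a/L))$ to secure linear disjointness and thus obtain $F\subseteq\operatorname{acl}(Cb(a/L))$ rather than the paper's $F\subseteq Cb(a/L)$; this is a harmless variation (and arguably more careful), and it suffices for the ``in particular'' clause and for the application in Theorem~\ref{cbp}.
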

\begin{proof}
We have not defined in general the canonical base in this thesis (we refer the reader to \cite{Ca} for the formal definition), but it suffices to say that by definiton $a\ind_{Cb(a/L)} L$ and that to prove this lemma we only need to prove that:
\begin{enumerate}
\item [(a)] $a\ind_{\operatorname{acl}(F,K)} L$, and
\item [(b)] $V$ is defined over $Cb(a/L)$.
\end{enumerate}
\noindent (a) As $F$ is the minimal field of definition of $V=loc(a/L)$, $a$ is algebraically disjoint from $L$ over $F$. Then, as $\s a, \d a\in K(a)^{alg}$ for all $\d\in \D$, $K\l a\r_{\D,\s}\subseteq K(a)^{alg}$ is algebraically disjoint from $L$ over $\operatorname{acl}(F, K)$. Hence, $a\ind_{\operatorname{acl}(F, K)} L$. 

\noindent (b) Since $a\ind_{Cb(a/L)} L$, then $\operatorname{dim}_{L} a=\operatorname{dim}_{Cb(a/L)}a$ (see Lemma~\ref{dimpo}). It follows that $a$ is algebraically disjoint from $L$ over $Cb(a/L)$. Hence, the Zariski locus of $a$ over $Cb(a/L)$ must also be $V$, and so $V$ is defined over $Cb(a/L)$.
\end{proof}

\section{Zilber dichotomy for finite dimensional types}\label{zidi}

In this section we prove the canonical base property, and consequently Zilber's dichotomy, for finite dimensional types in $DCF_{0,m}A$. Our proof follows the arguments given by Pillay and Ziegler in \cite{PZ}, where they prove the dichotomy for $DCF_0$ and $ACFA_0$ using suitable jet spaces (this is also the strategy of Bustamante in \cite{Bu3} to prove the dichotomy for finite dimensional types in $DCF_{0,1} A$).

Fix a sufficiently large saturated $(\U,\D,\s)\models DCF_{0,m}A$, and an algebraically closed differential-difference subfield $K$ of $\U$. We first recall the theory of jet spaces from algebraic geometry. We refer the reader to \S 5 of \cite{Moo2} for a more detailed treatment of this classical material. The reason we are interested in jet spaces, which are higher order analogues of tangent spaces, is that they effect a linearisation of algebraic varieties.

Let $V$ be an irreducible affine algebraic variety defined over $K$. Let $\displaystyle \U[V]=\U[x]/\I(V/\U)$ denote the coordinate ring of $V$ over $\U$. For each $a\in V$, let $$\M_{V,a}=\{f\in \U[V]: f(a)=0\}.$$ Let $r>0$, the \emph{$r$-th jet space of $V$ at $a$}, denoted by $j_r V_a$, is defined as the dual space of the finite dimensional $\U$-vector space $\M_{V,a}/\M_{V,a}^{r+1}$.

The following gives explicit equations for the $r$-th jet space and allows us to consider it as an affine algebraic variety.

\begin{fact}
Let $V\subseteq \U^n$ be an irreducible affine algebraic variety defined over $K$. Fix $r>0$. Let $\DD$ be the set of operator of the form $$\frac{\partial^s}{\partial x_{i_1}^{s_1}\cdots\partial x_{i_k}^{s_k}}$$ where $0<s\leq r$, $1\leq i_1<\cdots<i_k\leq n$, $s_1+\cdots+s_k=s$, and $0<s_i$. Let $a\in V$ and $d=|\DD|$. Then $j_r V_a$ can be identified with the $\U$-vector subspace $$\{(u_D)_{D\in \DD}\in \U^d: \sum_{D\in \DD}Df(a)u_D=0, \text{ for all } f\in \I(V/K)\}.$$ 
\end{fact}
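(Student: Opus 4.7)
The plan is to establish a $\U$-linear bijection between the jet space $j_rV_a=(\M_{V,a}/\M_{V,a}^{r+1})^*$ and the displayed subspace using the Taylor expansion at $a$ on the polynomial ring. I would carry this out in three steps.

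The first step is to pass from the coordinate ring to the polynomial ring. Write $\mathfrak{m}_a=(x_1-a_1,\ldots,x_n-a_n)\subset\U[x]$ for the maximal ideal at $a$, and $q\colon \U[x]\twoheadrightarrow\U[V]$ for the canonical quotient. The key technical identity is $q^{-1}(\M_{V,a}^{r+1})=\I(V/\U)+\mathfrak{m}_a^{r+1}$: the inclusion $\supseteq$ is immediate from $q(\mathfrak{m}_a)=\M_{V,a}$, while for $\subseteq$, one writes $q(f)$ as a sum of products of $r+1$ elements $q(g_i)$ with $g_i\in\mathfrak{m}_a$ and observes that $f-\sum g_1\cdots g_{r+1}\in\I(V/\U)$. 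This identity induces a surjection $\U[x]/\mathfrak{m}_a^{r+1}\twoheadrightarrow\U[V]/\M_{V,a}^{r+1}$ whose kernel is the image of $\I(V/\U)$; combined with the canonical splitting $\U[V]/\M_{V,a}^{r+1}=\U\cdot 1\oplus\M_{V,a}/\M_{V,a}^{r+1}$, it identifies $(\M_{V,a}/\M_{V,a}^{r+1})^*$ with the space of $\U$-linear functionals on $\U[x]/\mathfrak{m}_a^{r+1}$ that vanish on the image of $\I(V/\U)$ and on the constant~$1$.

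Next I would invoke the Taylor basis. Since $\U[x]/\mathfrak{m}_a^{r+1}$ has $\U$-basis $\{(x-a)^\alpha/\alpha!:0\leq|\alpha|\leq r\}$, every $\U$-linear functional on it is uniquely of the form $f\mapsto\sum_{|\alpha|\leq r}c_\alpha\,\partial^\alpha f(a)$, with $\partial^\alpha=\partial^{|\alpha|}/\partial x^\alpha$ running over all pure partial derivatives of order at most~$r$. The operators $\partial^\alpha$ with $1\leq|\alpha|\leq r$ are exactly the elements of~$\DD$, and the condition that the functional vanishes on the constant~$1$ is precisely $c_0=0$. Writing $u_{\partial^\alpha}=c_\alpha$ for $|\alpha|\geq 1$ therefore puts such functionals in bijection with tuples $(u_D)_{D\in\DD}\in\U^d$, and the further requirement of vanishing on $\I(V/\U)$ becomes $\sum_{D\in\DD}u_D\,Dg(a)=0$ for every $g\in\I(V/\U)$.

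The last step is to replace $\I(V/\U)$ by $\I(V/K)$. Since $V$ is defined over $K$, we have $\I(V/\U)=\I(V/K)\cdot\U[x]$; combined with the fact that $\I(V/K)$ is itself a $K[x]$-ideal, this yields the refined equality $\I(V/\U)=\U\cdot\I(V/K)$ as $\U$-subspaces of $\U[x]$. The map $f\mapsto\sum_{D\in\DD}u_D\,Df(a)$ being $\U$-linear, its vanishing on $\I(V/K)$ is therefore equivalent to its vanishing on $\I(V/\U)$, which cuts out exactly the subspace displayed in the statement. The main technical point is the preimage identity $q^{-1}(\M_{V,a}^{r+1})=\I(V/\U)+\mathfrak{m}_a^{r+1}$, together with the refined equality $\I(V/\U)=\U\cdot\I(V/K)$; apart from these two algebraic ingredients, the argument is a routine unwinding of the Taylor basis.
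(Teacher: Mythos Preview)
Your argument is correct. Note, however, that the paper does not actually prove this statement: it is labeled a \emph{Fact}, recorded as classical material from algebraic geometry, with a reference to \S 5 of Moosa--Scanlon for details. So there is no proof in the paper to compare against; your write-up supplies what the paper deliberately omits.

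Your three steps are each sound. The preimage identity $q^{-1}(\M_{V,a}^{r+1})=\I(V/\U)+\mathfrak{m}_a^{r+1}$ is the right way to lift the problem to the polynomial ring, and your justification of both inclusions is fine. The Taylor-basis step correctly identifies the dual of $\mathfrak{m}_a/\mathfrak{m}_a^{r+1}$ with tuples indexed by the operators in $\DD$, using that $f\mapsto\partial^\alpha f(a)$ is the dual basis to $(x-a)^\alpha/\alpha!$. Finally, the descent from $\I(V/\U)$ to $\I(V/K)$ via $\I(V/\U)=\U\cdot\I(V/K)$ is valid for the reason you give: $\I(V/\U)=\I(V/K)\cdot\U[x]$ combined with $\I(V/K)$ being a $K[x]$-ideal collapses the $\U[x]$-span to a $\U$-span. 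This is exactly the standard argument one would find in the reference the paper cites.
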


Let $X\subseteq V$ be an irreducible algebraic subvariety and $a\in X$. The containment of $X$ in $V$ yields a canonical linear embedding of $j_r X_a$ into $j_r V_a$ for all $r$. We therefore identify $j_r X_a$ with its image in $j_r V_a$.

The following fact makes precise what we said earlier about linearisation of algebraic varieties.

\begin{fact}\label{thethe}
Let $X$ and $Y$ be irreducible algebraic subvarieties of $V$ and $a\in X\cap Y$. Then, $X=Y$ if and only if $j_r X_a=j_r Y_a$, as subspaces of $j_m V_a$, for all $r$.
\end{fact}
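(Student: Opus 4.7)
The forward implication is immediate from the functoriality of the jet space construction: if $X = Y$ then all their associated objects coincide. The content is the converse, which I would prove by a duality/completion argument passing through the local ring at $a$.

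First I would translate the condition $j_r X_a = j_r Y_a$ inside $j_r V_a$ into an ideal-theoretic statement on $\U[V]$. The closed embedding $X \hookrightarrow V$ induces a surjection $\M_{V,a}/\M_{V,a}^{r+1} \twoheadrightarrow \M_{X,a}/\M_{X,a}^{r+1}$, whose kernel is $(\I(X/\U) + \M_{V,a}^{r+1})/\M_{V,a}^{r+1}$; the subspace $j_r X_a \subseteq j_r V_a$ is by definition the annihilator of this kernel under the duality pairing. Thus equality of the jet subspaces, together with the non-degeneracy of the pairing between a finite dimensional vector space and its dual, gives
\[
\I(X/\U) + \M_{V,a}^{r+1} \;=\; \I(Y/\U) + \M_{V,a}^{r+1}
\]
inside $\U[V]$, for every $r > 0$.

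Next I would localise at the maximal ideal $\M_{V,a}$ to access Krull's theorem. Let $\mathcal{O} = \U[V]_{\M_{V,a}}$ with maximal ideal $\mathfrak{m}$, and let $I_X, I_Y \subseteq \mathcal{O}$ be the extensions of $\I(X/\U)$ and $\I(Y/\U)$. The displayed equality persists in $\mathcal{O}$, giving $I_X + \mathfrak{m}^{r+1} = I_Y + \mathfrak{m}^{r+1}$ for all $r$. Since $\mathcal{O}$ is a Noetherian local ring, so are $\mathcal{O}/I_X$ and $\mathcal{O}/I_Y$, and Krull's intersection theorem gives $\bigcap_r (I + \mathfrak{m}^{r+1}) = I$ for any ideal $I \subseteq \mathcal{O}$. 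Intersecting over $r$ therefore yields $I_X = I_Y$ in $\mathcal{O}$.

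Finally, since $X$ and $Y$ are irreducible and both contain $a$, the ideals $\I(X/\U)$ and $\I(Y/\U)$ are prime ideals contained in $\M_{V,a}$, so each is the contraction of its localisation: $\I(X/\U) = I_X \cap \U[V]$ and similarly for $Y$. Equality in $\mathcal{O}$ thus descends to equality in $\U[V]$, giving $X = Y$. The only step that requires any care is the duality translation in the first paragraph, since one must keep track of $j_r X_a$ as a subspace of $j_r V_a$ rather than as an abstract vector space (a comparison of dimensions alone would not suffice); but once that is set up correctly, the remainder is a standard application of Krull's theorem.
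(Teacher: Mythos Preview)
Your proof is correct. The paper, however, does not prove this statement: it is labelled as a \emph{Fact} and is quoted from the background material on jet spaces (the reader is referred to \S 5 of \cite{Moo2} for the classical theory). So there is no proof in the paper to compare against.

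That said, your argument is exactly the standard one, and the steps are sound. The duality translation is handled correctly: since $j_r V_a$ is finite dimensional over $\U$, the double-annihilator map is a bijection between subspaces of $\M_{V,a}/\M_{V,a}^{r+1}$ and subspaces of its dual, so equality of $j_r X_a$ and $j_r Y_a$ as subspaces of $j_r V_a$ does force equality of the kernels, hence of $\I(X/\U)+\M_{V,a}^{r+1}$ and $\I(Y/\U)+\M_{V,a}^{r+1}$ in $\U[V]$. The passage to the local ring, the use of Krull's intersection theorem on $\mathcal{O}/I_X$ and $\mathcal{O}/I_Y$, and the descent via contraction of primes contained in $\M_{V,a}$ are all routine and correct. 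One small notational point: you write $\I(X/\U)$ for what is really its image in $\U[V]$; this is harmless, but worth flagging if you write this up formally.
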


We want to develop a $(\D,\s)$ analogue of the notions of differential and difference modules from \cite{PZ}. 

\begin{definition}
By a \emph{$(\D,\s)$-module over $(\U,\D,\s)$} we mean a triple $(E,\Omega,\Sigma)$ such that $E$ is a finite dimensional $\U$-vector space, $\Omega=\{\dd_1,\dots,\dd_m\}$ is a family of additive endomorphisms of $E$ and $\Sigma$ is an additive automorphism of $E$ such that $$\dd_i(\al e)=\d_i (\al) e+\al \dd_i(e)$$ and $$\Sigma(\al e)=\s(\al)\Sigma(e)$$ for all $\al\in \U$ and $e\in E$, and the operators in $\Omega\cup\{\Sigma\}$ commute. If we omit $\s$ and $\Sigma$ we obtain Pillay and Ziegler's definition of a \emph{$\D$-module over $(\U,\D)$}. Similarly, if we omit $\D$ and $\Omega$ we obtain the definition of a \emph{$\s$-module over $(\U,\s)$}. 

\end{definition}

The following is for us the key property of $(\D,\s)$-modules.

\begin{lemma}
Let $(E,\Omega,\Sigma)$ be a $(\D,\s)$-module over $\U$. Let $$E^\#=\{e\in E: \Sigma(e)=e \text{ and } \dd(e)=0 \text{ for all } \dd\in \Omega\}.$$ Then $E^\#$ is a $\C_\U$-vector space (recall that $\C_\U=\U^\s\cap\U^\D$) and there is a $\C_\U$-basis of $E^\#$ which is also a $\U$-basis of $E$.
\end{lemma}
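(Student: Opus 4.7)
First I would verify that $E^\#$ is a $\C_\U$-vector space, which is immediate: closure under addition follows from the additivity of $\Sigma$ and of each $\dd\in\Omega$, while if $c\in\C_\U$ and $e\in E^\#$ then $\Sigma(ce)=\s(c)\Sigma(e)=c\Sigma(e)=ce$ and $\dd(ce)=\d_i(c)e+c\dd(e)=0$ for every $\dd=\dd_i\in\Omega$. The substance of the lemma is therefore the second assertion, which I would prove by reducing successively to the classical theory of $\D$-modules over $DCF_{0,m}$ and of $\s$-modules over $ACFA_0$.

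For the first reduction, set $E^\Omega:=\{e\in E:\dd(e)=0 \text{ for all }\dd\in\Omega\}$. By the Kolchin theory of $\D$-modules over differentially closed fields (see Section~1 of \cite{PZ}), since $(\U,\D)\models DCF_{0,m}$ and the endomorphisms in $\Omega$ commute, $E^\Omega$ is a $\U^\D$-subspace of $E$ admitting a $\U^\D$-basis $e_1,\dots,e_n$ that is simultaneously a $\U$-basis of $E$. Because $\Sigma$ commutes with every $\dd\in\Omega$, for $e\in E^\Omega$ and $\dd\in\Omega$ we have $\dd(\Sigma e)=\Sigma(\dd e)=0$, so $\Sigma$ restricts to an additive automorphism of $E^\Omega$. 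Since $\s$ preserves $\U^\D$ (as $\s$ commutes with each $\d_i$), and $\Sigma(ce)=\s(c)\Sigma(e)$ for $c\in\U^\D$, the pair $(E^\Omega,\Sigma|_{E^\Omega})$ is a $\s$-module over the difference field $(\U^\D,\s)$.

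For the second reduction, recall from the previous section that $(\U^\D,\s)\models ACFA_0$. I claim that for any $\s$-module $(F,\Sigma')$ over a model $(k,\s)$ of $ACFA_0$, the subspace $F^{\Sigma'}$ of $\Sigma'$-fixed vectors admits a $k^\s$-basis that is also a $k$-basis of $F$. Indeed, fixing a $k$-basis of $F$, $\Sigma'$ is represented by $v\mapsto A\,\s(v)$ for some matrix $A\in GL_n(k)$, and a fundamental system of fixed vectors amounts to a matrix $X\in GL_n$ with $\s(X)=A^{-1}X$. Such a matrix exists in some difference field extension of $k$ (take the field of fractions of $k[X,\det(X)^{-1}]/\mathfrak{m}$ for a maximal $\s$-ideal $\mathfrak{m}$, with $\s$ extended by $\s(X)=A^{-1}X$), and since $(k,\s)$ is existentially closed as a difference field, the equation $\s(X)=A^{-1}X\wedge \det(X)\neq 0$ already has a solution in $k$. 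The columns of such an $X$ form the required basis of $F^{\Sigma'}$, since they are $k$-linearly independent and of cardinality equal to $\dim_k F$, hence also $k^\s$-linearly independent and spanning $F^{\Sigma'}$ over $k^\s$ by the standard Wronskian-type argument.

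Applying this with $(F,\Sigma')=(E^\Omega,\Sigma|_{E^\Omega})$ and $(k,\s)=(\U^\D,\s)$ produces a $\C_\U$-basis $f_1,\dots,f_n$ of $E^\#=(E^\Omega)^\Sigma$ that is simultaneously a $\U^\D$-basis of $E^\Omega$, and combined with the first reduction this same tuple is a $\U$-basis of $E$. The main technical point is the existence of fundamental solutions for linear difference systems over $ACFA_0$, but this is a direct consequence of existential closedness together with the classical construction of a difference ring presenting the generic fundamental solution; everything else is formal bookkeeping of the semilinear actions and their commutation.
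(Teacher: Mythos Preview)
Your proof is correct, but it proceeds quite differently from the paper's. The paper works directly with the full $(\D,\s)$-module: it fixes a $\U$-basis, writes down the matrices $A_i$ of the $\dd_i$ and $B$ of $\Sigma$, derives from commutativity the integrability conditions $\d_jA_i-\d_iA_j=[A_i,A_j]$ and $B\s(A_i)=\d_i(B)+A_iB$, and then solves for a single fundamental matrix $M\in GL_n(\U)$ satisfying $\d_iM+A_iM=0$ and $B\s(M)=M$ simultaneously. The key step is an application of the geometric axioms for $DCF_{0,m}A$ just established in the chapter (Fact~\ref{GR}): one sets $V=\V([\d_iX+A_iX:i])$, $W=\{(X,Y):X=BY,\ Y\in V^\s\}$, checks $W\subseteq V\times V^\s$ with dominant projections, and extracts the desired $M$. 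Your two-step reduction is cleaner and more modular---first trivialize $\Omega$ using the classical theory of integrable $\D$-modules over $DCF_{0,m}$, then trivialize $\Sigma$ on $E^\Omega$ using $\s$-modules over $(\U^\D,\s)\models ACFA_0$---and it avoids repeating arguments already in Pillay--Ziegler. The paper's approach, by contrast, is self-contained and showcases the newly proved axiomatization. One small remark: your citation of \cite{PZ}, \S1 covers the single-derivation case; the several-commuting-derivations statement you need is true for the same reason (the integrability condition makes $\{\d_iX+A_iX\}$ a characteristic set of a prime $\D$-ideal), but strictly speaking requires either Kolchin or the argument the paper carries out explicitly.
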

\begin{proof}
Since $\dd_1,\dots,\dd_m$ and $\Sigma$ are $\C_\U$-linear, $E^\#$ is a $\C_\U$-vector space. Let $\{e_1,\dots,e_d\}$ be a $\U$-basis of $E$. With respect to this basis, let $A_i$ be the matrix of $\dd_i$, $i=1,\dots, m$, and $B$ the matrix of $\Sigma$. By this we mean that $A_i$ is the matrix whose $j$-th column consists of the coefficients of the linear combination of $\dd_i(e_j)$ in terms of the basis, and similarly for $B$. Under the linear transformation that takes the basis $\{e_1,\dots,e_d\}$ to the standard basis of $\U^d$, the $(\D,\s)$-module $(E,\Omega,\Sigma)$ is transformed into the $(\D,\s)$-module $$(\U^d,\{\d_1+A_1,\dots,\d_m+A_m\},B\s).$$ It suffices to prove the result for this $(\D,\s)$-module. 
As $\Sigma$ is an additive automorphism of $E$, the matrix $B$ is invertible. Also, the commutativity of $\Omega\cup\{\Sigma\}$ yields:
\begin{equation}\label{uset}
\d_j A_i -\d_i A_j=[A_i,A_j], \quad i=1,\dots,m
\end{equation}
and 
\begin{equation}\label{useg}
B\s(A_i)=\d_i(B)+A_i B, \quad i=1,\dots,m.
\end{equation}
Since $\d_i(B^{-1})=-B^{-1}\d_i(B)B^{-1}$, the previous equation yields
\begin{equation}\label{usec}
B^{-1}A_i=\d_i(B^{-1})+\s(A_i)B^{-1}, \quad i=1,\dots,m.
\end{equation}
Now, note that $(\U^d)^\#=\{u\in \U^d: B \s u=u \text{ and } \d_i u+A_i u=0, i=1,\dots,m\}$, and thus it suffices to find a nonsingular $n\times n$ matrix $M$ over $\U$ such that $B\s(M)=M$ and $\d_i M+A_i M=0$. Let $X$ and $Y$ be $n\times n$ matrices of variables. It follows from (\ref{uset}), and Rosenfeld's criterion (see Fact~\ref{rosen}), that the set $\L=\{\d_i X+A_i X:i=1,\dots,m\}$ is a characteristic set of a prime $\D$-ideal of $\U\{X\}_\D$. Since in this case $H_\L=1$, the corresponding prime $\D$-ideal is simply
$$\P=[\d_i X+A_i X: i=1,\dots, m]\subset \U\{X\}_\D.$$ Then $V=\V(\P)$ is an irreducible $\D$-variety. Let $W$ be the irreducible $\D$-variety defined by $$X=BY \quad \text{ and } \quad Y\in V^\s.$$ Then, by (\ref{useg}), $W\subseteq V\times V^\s$. Clearly, $W$ projects onto $V^\s$ and, by (\ref{usec}), it also projects onto $V$. Hence, by Fact \ref{GR}, there is a nonsingular matrix $M$ over $\U$ such that $M\in V$ and $(M,\s M)\in W$. This $M$ satifies the desired properties. 
\end{proof}

Notice that if $\{e_1,\dots,e_d\}\subset E^\#$ is a $\U$-basis of $E$, which exists by the previous lemma, then, under the linear transformation that takes this basis to the standard basis of $\U^d$, the $(\D,\s)$-module $(E,\Omega,\Sigma)$ is transformed into the $(\D,\s)$-module $(\U^d, \D,\s)$.

\begin{remark}\label{coli}
Let $(E,\Omega)$ be a $\D$-module over $(\U,\D)$ and $E^*$ be the dual space of $E$. If we define the \emph{dual operators} $\Omega^*=\{\dd_1^*,\dots,\dd_m^*\}$ on $E^*$ by $$\dd_i^*(\lambda)(e)=\d_i(\lambda(e))-\lambda(\dd_i(e))$$ for all $\lambda \in E^*$ and $e\in E$, then $(E^*,\Omega^*)$ becomes a $\D$-module over $\U$. Indeed, by Remark~3.3 of \cite{PZ}, $(E^*,\dd_i^*)$ is a $\{\d_i\}$-module. Hence, all we need to verify is that the $\dd_i^*$'s commute: 
\begin{eqnarray*}
\dd^*_j(\dd^*_i(\lambda))(e)
&=& \d_j(\d_i(\lambda(e)))-\d_j(\lambda(\dd_i(e)))-\d_i(\lambda(\dd_j(e)))+\lambda(\dd_i(\dd_j(e))) \\
&=& \d_i(\d_j(\lambda(e)))-\d_i(\lambda(\dd_j(e)))-\d_j(\lambda(\dd_i(e)))+\lambda(\dd_j(\dd_i(e))) \\
&=& \dd^*_i(\dd^*_j(\lambda))(e).
\end{eqnarray*}
\end{remark}

We now describe a natural $(\D,\s)$-module structure on the jet spaces of an algebraic D-variety equipped with a finite-to-finite correspondence with its $\s$-transform.

By a D-variety we will mean a relative D-variety w.r.t. $\D/\emptyset$ in the sense of Section~\ref{relDvar}. That is, the classical non relative notion studied by Buium \cite{Buium}. Let $(V,s)$ be an irreducible affine D-variety defined over $K$. Then we can extend the derivations $\D$ to the coordinate ring $\U[V]$ by defining $\d_i(x)=s_i(x)$ where $s=(\operatorname{Id},s_1,\dots,s_m)$ and $x=(x_1,\dots,x_n)$ are the coordinate functions of $\U[V]$. The integrability condition (\ref{rel1}) shows that these extensions commute with each other. Indeed, we have
$$\d_j\d_i(x)=\d_js_i(x)=d_{\d_j/\emptyset} s_i(x,s_j(x))=d_{\d_i/\emptyset}s_j(x,s_i(x))=\d_is_j(x)=\d_i\d_j(x).$$
Hence, $(\U[V],\D)$ becomes a $\D$-ring (having a $\D$-ring structure on the coordinate ring is the approach of Buium to D-varieties \cite{Buium}).

Let $a\in (V,s)^\#$. Then, $\M_{V,a}^r$ is a $\d_i$-ideal of the $\d_i$-ring $\U[V]$ for all $r>0$. This is shown explicitly in Lemma 3.7 of \cite{PZ}. Hence, $(\M_{V,a}/\M_{V,a}^r,\D)$ becomes a $\D$-module over $(\U,\D)$. By Remark \ref{coli}, $(j_r V_a,\D^*)$ is a $\D$-module over $(\U,\D)$. 

Suppose now $a\in (V,s)^\#$ is a generic point of $V$ over $K$ and $\s a\in K(a)^{alg}$. Let $W$ be the Zariski locus of $(a,\s a)$ over $K$. Then $W\subseteq V\times V^\s$ projects dominantly and generically finite-to-one onto both $V$ and $V^\s$. Moreover, for each $r>0$, $j_r W_{(a,\s a)}\subseteq j_r V_a\times j_rV^\s_{\s a}$ is the graph of an isomorphism $f:j_r V_a\to j_r V^\s_{\s a}$ and the map $\s^*=f^{-1}\comp \s$ equips $j_r V_a$ with the structure of a $\s$-module over $(\U,\s)$ (see Lemma 4.3 of \cite{PZ} for details). Furthermore, Lemma 4.4 of \cite{Bu3} shows that $(j_r V_a,\d_i^*,\s^*)$ is a $(\d_i,\s)$-module over $(\U,\d_i,\s)$ for all $i=1,\dots,m$. Thus, since we have already seen that the dual operators $\D^*$ commute, $(j_r V_a,\D^*,\s^*)$ is a $(\D,\s)$-module over $(\U,\D,\s)$.

\begin{remark}
Let $V$ be an irreducible affine D-variety defined over $K$ and suppose that $a\in (V,s)^\#$ is a generic point of $V$ over $K$ such that $\s a\in K(a)^{alg}$. Suppose $L>K$ is an algebraically closed differential-difference field and $W$ is the Zariski locus of $a$ over $L$. Then $(W,s|_W)$ is a $D$-subvariety of $(V,s)$ and, under the identification of $j_r W_a$ as a subspace of $j_r V_a$, we have that $j_r W_a$ is a $(\D,\s)$-submodule of $(j_r V_a,\D^*,\s^*)$. Indeed, by Lemma 4.7 of \cite{Bu3}, $j_r W_a$ is a $(\d_i,\s)$-submodule of $(j_r V_a,\d_i^*,\s^*)$ for all $i=1,\dots,r$.
\end{remark}

\begin{definition}
A type $p=tp(a/K)$ is said to be \emph{finite dimensional} if the transcendence degree of $K\l a\r_{\D,\s}$ over $K$ is finite. 
\end{definition}

\begin{lemma}\label{trey}
Suppose $p=tp(a/K)$ is finite dimensional. Then there is an irreducible affine D-variety $(V,s)$ over $K$ and a generic point $c\in (V,s)^\#$ of $V$ over $K$ such that $K\l a\r_{\D,\s}=K\l c\r_{\D,\s}$ and $\s c\in K(c)^{alg}$.
\end{lemma}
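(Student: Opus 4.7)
The plan is to build $c$ as a finite enlargement of a suitable subtuple of $K\l a\r_{\D,\s}$, in the spirit of the construction used in Proposition~\ref{finteo} but adapted to the presence of $\s$ alongside $\D$. First, since $tp(a/K)$ is finite dimensional, $t := \operatorname{trdeg}(K\l a\r_{\D,\s}/K)$ is finite. The field $K\l a\r_{\D,\s}$ is generated over $K$ by $\{\d^e \s^k a_i : e \in \NN^m,\ k \in \NN,\ 1\leq i\leq |a|\}$, so there is $N \in \NN$ large enough that the finite tuple
$$B := (\d^e \s^k a_i \ :\ |e| + k \leq N,\ 1 \leq i \leq |a|)$$
already achieves transcendence degree $t$ over $K$. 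Then every element of $K\l a\r_{\D,\s}$ is algebraic over $K(B)$; in particular each entry of $\d_i B$ (for $i \leq m$) and of $\s B$ is algebraic over $K(B)$.

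Next, to upgrade ``algebraic over $K(B)$'' to ``rational in a larger tuple'' for the derivations in $\D$ -- so that a regular section to the prolongation becomes available -- I would adapt the separant-inversion trick used in the proof of Proposition~\ref{finteo}. For each coordinate $b$ of $B$ of maximal order and each $i \leq m$, choose an irreducible $P_{b,i}\in K[x,y]$ with $P_{b,i}(B,\d_i b)=0$ and nonzero separant $h_{b,i}:=(\partial P_{b,i}/\partial y)(B,\d_i b)$. Differentiating the relation $P_{b,i}(B,\d_i b)=0$ by any $\d_j$ via the chain rule of Lemma~\ref{exten1} and solving for the resulting next-order derivative yields a rational expression in $B$ and $\d B$ with denominator $h_{b,i}$. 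Iterating this -- adjoining each new derivative together with the inverse of the relevant separant -- terminates after finitely many rounds because the transcendence degree $t$ gives an a priori bound on how often new algebraically independent elements can be added. This produces a tuple $c\subset K\l a\r_{\D,\s}$ containing $a$ such that every entry of $\d_i c$ is a polynomial in $c$ with coefficients in $K$, for all $i \leq m$.

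Now let $V\subseteq \mathbb{A}^{|c|}$ be the Zariski locus of $c$ over $K$: an irreducible affine algebraic variety with generic point $c$. The polynomials realising $\d_i c$ assemble into a regular section $s=(\operatorname{Id},s_1,\ldots,s_m):V\to \tau V$ in the sense of Section~\ref{repro}. Indeed, $\nabla c = s(c) \in \tau V$ by construction, and since $c$ is Zariski-generic, $s$ takes values in $\tau V$ on all of $V$, by the same argument as in Proposition~\ref{lema1}~(1). The integrability condition is automatic: it holds at $c$ because $\d_i\d_j c = \d_j\d_i c$ in $\U$, and hence globally by Zariski-density. Thus $(V,s)$ is an affine D-variety defined over $K$, and $c\in(V,s)^\#$ is a generic point of $V$ over $K$. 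Moreover $K\l c\r_{\D,\s}=K\l a\r_{\D,\s}$ because $a\subseteq c\subseteq K\l a\r_{\D,\s}$; and each entry of $\s c$, being a shift of an entry of $c$, lies in $K\l a\r_{\D,\s}$, which is algebraic over $K(B)\subseteq K(c)$, so $\s c\in K(c)^{\mathrm{alg}}$.

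The main technical obstacle lies in the second step: orchestrating the separant-inversion trick for all the commuting derivations $\d_1,\ldots,\d_m$ simultaneously, so that the adjunction of new derivatives and separant-inverses closes up in finitely many rounds. The finiteness of $t$ is precisely what guarantees termination; without the finite-dimensionality hypothesis on $tp(a/K)$ one could not produce a finite tuple $c$ with the stated rational-expressibility of its derivatives.
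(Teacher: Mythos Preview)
Your overall strategy is sound, but the termination of your separant-inversion process is not justified. You write that it ``terminates after finitely many rounds because the transcendence degree $t$ gives an a priori bound on how often new algebraically independent elements can be added.'' This reasoning is vacuous: since $K(B)$ already has transcendence degree $t$, \emph{every} element you adjoin thereafter is algebraic over $K(B)$, so no new algebraically independent elements are ever added and the bound never bites. What you actually need is the direct verification that \emph{one round suffices}: with $C:=\bigl(B,(\d_iB_j)_{i,j},(h_{ij}^{-1})_{i,j}\bigr)$, differentiating the relation $Q_{ij}(B,\d_iB_j)=0$ by $\d_k$ yields $\d_k\d_iB_j\in K[C]$ (the right-hand side involves only $B$, $\d_iB_j$, the $\d_kB_l$, and $h_{ij}^{-1}$, all coordinates of $C$); then $\d_k h_{ij}^{-1}=-h_{ij}^{-2}\d_k h_{ij}\in K[C]$ follows since $h_{ij}$ is polynomial in $(B,\d_iB_j)$. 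Once this is in place the rest of your argument goes through.

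By comparison, the paper's proof is more modular and avoids this issue entirely. It first sets $b=(a,\s a,\dots,\s^s a)$, so that $\s b\in K\l b\r_\D^{alg}$ and $\D$-type$(b/K)=0$, and then simply invokes Proposition~\ref{finteo} (the $\ell=0$ case, resting on Kolchin's Fact~\ref{Kolteo}) to obtain an algebraic D-variety $(V,s)$ and a generic $c\in(V,s)^\#$ with $K\l b\r_\D=K(c)$; the conclusions about $K\l a\r_{\D,\s}=K\l c\r_{\D,\s}$ and $\s c\in K(c)^{alg}$ then follow immediately. Your approach re-derives that construction by hand and merges the $\s$- and $\D$-steps, which is more explicit but forces you to supply the termination argument yourself. (A minor aside: $K\l a\r_{\D,\s}$ is generated by $\d^e\s^k a_i$ with $k\in\ZZ$, not $k\in\NN$, since $\s$ is an automorphism; this does not affect your argument, as the forward orbit already achieves transcendence degree $t$.)
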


\begin{proof}
Since $p$ is finite dimensional then there is $s< \w$ such that $K\l a\r_{\D,\s}\subseteq K(\T_s a)^{alg}$ where (as in Definition~\ref{dimpo})
$$\T_s a=\{\d_m^{e_m}\cdots \d_1^{e_1}\s^k a: e_i,k\in \w \text{ and } e_1+\cdots+e_m+k\leq s\}.$$
In particular, $\s^{s+1} a\in K(\T_s a)^{alg}$. Hence, if we let $b=(a,\s a,\dots,\s^s a)$, then $K\l a\r_{\D,\s}=K\l b\r_{\D,\s}$ and $\s b\in K\l b\r_{\D}^{alg}$. Also, since $K\l b\r_\D\subseteq K(\T_s b)$, we have that $\D$-type$(b/K)=0$. So, by Proposition~\ref{finteo}, there is an irreducible affine D-variety $(V,s)$ over $K$ and a generic point  $c\in (V,s)^\#$ of $V$ over $K$ such that $K\l b\r_{\D}=K(c)$. Hence, $K\l a\r_{\D,\s}=K\l c\r_{\D,\s}$ and $\s c\in K(c)^{alg}$.
\end{proof}

Let us make explicit what the notion of almost internality means in $DCF_{0,m}A$. We say that $p=tp(a/K)$ is \emph{almost $\C_\U$-internal} if there is a differential-difference field extension $L$ of $K$ with $a\ind_K L$ such that $a\in \operatorname{acl}(L,c)=L\l c\r_{\D,\s}^{alg}$ for some tuple $c$ from $\C_\U$.

We are now in position to prove the canonical base property for finite dimensional types.

\begin{theorem}\label{cbp}
Suppose $tp(a/K)$ is finite dimensional and $L>K$ is an algebraically closed differential-difference field. Let $c$ be a tuple such that $\operatorname{acl}(c)=\operatorname{acl}(Cb(a/L))$. Then $tp(c/K\l a\r_{\D,\s})$ is almost $\C_\U$-internal.
\end{theorem}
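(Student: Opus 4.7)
The plan is to adapt the jet-space strategy of Pillay--Ziegler \cite{PZ}, as generalised to one derivation plus an automorphism by Bustamante \cite{Bu3}, using our notion of $(\D,\s)$-modules. First I would reduce to a convenient representative for $a$. Applying Lemma \ref{trey}, I may replace $a$ by a generic point (still called $a$) of an irreducible affine D-variety $(V,s)$ defined over $K$, lying in $(V,s)^\#$ and satisfying $\s a\in K(a)^{alg}$. Let $W$ be the Zariski locus of $a$ over $L$. By Lemma \ref{minf}, if $F$ denotes the minimal field of definition of $W$, then $\operatorname{acl}(Cb(a/L),K)=\operatorname{acl}(F,K)$; thus replacing $c$ by a tuple coding $F$, it suffices to prove that $tp(F/K\la a\ra_{\D,\s})$ is almost $\C_\U$-internal.

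Next I would encode $W$ by means of jet spaces. Since $a\in(V,s)^\#$ is sharp and $\s a\in K(a)^{alg}$, the constructions reviewed in this section equip each jet space $j_rV_a$ with the structure of a $(\D,\s)$-module $(j_rV_a,\D^*,\s^*)$ over $(\U,\D,\s)$, and this structure is defined from $V$, $s$, $a$ and the Zariski locus of $(a,\s a)$ over $K$, hence over parameters in $K\la a\ra_{\D,\s}$. The argument sketched just before Lemma \ref{trey} shows that $(W,s|_W)$ is a D-subvariety of $(V,s)$ (the defining ideal of $W$ over $L$ is a $\D$-ideal because $a$ is sharp and $\d_i a\in K(a)^{alg}\subseteq L(a)^{alg}$), and the correspondence $\operatorname{loc}(a,\s a/L)\subseteq W\times W^{\s}$ shows that $j_rW_a$ is a $(\D,\s)$-submodule of $j_rV_a$ for every $r$. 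Finally, by Fact \ref{thethe}, the subvariety $W$ (and hence $F$) is determined by the family of subspaces $\{j_rW_a:r\geq 1\}$, so it is enough to show that the canonical parameter of $j_rW_a$ is almost $\C_\U$-internal over $K\la a\ra_{\D,\s}$, uniformly in $r$.

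The decisive ingredient is the lemma on $(\D,\s)$-modules: $(j_rV_a)^\#:=\{e\in j_rV_a:\Sigma e=e,\ \dd_i e=0\ \forall i\}$ is a $\C_\U$-vector space which admits a $\C_\U$-basis that is also a $\U$-basis of $j_rV_a$. Working in such a basis, the $(\D,\s)$-submodule $j_rW_a$ is determined by the $\C_\U$-subspace $(j_rW_a)^\#$ of $(j_rV_a)^\#$. Pick a $\U$-basis $e_1,\dots,e_d$ of $j_rV_a$ from $(j_rV_a)^\#$ defined over an algebraically closed differential-difference field $M\supseteq K\la a\ra_{\D,\s}$ with $M\ind_{K\la a\ra_{\D,\s}}F$ (possible by extension). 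Then the canonical parameter of $j_rW_a$ is interdefinable over $M$ with a canonical parameter of a $\C_\U$-subspace of $\C_\U^d$, hence belongs to $\operatorname{acl}(M,e)$ for some tuple $e$ from $\C_\U$. This exhibits $tp(F/K\la a\ra_{\D,\s})$ as almost $\C_\U$-internal, completing the proof.

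The principal obstacle, as in \cite{Bu3}, is verifying that the jet-space construction really delivers a full $(\D,\s)$-module and not just separately a $\D$-module and a $\s$-module: one must check that the dual operators $\d_i^*$ constructed from the D-variety structure on $(V,s)$ commute with the map $\s^*$ coming from the finite-to-finite correspondence $\operatorname{loc}(a,\s a/K)\subseteq V\times V^\s$. This is exactly what makes it essential that $a$ be a sharp point with $\s a\in K(a)^{alg}$, so that both structures are compatible at $a$; once this compatibility is confirmed, the $(\D,\s)$-module lemma proved above applies and the rest of the argument runs in parallel with the pure differential and pure difference cases.
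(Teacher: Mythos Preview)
Your proposal is correct and follows essentially the same route as the paper: reduce via Lemma~\ref{trey} to a sharp generic point of an affine D-variety with $\s a\in K(a)^{alg}$, replace $c$ by the minimal field of definition of $W=\operatorname{loc}(a/L)$ using Lemma~\ref{minf}, equip each $j_rV_a$ with its $(\D,\s)$-module structure, choose a $\C_\U$-basis of $(j_rV_a)^\#$ independent from the parameter over $K\la a\ra_{\D,\s}$, and conclude via Fact~\ref{thethe} that the code for $W$ lies in the definable (or algebraic) closure of $K\la a\ra_{\D,\s}$, the basis parameters, and tuples from $\C_\U$. The paper carries this out with the same ingredients, only phrasing the final step as an explicit automorphism argument rather than in terms of canonical parameters.
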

\begin{proof}
We may replace $a$ by anything interdefinable with it over $K$. Hence, by Lemma~\ref{trey}, we may assume that $\s a\in K(a)^{alg}$ and that there is an irreducible affine D-variety $(V,s)$ defined over $K$ such that $a\in (V,s)^\#$ is a generic point of $V$ over $K$. Let $W$ be the locus of $a$ over $L$. Then $(W,s|_W)$ is a $D$-subvariety of $(V,s)$ and $a\in (W,s|_W)^\#$. By Lemma~\ref{minf}, if $d$ is a tuple generating the minimal field of definition of $W$ then $\operatorname{acl}(d,K)=\operatorname{acl}(c,K)$. Thus, it suffices to prove that $tp(d/K\l a\r_{\D,\s})$ is almost $\C_\U$-internal.

Consider the $(\D,\s)$-module $(j_r V_a, \D^*,\s^*)$ and recall that $j_r W_a$ is a $(\D,\s)$-submodule. For each $r$, let $b_r$ a $\C_\U$-basis of $j_r V_a^\#$ which is also a $\U$-basis of $j_r V_a$. Let $\displaystyle B=\cup_{r=1}^{\infty}b_r$, we may choose the $b_r$'s such that $d\ind_{K\l a\r_{\D,\s}} B$. The basis $b_r$ yields a $(\D,\s)$-module isomorphism between $(j_r V_a,\D^*,\s^*)$ and $(\U^{d_r},\D,\s)$ which therefore takes $j_r W_a$ into a $(\D,\s)$-submodule $S_r$ of $(\U^{d_r},\D,\s)$. We can find a $\C_\U$-basis $e_r$ of $S_r^\#\subseteq \C_\U^{d_r}$ which is also a $\U$-basis of $S_r$, so $S_r$ is defined over $e_r\subset\C_\U^{d_r}$. Let $\displaystyle E=\cup_{r=1}^{\infty}e_r$. 

It suffices to show that $d\in \operatorname{dcl}(a,K,B,E)$. To see this, let $\phi$ be an automorphism of $(\U,\D,\s)$ fixing $a,K,B,E$ pointwise. Since $j_r V_a$ is defined over $K\l a \r_{\D,\s}$, then $\phi(j_r V_a)=j_rV_a$. Also, as each $S_r$ is defined over $E$ and the isomorphism between $S_r$ and $j_r W_a$ is defined over $B$, $\phi(j_rW_a)=j_r W_a$ for all $r>0$. Since $V$ is defined over $K$ and $\phi$ fixes $a$ pointwise, Fact~\ref{thethe} implies that $\phi(W)=W$. But $d$ is in the minimal field of definition of $W$, thus $\phi$ fixes $d$ pointwise, as desired.
\end{proof}

\begin{remark}
Even though we did not mentioned it explicitly in the proof of Theorem~\ref{cbp}, the key construction in it is that of
\begin{eqnarray*}
(j_r W_a,\D^*,\s^*)^\#
&=& \{\lambda \in j_r W_a: \s^*(\lambda)=\lambda \text{ and }\d_i^*(\lambda)=0, i=1,\dots,m\} \\
&=& \{\lambda \in j_r W_a: \s(\lambda)=f(\lambda) \text{ and } \d_i\comp \lambda=\lambda\comp \d_i, i=1\dots,m\},
\end{eqnarray*}
which is a finite dimensional $\C_\U$-vector space. We could define this vector space to be the $r$-th $(\D,\s)$-jet space at $a$ of the $(\D,\s)$-locus of $a$ over $L$. But $(\D,\s)$-jet spaces for arbitrary $(\D,\s)$-varieties have already been defined, they are a special case of the generalized Hasse-Schmidt jet spaces of Moosa and Scanlon from \cite{Moo3}. In this context, namely $a$ is a $\D$-generic point of the sharp points of an algebraic D-variety over $L$ and $\s a \in L(a)^{alg}$, the two constructions agree. A proof of this in the differential case appears at the end of \S4.3 of \cite{Moo3}. We do not include a proof of the general case because it is not necessary for our results.
\end{remark}

As a consequence of the canonical base property, we can prove Zilber dichotomy for finite dimensional types. But first let us give the definition of a one-based type in the context of $DCF_{0,m}A$. A type $p=tp(a/K)$ is said to be \emph{one-based} if for every algebraically closed differential-difference field extension $L$ of $K$ we have that $Cb(a/L)\subseteq \operatorname{acl}(K,a)=K\l a\r_{\D,\s}^{alg}$.

\begin{corollary}\label{zildi}
Let $p$ be a finite dimensional complete type over $K$ with $SU(p)=1$. Then $p$ is either one-based or almost $\C_\U$-internal.
\end{corollary}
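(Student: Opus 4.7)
Assume, for contradiction, that $p = tp(a/K)$ is not one-based. Then there is an algebraically closed differential-difference field $L \supseteq K$ and a tuple $b$ enumerating $Cb(a/L)$ with $b \notin \operatorname{acl}(K,a)$. By the defining property of the canonical base, $a \ind_b L$. Were it also the case that $a \ind_K b$, then since $K \subseteq L$, transitivity of independence (Theorem~\ref{imtop}(5)) would give $a \ind_K L$; but then $Cb(a/L)$ would lie in $\operatorname{acl}(K) \subseteq \operatorname{acl}(K,a)$, contradicting the choice of $b$. Hence $tp(a/K,b)$ forks over $K$, and the hypothesis $SU(p) = 1$ forces $SU(a/K,b) = 0$, i.e., $a \in \operatorname{acl}(K,b)$.

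Next, the canonical base property (Theorem~\ref{cbp}) applied to $a$ and $L$ gives that $tp(b / K\l a\r_{\D,\s})$ is almost $\C_\U$-internal. Since $\C_\U$ is a pseudofinite field, hence of $SU$-rank $1$, this yields $SU(b / Ka) < \omega$; combined with $SU(a/K) = 1$, the Lascar inequalities give that $SU(b/K)$, and hence $SU(a,b/K)$, is finite. In particular, the whole configuration takes place within a finite-dimensional and finite-$SU$-rank corner of the universe.

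The remaining step, which I expect to be the main obstacle, is to transfer the almost $\C_\U$-internality of $tp(b/Ka)$ to $tp(a/K)$. The plan is to follow the standard deduction of Zilber's dichotomy from the canonical base property in supersimple theories of finite $SU$-rank, as carried out in \cite{PZ} for $ACFA_0$ and in \cite{Bu3} for $DCF_0 A$. Concretely, one picks a Morley sequence $(a_i)_{i<\omega}$ of $tp(a/L)$ over $L$; by the finiteness of $SU(a/L)$, the canonical base $b$ lies in $\operatorname{acl}(a_1,\dots,a_n)$ for some $n$. Each $a_i$ realises $tp(a/L)$, so the argument of the first paragraph applies to $a_i$ in place of $a$ to give $a_i \in \operatorname{acl}(K,b)$, and Theorem~\ref{cbp} yields that each $tp(b / K a_i)$ is almost $\C_\U$-internal. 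Iterated use of the independence theorem (Theorem~\ref{imtop}(6)) over the algebraically closed base $\operatorname{acl}(K)$ then allows one to combine these local witnesses into a single parameter set $N \supseteq K$ with $a \ind_K N$ and a tuple $\bar c$ from $\C_\U$ such that $a \in \operatorname{acl}(N,\bar c)$, giving almost $\C_\U$-internality of $tp(a/K)$ and the desired contradiction. The delicate bookkeeping, namely preserving the independence $a \ind_K N$ while gluing together the local witnesses, is precisely where the finite-$SU$-rank arithmetic and the independence theorem are essential.
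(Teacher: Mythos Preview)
Your first two paragraphs set things up correctly, though the observation that $a\in\operatorname{acl}(K,b)$ is not actually needed. The problem is the third paragraph: you correctly identify it as the crux, but you do not carry it out, and the route you sketch (iterate the independence theorem to ``glue local witnesses'') is unnecessarily heavy. The paper's argument avoids the independence theorem entirely and is essentially one line once the ingredients are in place.

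Here is the missing step as the paper does it. Unpack almost $\C_\U$-internality of $tp(b/K\l a\r_{\D,\s})$ \emph{once}: there is $d$ with $b\ind_{K,a} d$ and a tuple $e$ from $\C_\U$ such that $b\in\operatorname{acl}(K,a,d,e)$. Since $b\notin\operatorname{acl}(K,a)$, transitivity gives $b\nind_{K,a,d}e$. By the general fact about canonical bases in supersimple theories, $b\in\operatorname{acl}(a_1,\dots,a_s)$ for independent realisations $a_i$ of $p$, which we may take independent from $a,d$ over $K$. Then $a_1,\dots,a_s\nind_{K,a,d}e$, so for some $i$ we have $a_{i+1}\nind_{K,a,d,a_1,\dots,a_i}e$. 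But $a_{i+1}\ind_K a,d,a_1,\dots,a_i$ and $SU(a_{i+1}/K)=1$, so $a_{i+1}\in\operatorname{acl}(K,a,d,a_1,\dots,a_i,e)$. This directly exhibits almost $\C_\U$-internality of $p=tp(a_{i+1}/K)$: the parameter set is $K\cup\{a,d,a_1,\dots,a_i\}$, over which $a_{i+1}$ is independent, and $e$ is the tuple from $\C_\U$. No amalgamation is required.
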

\begin{proof}
Suppose $p=tp(a/K)$ is not one-based. Then there is an algebraically closed differential-difference field $L>K$ such that $Cb(a/L)$ is not contained in $\operatorname{acl}(K,a)$. Let $c$ be a tuple such that $\operatorname{acl}(c)=\operatorname{acl}(Cb(a/L))$, then $c\notin \operatorname{acl}(K,a)$. By Theorem~\ref{cbp}, $tp(c/K\l a\r_{\D,\s})$ is almost $\C_\U$-internal. Hence, $c\in \operatorname{acl}(K,a,b,e)$ where $c\ind_{K,a} b$ and $e$ is a tuple from $\C_\U$. Therefore we get $c\Notind_{K,a,b} e$. 

Now, by a general fact about canonical bases is supersimple theories we get that $c\in \operatorname{acl}(a_1,\dots,a_s)$ where the $a_i$'s are independent realisations of $p$ (see Chap. 17 of \cite{Ca}). We may assume the $a_i$'s are independent from $a,b$ over $K$. We get that $$a_1,\dots,a_s\Notind_{K,a,b} e,$$ and so, since $SU(p)=1$, $a_{i+1}\in\operatorname{acl}(K,a,b,a_1,\dots,a_i,e)$ for some $i<s$. Since $$a_{i+1}\ind_{K}a,b,a_1,\dots,a_i$$ and $e$ is a tuple from $\C_\U$, $p$ is almost $\C_\U$-internal.
\end{proof}

\begin{remark}
The assumption in Corollary~\ref{zildi} that $p$ is finite dimensional should not be necessary, though a different proof is needed. To prove the general case one could work out the theory of arc spaces of Moosa, Pillay, Scanlon \cite{Moo} in the $(\D,\s)$ setting, and apply their weak dichotomy for regular types. This is done in the $(\d,\s)$ setting (i.e., ordinary differential-difference fields) by Bustamante in \cite{Bu4}, and the arguments there should work here as well.
\end{remark}

\chapter*{Appendix \\
\vspace{0.4in}
Some $U$-rank computations in $DCF_{0,m}$}\label{ucomp}
\addcontentsline{toc}{chapter}{Appendix: Some $U$-rank computations in $DCF_{0,m}$}

In this appendix, we present some results on computing and finding lower bounds for the $U$-rank of certain definable subgroups of the additive group. We work inside a sufficiently saturated $(\U,\D)\models DCF_{0,m}$ and fix a base $\D$-field $K$.

Let $p\in S_n(K)$, $\tau=\D$-type$(p)$ and $d=\D$-dim$(p)$. In \cite{Mc}, McGrail proved that 
\begin{equation}\label{upbo}
U(p)<\w^{\tau}(d+1).
\end{equation}
She also claimed the lower bound $\omega^{\tau}d\leq U(p)$. However, in \cite{So}, Suer shows that this lower bound is in general false. He showed that, in $DCF_{0,2}$, if $p$ is the generic type of $\d_2^k x=\d_1 x$ then $U(p)=\w$ while $\D$-type$(p)=1$ and $\D$-dim$(p)=k$ for all $k>0$. So far there is no known lower bound for the $U$-rank in terms of the $\D$-type; in fact, it is not even known if $U(p)=1$ implies $\D$-type$(p)=0$. 

On the other hand, it is generally known that the generic type of $DCF_{0,m}$ has $U$-rank $\w^m$. However, the only written argument (for $m>1$)  found by the author uses McGrail's incorrect lower bound. We record a correct (and very standard) proof.

\begin{lemma}\label{utheo}
If $p\in S_1(K)$ is the generic type of $\U$ over $K$ then $U(p)=\w^m$.
\end{lemma}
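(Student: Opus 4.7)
I will prove the two inequalities $U(p) \leq \omega^m$ and $U(p) \geq \omega^m$ separately; combined they give $U(p) = \omega^m$.

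\emph{Upper bound.} Let $q = tp(a/L)$ be any forking extension of $p$, so $L \supseteq K$ and $a \nind_K L$. Since $a$ is a single element that is $\D$-transcendental over $K$, unwinding the definition of $\ind$ in terms of algebraic disjointness shows that $a \nind_K L$ if and only if $a$ is $\D$-algebraic over $L$. Hence $\D$-type$(q) \leq m-1$ and $\D$-dim$(q) = d$ is a finite natural number, so McGrail's bound \eqref{upbo} yields $U(q) < \omega^{m-1}(d+1) < \omega^m$. Every forking extension of $p$ therefore has $U$-rank below $\omega^m$, so $U(p) \leq \omega^m$.

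\emph{Lower bound.} For $0 \leq j \leq m$, set $F_j := \{x \in \U : \d_{j+1} x = \cdots = \d_m x = 0\}$, so $F_0 = \U^\D$ and $F_m = \U$. By induction on $j$ I will show that the generic type of $F_j$ over $K$ has $U$-rank at least $\omega^j$; taking $j = m$ gives the required bound. The base case $j = 0$ holds because $\U^\D$ is strongly minimal as a pure algebraically closed field (Fact~\ref{onty}(ii)), so its generic type has $U$-rank $1 = \omega^0$. For the inductive step, assume the result at $j-1$. Since $\omega^j = \sup_n \omega^{j-1} n$, it suffices, for each $n \geq 1$, to exhibit a forking extension $q_n$ of the generic type of $F_j$ with $U(q_n) \geq \omega^{j-1} \cdot n$.

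Take $q_n$ to be the type over $K$ of a generic $a \in F_j$ satisfying the extra equation $\d_j^n a = 0$, and establish $U(q_n) \geq \omega^{j-1} \cdot n$ by a secondary induction on $n$. For $n = 1$, the condition $\d_j a = 0$ together with $a \in F_j$ forces $a$ to be a generic of $F_{j-1}$, so $U(q_1) \geq \omega^{j-1}$ by the outer IH. For $n \geq 2$, let $a \models q_n$ and set $b := \d_j a$; then $b \in F_j$ and $\d_j^{n-1} b = 0$, so $b$ realises $q_{n-1}$ and the secondary IH gives $U(b/K) \geq \omega^{j-1}(n-1)$. A saturation argument---justified by the integrability identities $\d_k b = \d_k \d_j a = \d_j \d_k a = 0$ for $k \neq j$, which hold since $b \in F_j$---produces some $a_0 \in F_j$ with $\d_j a_0 = b$; any two such solutions differ by an element of $F_{j-1}$, so over $K \langle b, a_0 \rangle_\D$ the element $a$ is interdefinable with the generic element $a - a_0$ of $F_{j-1}$, giving $U(a / K\langle b, a_0\rangle_\D) \geq \omega^{j-1}$ by the outer IH. A fortiori $U(a/K\langle b\rangle_\D) \geq \omega^{j-1}$, and the Lascar inequality (Fact~\ref{yut}(iv)), together with $b \in \operatorname{dcl}(Ka)$, yields
\[
U(a/K) \;\geq\; U(a/K\langle b\rangle_\D) + U(b/K) \;\geq\; \omega^{j-1} + \omega^{j-1}(n-1) \;=\; \omega^{j-1} \cdot n,
\]
completing both inductions.

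The principal technical delicacy lies in the secondary inductive step: one must verify that the generic $a$ chosen subject to $\d_j^n a = 0$ inherits no hidden algebraic relations with $b = \d_j a$ beyond the imposed equation, so that $a - a_0$ is genuinely generic in $F_{j-1}$ over $K\langle b, a_0\rangle_\D$ and the outer IH can be invoked at full strength; the existence of $a_0$ itself is a standard application of saturation once the integrability conditions are in place.
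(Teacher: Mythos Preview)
Your overall architecture matches the paper's: the upper bound comes from McGrail's inequality~\eqref{upbo}, and the lower bound is obtained by inducting on the number of free derivations using the subgroups $G_n=\{a\in F_j:\d_j^n a=0\}$. The divergence is in how you establish $U(G_n)\geq\omega^{j-1}n$. The paper does this in one line by observing that $G_n$ is an $n$-dimensional $F_{j-1}$-vector space, hence (after choosing a basis independent from the generic point) in definable bijection with $(F_{j-1})^n$; the outer hypothesis together with Lascar then gives $U(G_n)=\omega^{j-1}n$ immediately. You instead run a secondary induction on $n$ through the exact sequence $0\to F_{j-1}\to G_n\xrightarrow{\d_j}G_{n-1}\to 0$.

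That secondary induction has two unjustified steps. First, asserting that $b=\d_j a$ realises $q_{n-1}$ amounts to saying the image of a generic under a surjective definable group homomorphism is generic---true, but it should be invoked explicitly. Second, and this is the ``delicacy'' you flag without resolving, the claim that $a-a_0$ is generic in $F_{j-1}$ over $K\langle b,a_0\rangle_\D$ is not established. You never specify how $a_0$ is chosen (taking $a_0=a$ satisfies your description and gives $a-a_0=0$), and even with $a_0\ind_{K\langle b\rangle}a$ one only gets $U(a-a_0/K\langle a_0\rangle)=U(a/K\langle b\rangle)$; you would still need $tp(a/K\langle b\rangle)$ to be generic in the fibre, which does not follow for free from $a$ being generic in $G_n$ over $K$. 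This can be repaired via standard stable-group-theory facts about generics of cosets, but the cleanest route is the paper's: pass to the vector-space description, where a generic of $G_n$ becomes a tuple of $n$ independent generics of $F_{j-1}$ and both issues dissolve.
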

\begin{proof}
For $i=0,1,\dots,m$, let $\D_i=\{\d_1,\dots, \d_{m-i}\}$. Hence, $\U^{\D_0}=\U^\D$ and $\U^{\D_m}=\U$. We show, by induction on $i$, that $U(\U^{\D_i})=\w^i$. The base case holds trivially, as in this case we are dealing with the (total) constants $\U^\D$. Now suppose the result holds for $i$. Let $n>0$ and consider the definable group $$G=\{a\in \U^{\D_{i+1}}:\, \d_{m-i}^n a=0\},$$ which is an $n$ dimensional $\U^{\D_i}$-vector subspace of $\U^{\D_{i+1}}$. By the induction hypothesis $U(G)=\w^i n$. As $n$ was arbitrary and $G\subset \U^{\D_{i+1}}$, we get that $U(\U^{\D_{i+1}})\geq\w^{i+1}$. Now we need to show that $U(\U^{\D_{i+1}})\leq\w^{i+1}$; that is, we have to show that for every forking extension $tp(a/L)$ of the generic type of $\U^{\D_{i+1}}$ over $K$ we have that $U(a/L) < \w^{i+1}$. But, since $a\in \U^{\D_{i+1}}$ is $\D\setminus \D_{i}$-algebraic over $L$ (as $tp(a/L)$ forks over $K$), $\D$-type$(a/L)\leq i$. Hence, by (\ref{upbo}), $U(a/L)<\w^{i+1}$. This completes the induction.

Applying this to the case $i=m$ shows that the generic type of $\U$ over $K$ is $\w^m$.
\end{proof}

Let us observe that the idea here was to find a family of definable subgroups of the additive group $(\U,+)$ whose image under the $U$-rank map was a cofinal subset of $\w^m$. This cofinal subset was the set of ordinals of the form $\w^i n$. In the following example we observe that, through a more detailed argument, one can produce a family of subgroups of the additive group such that for every $\al<\w^m$ there is an element in the family whose $U$-rank is ``close'' to $\al$. 

\begin{example}
We construct a family $(G_r)_{r\in \NN^m}$ of definable subgroups of the additive group with the following property. For every $\al$ of the form $$\al= \w^n a_n+\w^{n-1}a_{n-1}+\cdots+ \w a_1+ a_0$$ with $n<m$, $a_i<\w$ and $a_n\neq 0$, there is $r\in \NN^m$ such that $$\al\leq U(G_r)<\w^n(a_n +1).$$
Let $G_r$ be defined by the homogeneous system of linear differential equations $$\d_1^{r_1+1}x=0, \, \d_2^{r_2+1}\d_1^{r_1}x=0,\, \dots,\, \d_{m-1}^{r_{m-1}+1}\d_{m-2}^{r_{m_2}}\cdots\d_1^{r_1}x=0, \, \d_{m}^{r_m}\d_{m-1}^{r_{m-1}}\cdots\d_1^{r_1}x=0.$$ We first show that $$U(G_r)\geq \w^{m-1}r_1+\w^{m-2}r_2+\cdots+\w r_{m-1}+r_m.$$ We prove this by transfinite induction on $r=(r_1, . . . , r_m)$ in the lexicographical order. The base case holds trivially. Suppose first that $r_m \neq 0$ (i.e., the succesor ordinal case). Consider the definable group homomorphism $f:(G_r,+)\to (G_r,+)$ given by $f(a)=\d_m^{r_m -1}a$. Then the generic type of the generic fibre of $f$ is a forking extension of the generic type of $G_r$. Since $f$ is a definable group homomorphism, the $U$-rank of the generic fibre is the same as the $U$-rank of $\operatorname{Ker}(f)=G_{r'}$ where $r'=(r_1,\dots,r_{m-1},r_m -1)$. By induction, $$U(G_{r'})\geq \w^{m-1}r_1+\w^{m-2}r_2+\cdots+\w r_{m-1}+(r_m -1).$$ Hence, $$U(G_{r})\geq \w^{m-1}r_1+\w^{m-2}r_2+\cdots+\w r_{m-1}+r_m.$$ 

Now suppose $r_m = 0$ (i.e., the limit ordinal case) and that $k$ is the largest such that $r_k \neq 0$. Let $n\in \w$ and $r' = (r_1,\dots,r_k -1,n,0,...,0)$. Then $G_{r'}\subset G_r$ and, by induction, $$U(G_{r'})\geq \w^{m-1}r_1+\w^{m-2}r_2+\cdots+\w^{m-k} (r_{k}-1)+\w^{m-k-1}n.$$ Since n was arbitrary, $$U(G_r)\geq \w^{m-1}r_1 +\w^{m-2}r_2 +\dots +\w^{m-k}r_k.$$ This complete the induction.

Next we show that if $k$ is the smallest such that $r_k>0$ then $\D$-type$(G)=m-k$ and $\D$-dim$(G)=r_k$. Let $tp(a/K)$ be the generic type of $G_r$. As $r_1=\cdots=r_{k-1}=0$, we have $\d_1 a=0,\,\dots,\,\d_{k-1} a=0,\, \d_k^{r_k+1}a=0$ and $\d_k^{r_k}a$ is $\D_{k}'$-algebraic over $K$ where $\D_k'=\{\d_{k+1},\dots,\d_m\}$. It suffices to show that $a,\d_k a,\dots, \d_k^{r_k-1}a$ are $\D_{k}'$-algebraically independent over $K$. Let $f\in K\{x_0,\dots,x_{k-1}\}_{\D_{k}'}$ be nonzero, and let $g(x)=f(x,\d_k x,\dots,\d_k^{r_k-1} x)\in K\{x\}_\D$. Then $g$ is a nonzero $\D$-polynomial over $K$ reduced with respect to all $f\in \I(G_r/K)_\D$. Thus, as $a$ is a generic point of $G_r$ over $K$, $$0\neq g(a)=f(a,\d_k a,\dots,\d_k^{r_k-1} a),$$ as desired.

Applying this, and (\ref{upbo}), in the case when $r_1=0, \dots,r_{m-n-1}=0, r_{m-n}=a_n, r_{m-n+1}=a_{n-1},\dots, r_{m-1}=a_1, r_m=a_0$, we obtain the desired inequalities $$\al \leq U(G_r)< \w^n(a_n+1).$$ Note that, in particular, each $G_r$ satisfies McGrail's lower bound.
\end{example}








\bibliographystyle{plain}
\cleardoublepage 
\phantomsection  
\renewcommand*{\bibname}{References}

\addcontentsline{toc}{chapter}{\textbf{References}}

\bibliography{uw-ethesis}

\begin{thebibliography}{10}

\bibitem{Buium}
A.~Buium.
\newblock {\em Differential algebraic groups of finite dimension}.
\newblock Lecture notes in mathematics, Vol. 1506, Springer-Verlag, 1992.

\bibitem{Bu3}
R.~Bustamante.
\newblock {\em Algebraic jet spaces and Zilber's dichotomy in DCFA}.
\newblock Revista de Matem\'atica: Teor\'ia y Aplicaciones 17 (1), pp.1-12,
  2010.

\bibitem{Bu}
R.~Bustamante.
\newblock {\em Differentially closed fields of characteristic zero with a
  generic automorphism}.
\newblock Revista de Matem\'atica: Teor\'ia y Aplicaciones 14(1), pp.81-100,
  2007.

\bibitem{Bu2}
R.~Bustamante.
\newblock {\em Rank and dimension in difference-differential fields}.
\newblock Notre Dame Journal of Formal Logic, Vol. 52 (4), pp.403-414, 2011.

\bibitem{Bu4}
R.~Bustamante.
\newblock {\em Th\'eorie des mod\`eles des corps diff\'erentiellement clos avec
  un automorphisme g\'enerique}.
\newblock PhD Thesis. Universit\'e Paris 7, 2005.

\bibitem{Ca}
E.~Casanovas.
\newblock {\em Simple theories and hyperimaginaries}.
\newblock Lecture Notes in Logic. Cambridge University Press, 2011.

\bibitem{PM}
P.~Cassidy and M.~Singer.
\newblock {\em Galois theory of parametrized differential equations and linear
  differential algebraic groups}.
\newblock Differential Equations and Quantum Groups (IRMA Lectures in
  Mathematics and Theoretical Physics Vol. 9). EMS Publishing house, pp.113-
  157, 2006.

\bibitem{CH}
Z.~Chatzidakis and E.~Hrushovski.
\newblock {\em Model theory of difference fields}.
\newblock Trans. Amer. Math. Soc.351 (8), pp.2997-3071, 1999.

\bibitem{CP}
Z.~Chatzidakis and A.~Pillay.
\newblock {\em Generic structures and simple theories}.
\newblock Annals of Pure and Applied Logic 95, pp.71-92, 1998.

\bibitem{Co}
R.~Cohn.
\newblock {\em Difference algebra}.
\newblock Interscience tracts in pure and applied mathematics, No. 17, 1965.

\bibitem{EH}
D.~Evans and E.~Hrushovski.
\newblock {\em On the automorphism groups of finite covers}.
\newblock Annals of Pure and Applied Logic 62, pp.83-112, 1993.

\bibitem{Fre2}
J.~Freitag.
\newblock {\em Indecomposability for differential algebraic groups}.
\newblock Preprint 2011 (http://arxiv.org/pdf/1106.0695v2.pdf).

\bibitem{Fre}
J.~Freitag.
\newblock {\em Model theory and differential algebraic geometry}.
\newblock PhD Thesis. University of Illinois at Chicago, 2012.

\bibitem{Ov}
O.~Golubitsky, M.~Kondratieva, and A.~Ovchinnikov.
\newblock {\em On the Generalised Ritt Problem as a Computational Problem}.
\newblock Journal of Mathematical Sciences. Vol. 163, No. 5, pp. 515-522
  (2009).

\bibitem{GR}
N.~Guzy and C.~Riviere.
\newblock {\em On existentially closed partial differential fields with an
  automorphism}.
\newblock Preprint 2007 (citeseerx.ist.psu.edu/viewdoc/download?doi).

\bibitem{HKM}
M.~Harrison-Trainor, J.~Klys, and R.~Moosa.
\newblock {\em Non standard methods for bounds in differential polynomial
  rings}.
\newblock Journal of Algebra 360, pp.71-86, 2012.

\bibitem{HKP}
B.~Hart, B.~Kim, and A.~Pillay.
\newblock {\em Coordinatisation and canonical basis in simple theories}.
\newblock Journal of Symbolic Logic. Vol. 65, No. 1, pp.293-309, 2000.

\bibitem{Hi}
M.~Hils.
\newblock {\em Generic automorphisms and green fields}.
\newblock Preprint 2011 (http://arxiv.org/pdf/ 1003.5149.pdf).

\bibitem{Hru2}
E.~Hrushovski.
\newblock {\em The Mordell-Lang conjecture for function fields}.
\newblock Journal of the American Mathematical Society. Vol. 9, No. 3, 1996.

\bibitem{Hru}
E.~Hrushovski.
\newblock {\em Unidimensional theories are superstable}.
\newblock Annals of Pure and Applied Logic 50, pp.117-138, 1990.

\bibitem{Ka}
I.~Kaplansky.
\newblock {\em An introduction to differential algebra}.
\newblock L'institut de math\'ematique de l'universit\'e de Nancago. Hermann,
  1957.

\bibitem{KP}
B.~Kim and A.~Pillay.
\newblock {\em Simple theories}.
\newblock Annals of Pure and Applied Logic. Vol. 88, pp.149-164, 1997.

\bibitem{Ko4}
E.~R. Kolchin.
\newblock {\em Algebraic matric groups and the Picard-Vessiot theory of
  homogeneous linear ordinary differential equations}.
\newblock Annals of Mathematics, Vol. 49, No. 1, 1948.

\bibitem{Ko}
E.~R. Kolchin.
\newblock {\em Differential algebra and algebraic groups}.
\newblock Academic Press. New York, New York 1973.

\bibitem{Ko2}
E.~R. Kolchin.
\newblock {\em Differential algebraic groups}.
\newblock Academic Press, Inc. 1985.

\bibitem{Ko3}
E.~R. Kolchin.
\newblock {\em On the basis theorem for infinite systems of differential
  polynomials}.
\newblock Bull. Amer. Math. Soc. 45, pp.923-926, 1939.

\bibitem{Kova}
J.~Kovacic.
\newblock {\em Differential Schemes}.
\newblock Differential algebra and related topics. Proc. Internat. Workshop.
  pp.71-94, 2002.

\bibitem{Kow}
P.~Kowalski.
\newblock {\em Geometric axioms for existentially closed Hasse fields}.
\newblock Annals of Pure and Applied Logic 135, pp.286-302, 2005.

\bibitem{KoP}
P.~Kowalski and A.~Pillay.
\newblock {\em Quantifier elimination for algebraic D-groups}.
\newblock Trans. Amer. Math. Soc. 358, pp.167-181, 2005.

\bibitem{La}
P.~Landesman.
\newblock {\em Generalized differential Galois theory}.
\newblock Trans. Amer. Math.Soc. 360, pp.4441-4495, 2008.

\bibitem{Ma}
D.~Marker.
\newblock {\em Embedding differential algebraic groups in algebraic groups}.
\newblock www.sci.ccny.cuny.edu/~ksda/PostedPapers/Marker050109.pdf.

\bibitem{Ma3}
D.~Marker.
\newblock {\em Manin kernels}.
\newblock Connections between model theory and algebraic and analytic geometry,
  pp.1-21, 2000.

\bibitem{Ma2}
D.~Marker.
\newblock {\em Model Theory: An Introduction}.
\newblock Springer-Verlag. New York, Inc, 2002.

\bibitem{Ma4}
D.~Marker, M.~Messmer, and A.~Pillay.
\newblock {\em Model theory of fields}.
\newblock Lecture Notes in Logic 5, ASL, 2005.

\bibitem{Mc}
T.~McGrail.
\newblock {\em The model theory of differential fields with finitely many
  commuting derivations}.
\newblock The Journal of Symbolic Logic. Vol. 65, No. 2, pp.885-913, 2000.

\bibitem{Moosa}
R.~Moosa.
\newblock {\em Model theory and complex geometry}.
\newblock Notices of the American Mathematical Society. Vol. 57, No. 2,
  pp.230-235, 2010.

\bibitem{Moo}
R.~Moosa, A.~Pillay, and T.~Scanlon.
\newblock {\em Differential arcs and regular types in differential fields}.
\newblock J. reine angew. Math. 620, pp.35-54, 2008.

\bibitem{Moo3}
R.~Moosa and T.~Scanlon.
\newblock {\em Generalized Hasse-Schmidt varieties and their jet spaces}.
\newblock Proc. London Math. Soc. 3, pp.197-234, 2011.

\bibitem{Moo2}
R.~Moosa and T.~Scanlon.
\newblock {\em Jet and prolongation spaces}.
\newblock Journal of the Inst. of Math. Jussieu 9, pp.391-430, 2010.

\bibitem{MS}
R.~Moosa and T.~Scanlon.
\newblock {\em Model theory of fields with free operators in characteristic
  zero}.
\newblock Preprint 2012 (http://arxiv.org/pdf/1212.5838v1.pdf).

\bibitem{Pierce}
D.~Pierce.
\newblock {\em Fields with several commuting derivations}.
\newblock Preprint, 2007, http://arxiv.org/pdf/0708.2769v1.pdf.

\bibitem{Pierce2}
D.~Pierce.
\newblock {\em Geometric characterizations of existentially closed fields with
  operators}.
\newblock Illinois J. Math., Vol. 48, No. 4, pp.1321-1343, 2004.

\bibitem{PiPi}
D.~Pierce and A.~Pillay.
\newblock {\em A note on the axioms for differentially closed fields of
  characteristic zero}.
\newblock Journal of Algebra 204, pp.108-115, 1998.

\bibitem{Pi2}
A.~Pillay.
\newblock {\em Algebraic D-groups and differential Galois theory}.
\newblock Pacific Journal of Mathematics. Vol. 216, No. 2, 2004.

\bibitem{Pi}
A.~Pillay.
\newblock {\em Differential Galois theory I}.
\newblock Illinois Journal of Mathematics. Vol. 42, N. 4, 1998.

\bibitem{Pi8}
A.~Pillay.
\newblock {\em Geometric stability theory}.
\newblock Oxford Science Publications, 1996.

\bibitem{Pi6}
A.~Pillay.
\newblock {\em An introduction to stability theory}.
\newblock Claredon Press. Oxford 1983.

\bibitem{Pi7}
A.~Pillay.
\newblock {\em Remarks on algebraic D-varieties and the model theory of
  differential fields}.
\newblock Lecture Notes in Logic 26, Logic in Tehran, pp.256-269, 2006.

\bibitem{Pi3}
A.~Pillay.
\newblock {\em Remarks on Galois cohomology and definability}.
\newblock The Journal of Symbolic Logic. Vol. 62, No. 2, pp. 487-492, 1997.

\bibitem{Pi4}
A.~Pillay.
\newblock {\em Some foundational questions concerning differential algebraic
  groups}.
\newblock Pacific Journal of Mathematics. Vol. 179, No.1, 1997.

\bibitem{PiT}
A.~Pillay.
\newblock {\em Two remarks on differential fields}.
\newblock Model Theory and Applications. Quaderni di Matematica. Volume 11,
  2003. Edited by Dipartimento di Matematica, Seconda Universita di Napoli.

\bibitem{PZ}
A.~Pillay and M.~Ziegler.
\newblock {\em Jet spaces of varieties over differential and difference
  fields}.
\newblock Selecta Mathematica. New series 9, pp.579-599, 2003.

\bibitem{Po}
B.~Poizat.
\newblock {\em Stable Groups}.
\newblock Mathematical Surveys and Monographs Vol. 87. American Mathematical
  Society 2001.

\bibitem{Po2}
B.~Poizat.
\newblock {\em Une th\'eorie de Galois imaginaire}.
\newblock Journal of Symbolic Logic, Vol. 48, No. 4, pp.1151-1170, 1983.

\bibitem{She}
S.~Shelah.
\newblock {\em Classification theory and the number of non-isomorphic models}.
\newblock Studies in Logic and the Foundations of Mathematics, Vol. 92.
  Elsevier Science Publishing Company, 1990.

\bibitem{So2}
S.~Suer.
\newblock {\em Model theory of differentially closed fields with several
  commuting derivations}.
\newblock PhD Thesis. University of Illinois at Urbana-Champaign, 2007.

\bibitem{So}
S.~Suer.
\newblock {\em On subgroups of the additive group in differentially closed
  fields}.
\newblock The Journal of Symbolic Logic. Vol. 77, No. 2, 2012.

\bibitem{Tr}
M.~Tressl.
\newblock {\em The Uniform Companion for Large Differential Fields of
  Characteristic 0}.
\newblock Transactions of the American Mathematical Society. Vol. 357, No. 10,
  pp. 3933-3951 (2005).

\bibitem{Van}
L.~van~den Dries and K.~Schmidt.
\newblock {\em Bounds in the Theory of Polynomial Rings over Fields. A
  Nonstandard Approach}.
\newblock Inventiones Mathematicae 76, pp. 77-91 (1984).

\bibitem{VS}
M.~van~der Put and M.~Singer.
\newblock {\em Galois theory of linear differential equations}.
\newblock Springer-Verlag Berlin Heidelberg 2003.

\end{thebibliography}


\end{document}